\def\@testdef #1#2#3{%
  \def\reserved@a{#3}\expandafter \ifx \csname #1@#2\endcsname
 \reserved@a  \else
\typeout{^^Jlabel #2 changed:^^J%
\meaning\reserved@a^^J%
\expandafter\meaning\csname #1@#2\endcsname^^J}%
\@tempswatrue \fi}
\def\colorful{0}
\numberwithin{algocf}{section}
\DeclareMathOperator{\vect}{vec}
\DeclareMathOperator{\Var}{Var}
\DeclareMathOperator{\rank}{rank}
\DeclareMathOperator{\Cov}{Cov}
\DeclareMathOperator{\tr}{tr}
\DeclareMathOperator{\op}{op}
\DeclarePairedDelimiter\brackets{(}{)}
\newcommand{\bigO}[2][]{\ifthenelse{\equal{#1}{}}{O\!\left(#2\right)}{O\brackets[#1]{#2}}}
\newcommand{\bigtO}[2][]{\ifthenelse{\equal{#1}{}}{\tilde{O}\!\left(#2\right)}{\tilde{O}\brackets[#1]{#2}}}
\newcommand{\bigOmega}[2][]{\ifthenelse{\equal{#1}{}}{\Omega\!\left(#2\right)}{\Omega\brackets[#1]{#2}}}
\newcommand{\bigtOmega}[2][]{\ifthenelse{\equal{#1}{}}{\tilde{\Omega}\!\left(#2\right)}{\tilde{\Omega}\brackets[#1]{#2}}}
\DeclarePairedDelimiter\doubleVertical{\|}{\|}
\newcommand{\norm}[2][]{\ifthenelse{\equal{#1}{}}{\left\|#2\right\|}{\doubleVertical[#1]{#2}}}
\newcommand{\opnorm}[2][]{\ifthenelse{\equal{#1}{}}{\left\|#2\right\|_{\op}}{\doubleVertical[#1]{#2}_{\op}}}
\newcommand{\fnorm}[2][]{\ifthenelse{\equal{#1}{}}{\left\|#2\right\|_{F}}{\doubleVertical[#1]{#2}_{F}}
}
\newcommand{\la}{\leftarrow}
\newcommand{\ra}{\rightarrow}
\newcommand{\Ind}{\mathbbm{1}}
\newcommand{\B}{\mathcal{B}}
\newcommand{\C}{\mathcal{C}}
\newcommand{\G}{\mathcal{G}}
\DeclareMathOperator*{\E}{\mathbb{E}}
\DeclareMathOperator{\normal}{\mathcal{N}}
\newcommand{\eps}{\epsilon}
\newcommand{\N}{\mathbb{N}}
\newcommand{\Prob}{\mathbb{P}}
\newcommand{\R}{\mathbb{R}}
\newcommand{\Z}{\mathbb{Z}}
\newcommand{\Lin}{\mathcal{L}} 
\newcommand{\X}{\mathcal{X}}
\newcommand{\Y}{\mathcal{Y}}
\newcommand{\grad}{\nabla}
\newcommand{\<}{\langle}
\renewcommand{\>}{\rangle} 
\newcounter{thm}
\numberwithin{thm}{section}
\theoremstyle{plain}
\newtheorem{theorem}[thm]{Theorem}
\newtheorem{corollary}[thm]{Corollary}
\newtheorem{lemma}[thm]{Lemma}
\newtheorem{proposition}[thm]{Proposition}
\newtheorem{claim}[thm]{Claim}
\newtheorem{assumption}[thm]{Assumption}
\newtheorem{fact}[thm]{Fact}
\theoremstyle{definition}
\newtheorem{definition}[thm]{Definition}
\theoremstyle{remark} 
\newcommand{\abs}[1]{\left|{#1}\right|}
\newcommand{\aspace}{\mathcal{A}}
\DeclareMathOperator{\dist}{dist}
\DeclareMathOperator*{\argmin}{\arg\min}
\DeclareMathOperator{\poly}{poly}
\providecommand*{\diff}{
  \@ifnextchar^{\DIfF}{\DIfF^{}}}
\def\DIfF^#1{%
  \mathop{\mathrm{\mathstrut d}}%
  \nolimits^{#1}\gobblespace}
\def\gobblespace{%
  \futurelet\diffarg\opspace}
\def\opspace{%
  \let\DiffSpace\!%
  \ifx\diffarg(%
  \let\DiffSpace\relax
  \else
  \ifx\diffarg[%
  \let\DiffSpace\relax
  \else
  \ifx\diffarg\{%
  \let\DiffSpace\relax
  \fi\fi\fi\DiffSpace}
\providecommand*{\dd}[3][]{%
  \frac{\diff^{#1}#2}{\diff #3^{#1}}}
\renewcommand{\tilde}{\widetilde}
\renewcommand{\hat}{\widehat}
\newcommand{\Crme}{C_{est}}
\newcommand{\inexact}{\tilde}
\newcommand{\cXstar}{\mathcal{X}^\star}
\newcommand{\sigstarr}{{\sigma^{\star}_r}}
\newcommand{\sigstarl}{{\sigma^\star_1}}
\newcommand{\projX}{\mathcal{P}_{\mathcal{X}^\star}}
\newcommand{\flb}{f^*}
\newcommand{\jd}[1]{{\color{purple}{\textbf{JD:} #1}}}
\newcommand{\syl}[1]{{\color{purple}{\textbf{Shuyao:} #1}}}
\newcommand{\new}[1]{{\color{red} #1}}
\newcommand{\new}[1]{{#1}}
\begin{document}

\title{Robust Second-Order Nonconvex Optimization and\\ Its Application to Low Rank Matrix Sensing}
\author{Shuyao Li\\
  University of Wisconsin-Madison\\
  \texttt{shuyao.li@wisc.edu} 
  \And
  Yu Cheng \\
  Brown University\\
  \texttt{yu\_cheng@brown.edu} \\
  \AND
  Ilias Diakonikolas \\
  University of Wisconsin-Madison \\
  \texttt{ilias@cs.wisc.edu}
  \And
  Jelena Diakonikolas\\
  University of Wisconsin-Madison \\
  \texttt{jelena@cs.wisc.edu} \\
  \AND
  Rong Ge \\
  Duke University \\
  \texttt{rongge@cs.duke.edu}
  \And
  Stephen Wright \\
  University of Wisconsin-Madison \\
  \texttt{swright@cs.wisc.edu} \\
}
\date{}

\maketitle
\begin{abstract}
Finding an approximate second-order stationary point (SOSP) 
is a well-studied and fundamental problem in stochastic nonconvex optimization with many applications in machine learning.
However, this problem is poorly understood in the presence of outliers, limiting the use of existing nonconvex algorithms in adversarial settings.

In this paper, we study the problem of finding SOSPs in the strong contamination model, 
where a constant fraction of datapoints are arbitrarily corrupted.
We introduce a general framework for efficiently finding an approximate SOSP with \emph{dimension-independent} accuracy guarantees, using $\widetilde{O}({D^2}/{\epsilon})$ samples where $D$ is the ambient dimension and $\epsilon$ is the fraction of corrupted datapoints.

As a concrete application of our framework, we apply it to the problem of low rank matrix sensing, developing efficient and provably robust algorithms that can tolerate corruptions in both the sensing matrices and the measurements.
In addition, we establish a Statistical Query lower bound providing evidence that the quadratic dependence on $D$ in the sample complexity is necessary for computationally efficient algorithms.

  
\end{abstract}

\section{Introduction}\label{sec:introduction}


Learning in the presence of corrupted data is a significant challenge in machine learning (ML) with many applications, 
including ML security~\cite{Barreno2010,BiggioNL12, SteinhardtKL17, diakonikolas2019sever} 
and exploratory data analysis of real datasets, 
e.g., in biological settings~\cite{RP-Gen02, Pas-MG10, Li-Science08, diakonikolas2017being-robust-in}.
The goal in such scenarios is to design efficient learning algorithms that can tolerate a small constant fraction of outliers and achieve error guarantees independent of the dimensionality of the data.
Early work in robust statistics~\cite{HampelEtalBook86, Huber09}
gave sample-efficient robust estimators for various tasks 
(e.g., the Tukey median~\cite{Tukey75} for robust mean estimation), alas with runtimes exponential in the dimension.
A recent line of work in computer science, 
starting with~\cite{diakonikolas2016robust-estimato, LaiRV16}, developed the first computationally efficient 
robust algorithms for several fundamental high-dimensional tasks. 
Since these early works, there has been significant progress in algorithmic aspects of robust high-dimensional statistics (see~\cite{diakonikolas2019recent-advances}~and~\cite{diakonikolas2022algorithmic} for a comprehensive overview).

In this paper, we study the general problem of smooth (with Lipschitz gradient and Hessian) 
stochastic nonconvex optimization $\min_x \bar{f}(x)$ in the outlier-robust setting, 
where \(\bar f(x) := \E_{A \sim \G} f(x, A)\) and $\G$ is a possibly unknown distribution 
of the random parameter $A$.
We will focus on the following standard adversarial contamination model (see, e.g.,~\cite{diakonikolas2016robust-estimato}). 

\begin{definition}[Strong Contamination Model]
  \label{def:corruption}
  Given a parameter \(0 < \eps < 1/2\) and an inlier distribution \(\G\), an algorithm receives samples from \(\G\) with \(\epsilon\)-contamination as follows: The algorithm first speciﬁes the number of samples \(n\), and \(n\) samples are drawn independently from  \(\G\). An adversary is then allowed to inspect these samples, and replace an \(\epsilon\)-fraction of the samples with arbitrary points. This modified set of \(n\) points is said to be $\eps$-corrupted, which is then given to the algorithm.
\end{definition}

The stochastic optimization problem we consider 
is computationally intractable in full 
generality --- even without corruption --- if the goal is to obtain globally optimal solutions.
At a high level, an achievable goal is to design sample and computationally efficient robust algorithms for finding {\em locally} optimal solutions.
Prior work~\cite{prasad2020robust-estimati, diakonikolas2019sever} studied outlier-robust stochastic optimization and obtained efficient algorithms for finding approximate {\em first-order} stationary points.
While first-order guarantees suffice for convex problems, it is known that in many tractable non-convex problems, first-order stationary points may be bad solutions, but all {\em second-order} stationary points (SOSPs) are globally optimal. This motivates us to study the following questions: 

\begin{quote}
\em Can we develop a general framework for finding {\bf\em second-order} stationary points in outlier-robust stochastic optimization?

Can we obtain sample and computationally efficient algorithms for outlier-robust versions of tractable nonconvex problems using this framework?
\end{quote}

In this work, we answer both questions affirmatively.
We introduce a framework for efficiently finding an approximate SOSP when $\eps$-fraction of the functions are corrupted and then use our framework to solve the problem of outlier-robust low rank matrix sensing.

In addition to the gradient being zero, a SOSP requires the Hessian matrix to not have negative eigenvalues.
The second-order optimality condition is important because it rules out suboptimal solutions such as strict saddle points.
It is known that all SOSPs are globally optimal in nonconvex formulations of many important machine learning problems, such as matrix completion~\cite{ge2016matrix}, matrix sensing~\cite{bhojanapalli2016global}, phase retrieval~\cite{sun2016geometric}, phase synchronization~\cite{bandeira2016low}, dictionary learning~\cite{sun2016dictionary}, and tensor decomposition~\cite{ge2015escaping} (see also \cite[Chapter~7]{wright2022high}). However, the properties of SOSPs are highly sensitive to perturbation in the input data.
For example, it is possible to create spurious SOSPs for nonconvex formulations of low rank matrix recovery problems, even for a semi-random adversary that can add additional sensing matrices but cannot corrupt the measurements in matrix sensing~\cite{GaoC23} or an adversary who can only reveal more entries of the ground-truth matrix in matrix completion~\cite{cheng2018non}. Those spurious SOSPs correspond to highly suboptimal solutions.

Finding SOSPs in stochastic nonconvex optimization problems in the presence of arbitrary outliers was largely unaddressed prior to our work. 
Prior works~\cite{prasad2020robust-estimati, diakonikolas2019sever} obtained efficient and robust algorithms for finding {\em first-order} stationary points with dimension-independent accuracy guarantees.
These works relied on the following simple idea: 
Under certain smoothness assumptions,
projected gradient descent with an {\em approximate} gradient oracle efficiently converges to an {\em approximate} first-order stationary point. 
Moreover, in the outlier-robust setting, approximating the gradient at a specific point amounts to a robust mean estimation problem (for the underlying distribution of the gradients), which can be solved by leveraging existing algorithms for robust mean estimation.

Our work is the first to find approximate SOSPs with dimension-independent errors in outlier-robust settings. Note that in standard non-robust settings, approximate SOSPs can be computed using first-order methods such as perturbed gradient descent~\cite{jin2017howto, jin2021on-nonconvex-op}. This strategy might seem extendable to outlier-robust settings through perturbed approximate gradient descent, utilizing robust mean estimation algorithms to approximate gradients.
The approach in~\cite{yin2019defending} follows this idea, but unfortunately their second-order guarantees scale polynomially with dimension, even under very strong distributional assumptions (e.g., subgaussianity). Our lower bound result provides evidence that approximating SOSPs with dimension-independent error is as hard as approximating \emph{full} Hessian, suggesting that solely approximating the gradients is not sufficient.
On a different note, \cite{IPL23} recently employed robust estimators for both gradient and Hessian in solving certain convex stochastic optimization problems, which has a different focus than ours and does not provide SOSPs with the guarantees that we achieve.



\subsection{Our Results and Contributions}
The notation we use in this section is defined in Section~\ref{sec:prelims}.
To state our results, we first formally define our generic nonconvex optimization problem.
Suppose there is a true distribution over functions \(f: \R^{D} \times \aspace \ra \R\), where $f(x, A)$ takes an argument $x \in \R^{D}$ and is parameterized by a random variable \(A \in \aspace\) drawn from a distribution \(\G\). Our goal is to find an $(\eps_g, \eps_H)$-approximate SOSP of the function \(\bar f(x):= \E_{A \sim \G} f(x, A)\).

\begin{definition}[$\eps$-Corrupted Stochastic Optimization]\label{def:robust-stochastic-opt}
The algorithm has access to \(n\) functions $(f_{i})_{i=1}^n$ generated as follows.
First \(n\) random variables $(A_i)_{i=1}^n$ are drawn independently from $\G$.
Then an adversary arbitrarily corrupts an \(\epsilon\) fraction of the $A_i$'s. 
Finally, the \(\epsilon\)-corrupted version of \(f_{i}(\cdot) = f(\cdot, A_{i})\) is sent to the algorithm as input.
The task is to find an approximate SOSP of the ground-truth average function $\bar f(\cdot) := \E_{A\sim \G} f(\cdot, A)$.
\end{definition}

\begin{definition}[Approximate SOSPs]\label{def:SOSP}
  A point $x$ is an {$(\epsilon_g, \epsilon_H)$-approximate second-order stationary point (SOSP)} of \(\bar{f}\) if $\norm{\grad \bar{f}(x)} \le \epsilon_g$ and ${\lambda_\min \left(\grad^2 \bar{f}(x)\right) \ge -\epsilon_H}$.
\end{definition}

We make the following additional assumptions on $f$ and $\G$. 

\begin{assumption}
  \label{assump:general_clean_data}
   There exists a bounded region \(\mathcal{B}\) such that the following conditions hold:
  \begin{enumerate}[leftmargin=2em]
    \item[(i)] There exists a lower bound $\flb > -\infty$ such that for all $x \in \mathcal{B}$, $f(x, A) \ge \flb$ with probability $1$.
    \item[(ii)] There exist parameters $L_{D_g}$, $L_{D_H}$, $B_{D_g}$, and $B_{D_H}$ such that, with high probability over the randomness in $A \sim \G$, letting $g(x) = f(x, A)$, we have that \(g(x)\) is \(L_{D_g}\)-gradient Lipschitz and \(L_{D_H}\)-Hessian Lipschitz over $\mathcal{B}$, and $\norm{\grad g(x)} \le B_{D_g}$ and $\fnorm{\grad^2 g(x)} \le B_{D_H}$ for all $x \in \mathcal{B}$.
    \item[(iii)]  There exist parameters \(\sigma_{g}, \sigma_{H} > 0\) such that for all $x \in \B$,
    
    \(\opnorm{\Cov_{A \sim \G}(\grad f(x,A))}  \le \sigma_{g}^{2}\) and 
    \(\opnorm{\Cov_{A \sim \G}(\vect(\grad^{2} f(x,A)))} 
    \le \sigma_{H}^{2}\).
  \end{enumerate}
\end{assumption}
Note that the radius of $\mathcal{B}$ and  
  the parameters $L_{D_g}$, $L_{D_H}$, $B_{D_g}$, $B_{D_H}$ are all allowed to depend polynomially on $D$ and $\eps$ (but not on $x$ and $A$).

  Our main algorithmic result for $\eps$-corrupted stochastic optimization is summarized in the following theorem.
  A formal version of this theorem is stated as Theorem~\ref{thm:global_convergence-general} in Section~\ref{sec:general_nonconvex}.

\begin{theorem}[Finding an Outlier-Robust SOSP, informal]
  \label{thm:global_convergence-general-intro}
  \new{Suppose $f$ satisfies Assumption~\ref{assump:general_clean_data} in a region \(\mathcal{B}\) with parameters $\sigma_{g}$ and $\sigma_{H}$.}
  Given an arbitrary initial point \(x_{0} \in \B\) and an $\eps$-corrupted set of $n = \bigtOmega{D^{2}/\epsilon}$ functions where $D$ is the ambient dimension, there exists a polynomial-time algorithm that with high probability outputs an \((\bigO{\sigma_{g}\sqrt\epsilon}, \bigO{\sigma_{H}\sqrt\epsilon})\)-approximate SOSP of $\bar f$, provided that all iterates of the algorithm stay inside \(\B\).
\end{theorem}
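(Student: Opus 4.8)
The plan is to follow the standard "descent + escape saddle" template for finding approximate SOSPs, but with every access to $\grad \bar f$ and $\grad^2 \bar f$ replaced by a robust estimator built from the $\eps$-corrupted functions. The first ingredient is a robust gradient/Hessian oracle: at any query point $x \in \B$, the vectors $\{\grad f_i(x)\}$ and the matrices $\{\vect(\grad^2 f_i(x))\}$ are an $\eps$-corrupted sample from distributions whose covariances are bounded by $\sigma_g^2$ and $\sigma_H^2$ (Assumption~\ref{assump:general_clean_data}(iii)), so a standard robust mean estimation routine returns $\inexact g(x)$ with $\norm{\inexact g(x) - \grad \bar f(x)} = \bigO{\sigma_g\sqrt\eps}$ and $\inexact H(x)$ with $\opnorm{\inexact H(x) - \grad^2 \bar f(x)} = \bigO{\sigma_H\sqrt\eps}$, using $n = \bigtOmega{D^2/\eps}$ samples (the $D^2$ coming from flattening the $D\times D$ Hessians into $D^2$-dimensional vectors). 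A union bound over the polynomially many iterates, together with the Lipschitz/boundedness controls in Assumption~\ref{assump:general_clean_data}(ii), lets us treat these as valid inexact first- and second-order oracles throughout the run.

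Next I would run an inexact version of a perturbed/curvature-exploiting gradient method. At each iterate $x_t \in \B$: if $\norm{\inexact g(x_t)}$ is large (above a threshold $\Theta(\sigma_g\sqrt\eps)$), take a gradient step $x_{t+1} = x_t - \eta\, \inexact g(x_t)$; by $L_{D_g}$-smoothness and the gradient-estimation error bound, this decreases $\bar f$ by a fixed amount. Otherwise, compute $\lambda_{\min}(\inexact H(x_t))$ and its eigenvector $v$; if this is below $-\Theta(\sigma_H\sqrt\eps)$, move along $\pm v$ with an appropriately chosen step, and use $L_{D_H}$-Hessian-Lipschitzness plus the Hessian-estimation error to argue a fixed decrease in $\bar f$ from the negative curvature (this is the negative-curvature-descent lemma, which works directly once we have the eigenvector — no random perturbation is even needed if we are allowed to compute $\lambda_{\min}$). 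If neither condition triggers, then $\norm{\grad \bar f(x_t)} \le \norm{\inexact g(x_t)} + \bigO{\sigma_g\sqrt\eps} = \bigO{\sigma_g\sqrt\eps}$ and $\lambda_{\min}(\grad^2\bar f(x_t)) \ge \lambda_{\min}(\inexact H(x_t)) - \bigO{\sigma_H\sqrt\eps} = -\bigO{\sigma_H\sqrt\eps}$ by Weyl's inequality, so $x_t$ is the desired $(\bigO{\sigma_g\sqrt\eps},\bigO{\sigma_H\sqrt\eps})$-SOSP and we stop. Since each non-terminal step decreases $\bar f$ by at least a constant $\Delta > 0$ depending on the problem parameters, and $\bar f \ge \flb$ on $\B$ by Assumption~\ref{assump:general_clean_data}(i), the number of iterations is at most $(\bar f(x_0) - \flb)/\Delta = \poly(D,1/\eps)$, giving the claimed polynomial running time; the caveat "provided all iterates stay inside $\B$" is exactly what lets us invoke the assumptions at every $x_t$.

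The main obstacle — and the reason the bound is $\sigma\sqrt\eps$ rather than something smaller — is the irreducible bias of robust mean estimation under only a second-moment (bounded-covariance) assumption: one cannot do better than $\Theta(\sigma\sqrt\eps)$ error, and this error does not shrink with more samples. Consequently the algorithm cannot drive the gradient or the negative curvature below this floor, so the SOSP guarantee is inherently $(\bigO{\sigma_g\sqrt\eps},\bigO{\sigma_H\sqrt\eps})$; the descent argument must be set up with thresholds comfortably above this floor so that the "make progress or else declare success" dichotomy is clean, and the per-step decrease $\Delta$ must be shown to dominate the error terms introduced by the inexact oracles (this is where the careful bookkeeping with $L_{D_g}, L_{D_H}, B_{D_g}, B_{D_H}$ enters). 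A secondary technical point is ensuring the robust estimators at all iterates are simultaneously valid: since the iterates are data-dependent, one either unions over a net of $\B$ (whose size is controlled because the radius of $\B$ and the Lipschitz constants are only $\poly(D,1/\eps)$) or uses the fact that the total iteration count is a fixed polynomial and re-draws/re-uses samples with a union bound, which is routine.
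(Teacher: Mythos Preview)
Your approach is essentially the paper's: build inexact gradient and Hessian oracles via robust mean estimation (Proposition~\ref{prop:robust-mean-estimation-summary}), plug them into a gradient/negative-curvature descent method (Algorithm~\ref{alg:inexact_randomized}), and handle the data-dependence of iterates by a net argument over $\B$ (Theorem~\ref{thm:sample_complexity_stability}). The one point you glossed over is the negative-curvature step: you claim a \emph{deterministic} per-step decrease in $\bar f$, but when $\norm{\inexact g_t}\le\eps_g$ the linear term $\grad\bar f(x_t)^\top(\pm\inexact p_t)$ can still be of order $\eps_g$, and since here $\eps_g=\Theta(\sigma_g\sqrt\eps)$ and $\eps_H=\Theta(\sigma_H\sqrt\eps)$ are fixed by the estimation error rather than free parameters, there is no relation like $\eps_g\lesssim\eps_H^2/L_H$ guaranteeing the second-order decrease dominates it. The paper resolves this by randomizing the sign $\sigma_t=\pm1$ so the linear term vanishes in expectation, and then bounds the stopping time via a supermartingale argument (Lemma~\ref{lemma:expected_complexity}) followed by Markov's inequality; your claimed bound $(\bar f(x_0)-\flb)/\Delta$ on the iteration count does not go through as a deterministic statement without this modification.
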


\new{Although the bounded iterate condition in Theorem~\ref{thm:global_convergence-general-intro} appears restrictive, this assumption holds if the objective function satisfies a ``dissipativity'' property, which is a fairly general phenomenon~\cite{hale2010asymptotic}.
Moreover, adding an $\ell_2$-regularization term enables any Lipschitz function to satisfy the dissipativity property~\cite[{Section 4}]{raginsky17nonconvex}.
As an illustrating example, a simple problem-specific analysis shows that this bounded iterate condition holds for outlier-robust matrix sensing by exploiting the fact that the matrix sensing objective satisfies the dissipativity property.}

In this paper, we consider the problem of outlier-robust symmetric low rank matrix sensing, which we formally define below. We focus on the setting with Gaussian design.

\begin{definition}[Outlier-Robust Matrix Sensing]
\label{def:iid_Gaussian_with_noise}
    There is an unknown rank-$r$ ground-truth matrix \(M^{*} \in \R^{d \times d}\) that can be factored into $U^{*} {U^{*}}^\top$ where $U^* \in \R^{d \times r}$.   
    The (clean) sensing matrices \(\{A_{i}\}_{i \in [n]}\) have i.i.d.\ standard Gaussian entries. The (clean) measurements \(y_{i}\) are obtained as \(y_i = \<A_{i}, M^{*}\> + \zeta_{i}\), where the noise \(\zeta_{i} \sim \mathcal{N}(0,\sigma^{2})\) is independent from all other randomness. We denote the (clean) data generation process by \((A_{i}, y_{i}) \sim \G_{\sigma}\). When $\sigma=0$, we have $\zeta_{i} = 0$ and we write \(\G := \G_0\) for this noiseless (measurement) setting.
    In outlier robust matrix sensing, an adversary can arbitrarily change any $\eps$-fraction of the sensing matrices and the corresponding measurements.
    This corrupted set of $(A_i, y_i)$'s is then given to the algorithm as input, where the goal is to recover $M^*$.
\end{definition}

We highlight that in our setting, both the sensing matrices \(A_{i} \in \R^{d \times d}\) and the measurements \(y_{i} \in \R\) can be corrupted,  \new{presenting a substantially more challenging problem compared to} prior works (e.g.,~\cite{li2020non,li2020nonconvex}) that only allow corruption in \(y_{i}\). 

Let \(\sigstarl\) and \(\sigstarr\) denote the largest and the smallest nonzero singular value of \(M^{*}\) respectively. We assume \(\sigstarr\) and the rank $r$ are given to the algorithm,
and we assume that the algorithm knows a multiplicative upper bound \(\Gamma\) of \(\sigstarl\) such that \(\Gamma \ge 36 \sigstarl\) (\new{a standard assumption in matrix sensing even for non-robust settings~\cite{ge2017no-spurious-loc,jin2017howto}}). Let \(\kappa = \Gamma/\sigstarr\).

Our main algorithmic result for the low rank matrix sensing problem is summarized in the following theorem. For a more detailed statement, see Theorems~\ref{thm:together} and \ref{thm:noisy_combined} in Section~\ref{sec:general_nonconvex}. 
\begin{theorem}[Our Algorithm for Outlier-Robust Matrix Sensing]\label{thm:low-rank-mtrx-sensing-intro}
    Let $M^* \in \R^{d \times d}$ be the rank $r$ ground-truth matrix with smallest nonzero singular value $\sigstarr$.
    Let $\Gamma \ge 36 \opnorm{M^*}$ and let \(\kappa = \Gamma/\sigstarr\).
    There exists an algorithm for outlier-robust matrix sensing, where an \(\epsilon = \bigO[\big]{1/{(\kappa^{3}r^{3})}}\) fraction of samples from \(\G_{\sigma}\) as in Definition~\ref{def:iid_Gaussian_with_noise} gets arbitrarily corrupted, that can output a rank-$r$ matrix $\hat{M}$ such that \(\fnorm[\big]{\hat M - M^{*}} \leq \iota\) with probability at least \(1-\xi\), where \(\iota > 0\) is the error parameter:
    \begin{enumerate}[leftmargin=*]
        \item[1)] If $\sigma \geq r\Gamma$, then \(\iota = \bigO{\sigma \sqrt{\eps}}\);
        \item[2)] If $\sigma \leq r\Gamma$, then \(\iota = \bigO{\kappa \sigma \sqrt{\eps}}\);
        \item[3)] If \(\sigma = 0\) (noiseless), then \(\iota\) can be made arbitrarily small, achieving exact recovery.
    \end{enumerate}
    The algorithm uses \(n = \bigtO[\Big]{\frac{d^{2}r^{2} + d r \log(\Gamma/\xi)}{\epsilon}}\) samples and runs in time $\poly(n, \kappa, \log(\sigstarr/\iota))$.
\end{theorem}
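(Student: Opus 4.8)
The plan is to deduce Theorem~\ref{thm:low-rank-mtrx-sensing-intro} from the generic framework of Theorem~\ref{thm:global_convergence-general-intro} applied to the Burer--Monteiro factorized formulation, then use the benign landscape of matrix sensing to turn an approximate SOSP into a proximity bound for $M^{*}$, and finally add a local refinement step to get exact recovery when $\sigma=0$. Write $D = dr$ for the ambient dimension (since $U \in \R^{d\times r}$), and for a sample $(A,y)$ define the per-sample objective
\[
  f\brackets*{U,(A,y)} \;=\; \tfrac14\brackets*{\<A,UU^\top\> - y}^2 \;+\; \mu\fnorm{U}^2,
\]
where the $\ell_2$ regularizer is included to make $\bar f$ dissipative, so that the bounded-iterate hypothesis of Theorem~\ref{thm:global_convergence-general-intro} holds automatically~\cite{raginsky17nonconvex}, with $\mu$ chosen small enough that it perturbs the landscape only by a lower-order amount. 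Both $\grad_U f$ and $\grad^2_U f$ have closed forms that are low-degree polynomials in the Gaussian matrix $A$, the noise $\zeta$, and $U$; in particular, because $A$ has i.i.d.\ standard Gaussian entries, $\bar f(U) = \E_{(A,y)\sim\G_\sigma} f(U,(A,y))$ equals $\tfrac14\fnorm{UU^\top - M^{*}}^2 + \mu\fnorm{U}^2$ up to an additive constant.

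First I would fix the bounded region $\B = \{U : \fnorm{U}^2 \le c\,\Gamma r\}$ for a suitable absolute constant $c$ and verify Assumption~\ref{assump:general_clean_data} on $\B$. Items (i)--(ii) follow by conditioning on a high-probability ``nice design'' event: standard Gaussian concentration (operator-norm bounds for $A$, sub-exponential tails for $\<A,\cdot\>$) shows that, for a typical sample, $f(U,(A,\cdot))$ is $L_{D_g}$-gradient and $L_{D_H}$-Hessian Lipschitz on $\B$ with gradient and Hessian of Frobenius norm at most $B_{D_g}, B_{D_H}$, all four parameters being $\poly(d,r,\Gamma,\sigma)$ --- exactly the dependence permitted by the remark after Assumption~\ref{assump:general_clean_data}. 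Item (iii) is the quantitative heart of this step: using Gaussian moment (Wick/Isserlis) identities I would compute the operator norms of $\Cov(\grad_U f)$ and $\Cov(\vect(\grad^2_U f))$ and obtain bounds of the form $\sigma_g^2 = \bigO{\opnorm{U}^2\brackets*{\fnorm{UU^\top - M^{*}}^2 + \sigma^2}}$ and $\sigma_H^2 = \bigO{\fnorm{UU^\top - M^{*}}^2 + \sigma^2 + \mu^2}$; crucially these scale with $\opnorm{U}$ and $\fnorm{UU^\top - M^{*}}$ rather than with the diameter of $\B$, which is what later enables bootstrapping. The bounded-iterate hypothesis is discharged by the problem-specific dissipativity argument sketched after Theorem~\ref{thm:global_convergence-general-intro}. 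Applying Theorem~\ref{thm:global_convergence-general-intro} with $n = \bigtOmega{D^2/\epsilon} = \bigtOmega{d^2r^2/\epsilon}$ then produces, in polynomial time and with high probability, a point $\hat U \in \B$ that is an $(\epsilon_g, \epsilon_H)$-approximate SOSP of $\bar f$ with $\epsilon_g = \bigO{\sigma_g\sqrt\epsilon}$ and $\epsilon_H = \bigO{\sigma_H\sqrt\epsilon}$.

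Next I would invoke the benign landscape of matrix sensing. Since $\bar f$ coincides, up to the regularizer and a constant, with $\tfrac14\fnorm{UU^\top - M^{*}}^2$, the factored landscape has no spurious second-order stationary points~\cite{bhojanapalli2016global,ge2017no-spurious-loc}, and the standard robustification of those arguments (with $\mu$ small) shows that any $(\epsilon_g,\epsilon_H)$-approximate SOSP $\hat U$ obeys a bound of the form $\fnorm{\hat U\hat U^\top - M^{*}} \le \poly(\kappa,r)\brackets*{\epsilon_g/\sqrt{\sigstarr} + \epsilon_H}$. Substituting the values of $\epsilon_g,\epsilon_H$ gives a first-stage error of order $\poly(\kappa,r)\sqrt\epsilon\,(\sigma + \text{signal scale inside }\B)$. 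Near the solution set the objective is restricted strongly convex with curvature $\gtrsim \sigstarr$, so one can re-run the robust descent loop from the warm start $\hat U$ on a shrinking region: each pass contracts the distance to the solution by a constant factor (this is exactly where $\epsilon = \bigO{1/(\kappa^3 r^3)}$ is used, to make the contraction factor $<1$), while the signal-dependent part of $\sigma_g,\sigma_H$ shrinks along with $\fnorm{UU^\top - M^{*}}$, so the error converges geometrically to a noise floor of order $\bigO{\sigma\sqrt\epsilon}$ when $\sigma \ge r\Gamma$ and $\bigO{\kappa\sigma\sqrt\epsilon}$ when $\sigma \le r\Gamma$ (the case split reflecting whether the $\sigma^2$ term or the signal term dominates $\sigma_g^2$), and to $0$ when $\sigma = 0$, giving items (1)--(3); taking $\hat M = \hat U\hat U^\top$, which is already rank $r$, completes the recovery. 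For the sample complexity, the $d^2r^2/\epsilon$ term is precisely what Theorem~\ref{thm:global_convergence-general-intro} requires with $D = dr$, and the additive $dr\log(\Gamma/\xi)/\epsilon$ term arises from a $\gamma$-net argument: $\B$ admits a net of size $\exp(\bigO{dr\log(\Gamma/\gamma)})$, and a union bound over it (with $\gamma$ polynomially small) combined with the Lipschitz estimates yields the uniform concentration in Assumption~\ref{assump:general_clean_data}(ii)--(iii) together with the stated failure probability $\xi$; the $\log(\sigstarr/\iota)$ factor in the running time counts the refinement passes.

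The step I expect to be the main obstacle is Assumption~\ref{assump:general_clean_data}(iii): obtaining covariance bounds for $\grad f$ and $\vect(\grad^2 f)$ that scale with $\fnorm{UU^\top - M^{*}}$ (rather than with the diameter of $\B$) near the solution is what ultimately makes the $\kappa$-dependence in $\iota$, and hence exact recovery when $\sigma = 0$, possible; pinning down these constants while simultaneously keeping the regularization strength $\mu$ in a window that preserves both dissipativity (needed for bounded iterates) and the benign landscape (needed for the proximity bound) is the delicate part. A secondary difficulty is making the dissipativity/bounded-iterate argument genuinely quantitative for this specific regularized objective rather than merely appealing to it in the abstract.
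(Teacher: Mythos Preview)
Your overall strategy for cases (2) and (3) matches the paper's: apply the general framework to the factored objective to obtain an approximate SOSP, use the strict-saddle property to conclude the SOSP lies near a global optimum, then run a local refinement exploiting a regularity condition. Two methodological differences are worth flagging. First, the paper does \emph{not} add a regularizer; it proves the bounded-iterate property directly for the unregularized objective (Lemma~\ref{lemma:radius}) by showing that gradient steps contract $\opnorm{U}$ when it is large and that negative-curvature steps are taken only when $\opnorm{U}$ is already small. This sidesteps the delicate window for $\mu$ you anticipate. Second, the paper's local refinement (Algorithm~\ref{alg:local_linear_convergence}) is \emph{gradient-only}: the Hessian covariance bound in Lemma~\ref{lemma:gradient_hessian_covariance_bound} contains a non-vanishing $128\opnorm{U}^4$ term, so your claimed $\sigma_H^2 = \bigO{\fnorm{UU^\top - M^{*}}^2 + \sigma^2 + \mu^2}$ is wrong near the optimum, and Hessian-based refinement cannot drive the error to zero. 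Only the \emph{gradient} covariance scales purely with $\fnorm{UU^\top - M^{*}}^2\opnorm{U}^2$, which is why the contraction argument works with gradients alone under the local regularity condition (Fact~\ref{fact:matrix_factorization}).

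There is a genuine gap for case (1). When $\sigma \ge r\Gamma$ the Hessian covariance is of order $r\sigma^2$ (see~\eqref{eq:hessian_covariance_bound_noise}), so under the fixed constraint $\epsilon = \bigO{1/(\kappa^3 r^3)}$ one cannot guarantee $\sigma_H\sqrt\epsilon \lesssim \sigstarr$ once $\sigma$ is large enough; the framework then fails to deliver an SOSP accurate enough to land in the local basin, and your bootstrapping never starts. The paper handles this regime with a \emph{different} algorithm (Theorem~\ref{thm:together_high_noise}): since $\E_{(A,y)\sim\G_\sigma}[yA] = M^{*}$ and $\opnorm{\Cov(\vect(yA))} = \bigO{\sigma^2}$ when $\sigma \ge r\Gamma \ge \fnorm{M^{*}}$, a single robust mean estimation of $\{y_i A_i\}$ followed by rank-$r$ truncation yields $\fnorm{\hat M - M^{*}} = \bigO{\sigma\sqrt\epsilon}$ directly, with no optimization at all.
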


Finally, we complement our algorithmic results for outlier-robust matrix sensing with a Statistical Query (SQ) lower bound, which provides strong evidence that quadratic dependence on $d$ in the sample complexity is unavoidable for efficient algorithms. A detailed statement of this result is provided in Section~\ref{sec:sq-lower-bound}.

\subsection{Our Techniques}
\paragraph{Outlier-robust nonconvex optimization.}
To obtain our algorithmic result in the general nonconvex setting, we leverage existing results on robust mean estimation~\cite{diakonikolas2020outlier}, which we use as a black box to robustly estimate the gradient and the (vectorized) Hessian.
We use these robust estimates as a subroutine in a randomized nonconvex optimization algorithm (described in Appendix~\ref{sec:inexact-randomized-short-proof-appendix}), which can tolerate inexactness in both the gradient and the Hessian.
With high probability, this algorithm outputs an \((\eps_g, \eps_H)\)-approximate SOSP, where \(\eps_g\) and \(\eps_H\) depend on the inexactness of the gradient and Hessian oracles.

We remark that robust estimation of the Hessian is crucial to obtaining our \emph{dimension-independent} approximation error result and is what causes $D^2$ dependence in the sample complexity (which is unavoidable for SQ algorithms as discussed below).
Notably, the only prior work on approximating SOSPs in the outlier-robust setting \cite{yin2019defending} used robust mean estimation only on the gradients and had sample complexity scaling linearly with $D$;
however, they can only output an order $(\sqrt{\eps}, (\eps D)^{1/5})$-SOSP, which is uninformative for many problems of interest, including the matrix sensing problem considered in this paper, due to the dimensional dependence in the approximation error. 

\paragraph{Application to low rank matrix sensing.} Our main contribution on the algorithmic side is showing that our outlier-robust nonconvex optimization framework can be applied to solve outlier-robust matrix sensing with \emph{dimension-independent} approximation error, even achieving exact recovery when the measurements are noiseless.
We obtain this result using the following geometric insights about the problem: We show that the norm of the covariance of the gradient and the Hessian can both be upper bounded by the sum of $\sigma^2$ and a function of the distance to the closest optimal solution. 
We further prove that all iterates stay inside a nice region using a ``dissipativity'' property~\cite{hale2010asymptotic}), which says that the iterate aligns with the direction of the gradient when the iterate's norm is large.
This allows us to invoke Theorem~\ref{thm:global_convergence-general-intro} to obtain an approximate SOSP of the ground-truth objective function.

We show that this approximate SOSP must be close to a global optimal solution.
Additionally, we establish a local regularity condition in a small region around globally optimal solutions (which is similar to strong convexity but holds only locally). 
This local regularity condition bounds below a measure of stationarity, which allows us to prove that gradient descent-type updates contract the distance to the closest global optimum under appropriate stepsize. 
We leverage this local regularity condition to prove that the iterates of the algorithm stay near a global optimum, so that the regularity condition continues to hold, and moreover, the distance between the current solution and the closest global optimum contracts, as long as it is larger than a function of $\sigma$.
Consequently, the distance-dependent component of the gradient and Hessian covariance bound contracts as well, which allows us to obtain more accurate gradient and Hessian estimates. 
While such a statement may seem evident to readers familiar with linear convergence arguments, we note that proving it is quite challenging, due to the circular dependence between the distance from the current solution to global optima, the inexactness in the gradient and Hessian estimates, and the progress made by our algorithm. 

The described distance-contracting argument allows us to control the covariance of the gradient and Hessian, which we utilize to recover $M^{*}$ exactly when \(\sigma = 0\), and recover $M^{*}$ with error roughly \(O(\sigma \sqrt{\epsilon})\) when \(0 \neq \sigma \leq r \Gamma\). We note that the \(\sigma \sqrt{\epsilon}\) appears unavoidable in the \(\sigma \neq 0\) case, due to known limits of robust mean estimation algorithms~\cite{diakonikolas2016statistical}.

\paragraph{SQ lower bound.} 
We exhibit a hard instance of low rank matrix sensing problem to show that quadratic dependence on the dimension in sample complexity is unavoidable for computationally efficient algorithms. 
Our SQ lower bound proceeds by constructing a family of distributions, corresponding to 
corruptions of low rank matrix sensing, that are nearly uncorrelated in a well-defined technical sense~\cite{FeldmanGRVX17}. To 
achieve this, we follow the framework of~\cite{diakonikolas2016statistical} which considered 
a family of distributions that are rotations of a carefully constructed 
one-dimensional distribution. The proof builds on~\cite{diakonikolas2019efficient-algor, diakonikolas2021statistical}, 
using a new univariate moment-matching construction which yields 
a family of corrupted conditional distributions. These
induce a family of joint distributions that are SQ-hard to learn.


\subsection{Roadmap}
Section~\ref{sec:prelims} defines the necessary notation and discusses relevant building blocks of our algorithm and analysis. Section~\ref{sec:general_nonconvex} introduces our framework for finding SOSPs in the outlier-robust setting. Section~\ref{sec:matrix-low-rank} presents how to extend and apply our framework to solve outlier-robust low rank matrix sensing. Section~\ref{sec:sq-lower-bound} proves that our sample complexity has optimal dimensional dependence for SQ algorithms. Most proofs are deferred to the supplementary material due to space limitations.

\section{Preliminaries}\label{sec:prelims}



 For an integer \(n\), we use \([n]\) to denote the ordered set \(\{1, 2, \dots, n\}\). We use \([a_{i}]_{i \in \mathcal{I}}\) to denote the matrix whose columns are vectors \(a_{i}\), where \(\mathcal{I}\) is an ordered set. We use \(\Ind_{E}(x)\) to denote the indicator function that is equal to $1$ if \(x \in E\) and $0$ otherwise. For two functions \(f\) and \(g\), we say \(f = \bigtO{g}\) if \(f = \bigO[\big]{g \log^k(g)}\) for some constant $k$, and we similarly define \(\tilde\Omega\).

 For vectors \(x\) and \(y\), we let \(\<x, y\>\) denote the inner product \(x^{\top} y\) and \(\norm{x}\) denote the \(\ell_{2}\) norm of \(x\). For \(d \in \Z_{+}\), we use \(I_{d}\) to denote the identity matrix of size \(d \times d\). For matrices \(A\) and \(B\), we use \(\opnorm{A}\) and \(\fnorm{A}\) to denote the spectral norm and Frobenius norm of \(A\) respectively. We use \(\lambda_{\max}(A)\) and \(\lambda_{\min}(A)\) to denote the maximum and minimum eigenvalue of \(A\) respectively. We use \(\tr(A)\) to denote the trace of a matrix \(A\).  We use \(\<A, B\> = \tr(A^{\top} B) \) to denote the entry-wise inner product of two matrices of the same dimension.
 We use \(\vect(A) = [a_{1}^{\top}, a_{2}^{\top}, \dots, a_{d}^{\top}]^{\top}\) to denote the canonical flattening of \(A\) into a vector, where \(a_{1}, a_{2}, \dots, a_{d}\) are columns of \(A\).

\begin{definition}[Lipschitz Continuity]\label{def:lipschitz}
  Let \(\X\) and \(\Y\) be normed vector spaces. A function \(h: \X \rightarrow \Y\) is {\(\ell\)-Lipschitz} if
  $\norm{h(x_{1}) - h(x_{2})}_{\Y} \le \ell \norm{x_{1} - x_{2}}_{\X},$ $\forall x_1, x_2.$

  In this paper, when \( \Y \) is a space of matrices, we take \( \norm{\cdot}_{\Y}\) to be the spectral norm \(\opnorm{\cdot}\). When \(\X\) is a space of matrices, we take \(\norm{\cdot}_{\X}\) to be the Frobenius norm \(\fnorm{\cdot}\); this essentially views the function \(h\) as operating on the vectorized matrices endowed with the usual \(\ell_{2}\) norm. When $\X$ or $\Y$ is the Euclidean space, we take the corresponding norm to be the $\ell_2$ norm.
\end{definition}

\paragraph{A Randomized Algorithm with Inexact Gradients and Hessians.}
We now discuss how to solve the unconstrained nonconvex optimization problem
\(\min_{x \in \R^{D}} f(x),\)
where \(f(\cdot)\) is a smooth function with Lipschitz gradients and Lipschitz
Hessians. The goal of this section is to find an approximate SOSP as defined in Definition~\ref{def:SOSP}.

\begin{proposition}[\cite{li2023randomized}]
\label{prop:optimization_inexact_derivatives_general}
    Suppose a function \(f\) is bounded below by $f^* > -\infty,$ has $L_g$-Lipschitz gradient and $L_H$-Lipschitz Hessian, and its
  inexact gradient and Hessian computations \(\inexact g_{t}\) and \(\inexact H_{t}\) satisfy
  \(\norm{\inexact g_{t} - \grad{f}(x_{t})} \le \frac{1}{3} \epsilon_{g}\) and \(\opnorm[\big]{\inexact H_{t} - \grad^{2} f(x_{t})} \leq \frac29 \epsilon_{H}\).
  Then there exists an algorithm (Algorithm~\ref{alg:inexact_randomized}) with the following guarantees:
  \begin{enumerate}[leftmargin=*]
    \item  (Correctness) If Algorithm~\ref{alg:inexact_randomized} terminates and outputs \(x_{n}\), then \(x_{n}\) is a
    \((\frac43\epsilon_g, \frac{4}{3}\epsilon_{H})\)-approximate SOSP.
    \item (Runtime) Algorithm~\ref{alg:inexact_randomized} terminates with probability $1$. Let
    \(C_{\epsilon}:= \min\left(\frac{\epsilon_{g}^{2}}{6L_{g}},\frac{2\epsilon_{H}^{3}}{9L_{H}^{2}}\right)\). With probability at least \(1-\delta\),
    Algorithm~\ref{alg:inexact_randomized} terminates after $k$ iterations for
    \begin{equation}
      \label{eq:high_probability_bound_general}
       k = \bigO[\Big]{ \frac{f(x_{0}) - \flb}{C_{\epsilon}} +  \frac{L_{H}^{2} L_{g}^{2} \epsilon_{g}^{2}}{\epsilon_{H}^{6}} \log\Bigl(\frac1\delta \Bigr)} \; .
    \end{equation}
    \end{enumerate}
\end{proposition}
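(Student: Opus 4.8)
The plan is to instantiate the general-purpose randomized second-order method of~\cite{li2023randomized}, track its progress through a potential-decrease argument, and convert that into the stated high-probability iteration bound. First I would set up the algorithm's per-iteration dichotomy: at iterate $x_t$, having queried the inexact oracles $\inexact g_t$ and $\inexact H_t$, the method either (a) detects an approximate-stationarity certificate and terminates, or (b) identifies a descent direction — either a gradient-type step when $\norm{\inexact g_t}$ is large, or a negative-curvature step extracted from $\inexact H_t$ when $\lambda_{\min}(\inexact H_t)$ is sufficiently negative. The correctness claim then follows by a triangle-inequality bookkeeping: if the algorithm terminates, then $\norm{\inexact g_n} \le \epsilon_g$ and $\lambda_{\min}(\inexact H_n) \ge -\epsilon_H$, so by the oracle accuracy assumptions $\norm{\grad f(x_n)} \le \epsilon_g + \tfrac13\epsilon_g = \tfrac43 \epsilon_g$ and $\lambda_{\min}(\grad^2 f(x_n)) \ge -\epsilon_H - \tfrac29\epsilon_H \ge -\tfrac43\epsilon_H$, using Weyl's inequality for the eigenvalue bound. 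The slack factors $\tfrac13$ and $\tfrac29$ are exactly calibrated so these inequalities close.

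Next I would establish the expected per-step decrease. For a gradient step with stepsize $\sim 1/L_g$: since $\grad f$ is $L_g$-Lipschitz, the standard descent lemma gives $f(x_{t+1}) \le f(x_t) - \Theta(1/L_g)\norm{\grad f(x_t)}^2$, and in case (b)-gradient we have $\norm{\grad f(x_t)} = \Omega(\epsilon_g)$, yielding a decrease of order $\epsilon_g^2/L_g$. For a negative-curvature step, using $L_H$-Lipschitzness of the Hessian one expands $f$ to third order along the curvature direction with step $\sim \epsilon_H/L_H$, obtaining a decrease of order $\epsilon_H^3/L_H^2$; the randomization (a random sign or random perturbation on the negative-curvature direction) ensures that in expectation the cubic remainder does not cancel the quadratic gain, so the decrease holds in expectation conditioned on the past. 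Thus every non-terminating iteration decreases $f$ in expectation by at least $\Omega(C_\epsilon)$ where $C_\epsilon = \min(\epsilon_g^2/(6L_g), 2\epsilon_H^3/(9L_H^2))$, matching the definition in the statement.

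To get the high-probability iteration count I would combine two ingredients. (i) Since $f \ge \flb$ and $f$ decreases in expectation by $\Omega(C_\epsilon)$ per step, a supermartingale / optional-stopping argument on $f(x_t) + \Omega(C_\epsilon)\cdot(\text{steps remaining})$ bounds the number of "productive" steps by $O((f(x_0)-\flb)/C_\epsilon)$ with good probability. (ii) The negative-curvature subroutine is itself randomized (e.g. it draws a random vector and runs a few power-iteration-like steps, or perturbs and takes a short gradient trajectory), so there is a per-invocation failure probability; controlling the number of such invocations and union-bounding, or equivalently bounding the length of "unproductive" randomized phases, contributes the additive $\frac{L_H^2 L_g^2 \epsilon_g^2}{\epsilon_H^6}\log(1/\delta)$ term. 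Termination with probability $1$ follows because each productive step strictly decreases a lower-bounded potential by a fixed amount, so infinitely many productive steps are impossible, and the randomized phases terminate almost surely.

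The main obstacle — and the part I would expect to occupy most of the work — is the negative-curvature step analysis under \emph{inexact} Hessian information: one must show that a direction extracted from $\inexact H_t$ (which is only $\frac29\epsilon_H$-close to $\grad^2 f(x_t)$ in operator norm) still furnishes genuine curvature descent on $f$, and that the randomization makes the third-order error term benign \emph{in expectation} without blowing up the stepsize-dependent constants. This couples the $L_H$-Lipschitz Hessian expansion, the operator-norm perturbation bound (again Weyl), and a careful choice of stepsize and perturbation scale so that the net decrease is still $\Omega(\epsilon_H^3/L_H^2)$; it is also where the slightly unusual exponents in~\eqref{eq:high_probability_bound_general} (the $\epsilon_H^6$ and the product $L_H^2 L_g^2$) originate. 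Since the proposition is quoted from~\cite{li2023randomized}, in the paper itself I would simply cite that reference for the full argument and only verify that our oracle-accuracy hypotheses match theirs.
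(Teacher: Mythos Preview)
Your overall architecture matches the paper's proof: correctness via triangle inequality and Weyl, deterministic decrease $\Omega(\epsilon_g^2/L_g)$ on gradient steps, expected decrease $\Omega(\epsilon_H^3/L_H^2)$ on negative-curvature steps, and a supermartingale / optional-stopping argument on $M_t = f(x_t) + t C_\epsilon$ to bound the stopping time. The paper indeed only proves a weaker $O(1/\delta)$ high-probability bound (via Markov on $\E T$) and defers the stated $\log(1/\delta)$ bound to the cited reference, just as you suggest at the end.

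Two points where your mechanism is off. First, in the negative-curvature step the random sign $\sigma_t \in \{\pm 1\}$ does \emph{not} handle the cubic remainder; the third-order term is bounded deterministically by $L_H$-Lipschitzness of the Hessian. What the randomization kills is the \emph{linear} term $\grad f(x_t)^\top \sigma_t \inexact p_t$, which need not be small (only $\norm{\inexact g_t}$ is small, not $\norm{\grad f(x_t)}$ in a useful way here) but has conditional expectation zero. Second, your account of the $\frac{L_H^2 L_g^2 \epsilon_g^2}{\epsilon_H^6}\log(1/\delta)$ term is wrong: Algorithm~\ref{alg:inexact_randomized} computes the smallest eigenpair of $\inexact H_t$ exactly, so there is no per-invocation failure probability and no union bound over power-iteration phases. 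That additive term arises (in~\cite{li2023randomized}) from a martingale concentration argument sharpening Markov, not from inexact eigenvector computation.
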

The constants \(1/3\) and \(2/9\) are chosen for ease of presentation. For all constructions of Hessian oracles in this paper, we take the straightforward relaxation \(\opnorm[\big]{\inexact H_{t} - \grad^{2} f(x_{t})} \le \fnorm[\big]{\inexact H_{t} - \grad^{2} f(x_{t})}\) and upper bound Hessian inexactness using  Frobenius norm.
Proof of a simplified version of Proposition~\ref{prop:optimization_inexact_derivatives_general} with a weaker high probability bound that is sufficient for our purposes is provided in Appendix~\ref{sec:inexact-randomized-short-proof-appendix} for completeness.

\paragraph{Robust Mean Estimation.} %
Recent algorithms in robust mean estimation give dimension-independent error in the presence of outliers under strong contamination model.

We use the following results, see, e.g.,~\cite{diakonikolas2020outlier}, 
where the upper bound \(\sigma\) on the spectral
norm of covariance matrix is unknown to the algorithm.

\begin{proposition}[Robust Mean Estimation]
  \label{prop:robust-mean-estimation-summary}
  Fix any $0<\xi<1$. Let $S$ be a multiset of $n = O((k \log k + \log (1 / \xi))/\epsilon)$ i.i.d.\ samples from a distribution on $\mathbb{R}^k$ with mean $\mu_{S}$ and covariance $\Sigma$. Let $T \subset \R^k$ be an $\epsilon$-corrupted version of $S$ as in Definition~\ref{def:corruption}.
  There exists an algorithm (Algorithm~\ref{alg:robust_mean_estimation}) such that, with probability at least $1-\xi$, on input $\epsilon$ and  $T$ (but not $\opnorm{\Sigma}$) returns a vector $\widehat{\mu}$ in polynomial time so that $\norm{\mu_S-\widehat{\mu}}=\bigO[\big]{ \sqrt{\opnorm{\Sigma}\epsilon}}$.
\end{proposition}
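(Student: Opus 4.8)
The plan is to prove this via the now-standard \emph{stability}-based framework for robust mean estimation, which cleanly separates a probabilistic ingredient from a deterministic algorithmic one. Call a finite multiset $S\subset\R^k$ with empirical mean $\mu_S$ \emph{$(\epsilon,\delta)$-stable} (at scale $\sigma^2$) if for every $S'\subseteq S$ with $|S'|\ge(1-\epsilon)|S|$ we have $\norm{\frac{1}{|S'|}\sum_{x\in S'}x-\mu_S}\le\delta$ and $\opnorm{\frac{1}{|S'|}\sum_{x\in S'}(x-\mu_S)(x-\mu_S)^\top}\le \sigma^2+\delta^2/\epsilon$. The proof then has two parts: (a) with scale $\sigma^2=\opnorm{\Sigma}$, the clean sample $S$ is $(\Omega(\epsilon),\,O(\sqrt{\opnorm{\Sigma}\epsilon}))$-stable with probability at least $1-\xi$, provided $n=O((k\log k+\log(1/\xi))/\epsilon)$; and (b) from \emph{any} $\epsilon$-corruption $T$ of an $(\epsilon,\delta)$-stable set one can compute in polynomial time a vector $\widehat\mu$ with $\norm{\widehat\mu-\mu_S}=O(\delta+\sqrt{\opnorm{\Sigma}\epsilon})=O(\sqrt{\opnorm{\Sigma}\epsilon})$. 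Composing (a) and (b) gives the proposition.

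For part (a), I would first truncate: points with $\norm{x-\mu_S}\gtrsim\sigma\sqrt{k/\epsilon}$ are rare enough (by Markov applied to $\E\norm{x-\mu}^2=\tr\Sigma\le k\sigma^2$) that even the worst $\epsilon$-fraction of them shifts any subsample mean by $o(\sqrt{\opnorm{\Sigma}\epsilon})$, so it suffices to establish stability for the truncated points. For those, the mean deviation over an arbitrary $(1-\epsilon)$-subset is controlled by a vector Bernstein bound, and the operator-norm bound on the trimmed second moment follows from a uniform (over unit vectors $v$ and thresholds $t$) control of the largest $\epsilon n$ values of $\langle x_i-\mu_S,v\rangle^2$, obtained by a covering/VC argument over the function class $x\mapsto\Ind_{\{\langle x-\mu_S,v\rangle^2\ge t\}}$; the $\log k$ factor in the sample size enters precisely through this uniform control, and standard $\chi^2$/Bernstein tails yield the $\log(1/\xi)$ term.

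For part (b), work over the polytope of \emph{admissible} weight vectors $w$ in the simplex $\Delta_n$ with $w_i\le\frac{1}{(1-\epsilon)n}$. The (unknown) uniform weighting of the inliers inside $T$ is admissible, so stability of $S$ guarantees that there exists an admissible $w$ whose weighted covariance $M_w:=\sum_i w_i(x_i-\mu_w)(x_i-\mu_w)^\top$ has $\opnorm{M_w}=O(\opnorm{\Sigma})$. A deterministic lemma then shows that \emph{any} admissible $w$ with $\opnorm{M_w}\le\lambda$ already satisfies $\norm{\mu_w-\mu_S}=O(\delta+\sqrt{\lambda\epsilon})$: compare $w$ with the uniform distribution on $S\cap T$ (they agree up to $\ell_1$-mass $O(\epsilon)$) and bound the induced mean shift by Cauchy--Schwarz against $M_w$ together with the stability covariance bound for $S$. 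So it remains to \emph{find} such a $w$ efficiently without knowing $\opnorm{\Sigma}$. I would use a filter: while $\opnorm{M_w}$ is large, take the top eigenvector $v$, score each point by $\tau_i=\langle x_i-\mu_w,v\rangle^2$, and downweight points proportionally to $\tau_i$; a potential argument tracking removed clean mass versus removed corrupted mass shows each round destroys strictly more corrupted than clean weight, so the filter halts after $\poly(n)$ rounds, and at termination stability of $S$ forces $\opnorm{M_w}=O(\opnorm{\Sigma})$. Output $\widehat\mu=\mu_w$.

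The main obstacle is that the argument must be \emph{scale-free} in $\opnorm{\Sigma}$: both the filter's stopping rule and the final error bound have to be stated without reference to the unknown $\opnorm{\Sigma}$, so the halting condition is phrased only in terms of the current $\opnorm{M_w}$ versus the total weight removed so far, and one must still certify $\opnorm{M_w}=O(\opnorm{\Sigma})$ at termination. The other technically delicate point is the concentration in part (a), where one needs stability with only $\tilde O(k/\epsilon)$ samples and error $O(\sqrt{\opnorm{\Sigma}\epsilon})$ rather than the naive $O(\sqrt{\opnorm{\Sigma}})$, which is exactly what the truncation-plus-uniform-control step buys.
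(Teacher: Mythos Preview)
Your proposal is correct and follows essentially the same stability-based decomposition as the paper: (a) the clean sample is stable with high probability at the stated sample size, and (b) a filter algorithm deterministically recovers the mean from any $\epsilon$-corruption of a stable set. The only difference is granularity: the paper simply cites both ingredients as black boxes from \cite{diakonikolas2020outlier} (their Propositions~\ref{prop:bounded_variance_stability_iid} and~\ref{prop:robust_mean_estimation_with_stability}), whereas you sketch how to prove them. One minor convention mismatch: the paper's version of (a) only asserts that some $(1-\epsilon)$-fraction $S'\subseteq S$ is stable and then absorbs the remaining $\epsilon$-fraction into the corruption budget (so $T$ is viewed as a $2\epsilon$-corruption of $S'$), which is exactly what your truncation step accomplishes implicitly.
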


Algorithm~\ref{alg:robust_mean_estimation} is given in Appendix~\ref{sec:robust-mean-estimation-appendix}. Proposition~\ref{prop:robust-mean-estimation-summary} states that for Algorithm~\ref{alg:robust_mean_estimation} to succeed with high probability, 
\(\bigtO{{k}/{\epsilon}}\) i.i.d.\ samples need to be drawn from a \(k\)-dimensional distribution of bounded covariance.  State of the art algorithms for robust mean estimation can be implemented in near-linear time, 
requiring only a logarithmic number of passes on the data, see, e.g,~\cite{diakonikolas22streaming, ChengDG19, dong2019quantum}. Any of these faster algorithms could be used for our purposes.

With the above results, the remaining technical component for applying the robust estimation subroutine (Algorithm~\ref{alg:robust_mean_estimation}) in this paper is handling the dependence across iterations. 

Because we will run \(\mathsf{RobustMeanEstimation}\) in each iteration of our optimization algorithm, the gradients \(\{\grad f_{i}(x_t)\}_{i = 1}^{n}\) and Hessians \(\{\grad^{2} f_{i}(x_t)\}_{i = 1}^{n}\) can no longer be considered as independently drawn from a distribution after the first iteration. Although they are i.i.d.\ for fixed \(x\), the dependence on previous iterations through \(x\) will break the independence assumption. Therefore, we will need a union bound over all \(x_{t}\) to handle dependence across different iterations \(t\). We deal with this technicality in Appendix~\ref{sec:general_nonconvex-appendix}.


\section{General Robust Nonconvex Optimization}
\label{sec:general_nonconvex}
In this section, we establish a general result that uses  Algorithm~\ref{alg:inexact_randomized} to obtain approximate SOSPs in the presence of outliers under strong contamination. The inexact gradient and inexact Hessian oracles are constructed with the robust mean estimation subroutine (Algorithm~\ref{alg:robust_mean_estimation}).
We consider stochastic optimization tasks in Definition~\ref{def:robust-stochastic-opt} satisfying Assumption~\ref{assump:general_clean_data}. 
We construct the inexact gradient and Hessian oracle required by Algorithm~\ref{alg:inexact_randomized} as follows:
\begin{align*}
  \tilde g_{t} & \la \mathbf{RobustMeanEstimation}(\{\grad f_{i}(x_{t})\}_{i=1}^{n}, 4\epsilon) \\
  \tilde H_{t} & \la \mathbf{RobustMeanEstimation}(\{\grad^{2} f_{i}(x_{t})\}_{i=1}^{n}, 4\epsilon)
\end{align*}

Then we have the following guarantee:
\begin{theorem}
  \label{thm:global_convergence-general}
  Suppose we are given \(\epsilon\)-corrupted set of functions  \(\{f_{i}\}_{i = 1}^{n}\) for sample size $n$, generated according to Definition~\ref{def:robust-stochastic-opt}.
  Suppose Assumption~\ref{assump:general_clean_data} holds in a bounded region \(\B \subset \R^D\) of diameter \(\gamma\) with gradient and Hessian covariance bound $\sigma_g$ and $\sigma_H$ respectively, and we have an arbitrary initialization \(x_{0} \in \B\).
  Algorithm~\ref{alg:inexact_randomized} initialized at \(x_{0}\) outputs an \((\epsilon_{g}, \epsilon_{H})\)-approximate SOSP for a sufficiently large sample with  probability at least \(1-\xi\) if the following conditions hold:
  \begin{enumerate}[(I)]
    \item All iterates \(x_{t}\) in Algorithm~\ref{alg:inexact_randomized} stay inside the bounded region \(\B\).
    \item For an absolute constant \(c > 0\), it holds that \(\sigma_{g}\sqrt\epsilon \le c \epsilon_{g}\) and \(\sigma_{H}\sqrt\epsilon \le c\epsilon_{H}\).
  \end{enumerate}
  The algorithm uses \(n = \bigtO[\big]{D^{2}/\eps}\) samples, where \(\bigtO{\cdot}\) hides logarithmic dependence on \(D, \eps, L_{D_g}, L_{D_H}, B_{D_g}, B_{D_H}, \gamma/\sigma_{H}, \gamma/\sigma_{g},\) and \(1/\xi \). The algorithm runs in time polynomial in the above parameters.
\end{theorem}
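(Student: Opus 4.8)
\textbf{Proof proposal for Theorem~\ref{thm:global_convergence-general}.}

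The plan is to verify that the two inexact oracles $\tilde g_t$ and $\tilde H_t$ constructed via \textbf{RobustMeanEstimation} meet the accuracy hypotheses of Proposition~\ref{prop:optimization_inexact_derivatives_general}, and then to invoke that proposition to conclude that Algorithm~\ref{alg:inexact_randomized} outputs an approximate SOSP. First I would fix an iterate $x_t \in \B$ (legitimate by Condition~(I)) and observe that, conditioned on $x_t$, the clean gradients $\{\grad f_i(x_t)\}$ are i.i.d.\ samples from a distribution with mean $\grad \bar f(x_t)$ and covariance of operator norm at most $\sigma_g^2$ by Assumption~\ref{assump:general_clean_data}(iii); similarly the vectorized clean Hessians $\{\vect(\grad^2 f_i(x_t))\}$ are i.i.d.\ in $\R^{D^2}$ with mean $\vect(\grad^2\bar f(x_t))$ and covariance operator norm at most $\sigma_H^2$. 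The corrupted set passed to the subroutine is a $4\epsilon$-corruption of these (the factor $4$ absorbs the $\epsilon$-fraction of corrupted $A_i$'s together with the rare event in Assumption~\ref{assump:general_clean_data}(ii) where smoothness/boundedness fails, via a standard argument that moving from ``$\epsilon$ corrupted points'' plus ``few atypical good points'' to ``$4\epsilon$ corrupted points'' only loses constants). Proposition~\ref{prop:robust-mean-estimation-summary} then gives, with probability $1-\xi'$, $\norm{\tilde g_t - \grad\bar f(x_t)} = \bigO{\sigma_g\sqrt\epsilon}$ and $\fnorm{\tilde H_t - \grad^2\bar f(x_t)} = \bigO{\sigma_H\sqrt\epsilon}$, where the Frobenius bound follows because we run robust mean estimation in the $\vect$-coordinates; relaxing $\opnorm{\cdot}\le\fnorm{\cdot}$ as remarked after Proposition~\ref{prop:optimization_inexact_derivatives_general} gives the required operator-norm control on the Hessian error. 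By Condition~(II), choosing the absolute constant $c$ small enough relative to the $\bigO{\cdot}$ constants of Proposition~\ref{prop:robust-mean-estimation-summary} makes these errors at most $\frac13\epsilon_g$ and $\frac29\epsilon_H$ respectively, which is exactly the inexactness hypothesis of Proposition~\ref{prop:optimization_inexact_derivatives_general}.

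The main obstacle, as flagged in the Preliminaries, is that this per-iterate guarantee must hold simultaneously at \emph{every} iterate $x_t$ visited by the algorithm, but the $x_t$ depend on the corrupted sample, so the $\{\grad f_i(x_t)\}$ are no longer independent once $t\ge 1$. I would handle this with a uniform-convergence / net argument over $\B$: since $\B$ has diameter $\gamma$ and, on the high-probability event of Assumption~\ref{assump:general_clean_data}(ii), both $x\mapsto\grad f_i(x)$ and $x\mapsto\grad^2 f_i(x)$ are Lipschitz (with constants $L_{D_g}, L_{D_H}$ growing at most polynomially in $D,1/\epsilon$) and bounded ($B_{D_g}, B_{D_H}$), I can place a $\delta$-net $\mathcal N$ over $\B$ with $\log|\mathcal N| = \bigO{D\log(\gamma/\delta)}$, take $\delta$ polynomially small (e.g.\ $\delta \asymp \sigma_g\sqrt\epsilon / L_{D_g}$ and $\delta \asymp \sigma_H\sqrt\epsilon/L_{D_H}$), apply Proposition~\ref{prop:robust-mean-estimation-summary} with failure probability $\xi/(2|\mathcal N|)$ at each net point, union bound, and then transfer the guarantee from net points to arbitrary $x_t\in\B$ via Lipschitzness of the true gradient/Hessian and of the empirical quantities. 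The cost of shrinking the per-point failure probability to $\xi/|\mathcal N|$ is only an additive $\log|\mathcal N| = \bigtO{D}$ term inside the sample-complexity bound $n = O((k\log k + \log(1/\xi'))/\epsilon)$ of Proposition~\ref{prop:robust-mean-estimation-summary}; since the Hessian estimation works in dimension $k = D^2$, the dominant term is $D^2\log D/\epsilon$, giving the claimed $n = \bigtO{D^2/\epsilon}$ with the stated logarithmic dependencies on $D,\epsilon,L_{D_g},L_{D_H},B_{D_g},B_{D_H},\gamma/\sigma_g,\gamma/\sigma_H,1/\xi$. (One subtlety I would be careful about: the robust mean estimation sample complexity in Proposition~\ref{prop:robust-mean-estimation-summary} is stated for a fixed corrupted set, so the net argument is applied to the \emph{clean} samples and concentration of their empirical mean/covariance, with the adversary's corruption — fixed once and for all after the clean draw — treated as a $4\epsilon$-corruption simultaneously valid at all net points; this is legitimate because the adversary commits before the algorithm runs.)

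Finally I would assemble the pieces: on the intersection of the high-probability event from Assumption~\ref{assump:general_clean_data}(ii) and the union-bound event above (total failure probability $\le\xi$), at every iterate the oracles satisfy the hypotheses of Proposition~\ref{prop:optimization_inexact_derivatives_general}, so by its Correctness part the output $x_n$ is a $(\frac43\epsilon_g,\frac43\epsilon_H)$-approximate SOSP — which, after relabeling constants (or by adjusting $c$ so the output is an $(\epsilon_g,\epsilon_H)$-SOSP), is the claimed conclusion. The Runtime part of Proposition~\ref{prop:optimization_inexact_derivatives_general} bounds the number of iterations by $\bigO{(f(x_0)-\flb)/C_\epsilon + (L_H^2L_g^2\epsilon_g^2/\epsilon_H^6)\log(1/\delta)}$ with $f(x_0)-\flb \le B_{D_g}\gamma$ (or directly bounded via Assumption~\ref{assump:general_clean_data}(i)) and $L_g = L_{D_g}, L_H = L_{D_H}$, all polynomial in the listed parameters; combined with the per-iteration cost of two polynomial-time robust mean estimations, the total running time is polynomial, completing the proof. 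I would note that Condition~(I) is used only to guarantee all queried $x_t$ lie in the region where Assumption~\ref{assump:general_clean_data} and hence the covariance bounds apply, and Condition~(II) is what converts the $\bigO{\sigma_g\sqrt\epsilon},\bigO{\sigma_H\sqrt\epsilon}$ estimation errors into the target oracle accuracies $\frac13\epsilon_g,\frac29\epsilon_H$.
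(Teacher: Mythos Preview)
Your proposal is correct and follows essentially the same approach as the paper: apply a net argument over the bounded region $\B$ to make the robust mean estimation guarantee hold uniformly (so that the dependence of $x_t$ on the data is harmless), use Assumption~\ref{assump:general_clean_data}(ii) for the Lipschitz transfer, and then invoke Proposition~\ref{prop:optimization_inexact_derivatives_general}. The only structural difference is that the paper packages the uniform-over-$\B$ statement as a separate lemma (showing that for every $x\in\B$ a $(2\epsilon,\bigO{\sqrt\epsilon})$-\emph{stable} subset of the clean sample exists, then appealing to the deterministic guarantee of robust mean estimation given stability), whereas you describe the same net/union-bound/Lipschitz argument inline; your parenthetical remark that the net argument is really applied to the clean samples' stability conditions rather than to the final estimate is exactly the point that makes this work.
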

Note that we are able to obtain dimension-independent errors $\epsilon_g$ and $\epsilon_H$, provided that $\sigma_{g}$ and $\sigma_{H}$ are dimension-independent.

\subsection{Low Rank Matrix Sensing Problems}
\label{sec:matrix-low-rank}
In this section, we study the problem of outlier-robust low rank matrix sensing as formally defined in Definition~\ref{def:iid_Gaussian_with_noise}. We first apply the above framework to obtain an approximate SOSP in Section~\ref{sec:global_convergence}. Then we make use of the approximate SOSP to obtain a solution that is close to the ground-truth matrix $M^*$ in Section~\ref{sec:local_linear_convergence}; this demonstrates the usefulness of approximate SOSPs.

\subsubsection{Main results for Robust Low Rank Matrix Sensing}
The following are the main results that we obtain in this section:
\begin{theorem}[Main Theorem Under Noiseless Measurements]
\label{thm:together}
   Consider the noiseless setting as in Theorem~\ref{thm:low-rank-mtrx-sensing-intro} with $\sigma = 0$. 
   For some sample size \(n = \bigtO[\big]{{(d^{2}r^{2} + d r \log(\Gamma/\xi))}/{\epsilon}}\) and with probability at least \(1-\xi\), there exists an algorithm that outputs a solution that is \(\iota\)-close to \(M^{*}\) in Frobenius norm in $\bigO[\big]{r^{2}\kappa^{3}\log(1/\xi) + \kappa \log(\sigstarr/\iota)} $ calls to the robust mean estimation subroutine (Algorithm~\ref{alg:robust_mean_estimation}).
\end{theorem}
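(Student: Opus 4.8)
The plan is to instantiate the framework of Theorem~\ref{thm:global_convergence-general} for the Burer--Monteiro objective and then to refine the resulting SOSP by a robust gradient-descent phase. Parametrize $M = UU^\top$ with $U \in \R^{d\times r}$ and take the per-sample loss $f(U,(A,y)) = \tfrac{1}{4}(\langle A, UU^\top\rangle - y)^2$, so that under the Gaussian design $\bar f(U) = \E_{(A,y)\sim\G} f(U,(A,y)) = \tfrac{1}{4}\fnorm{UU^\top - M^*}^2$ and $\cXstar = \{U : UU^\top = M^*\}$ is exactly the set of factorizations of $M^*$. The first task is to exhibit a bounded region $\B$ on which Assumption~\ref{assump:general_clean_data} holds. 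From the dissipativity bound $\langle\grad\bar f(U),U\rangle = \fnorm{UU^\top}^2 - \langle M^*,UU^\top\rangle \ge \tfrac{1}{r}\fnorm{U}^4 - \opnorm{M^*}\fnorm{U}^2$, the iterates cannot escape a Frobenius ball of radius $\poly(r)\cdot\sqrt{\opnorm{M^*}}$ --- which is \emph{dimension-independent} --- and we take this ball as $\B$. Conditions (i) ($f\ge0$) and (ii) (the per-sample gradient and Hessian together with their Lipschitz moduli are polynomially bounded in $d$ and the radius of $\B$) are then routine, using standard tail bounds on $\opnorm{A_i}$ for the clean Gaussian samples. The substantive point is condition (iii): by a direct Gaussian-moment computation, writing $\langle A, UU^\top\rangle - y = \langle A, UU^\top - M^*\rangle$, I will show $\opnorm{\Cov(\grad f(U,(A,y)))} = \bigO{\opnorm{M^*}\cdot\fnorm{UU^\top - M^*}^2}$, which \emph{vanishes} as $U\to\cXstar$, while $\opnorm{\Cov(\vect(\grad^2 f(U,(A,y))))} = \poly(\opnorm{M^*})$ is dimension-independent but does not vanish at $\cXstar$. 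Hence $\sigma_g^2$ scales with $\fnorm{UU^\top - M^*}^2$ and $\sigma_H$ is a dimension-independent constant.

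Next I would check hypothesis (I), that every iterate of Algorithm~\ref{alg:inexact_randomized} stays in $\B$: the same dissipativity bound shows that a (slightly inexact) robust gradient step taken from near the boundary of $\B$ strictly decreases $\fnorm{U}$, and negative-curvature steps travel a controlled distance that is absorbed by enlarging $\B$ by a constant factor. Hypothesis (II) then holds with $\epsilon_g = \Theta(\sigma_g\sqrt\epsilon)$ and $\epsilon_H = \Theta(\sigma_H\sqrt\epsilon)$, so Theorem~\ref{thm:global_convergence-general} (through Proposition~\ref{prop:optimization_inexact_derivatives_general}) returns an $(\epsilon_g,\epsilon_H)$-approximate SOSP $\widehat U$ of $\bar f$; substituting the explicit matrix-sensing parameter bounds into the iteration count of Proposition~\ref{prop:optimization_inexact_derivatives_general} and simplifying gives $\bigO{r^2\kappa^3\log(1/\xi)}$ calls to the robust mean estimation subroutine (Algorithm~\ref{alg:robust_mean_estimation}) for this phase.

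I would then invoke the quantitative matrix-sensing landscape result --- every approximate SOSP of $\bar f$ is approximately globally optimal --- to bound $\dist(\widehat U,\cXstar)$ by $\bigO{\epsilon_g/\sqrt{\sigstarr}}$ up to lower-order terms; for $\epsilon = \bigO{1/(\kappa^3 r^3)}$ this places $\widehat U$ inside a neighborhood $\mathcal N$ of $\cXstar$ on which a local regularity condition holds (an inequality of restricted-strong-convexity type, stated modulo the orthogonal symmetry $U\mapsto UR$ of $\cXstar$). That condition guarantees that an inexact robust \emph{gradient} step with stepsize $\Theta(1/\opnorm{M^*})$ contracts $\dist(\cdot,\cXstar)$ by a factor $1-\Omega(1/\kappa)$, up to an additive term of the order of the robust-estimation accuracy $\bigO{\sqrt{\opnorm{\Sigma}\epsilon}}$. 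The key point is that with $\sigma=0$ the gradient covariance satisfies $\opnorm{\Sigma} = \bigO{\opnorm{M^*}\fnorm{UU^\top - M^*}^2} = \bigO{\poly(\kappa,r)\,\sigstarr\cdot\dist(\cdot,\cXstar)^2}$, so that additive term is $\bigO{\sqrt{\poly(\kappa,r)\,\epsilon}\cdot\dist(\cdot,\cXstar)}$, which for $\epsilon=\bigO{1/(\kappa^3r^3)}$ is dominated by the contraction. The recursion therefore closes as $\dist_{k+1}\le(1-c/\kappa)\,\dist_k$, so $\bigO{\kappa\log(\sigstarr/\iota)}$ robust gradient steps bring $\dist(\cdot,\cXstar)$ --- and hence, losing only a $\poly(\kappa,r)$ factor inside the logarithm, $\fnorm{\widehat M - M^*}$ with $\widehat M = \widehat U\widehat U^\top$ --- below $\iota$, yielding exact recovery in the limit $\iota\to0$.

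The step I expect to be the main obstacle is making this last induction rigorous, because of the circular dependence flagged in the introduction: at each round one must simultaneously ensure (a) the iterate stays in $\mathcal N$, so the regularity condition persists; (b) the robust mean estimation of the gradient at that \emph{data-dependent} iterate succeeds --- handled by a union bound over all $\bigtO{\kappa\log(\sigstarr/\iota)}$ local rounds and all $\bigO{r^2\kappa^3\log(1/\xi)}$ global rounds, the argument deferred to Appendix~\ref{sec:general_nonconvex-appendix}, which together with the $k=(dr)^2$-dimensional Hessian estimation drives the sample size $\bigtO{(d^2r^2 + dr\log(\Gamma/\xi))/\epsilon}$; and (c) the estimation error at that round, $\bigO{\sqrt{\opnorm{\Sigma}\epsilon}}$, is small relative to the current $\dist(\cdot,\cXstar)$ so the contraction factor stays below $1$ --- yet (c)'s own guarantee refers to the very quantity the induction controls. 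Carrying a joint invariant on the distance, the oracle inexactness, and the per-step decrease resolves this. Adding the two phase counts yields the claimed $\bigO{r^2\kappa^3\log(1/\xi) + \kappa\log(\sigstarr/\iota)}$ total calls to Algorithm~\ref{alg:robust_mean_estimation}, completing the proof.
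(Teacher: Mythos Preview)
Your proposal is correct and follows essentially the same two-phase strategy as the paper: first invoke the general framework (Theorem~\ref{thm:global_convergence-general}) with the matrix-sensing covariance bounds and a dissipativity argument to obtain an approximate SOSP in $\bigO{r^2\kappa^3\log(1/\xi)}$ robust-estimation calls (the paper's Theorem~\ref{thm:global_convergence}), then use the strict-saddle/local-regularity structure of $\bar f$ together with the key fact that the gradient covariance vanishes at $\cXstar$ to run inexact gradient descent with geometrically shrinking error, contracting $\dist(\cdot,\cXstar)$ at rate $1-\Omega(1/\kappa)$ for another $\bigO{\kappa\log(\sigstarr/\iota)}$ calls (the paper's Theorem~\ref{thm:local_linear_convergence}). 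The only cosmetic differences are that the paper takes $\B = \{U:\opnorm{U}^2\le\Gamma\}$ rather than a Frobenius ball and handles the data-dependent iterates via a net argument over all of $\B$ (Theorem~\ref{thm:sample_complexity_stability}) rather than a union bound over rounds, but neither changes the argument or the final bounds.
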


This result achieves exact recovery of \(M^{*}\), despite the strong
contamination of samples. Each iteration involves a subroutine call to robust
mean estimation. Algorithm~\ref{alg:robust_mean_estimation} presented here is
one simple example of robust mean estimation; there are refinements~\cite{diakonikolas22streaming, dong2019quantum}
that run in nearly linear time, so the total computation utilizing those more efficient algorithms indeed requires \(\tilde O\left( r^{2}\kappa^{3} \right)\) passes of data (computed gradients and Hessians).
\begin{theorem}[Main Theorem Under Noisy Measurements] \label{thm:noisy_combined}
 Consider the same setting as in Theorem~\ref{thm:low-rank-mtrx-sensing-intro} with $\sigma \neq 0$. There exists a sample size \(n = \bigtO[\big]{{(d^{2}r^{2} + dr \log(\Gamma/\xi))}/{\epsilon}}\) such that
\begin{itemize}[leftmargin=*]
  \item if \(\sigma \le r \Gamma\), then with probability at least \(1 - \xi\), there exists an algorithm that outputs a solution \(\hat M\) in $ \tilde O( r^{2}\kappa^{3} )$ calls to robust mean estimation routine~\ref{alg:robust_mean_estimation}, with error \(\fnorm[\big]{\hat M - M^{*}} = \bigO{\kappa \sigma\sqrt\epsilon}\);
  \item if  \(\sigma \ge  r \Gamma\), then with probability at least \(1-\xi\), there exists a (different) algorithm that outputs a solution \(\hat M\) in one call to robust mean estimation routine~\ref{alg:robust_mean_estimation}, with error \(\fnorm[\big]{\hat M - M^{*}} = \bigO{\sigma\sqrt\epsilon}\).
\end{itemize}
\end{theorem}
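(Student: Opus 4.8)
The plan is to handle the two noise regimes with structurally different algorithms, mirroring the split already present in the theorem statement.

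\emph{The regime $\sigma \ge r\Gamma$ (large noise).} Here no iterative scheme is needed. Since the clean sensing matrices have i.i.d.\ standard Gaussian entries, $\E_{(A,y)\sim\G_\sigma}[\,yA\,] = \E[\langle A,M^*\rangle A] = M^*$, so $M^*$ is exactly the population mean of the $d\times d$ random matrix $yA$. I would therefore run $\mathsf{RobustMeanEstimation}$ (Proposition~\ref{prop:robust-mean-estimation-summary}) on the $\eps$-corrupted multiset $\{\vect(y_iA_i)\}_{i=1}^n$. The one computation to carry out is a bound on $\opnorm{\Cov(\vect(yA))}$: writing $yA = \langle A,M^*\rangle A + \zeta A$ and applying Wick's formula, the first term contributes $\bigO{\fnorm{M^*}^2}$ and the second contributes exactly $\sigma^2 I_{d^2}$; since $\fnorm{M^*}\le \sqrt r\,\sigstarl \le \sqrt r\,\Gamma \le \sigma$ in this regime, $\opnorm{\Cov(\vect(yA))} = \bigO{\sigma^2}$. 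Proposition~\ref{prop:robust-mean-estimation-summary} then returns $\hat\mu$ with $\fnorm{\hat\mu - \bar M_S} = \bigO{\sigma\sqrt\eps}$, where $\bar M_S$ is the clean sample mean, and a routine concentration bound gives $\fnorm{\bar M_S - M^*} = \bigO{\sigma\sqrt\eps}$ once $n = \bigtOmega{d^2/\eps}$. Outputting $\hat M$, the best rank-$r$ approximation of $\hat\mu$, keeps the error at $\fnorm{\hat M - M^*} \le 2\fnorm{\hat\mu - M^*} = \bigO{\sigma\sqrt\eps}$ (since $M^*$ has rank $r$), using exactly one subroutine call.

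\emph{The regime $\sigma \le r\Gamma$ (small noise).} Here I would run the framework of Theorem~\ref{thm:global_convergence-general} on the factored objective $\bar f(U) = \frac{1}{2}\E_{(A,y)\sim\G_\sigma}\big[(\langle A, UU^\top\rangle - y)^2\big]$ (optionally with the standard balancing regularizer), in three stages. \textbf{(a)} Verify Assumption~\ref{assump:general_clean_data} on a ball $\B$ of radius $\poly(d,\kappa,r)$: the lower bound, gradient/Hessian Lipschitzness, and norm bounds are routine for a quartic on a bounded domain; the substantive point is the distance-dependent covariance bounds $\opnorm{\Cov(\grad f(U,\cdot))} \lesssim \sigma^2 + \poly\cdot\dist(U,\cXstar)^2$ and the analogous bound for the vectorized Hessian, obtained by expanding $\langle A, UU^\top\rangle - y = \langle A, UU^\top - M^*\rangle - \zeta$ and using Gaussianity. \textbf{(b)} Show all iterates stay in $\B$ using the dissipativity property of the objective (the gradient points inward when $\fnorm{U}$ is large), which discharges condition~(I) of Theorem~\ref{thm:global_convergence-general}; condition~(II) holds by taking the target accuracies proportional to $\sigma_g\sqrt\eps$ and $\sigma_H\sqrt\eps$. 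This yields an $(\bigO{\sigma_g\sqrt\eps},\bigO{\sigma_H\sqrt\eps})$-approximate SOSP of $\bar f$. \textbf{(c)} By the matrix-sensing landscape analysis (every approximate SOSP is close to a global optimum), this SOSP lies in an $O(\sigstarr)$-neighborhood of $\cXstar$ on which a local regularity condition (a localized restricted-strong-convexity-type inequality) holds; running robust-gradient descent from there, each step contracts $\dist(U_t,\cXstar)$ by a factor $1-\Omega(1/\kappa)$ down to a floor of order $\kappa\sigma\sqrt\eps$. Summing the $\bigtO{r^2\kappa^3}$ iterations across (b)--(c) and outputting $\hat M = UU^\top$, where $\fnorm{UU^\top - M^*} \le (\opnorm U + \opnorm{U^*})\fnorm{U - U^*R}$ passes the contracted factor-distance to the claimed bound $\fnorm{\hat M - M^*} = \bigO{\kappa\sigma\sqrt\eps}$, completes this case.

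\emph{Main obstacle.} The crux is stage (c): the accuracy of the robust gradient oracle at $U_t$ depends, through the covariance bound of (a), on the current distance $\dist(U_t,\cXstar)$, while the contraction factor of the descent step depends on that very accuracy --- a circular dependence among progress, oracle inexactness, and distance. I would break it with an induction maintaining simultaneously (i) $\dist(U_t,\cXstar)$ below a geometrically shrinking threshold, hence ever-shrinking covariance and oracle error, and (ii) $U_t$ inside the neighborhood where the local regularity condition is valid. A second necessary technicality is that once $U_t$ depends on earlier randomness the per-sample gradients $\{\grad f_i(U_t)\}$ are no longer i.i.d.\ across iterations; this is handled by a union bound over a net of $\B$, which is the source of the extra $r^2$ and $\log(\Gamma/\xi)$ factors in $n$, exactly as in Appendix~\ref{sec:general_nonconvex-appendix}. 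The remaining effort is purely computational: nailing down the polynomial factors in the distance-dependent covariance bounds of (a) and the local regularity constant in (c).
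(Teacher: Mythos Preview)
Your proposal is correct and follows essentially the same two-regime structure as the paper: robust mean estimation of $\{y_iA_i\}$ followed by best rank-$r$ approximation for $\sigma \ge r\Gamma$ (Theorem~\ref{thm:together_high_noise}), and global convergence to an approximate SOSP via Theorem~\ref{thm:global_convergence-general} followed by local robust-gradient descent exploiting the distance-dependent covariance bounds for $\sigma \le r\Gamma$. The only methodological difference is in how the circular dependence of stage~(c) is broken: where you sketch a direct induction on a shrinking distance threshold, the paper splits into the sub-cases $\fnorm{U_tU_t^\top - M^*} \gtrless \sigma$, handles the first by the noiseless contraction argument verbatim, and in the second recasts the one-step recursion on the normalized distance as a contraction map and invokes the Banach fixed-point theorem to read off the $O(\kappa)$ floor.
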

We prove Theorem~\ref{thm:noisy_combined} in Appendix~\ref{sec:sensing_noise}, and instead focus on the noiseless measurements with $\sigma=0$ when we develop our algorithms in this section; the two share many common techniques.
In the remaining part of Section~\ref{sec:matrix-low-rank}, we use $\G_0$ in Definition~\ref{def:iid_Gaussian_with_noise} for the data generation process.

We now describe how we obtain the solution via nonconvex optimization. Consider the following objective function for (uncorrupted) matrix sensing:
\begin{equation}
  \label{eq:obj_m}
  \min_{\substack{M \in \R^{d \times d} \\ \rank(M) = r}} \frac{1}{2} \E_{(A_{i}, y_{i}) \sim \G_0}(\< M, A_{i} \> - y_{i})^{2}.
\end{equation}

We can write \(M = U U^{\top}\) for some \(U \in \R^{d \times r}\) to reparameterize the objective function. 
Let
\begin{equation}
  \label{eq:obj_u_i}
  f_{i}(U) := \frac12 \left(\< U U^{\top}, A_{i} \> - y_{i}\right)^{2}. 
\end{equation}
We can compute
\begin{equation}
    \label{eq:barf}
    \bar f(U) := \E_{(A_{i}, y_{i}) \sim \G_0} f_{i}(U) = \frac12 \Var\<UU^{\top} - M^{*}, A_{i}\>= \frac12\fnorm{ UU^{\top} - M^{*} }^{2}.
\end{equation}
We seek to solve the following optimization problem under the corruption model in Definition~\ref{def:robust-stochastic-opt}: 
\begin{equation}
  \label{eq:obj_u}
  \min_{U \in \R^{d \times r}}  \bar f(U).
\end{equation}
The gradient Lipschitz constant and Hessian Lipschitz constant of \(\bar f\) are given by the following result.
\begin{fact}[\cite{jin2017howto}, Lemma 6]
  \label{fact:matrix_factorization_constants}
  For any \(\Gamma > \sigstarl\), \(\bar f(U)\) has gradient Lipschitz constant \(L_{g} = 16\Gamma\) and Hessian Lipschitz constant \(L_{H} = 24\Gamma^{\frac12}\) inside the region \(\{U: \opnorm{U}^{2} < \Gamma\}\).
\end{fact}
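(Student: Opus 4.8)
\textbf{Proof proposal for Fact~\ref{fact:matrix_factorization_constants}.}

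The plan is to compute the gradient and Hessian of $\bar f(U) = \frac12 \fnorm{UU^\top - M^*}^2$ explicitly, and then bound how fast each of these changes as $U$ varies over the region $\mathcal{B} = \{U : \opnorm{U}^2 < \Gamma\}$. Since $\bar f$ is a degree-four polynomial in the entries of $U$, its gradient is a cubic map and its Hessian is a quadratic map in $U$, so both Lipschitz estimates reduce to bounding the operator norm of a polynomial expression in $U$ by powers of $\sqrt{\Gamma}$, using $\opnorm{U} < \sqrt{\Gamma}$ throughout.

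First I would differentiate. Writing $S(U) := UU^\top - M^*$, one gets $\grad \bar f(U) = 2\,S(U)\,U$ (as a $d\times r$ matrix), and for the Hessian acting on a direction $V \in \R^{d\times r}$,
\[
  \grad^2 \bar f(U)[V] = 2\,S(U)\,V + 2\,(UV^\top + VU^\top)\,U,
\]
equivalently the quadratic form $\grad^2\bar f(U)[V,V] = 2\langle S(U), VV^\top\rangle + \fnorm{UV^\top + VU^\top}^2$; I would double-check these against \cite{jin2017howto} conventions. For the gradient Lipschitz bound, I would take two points $U_1, U_2$, write $\grad\bar f(U_1) - \grad\bar f(U_2) = 2(S(U_1)U_1 - S(U_2)U_2)$, telescope by adding and subtracting cross terms so that each difference factor is $U_1 - U_2$ times a product of at most two factors each of operator norm $< \sqrt\Gamma$ (note $\opnorm{M^*} = \sigstarl < \Gamma$ as well), and collect constants to reach $L_g = 16\Gamma$. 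For the Hessian Lipschitz bound I would do the analogous telescoping on $\grad^2\bar f(U_1)[V] - \grad^2\bar f(U_2)[V]$: here each term is linear in $V$ and involves a single factor of $U_1 - U_2$ multiplied by one remaining factor of norm $< \sqrt\Gamma$, plus the constant term $2S(U)V$ whose $U_1,U_2$ difference is $2(U_1U_1^\top - U_2U_2^\top)V$, again telescoped; dividing out $\fnorm{V}$ and collecting gives the advertised $L_H = 24\sqrt\Gamma$.

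The main obstacle is purely bookkeeping: tracking the numerical constants through the telescoping so that they land exactly at $16$ and $24$, and being careful about the mixed Frobenius/operator norm inequalities (e.g.\ $\opnorm{AB} \le \opnorm{A}\fnorm{B}$, $\fnorm{AB}\le\opnorm{A}\fnorm{B}$) so the bounds are stated in the right norm — recall Definition~\ref{def:lipschitz} measures the gradient map in Frobenius norm on both sides and the Hessian map in Frobenius-to-operator norm. Since this is exactly Lemma~6 of \cite{jin2017howto}, I would simply cite it rather than reproduce the constant-chasing, after verifying that their region $\opnorm{U}^2 < \Gamma$ and their normalization of $\bar f$ match ours (in particular the factor $\frac12$ in \eqref{eq:barf} and the fact that $\sigstarl < \Gamma$ is implied by $\Gamma \ge 36\sigstarl$).
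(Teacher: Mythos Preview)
Your proposal is correct and matches the paper's treatment: the paper states this as a Fact cited directly from \cite[Lemma~6]{jin2017howto} without giving a proof, and your sketch of the underlying argument (explicit gradient/Hessian computation followed by telescoping and norm bounds over $\{U:\opnorm{U}^2<\Gamma\}$) is exactly the standard derivation behind that lemma. Your final recommendation to simply cite the result is precisely what the paper does.
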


\subsubsection{Global Convergence to an Approximate SOSP}
\label{sec:global_convergence}
In this section, we apply our framework
Theorem~\ref{thm:global_convergence-general} to obtain global convergence from an arbitrary initialization to an approximate SOSP, by providing problem-specific analysis to guarantee that both Assumption~\ref{assump:general_clean_data} and algorithmic assumptions (I) and (II) required by Theorem~\ref{thm:global_convergence-general} are satisfied.
\begin{theorem}[Global Convergence to a SOSP]
  \label{thm:global_convergence}
   Consider the noiseless setting as in Theorem~\ref{thm:low-rank-mtrx-sensing-intro} with $\sigma = 0$ and \(\epsilon = \bigO[\big]{1/{(\kappa^{3}r^{3})}}\).
   Assume we have an arbitrary initialization \(U_{0}\) inside \(\{U: \opnorm{U}^{2} \le \Gamma\}\). 
   There exists a sample size \(n = \tilde O\left({(d^{2}r^{2} + d r \log(\Gamma/\xi))}/{\epsilon}\right)\) such that with  probability at least \(1-\xi\), Algorithm~\ref{alg:inexact_randomized} initialized at \(U_{0}\) outputs a \((\frac{1}{24}\sigstarr^{3/2}, \frac{1}{3}\sigstarr)\)-approximate SOSP using at most \(\bigO{r^{2}\kappa^{3} \log(1/\xi)}\) calls to robust mean estimation subroutine (Algorithm~\ref{alg:robust_mean_estimation}).
\end{theorem}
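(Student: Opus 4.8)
The plan is to verify that the three hypotheses of the general framework (Theorem~\ref{thm:global_convergence-general}) hold for the matrix sensing objective $\bar f(U) = \tfrac12\fnorm{UU^\top - M^*}^2$ over a suitable bounded region $\B$, with appropriate choices of $\sigma_g, \sigma_H, \epsilon_g, \epsilon_H$. Concretely, I take $\B = \{U : \opnorm{U}^2 \le \Gamma\}$ (or a modest enlargement needed to absorb iterate drift), set the target accuracies to $\epsilon_g = \tfrac1{24}\sigstarr^{3/2}$ and $\epsilon_H = \tfrac13\sigstarr$, and must show: (i) Assumption~\ref{assump:general_clean_data} holds on $\B$ — the lower bound $\flb = 0$ is immediate, the Lipschitz/boundedness constants $L_{D_g}, L_{D_H}, B_{D_g}, B_{D_H}$ are polynomial in $d, \Gamma$ (using Fact~\ref{fact:matrix_factorization_constants} for the Lipschitz constants and a direct computation of $\grad f_i(U) = (\langle UU^\top, A_i\rangle - y_i)(A_i + A_i^\top)U$ and $\grad^2 f_i(U)$ for the norm bounds, where $A_i$ is Gaussian so these are bounded with high probability after a union bound over the $n$ samples); (ii) the covariance bounds $\opnorm{\Cov(\grad f(U,A))} \le \sigma_g^2$ and $\opnorm{\Cov(\vect(\grad^2 f(U,A)))} \le \sigma_H^2$; and (iii) the two algorithmic conditions of Theorem~\ref{thm:global_convergence-general}, namely that all iterates stay in $\B$, and $\sigma_g\sqrt\epsilon \le c\,\epsilon_g$, $\sigma_H\sqrt\epsilon \le c\,\epsilon_H$.

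The heart of the argument is bounding the covariances in (ii) as a function of the distance $\rho := \dist(U, \cXstar)$ from $U$ to the set of optimal factorizations, and showing that this translates (via condition (iii)) into the stated accuracy. Writing $\Delta = UU^\top - M^*$, one has $\grad f_i(U) = \langle A_i, \Delta\rangle (A_i+A_i^\top) U$ (in the noiseless case $\sigma=0$), so $\Cov(\grad f(U,A))$ is controlled by fourth moments of the Gaussian $A_i$ against $\Delta$ and $U$; a standard Gaussian fourth-moment computation gives $\opnorm{\Cov(\grad f(U,A))} = O(\fnorm{\Delta}^2 \opnorm{U}^2) = O(\fnorm{\Delta}^2 \Gamma)$. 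Since on $\B$ we have $\fnorm{\Delta} = O(\Gamma)$ at initialization but more importantly $\fnorm{\Delta}^2 \le O(\sigstarl \cdot \rho^2 + \rho^4)$ near optima, and in the worst case over $\B$ one gets $\sigma_g = O(\Gamma^{3/2})$ and similarly $\sigma_H = O(\Gamma)$ for the Hessian covariance (the Hessian $\grad^2 f_i(U)$ depends on $A_i$ quadratically, so its vectorized covariance involves up to fourth Gaussian moments as well, bounded by $O(\Gamma^2)$). Plugging into condition (iii): $\sigma_g\sqrt\epsilon = O(\Gamma^{3/2}\sqrt\epsilon)$ must be $\le \tfrac{c}{24}\sigstarr^{3/2}$, which forces $\epsilon = O(1/\kappa^3)$ — and comparable bookkeeping on the Hessian side, together with the extra $r$-factors coming from the fact that the $r$-dimensional factorization amplifies Frobenius norms relative to singular values, yields the claimed $\epsilon = O(1/(\kappa^3 r^3))$.

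For condition (I) — all iterates staying in $\B$ — I would invoke the dissipativity of the matrix sensing objective: when $\opnorm{U}^2$ approaches $\Gamma \ge 36\sigstarl$, the gradient $\grad \bar f(U) = 2\Delta U$ points outward strongly enough that the descent/saddle-escape steps of Algorithm~\ref{alg:inexact_randomized} (which are gradient steps with bounded stepsize $\propto 1/L_g$, plus occasional bounded negative-curvature steps and bounded random perturbations) cannot leave a slightly enlarged $\B$; this is a problem-specific monotonicity/Lyapunov argument showing $\opnorm{U_{t+1}}^2$ decreases whenever it exceeds a threshold below $\Gamma$. Once (I), (ii), and (iii) are in place, Theorem~\ref{thm:global_convergence-general} yields an $(\epsilon_g, \epsilon_H)$-approximate SOSP, and the iteration count follows from the runtime bound $k = O\big((\bar f(U_0) - \flb)/C_\epsilon + \dots\big)$ of Proposition~\ref{prop:optimization_inexact_derivatives_general}: with $\bar f(U_0) - \flb = O(\Gamma^2)$, $L_g = 16\Gamma$, $L_H = 24\Gamma^{1/2}$, $\epsilon_g = \Theta(\sigstarr^{3/2})$, $\epsilon_H = \Theta(\sigstarr)$, the dominant term $C_\epsilon^{-1} = O(\max(\Gamma/\epsilon_g^2, L_H^2/\epsilon_H^3)) = O(\Gamma/\sigstarr^3 \cdot \text{poly}(\kappa,r))$ gives $O(r^2\kappa^3)$ up to the $\log(1/\xi)$ from the high-probability term and the union bound over iterations. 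The main obstacle I anticipate is (I): the circular dependence between "iterates stay in $\B$", "the covariance bounds (hence oracle inexactness) are valid on $\B$", and "the algorithm's steps are small" must be untangled carefully — this is exactly the delicate point the authors flag in the techniques section, and it likely requires either a careful inductive argument over iterations or an a priori Lyapunov function for the whole trajectory rather than a one-step estimate.
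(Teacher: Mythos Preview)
Your proposal is correct and follows the paper's approach exactly: verify Assumption~\ref{assump:general_clean_data} on $\B = \{U:\opnorm{U}^2 \le \Gamma\}$ via the covariance bounds of Lemma~\ref{lemma:gradient_hessian_covariance_bound} (which give $\sigma_g = O(r\Gamma^{3/2})$ and $\sigma_H = O(r^{3/2}\Gamma)$ --- the $r$-factors you hand-wave come concretely from $\fnorm{UU^\top - M^*} \le 2r\Gamma$ for the gradient and from the $I_r\otimes B_i$ block structure of the Hessian), establish bounded iterates via the dissipativity induction of Lemma~\ref{lemma:radius}, check condition~(II) using $\epsilon = O(1/(\kappa^3 r^3))$, and read off the iteration count from Proposition~\ref{prop:optimization_inexact_derivatives_general} with $\bar f(U_0) = O(r^2\Gamma^2)$ and $C_\epsilon = \Theta(\sigstarr^3/\Gamma)$. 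The circular dependence you anticipate in condition~(I) is milder here than you fear and is resolved by a clean one-step induction (Lemma~\ref{lemma:radius}); the genuinely delicate distance-contracting argument the authors flag in the techniques section belongs to the \emph{local} linear convergence phase (Theorem~\ref{thm:local_linear_convergence}), not to this global-to-SOSP phase.
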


\begin{proof}[Proof of Theorem~\ref{thm:global_convergence}]
To apply Theorem~\ref{thm:global_convergence-general}, we verify Assumption~\ref{assump:general_clean_data} first. To verify (i), for all $U$ and $A_i$, $f_{i}(U) = \frac12 \left(\< U U^{\top}, A_{i} \> - y_{i}\right)^{2} \ge 0$ , so $\flb = 0$ is a uniform lower bound. We verify (ii) in Appendix~\ref{sec:global-convergence-appendix}: conceptually, by Fact~\ref{fact:matrix_factorization_constants}, $\bar f$ is gradient and Hessian Lipschitz; both gradient and Hessian of $f_i$ are sub-exponential and concentrate around those of $\bar f$. To check (iii), we calculate the gradients and Hessians of \(f_{i}\) in Appendix~\ref{sec:sample_grad_and_hessians} and bound their covariances from above in Appendix~\ref{sec:grad_cov_bound} and~\ref{sec:Hessian_cov_bound}. The result is summarized in the following lemma. Note that the domain of the target function in Algorithm~\ref{alg:inexact_randomized} and Theorem~\ref{thm:global_convergence-general} is the Euclidean space \(\R^{D}\), so we vectorize \(U\) and let \(D = d r\). The gradient becomes a vector in \(\R^{dr}\) and the Hessian becomes a matrix in \(\R^{dr \times dr}\).


\begin{lemma}[Gradient and Hessian Covariance Bounds]
  \label{lemma:gradient_hessian_covariance_bound}
  For all \(U \in \R^{d \times r}\) with \(\opnorm{U}^{2} \le \Gamma\) and $f_i$ defined in Equation~\eqref{eq:obj_u_i}, it holds 
  \begin{align}
    \label{eq:gradient_covariance_bound}
    \opnorm{\Cov(\vect( \grad f_{i}(U)))} & \le 8 \fnorm{U U^{\top} - M^{*}}^{2} \opnorm{U}^{2} \le 32 r^{2}\Gamma^{3} \\
    \label{eq:hessian_covariance_bound}
    \opnorm{\Cov(\vect(H_{i}))} & \le  16r \fnorm{ U U ^{\top} - M^{*}}^{2}  + 128 \opnorm{U}^{4} \le 192 r^{3} \Gamma^{2}
  \end{align}
\end{lemma}





We proceed to verify the algorithmic assumptions in Theorem~\ref{thm:global_convergence-general}. For the assumption (I), we prove the following Lemma in  Appendix~\ref{sec:global-convergence-appendix} to show that all iterates stay inside the bounded region in which we compute the covariance bounds.
\begin{lemma}
  \label{lemma:radius}
  All iterates of Algorithm~\ref{alg:inexact_randomized} stay inside the region \(\{U: \opnorm{U}^{2} \le \Gamma\}\).
\end{lemma}

To verify Theorem~\ref{thm:global_convergence-general} (II), we let \(\epsilon_{g} = \frac{1}{32}\sigstarr^{3/2}, \epsilon_{H} = \frac{1}{4}\sigstarr\) and $\sigma_g = 8 r \Gamma^{1.5}, \sigma_H =16 r^{1.5} \Gamma $. So if we assume \(\epsilon = O(1/{(\kappa^{3}r^{3}}))\), then for the absolute constant $c$ in Theorem~\ref{thm:global_convergence-general} it holds that
\begin{align*}
  \sigma_g\sqrt\epsilon  \le c {\epsilon_{g}} \qquad
  \sigma_H\sqrt\epsilon  \le c {\epsilon_{H}}.
\end{align*}

Hence, Theorem~\ref{thm:global_convergence-general} applies and Algorithm~\ref{alg:inexact_randomized} outputs an $(\epsilon_g, \epsilon_H)$-approximate SOSP 
with high probability in polynomial time. To bound the runtime, since \(\bar f(U_0) = 1/2\fnorm{U_0 U_0^{\top} - M^{*}}^{2} = O(r^{2}\Gamma^{2}) \) for an arbitrary initialization \(U_{0}\) with \(\opnorm{U_{0}}^{2} <  \Gamma\), the initial distance can be bounded by \(O(r^{2}\Gamma^{2})\). Setting \(L_{g} = 16\Gamma, L_{H} = 24\Gamma^{1/2}, \bar f(U_{0}) = O(r^{2}\Gamma^{2}), \flb = 0\) and thus \(C_{\epsilon} = \bigO[\big]{{\sigstarr^{3}}/{\Gamma}}\), Proposition~\ref{prop:optimization_inexact_derivatives_general} implies that Algorithm~\ref{alg:inexact_randomized} outputs a \((\frac{1}{24}\sigstarr^{3/2}, \frac{1}{3}\sigstarr)\)-approximate second order stationary point \(U_{SOSP}\) in  \(\bigO[\big]{r^{2}\kappa^{3}\log(1/\xi)}\) steps with high probability.
\end{proof}

\subsubsection{Local Linear Convergence}\label{sec:local_linear_convergence}
In this section, we describe a local search algorithm that takes a \((\frac{1}{24}\sigstarr^{3/2}, \frac{1}{3}\sigstarr)\)-approximate second-order stationary point as its initialization and achieves exact recovery even in the presence of outliers. 

\begin{algorithm}
  \caption{Local Inexact Gradient Descent}\label{alg:local_linear_convergence}
  \KwData{The initialization \(U_{SOSP}\) is a \((\frac{1}{24}\sigstarr^{3/2}, \frac{1}{3}\sigstarr)\)-approximate SOSP, corruption fraction is \(\epsilon\), corrupted samples are \(\{(A_{i}, y_{i})\}_{i = 1}^{n}\), target distance to optima is \(\iota\)}
  \KwResult{\(U\) that is \(\iota\)-close in Frobenius norm to some global minimum}
  \(\eta = 1/\Gamma\), 
  \(U_{0} = U_{SOSP}\)\\
  \For{t = 0, 1, \dots}{
    \(\tilde g_{t} := \mathbf{RobustMeanEstimation}(\{\grad f_{i}(U_{t})\}_{i=1}^{n}, 4\epsilon)\) \label{line:robust_mean_estimation_gradient}\\
    \(U_{t+1} \leftarrow U_{t} - \eta \tilde g_{t}\)
  }
\end{algorithm}

\begin{theorem}[Local Linear Convergence]
  \label{thm:local_linear_convergence}
    Consider the same noiseless setting as in Theorem~\ref{thm:low-rank-mtrx-sensing-intro}.
    Assume we already found a \((\frac{1}{24}\sigstarr^{3/2}, \frac{1}{3}\sigstarr)\)-approximate SOSP \(U_{SOSP}\) of $\bar f$. Then there exists a sample size \(n = \tilde O \left({d r \log(1/\xi)}/{\epsilon}\right)\) such that with probability at least \(1-\xi\), Algorithm~\ref{alg:local_linear_convergence} initialized at \(U_{SOSP}\) outputs a solution that is \(\iota\)-close to
 some global minimum in Frobenius norm after \(\bigO[\big]{\kappa\log( \sigstarr/{\iota} )}\) calls to robust mean estimation subroutine (Algorithm~\ref{alg:robust_mean_estimation}). Moreover, all iterates \(U_{t}\) are \(\frac13 \sigstarr^{1/2}\)-close to some global minimum in Frobenius norm.
\end{theorem}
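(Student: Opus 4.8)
The plan is to view Algorithm~\ref{alg:local_linear_convergence} as perturbed gradient descent on $\bar f(U) = \tfrac12\fnorm{UU^{\top} - M^{*}}^{2}$ and to show that its distance to the optimal set $\cXstar := \{UR : UU^{\top} = M^{*},\ R^{\top}R = I_{r}\}$ contracts geometrically. Write $\dist(U) := \fnorm{U - \projX(U)}$ with $\projX(U)$ a Frobenius‑closest point of $\cXstar$ (not necessarily unique --- only the value $\dist(U)$ matters, matching ``close to \emph{some} global minimum''), and set $\mathcal{N} := \{U : \dist(U) \le \tfrac13\sigstarr^{1/2}\}$. The argument has three ingredients. \emph{(a) SOSP proximity.} I would first prove a lemma, standard for symmetric matrix factorization (cf.~\cite{ge2017no-spurious-loc,jin2017howto}), that every $(\tfrac1{24}\sigstarr^{3/2},\tfrac13\sigstarr)$-approximate SOSP $U$ of $\bar f$ lies in $\mathcal{N}$: if $\dist(U)$ were larger one exhibits an explicit direction (built from $U-\projX(U)$ and the $\sigstarr$-eigenspace) along which $\grad^{2}\bar f(U)$ has an eigenvalue below $-\tfrac13\sigstarr$, a contradiction. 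Since $\opnorm{\projX(U)}^{2} = \sigstarl \le \Gamma/36$ and any $U\in\mathcal{N}$ has $\dist(U) \le \tfrac13\sigstarl^{1/2}$, it follows that $\opnorm{U}^{2}\le\Gamma$ on $\mathcal{N}$, so Fact~\ref{fact:matrix_factorization_constants} and Lemma~\ref{lemma:gradient_hessian_covariance_bound} are in force there. \emph{(b) Local regularity.} On $\mathcal{N}$ I would establish a local error bound (again in the spirit of~\cite{ge2017no-spurious-loc}):
\[
\langle \grad\bar f(U), U - \projX(U)\rangle \ \ge\ c_{1}\sigstarr\,\dist(U)^{2} + c_{2}\,\fnorm{UU^{\top} - M^{*}}^{2}, \qquad \fnorm{\grad\bar f(U)} \ \le\ 2\sqrt{\Gamma}\,\fnorm{UU^{\top} - M^{*}},
\]
the second bound being immediate from $\grad\bar f(U) = 2(UU^{\top} - M^{*})U$ and $\opnorm{U}^{2}\le\Gamma$, with $c_{1},c_{2}$ absolute constants. \emph{(c) Distance‑proportional inexactness.} By Proposition~\ref{prop:robust-mean-estimation-summary} (together with ordinary concentration of the empirical mean, handled in (I) below) and Lemma~\ref{lemma:gradient_hessian_covariance_bound}, at any $U_{t}\in\mathcal{N}$ the robust estimate obeys $\fnorm{e_{t}} := \fnorm{\tilde g_{t} - \grad\bar f(U_{t})} = O(\sqrt{\epsilon})\,\fnorm{U_{t}U_{t}^{\top} - M^{*}}\,\opnorm{U_{t}} = O(\sqrt{\Gamma\epsilon})\,\fnorm{U_{t}U_{t}^{\top} - M^{*}}$ --- crucially the inexactness is governed by the \emph{same} quantity $\fnorm{U_{t}U_{t}^{\top} - M^{*}}$ that (b) penalizes, which is what eventually yields exact recovery when $\sigma = 0$.

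Given these, I would run a single induction over $t$. Suppose $U_{t}\in\mathcal{N}$; since $\projX(U_{t+1})$ is no farther than $\projX(U_{t})$,
\[
\dist(U_{t+1})^{2} \ \le\ \fnorm{U_{t} - \projX(U_{t}) - \eta(\grad\bar f(U_{t}) + e_{t})}^{2} = \dist(U_{t})^{2} - 2\eta\langle \grad\bar f(U_{t}) + e_{t}, U_{t} - \projX(U_{t})\rangle + \eta^{2}\fnorm{\grad\bar f(U_{t}) + e_{t}}^{2}.
\]
Plugging in $\eta = 1/\Gamma$: the term $2\eta c_{2}\fnorm{U_{t}U_{t}^{\top}-M^{*}}^{2}$ from (b) absorbs $\eta^{2}\fnorm{\grad\bar f(U_{t})}^{2} \le (4/\Gamma)\fnorm{U_{t}U_{t}^{\top}-M^{*}}^{2}$, the piece of $-2\eta\langle e_{t}, U_{t}-\projX(U_{t})\rangle$ that one splits off by AM--GM against $\fnorm{U_{t}U_{t}^{\top}-M^{*}}^{2}$ using (c), and $2\eta^{2}\fnorm{e_{t}}^{2} = O(\epsilon/\Gamma)\fnorm{U_{t}U_{t}^{\top}-M^{*}}^{2}$; the remaining part of the AM--GM split adds only $O(\epsilon)\dist(U_{t})^{2}$. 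What is left is $\dist(U_{t+1})^{2} \le (1 - 2c_{1}\eta\sigstarr + O(\epsilon))\dist(U_{t})^{2} \le (1 - \Omega(1/\kappa))\dist(U_{t})^{2}$, where $O(\epsilon) = O(1/(\kappa^{3}r^{3}))$ is dominated by $2c_{1}\eta\sigstarr = \Omega(1/\kappa)$ precisely by the hypothesis on $\epsilon$. In particular $\dist(U_{t+1}) \le \dist(U_{t}) \le \tfrac13\sigstarr^{1/2}$, so $U_{t+1}\in\mathcal{N}$ and the induction closes; unrolling from $U_{0} = U_{SOSP}\in\mathcal{N}$ (by ingredient (a)) gives $\dist(U_{t}) \le (1 - \Omega(1/\kappa))^{t/2}\dist(U_{0}) \le \iota$ once $t = O(\kappa\log(\sigstarr/\iota))$, and every iterate stays within $\tfrac13\sigstarr^{1/2}$ of $\cXstar$, which is the conclusion.

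Two further points complete the argument. \emph{(I) Dependence across iterations.} Each call to $\mathbf{RobustMeanEstimation}$ is made at $U_{t}$, which depends on the corrupted data through earlier iterations, so $\{\grad f_{i}(U_{t})\}_{i}$ is not conditionally i.i.d.\ given the data; as flagged in Section~\ref{sec:prelims}, I would instead apply Proposition~\ref{prop:robust-mean-estimation-summary} uniformly over a fine net of $\mathcal{N}$ --- legitimate because the per‑sample gradients are sub‑exponential and Lipschitz in $U$ with constants polynomially bounded in $d$ by Assumption~\ref{assump:general_clean_data} --- which inflates the sample size only by logarithmic factors and a $\log(1/\xi)$ term, giving $n = \tilde O(dr\log(1/\xi)/\epsilon)$ (the ambient dimension for mean estimation being $k = dr$). \emph{(II)} One must also verify $\opnorm{U_{t}}^{2}\le\Gamma$ along the whole run so that Lemma~\ref{lemma:gradient_hessian_covariance_bound} applies, which is immediate from $U_{t}\in\mathcal{N}$ as in (a). I expect the principal obstacle to be ingredients (a) and (b): carrying the by‑now‑standard strict‑saddle and local‑regularity geometry of symmetric matrix factorization through with explicit constants that fit the fixed step size $\eta = 1/\Gamma$ and the bookkeeping above. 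Given those, the remaining subtlety is the coupled induction, which simultaneously tracks $\dist(U_{t})$, the inexactness $\fnorm{e_{t}}$, and the per‑step decrease, thereby breaking the apparent circular dependence among them.
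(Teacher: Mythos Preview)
Your proposal is correct and follows essentially the same approach as the paper: cite/establish the strict-saddle and local-regularity geometry of $\bar f$ to place $U_{SOSP}$ in $\mathcal{N}$, use the gradient-covariance bound~\eqref{eq:gradient_covariance_bound} to make the robust-estimation error proportional to $\fnorm{U_tU_t^\top-M^*}$, and run a one-step contraction induction that simultaneously keeps iterates in $\mathcal{N}$ and shrinks $\dist(U_t)$ at rate $1-\Omega(1/\kappa)$, with the cross-iteration dependence handled by a net argument. The only cosmetic differences are that the paper simply quotes the strict-saddle and $(\tfrac23\sigstarr,10\sigstarl)$-regularity properties from \cite{jin2017howto} (Fact~3.1 there) rather than re-deriving them, and that it uses the regularity condition in the form $\langle\grad\bar f,U-\projX(U)\rangle\ge\frac{\alpha}{2}\dist(U)^2+\frac{1}{2\beta}\fnorm{\grad\bar f}^2$ (so the $\frac{1}{2\beta}\fnorm{\grad\bar f}^2$ term directly kills $2\eta^2\fnorm{\grad\bar f}^2$ once $\eta\le 1/(2\beta)$) together with the inequality $\fnorm{UU^\top-M^*}\le 2\sqrt{\Gamma}\,\dist(U)$ to convert the error terms to multiples of $\dist(U_t)^2$; your variant with $c_2\fnorm{UU^\top-M^*}^2$ on the right achieves the same cancellations.
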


\begin{proof}[Proof Sketch]
  First we use known properties of \(\bar f = \E_{(A_{i}, y_{i}) \sim \G_0} f_{i}\) from the literature~\cite{jin2017howto} to show approximate SOSPs of $\bar f$ --- in particular our initialization $U_{SOSP}$ --- are in a small neighborhood of the global minima of $\bar f$. In that neighborhood, it was also known that \(\bar f\) satisfies some local regularity conditions that enable gradient descent's linear convergence. 
  
  However, the algorithm only has access to the {\em inexact} gradient from the robust mean estimation subroutine (line~\ref{line:robust_mean_estimation_gradient} in Algorithm~\ref{alg:local_linear_convergence}) and, therefore, we need to establish linear convergence of {\em inexact gradient descent}. We achieve this with the following iterative argument: As the iterate gets closer to global optima, the covariance bound of sample gradient in Equation~\eqref{eq:gradient_covariance_bound} gets closer to 0. Because the accuracy of the robust mean estimation scales with the covariance bound (see Proposition~\ref{prop:robust-mean-estimation-summary}), a more accurate estimate of population gradient \(\grad \bar f\) can be obtained via the robust estimation subroutine (Algorithm~\ref{alg:robust_mean_estimation}). This, in turn, allows for an improved inexact gradient descent step, driving the algorithm towards an iterate that is even closer to global optima.

  See Appendix~\ref{sec:local-linear-convergence-appendix} for the complete proof.
\end{proof}

\section{Statistical Query Lower Bound for Low Rank Matrix Sensing}
\label{sec:sq-lower-bound}

Our general algorithm (Theorem~\ref{thm:global_convergence-general}) leads
to an efficient algorithm for robust low rank matrix sensing with sample complexity
$O(d^2r^2/\eps)$ (Theorem~\ref{thm:noisy_combined}). Interestingly, the sample
complexity of the underlying robust estimation problem --- ignoring computational
considerations --- is $\Theta(dr/\eps)$. The information-theoretic upper bound 
of $O(dr/\eps)$ can be achieved by an exponential (in the dimension) 
time algorithm (generalizing the Tukey median to our regression setting); 
see, e.g., Theorem 3.5 in~\cite{gao2017robust}.

Given this discrepancy, it is natural to ask whether the sample complexity achieved by 
our algorithm can be improved via a different computationally efficient method. In this section, we provide evidence that this may not be possible. In more detail, 
we establish a near-optimal information-computation tradeoff
for the problem, within the class of Statistical Query (SQ) algorithms. 
To formally state our lower bound, we require basic background on
SQ algorithms. 

\paragraph{Basics on SQ Model.} 
SQ algorithms are a class of algorithms
that, instead of access to samples from some distribution $\mathcal{P}$, 
are allowed to query expectations of bounded functions over $\mathcal{P}$. 

\begin{definition}[SQ Algorithms and \(\mathsf{STAT}\) Oracle~\cite{Kearns:98}]
Let \(\mathcal{P}\) be a distribution on \(\R^{d^{2} + 1}\).  A Statistical Query (SQ) is a bounded function \(q: \R^{d^{2} + 1} \ra [-1, 1]\). For \(\tau > 0\), the \(\mathsf{STAT}(\tau)\) oracle responds to the query \(q\) with a value \(v\) such that \(\abs{v - \E_{X \sim \mathcal{P}}[q(X)]} \le \tau\). An SQ algorithm is an algorithm whose objective is to learn some information about an unknown distribution \(\mathcal{P}\) by making adaptive calls to the corresponding \(\mathsf{STAT}(\tau)\) oracle.
\end{definition}

In this section, we consider $\mathcal{P}$ as the unknown corrupted distribution where $(A_i, y_i)$ are drawn. The SQ algorithm tries to learn the ground truth matrix $M^*$ from this corrupted distribution; the goal of the lower bound result is to show that this is hard.

The SQ model has the capability to implement a diverse set of algorithmic techniques in 
machine learning such as spectral techniques, moment and tensor methods, local search (e.g., 
Expectation Maximization), and several others~\cite{FeldmanGRVX17}. 
A lower bound on the SQ complexity of a problem 
provides evidence of hardness for the problem. \cite{brennan2020statistical} established that (under certain assumptions) 
an SQ lower bound also implies a qualitatively similar lower bound 
in the low-degree polynomial testing model. 
This connection can be used to show a similar lower bound for low-degree polynomials.  

Our main result here is a near-optimal SQ lower bound for 
robust low rank matrix sensing that applies even for rank $r=1$,
i.e., when the ground truth matrix is 
\(M^{*} = u u^{\top}\) for some \(u \in \R^{d}\).
\new{The choice of rank $r = 1$ yields the strongest possible lower bound in our setting because it is the easiest parameter regime: Recall that the sample complexity of our algorithm is $\widetilde O(d^2 r^2)$ as in Theorems~\ref{thm:together} and~\ref{thm:noisy_combined}, and the main message of our SQ lower bound is to provide evidence that the $d^2$ factor is necessary for computationally efficient algorithms \emph{even if} $r = 1$.}


\begin{theorem}[SQ Lower Bound for Robust Rank-One Matrix Sensing]
  \label{thm:sq_lower_bound}
  Let \(\epsilon \in (0,1/2)\) be the fraction of corruptions and let \(c \in (0,1/2)\). 
  Assume the dimension \(d \in \N\) is sufficiently large. 
  Consider the \(\epsilon\)-corrupted rank-one matrix sensing problem with ground-truth matrix \(M^{*} = u u^{\top}\) and noise \(\sigma^{2} = O(1)\).
  Any SQ algorithm that outputs \(\hat u\) with \(\norm{\hat u - u} = O(\epsilon^{1/4})\) either requires \(2^{\Omega(d^{c})}/ d^{2-4c}\)  queries or 
  makes at least one query to 
  $\mathsf{STAT}\Big(e^{\bigO{1/\sqrt\epsilon}}/\bigO{d^{1-2c}}\Big)$. 
\end{theorem}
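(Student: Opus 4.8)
\textbf{Proof proposal for Theorem~\ref{thm:sq_lower_bound}.}

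The plan is to follow the now-standard ``non-Gaussian component analysis'' (NGCA) style SQ lower bound framework of~\cite{diakonikolas2016statistical} and its refinements~\cite{diakonikolas2019efficient-algor, diakonikolas2021statistical}. First I would reduce the rank-one matrix sensing problem to a hidden-direction problem: since $M^* = uu^\top$ with $\|u\|=1$ and the sensing matrix $A$ has i.i.d.\ Gaussian entries, the clean measurement satisfies $y = \langle A, uu^\top\rangle + \zeta = u^\top A u + \zeta$, and conditionally on the ``hidden direction'' coordinate $v := u^\top A u$ (which, together with the orthogonal part of $A$, forms a convenient coordinate system) the measurement $y$ depends on $A$ only through $v$. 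The key observation is that $v = u^\top A u$ is distributed as a fixed univariate law (a shifted/scaled chi-square-like mixture, but for the lower bound we really only need the marginal of $y$ along the hidden direction), while in the directions orthogonal to $u$ the joint distribution of $(A,y)$ is exactly the ``null'' product Gaussian. So the family of instances $\{\mathcal{P}_u\}_{\|u\|=1}$ looks like a hidden-direction family: along $u$ the joint distribution of the relevant one-dimensional projection is some carefully designed law $P_0$, and in the orthogonal complement it is standard Gaussian.

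The heart of the argument is the univariate moment-matching construction. I would construct a one-dimensional distribution $P_0$ on the (projected) measurement space that (a) is within total variation distance $O(\epsilon)$ of the true conditional law of $y$ given the hidden-direction projection — this is where the $\epsilon$-corruption budget is spent, realizing $P_0$ as an $\epsilon$-contamination of the clean conditional distribution — and (b) matches the first $m = \Theta(1/\sqrt{\epsilon})$ moments of the standard Gaussian (or of the appropriate reference marginal). Moment matching to order $m$ is the lever that produces the $2^{\Omega(d^c)}$ query lower bound and the $e^{-\Omega(1/\sqrt\epsilon)}$-accuracy threshold on the $\mathsf{STAT}$ oracle; the trade-off between how many moments one can match and how much total variation mass the corruption must move is exactly the source of the $e^{\bigO{1/\sqrt\epsilon}}$ factor. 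The corruption is feasible because we only need the $O(\epsilon)$-fraction reweighting to simultaneously (i) keep the construction a legitimate contamination of $\G_\sigma$ and (ii) kill the low-order moments; standard interpolation / linear-programming-duality arguments (as in~\cite{diakonikolas2019efficient-algor}) show such a $P_0$ exists for $m \lesssim 1/\sqrt\epsilon$. One then lifts $P_0$ to the $d$-dimensional joint distribution $\mathcal{P}_u$ by taking $P_0$ along direction $u$ and standard Gaussian orthogonally, and the reference null distribution $\mathcal{P}_{\mathrm{null}}$ is the fully Gaussian product.

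Next I would invoke the generic SQ-dimension machinery: by the moment-matching property, the pairwise chi-square correlation between $\mathcal{P}_u$ and $\mathcal{P}_{u'}$ relative to $\mathcal{P}_{\mathrm{null}}$ decays like $|\langle u, u'\rangle|^{m+1}$ up to the TV error, so a packing of $2^{\Omega(d^c)}$ nearly-orthogonal unit vectors $u$ (with $|\langle u,u'\rangle| \le d^{-1/2+c}$, which exists for $c<1/2$) yields a family that is $(\bar\chi^2$-)hard in the sense of~\cite{FeldmanGRVX17}: the corresponding statistical-query dimension is $2^{\Omega(d^c)}$ and the correlation parameter is $e^{-\Omega(m)} = e^{-\Omega(1/\sqrt\epsilon)}$ times a $\poly(d)$ factor. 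The standard SQ lower bound lemma then gives: any SQ algorithm distinguishing a random member of the family from $\mathcal{P}_{\mathrm{null}}$ — hence any algorithm recovering $u$ to accuracy better than the packing radius $O(\epsilon^{1/4})$ (the $\epsilon^{1/4}$ matching the $O(\sqrt\epsilon)$ Frobenius error $\|\hat M - M^*\|_F = \|\hat u \hat u^\top - uu^\top\|_F$ from Theorem~\ref{thm:low-rank-mtrx-sensing-intro}) — must either make $2^{\Omega(d^c)}/d^{2-4c}$ queries or issue a query of tolerance below $e^{\bigO{1/\sqrt\epsilon}}/\bigO{d^{1-2c}}$; the $d^{2-4c}$ and $d^{1-2c}$ polynomial factors are exactly the bookkeeping from the $d^2$-dimensional ambient space ($A \in \R^{d\times d}$) combined with the $d^{-1/2+c}$ packing inner-product bound. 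The main obstacle I expect is step two: designing the univariate corrupted conditional distribution $P_0$ so that it is simultaneously (i) a valid $\epsilon$-contamination of the \emph{actual} conditional law arising in Gaussian-design matrix sensing (which is not itself Gaussian — $u^\top A u$ has a chi-square-type component), and (ii) moment-matching to order $\Theta(1/\sqrt\epsilon)$ against the correct reference, while (iii) keeping the lifted joint distribution a genuine corruption of $\G_\sigma$ in the \emph{strong contamination} sense (corrupting an $\epsilon$-fraction of the $(A_i,y_i)$ pairs, not just the measurements). Threading all three constraints through a single explicit construction, and verifying the chi-square correlation bound survives the non-Gaussianity of the design, is the technically delicate part; everything downstream is an application of the cited SQ framework.
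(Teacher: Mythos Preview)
Your high-level framework (NGCA + packing + SQ-dimension) is right, but two concrete points diverge from the paper's argument and the first is a genuine gap.

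First, a factual slip: $\langle A, uu^\top\rangle = \sum_{ij}A_{ij}u_iu_j$ is \emph{linear} in the i.i.d.\ Gaussian entries of $A$, so it is exactly Gaussian, not ``chi-square-like.'' The obstacle you flag at the end (surviving the non-Gaussianity of the design) is therefore not an obstacle at all. More importantly, your proposed mechanism---matching $m=\Theta(1/\sqrt\epsilon)$ moments so that the pairwise correlation decays like $|\langle u,u'\rangle|^{m+1}$---would not produce the theorem as stated: with $|\langle u,u'\rangle|\le d^{c-1/2}$ and $m\asymp 1/\sqrt\epsilon$ the correlation parameter becomes $d^{-\Theta((1/2-c)/\sqrt\epsilon)}$, which has nothing to do with the fixed polynomials $d^{2-4c}$ and $d^{1-2c}$ appearing in the statement. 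In fact, with signal strength $\|u\|=\Theta(\epsilon^{1/4})$ (forced by the target accuracy $\|\hat u-u\|=O(\epsilon^{1/4})$), the one-dimensional mean shift along the hidden direction is $\Theta(\sqrt\epsilon)$, and an $\epsilon$-budget contamination can only match $O(1)$ moments, not $\Theta(1/\sqrt\epsilon)$.

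The paper's mechanism is different in three places. (i) It conditions on $y$ and works with $\vect(A)\mid y$, which is Gaussian with mean $c_1\sqrt\epsilon\, y\,\vect(vv^\top)$; the hidden direction is $\vect(vv^\top)\in\R^{d^2}$, not $u\in\R^d$. Since $\langle\vect(vv^\top),\vect(v'v'^\top)\rangle=(v^\top v')^2$, the correlation lemma yields a factor $(v^\top v')^4$, and \emph{this} is where the $d^{4c-2}=1/d^{2-4c}$ and $d^{2c-1}=1/d^{1-2c}$ come from---a constant number of matched moments, not a growing one. (ii) For each fixed $y$ the contamination is just a two-component Gaussian mixture matching mean and second moment; the exponential $e^{O(1/\epsilon)}$ in the tolerance arises from bounding $\chi^2(D_{\mu(y)},\mathcal N(0,1))$, because the contaminating Gaussian must sit at distance $\approx 1/\sqrt\epsilon$. (iii) The contamination fraction $\epsilon_{\mu(y)}$ depends on $y$, and a separate integration argument is needed to show the total mass moved is at most $\epsilon$. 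Your proposal does not anticipate (i) or (iii), and replaces (ii) with a moment-matching step that does not fit the stated bounds.
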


In other words, we show that, when provided with SQ access to an $\eps$-corrupted distribution, approximating \(u\) is impossible unless employing a statistical query of higher precision
than what can be achieved with a strictly sub-quadratic number (e.g.,~\(d^{1.99}\)) of samples. 
Note that the SQ oracle \(\mathsf{STAT}\bigl(e^{\bigO[\big]{1/\sqrt\epsilon}}/\bigO[big]{d^{1-2c}}\bigr)\) 
can be simulated with \(O(d^{2 - 4c})/e^{O(1/\epsilon)}\) samples, and this bound is tight in general. 
Informally speaking, this theorem implies that improving the sample complexity 
from \(d^{2}\) to \(d^{2 - 4c}\) requires exponentially many queries. 
This result can be viewed as a near-optimal information-computation tradeoff
for the problem, within the class of SQ algorithms. 

The proof follows a similar analysis as in~\cite{diakonikolas2019efficient-algor, diakonikolas2021statistical}, using one-dimensional moment matching to construct a family of corrupted conditional distributions, which induce a family of corrupted joint distributions that are SQ-hard to learn. We provide the details of the proof in Appendix~\ref{sec:sq-lower-bound-appendix}. \new{Apart from the formal proof, in Appendix~\ref{sec:count-why-simple} we also informally discuss the intuition for why some simple algorithms that require \(O(d)\) samples do not provide dimension-independent error guarantees.}

\section*{Acknowledgements}
\label{sec:ack}
Shuyao Li was supported in part by NSF Awards DMS-2023239, NSF Award CCF-2007757 and the U.\ S.\ Office of Naval Research under award number N00014-22-1-2348.
Yu Cheng was supported in part by NSF Award CCF-2307106. 
Ilias Diakonikolas was supported in part by NSF Medium Award CCF-2107079, NSF Award CCF-1652862 (CAREER), a Sloan Research Fellowship, and a DARPA Learning with Less Labels (LwLL) grant.
Jelena Diakonikolas was supported in part by NSF Award CCF-2007757 and by the U.\ S.\ Office of Naval Research under award number N00014-22-1-2348.
Rong Ge was supported in part by NSF Award DMS-2031849, CCF-1845171 (CAREER) and a Sloan Research Fellowship.
Stephen Wright was supported in part by NSF Awards DMS-2023239 and CCF-2224213 and AFOSR via subcontract UTA20-001224 from UT-Austin. 

\newpage

\printbibliography

\newpage

\appendix

\section*{Supplementary Material}
Supplementary material is organized as follows. In Appendix~\ref{sec:prelim-appendix}, we provide useful auxiliary facts and relevant technical results from previous works. Appendix~\ref{sec:general_nonconvex-appendix} proves our result for general robust nonconvex optimization (Theorem~\ref{thm:global_convergence-general}). Appendix~\ref{sec:low-rank-appendix} provides omitted computation and proofs for robust low rank matrix sensing (Section~\ref{sec:matrix-low-rank}). Appendix~\ref{sec:sq-lower-bound-appendix} proves our SQ lower bound (Section~\ref{sec:sq-lower-bound}) for the sample complexity of efficient algorithms for the outlier-robust low rank matrix sensing problem, \new{and Appendix~\ref{sec:count-why-simple} discusses the intuition why some simple algorithms that violate our SQ lower bound fail.}

\section{Technical Preliminaries}\label{sec:prelim-appendix}
\subsection{Notation and Auxiliary Facts}\label{sec:notation-appendix}
\new{In the appendix, we use \(\bigtO{\cdot}\) notation to suppress, for conciseness, logarithmic dependences on all defined quantities, even if they do not appear inside \(\bigtO{\cdot}\). More precise statements can be found in the corresponding main parts of the paper.
}
For matrix space $\R^{d \times r}$ and a set $\cXstar \subset \R^{d \times r}$, we use $\projX(\cdot)$ to denote the Frobenius projection onto \(\cXstar\), i.e.,\ $\projX(U) = \argmin_{Z \in \cXstar} \fnorm{U - Z}^2 $. We use $\dist(U, \cXstar)$ to denote $\fnorm{U - \projX(U)}$.

\begin{fact}\label{fact:fnorm}
   For matrices \(A, B\) with compatible dimensions,
   \begin{align*}
     \fnorm{AB} &\le \opnorm{A} \fnorm{B}\\
     \fnorm{AB} &\le \fnorm{A} \opnorm{B}
   \end{align*}
 \end{fact}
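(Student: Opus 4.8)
\textbf{Proof proposal for Fact~\ref{fact:fnorm}.}

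The plan is to prove the first inequality by a column-wise decomposition and then deduce the second one by transposition. Write $B = [b_1, b_2, \dots, b_n]$ in terms of its columns $b_j$. Then $AB = [Ab_1, Ab_2, \dots, Ab_n]$, so by the definition of the Frobenius norm as the Euclidean norm of the vectorization, $\fnorm{AB}^2 = \sum_{j} \norm{Ab_j}^2$. For each $j$, the definition of the operator norm gives $\norm{Ab_j} \le \opnorm{A}\norm{b_j}$, hence $\fnorm{AB}^2 \le \opnorm{A}^2 \sum_j \norm{b_j}^2 = \opnorm{A}^2 \fnorm{B}^2$. Taking square roots yields $\fnorm{AB} \le \opnorm{A}\fnorm{B}$.

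For the second inequality, I would apply the first one to the transposed product, using the facts that $\fnorm{C} = \fnorm{C^\top}$ and $\opnorm{C} = \opnorm{C^\top}$ for any matrix $C$. Concretely, $\fnorm{AB} = \fnorm{(AB)^\top} = \fnorm{B^\top A^\top} \le \opnorm{B^\top}\fnorm{A^\top} = \opnorm{B}\fnorm{A}$, which is the claimed bound.

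There is no real obstacle here; the only thing to be careful about is stating precisely which characterizations of $\fnorm{\cdot}$ and $\opnorm{\cdot}$ are being invoked (Frobenius norm as the $\ell_2$ norm of the stacked columns, operator norm as the maximal Euclidean-norm dilation of a vector, and invariance of both under transposition). Given those standard facts, both inequalities follow in two or three lines each, so the write-up will be short.
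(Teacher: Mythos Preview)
Your proof is correct. The paper states Fact~\ref{fact:fnorm} as a standard auxiliary fact without proof, so there is no approach to compare against; your column-wise decomposition followed by transposition is the standard argument and would serve perfectly well as the missing justification.
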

For two matrices $A$ and $B$, let \(A \otimes B\)  denote the Kronecker product of \(A\) and \(B\).
\begin{fact}
  \label{fact:tensor}
  For matrices \(A, B, C\) with compatible dimensions,
  $\operatorname{vec}({A B C})=\left({C}^{\top} \otimes {A}\right) \operatorname{vec}({B})$.
\end{fact}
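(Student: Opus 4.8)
\textbf{Proof proposal for Fact~\ref{fact:tensor}.}

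This is the standard $\operatorname{vec}$--Kronecker identity, and the plan is to verify it entrywise by unwinding the definitions of matrix multiplication, the canonical flattening $\operatorname{vec}(\cdot)$ (columns stacked, as fixed in the Preliminaries), and the Kronecker product. Let $A \in \R^{p \times q}$, $B \in \R^{q \times s}$, $C \in \R^{s \times t}$, so that $ABC \in \R^{p \times t}$ and $\operatorname{vec}(ABC) \in \R^{pt}$. First I would write the $(i, \ell)$-entry of $ABC$ as $(ABC)_{i\ell} = \sum_{j=1}^{q} \sum_{k=1}^{s} A_{ij} B_{jk} C_{k\ell}$. Under the column-stacking convention, the entry of $\operatorname{vec}(ABC)$ in position $(\ell - 1)p + i$ equals $(ABC)_{i\ell}$; similarly the entry of $\operatorname{vec}(B)$ in position $(k-1)q + j$ equals $B_{jk}$.

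Next I would compute the matching entry of $(C^\top \otimes A)\operatorname{vec}(B)$. By the definition of the Kronecker product, $(C^\top \otimes A)$ is a $pt \times qs$ matrix whose $\big((\ell-1)p + i,\ (k-1)q + j\big)$-entry is $(C^\top)_{\ell k}\, A_{ij} = C_{k\ell}\, A_{ij}$. Multiplying by $\operatorname{vec}(B)$ and summing over the column index, which ranges over pairs $(k, j)$ with $k \in [s]$, $j \in [q]$, gives
\begin{equation*}
  \big[(C^\top \otimes A)\operatorname{vec}(B)\big]_{(\ell-1)p+i} = \sum_{k=1}^{s}\sum_{j=1}^{q} C_{k\ell}\, A_{ij}\, B_{jk} = \sum_{j=1}^{q}\sum_{k=1}^{s} A_{ij} B_{jk} C_{k\ell},
\end{equation*}
which is exactly $(ABC)_{i\ell}$. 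Since this holds for all $i \in [p]$, $\ell \in [t]$, the two vectors agree in every coordinate, establishing the identity.

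There is essentially no obstacle here — the only thing to be careful about is consistency with the flattening convention adopted in this paper (columns of the matrix stacked on top of one another, as in the definition of $\operatorname{vec}$ in Section~\ref{sec:prelims}), since the transposed form $\operatorname{vec}(ABC) = (A \otimes C^\top)\operatorname{vec}(B^\top)$ would arise under row-stacking; once that convention is pinned down, bookkeeping the index arithmetic $(\ell-1)p + i$ and $(k-1)q + j$ correctly is the whole content. Alternatively, one can cite this as a classical fact (e.g., it follows from the bilinearity of the map $X \mapsto AXC$ together with $\operatorname{vec}(A e_j e_k^\top C) = (C^\top e_k) \otimes (A e_j)$ applied to $B = \sum_{j,k} B_{jk} e_j e_k^\top$), but the entrywise check above is self-contained and short.
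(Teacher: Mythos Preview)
Your proof is correct. The paper states this as a standard auxiliary fact without proof, so there is no approach to compare against; your entrywise verification under the column-stacking convention is a valid and self-contained justification.
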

\begin{fact}
  \label{fact:kron_operator_norm}
  \(\opnorm{A \otimes B} = \opnorm{A}\opnorm{B}\)
\end{fact}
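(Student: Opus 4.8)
\textbf{Proof proposal for Fact~\ref{fact:kron_operator_norm}.}
The plan is to prove the identity $\opnorm{A \otimes B} = \opnorm{A}\opnorm{B}$ via singular value decompositions. First I would recall the mixed-product property of the Kronecker product: for matrices of compatible dimensions, $(A \otimes B)(C \otimes D) = (AC) \otimes (BD)$, and the facts that $(A \otimes B)^\top = A^\top \otimes B^\top$ and $I_m \otimes I_n = I_{mn}$. From these it follows that if $A = U_A \Sigma_A V_A^\top$ and $B = U_B \Sigma_B V_B^\top$ are singular value decompositions, then $A \otimes B = (U_A \otimes U_B)(\Sigma_A \otimes \Sigma_B)(V_A \otimes V_B)^\top$. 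The matrices $U_A \otimes U_B$ and $V_A \otimes V_B$ are orthogonal (again by the mixed-product property: $(U_A \otimes U_B)(U_A \otimes U_B)^\top = (U_A U_A^\top) \otimes (U_B U_B^\top) = I \otimes I = I$), so this is a valid SVD of $A \otimes B$, up to reordering of the diagonal entries of $\Sigma_A \otimes \Sigma_B$.

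Consequently, the singular values of $A \otimes B$ are exactly the pairwise products $\sigma_i(A)\,\sigma_j(B)$ over all $i, j$. The operator (spectral) norm is the largest singular value, so $\opnorm{A \otimes B} = \max_{i,j} \sigma_i(A)\sigma_j(B) = \left(\max_i \sigma_i(A)\right)\left(\max_j \sigma_j(B)\right) = \opnorm{A}\opnorm{B}$, where the middle equality uses that all singular values are nonnegative. This completes the argument.

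Alternatively, and perhaps more cleanly for a short write-up, I would avoid the SVD and argue directly with the variational characterization of the operator norm together with the identity $\vect(BXA^\top) = (A \otimes B)\vect(X)$ (a restatement of Fact~\ref{fact:tensor}) and the fact that vectorization is an isometry from the Frobenius norm to the Euclidean norm. For the upper bound, for any $X$ we have $\norm{(A \otimes B)\vect(X)} = \fnorm{BXA^\top} \le \opnorm{B}\fnorm{XA^\top} \le \opnorm{B}\opnorm{A}\fnorm{X} = \opnorm{A}\opnorm{B}\norm{\vect(X)}$, using Fact~\ref{fact:fnorm} twice; taking the supremum over unit vectors gives $\opnorm{A \otimes B} \le \opnorm{A}\opnorm{B}$. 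For the matching lower bound, let $a$ and $b$ be right singular vectors of $A$ and $B$ realizing their operator norms, i.e., $\norm{Aa} = \opnorm{A}\norm{a}$ and $\norm{Bb} = \opnorm{B}\norm{b}$, and test the operator on $a \otimes b$: since $(A \otimes B)(a \otimes b) = (Aa) \otimes (Bb)$ and $\norm{x \otimes y} = \norm{x}\norm{y}$ for vectors, we get $\norm{(A \otimes B)(a \otimes b)} = \opnorm{A}\opnorm{B}\norm{a}\norm{b} = \opnorm{A}\opnorm{B}\norm{a \otimes b}$, hence $\opnorm{A \otimes B} \ge \opnorm{A}\opnorm{B}$.

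This is an elementary fact and I do not anticipate a genuine obstacle; the only minor care needed is to state precisely which standard Kronecker-product identities are being invoked (mixed-product rule, behavior under transpose, and $\norm{x \otimes y} = \norm{x}\norm{y}$ for vectors), since the paper has not recorded them explicitly. Given the surrounding Facts~\ref{fact:fnorm} and~\ref{fact:tensor}, the vectorization-based argument is the most economical and reuses machinery already in the excerpt, so that is the route I would write up in full.
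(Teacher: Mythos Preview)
Your proposal is correct; both the SVD route and the vectorization route are valid proofs of this standard identity. Note, however, that the paper does not actually prove Fact~\ref{fact:kron_operator_norm}: it is stated without proof as a well-known property of the Kronecker product, so there is no ``paper's own proof'' to compare against. Your write-up would therefore go beyond what the paper includes, which is fine, and your second argument has the virtue of reusing Facts~\ref{fact:fnorm} and~\ref{fact:tensor} already recorded in the text.
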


We will frequently use the following fact about the mean and the variance of the quadratic form for zero-mean multivariate normal distributions.

\begin{fact}
  \label{fact:Gaussian_quadratic_form}
  Let \(X \sim \mathcal N(0, \Sigma)\) be a \(k\)-dimensional random variable  and let \(G\) be a \(k \times k\) real matrix; then
  \begin{align*}
    \E[X^{\top} G X] &= \tr(G\Sigma) = \< G, \Sigma\> \\
    \Var[X^{\top} G X] &=  \tr(G\Sigma (G + G^{\top})\Sigma)
  \end{align*}
\end{fact}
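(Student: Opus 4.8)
\textbf{Proof proposal for Fact~\ref{fact:Gaussian_quadratic_form}.}

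The plan is to reduce the general statement to the diagonal case by an orthogonal change of variables, and then do the bookkeeping with Isserlis' theorem (Wick's formula) for fourth moments of jointly Gaussian variables. First I would diagonalize the covariance: write $\Sigma = Q \Lambda Q^\top$ with $Q$ orthogonal and $\Lambda = \operatorname{diag}(\lambda_1,\dots,\lambda_k)$ with $\lambda_i \ge 0$, and set $Y := \Lambda^{-1/2} Q^\top X$ (on the support; if $\Sigma$ is singular one restricts to the range of $\Sigma$, or alternatively adds $\delta I$ and lets $\delta \to 0$ by continuity of both sides in $\Sigma$). Then $Y \sim \mathcal{N}(0, I_k)$ has i.i.d.\ standard normal coordinates, and $X = Q \Lambda^{1/2} Y$, so $X^\top G X = Y^\top \widetilde G Y$ where $\widetilde G := \Lambda^{1/2} Q^\top G Q \Lambda^{1/2}$. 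Note $\tr(\widetilde G) = \tr(G Q\Lambda Q^\top) = \tr(G\Sigma) = \langle G, \Sigma\rangle$, which already gives the mean formula once we prove $\E[Y^\top \widetilde G Y] = \tr(\widetilde G)$ for standard Gaussian $Y$; that is immediate since $\E[Y_i Y_j] = \delta_{ij}$, so $\E[\sum_{i,j}\widetilde G_{ij} Y_i Y_j] = \sum_i \widetilde G_{ii} = \tr(\widetilde G)$.

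For the variance, I would compute $\Var[Y^\top \widetilde G Y] = \E\big[(Y^\top \widetilde G Y)^2\big] - (\tr \widetilde G)^2$ by expanding $(Y^\top \widetilde G Y)^2 = \sum_{i,j,p,q} \widetilde G_{ij}\widetilde G_{pq} Y_i Y_j Y_p Y_q$ and applying Isserlis: $\E[Y_i Y_j Y_p Y_q] = \delta_{ij}\delta_{pq} + \delta_{ip}\delta_{jq} + \delta_{iq}\delta_{jp}$. The first term contributes $(\tr\widetilde G)^2$, which cancels; the second contributes $\sum_{i,j}\widetilde G_{ij}\widetilde G_{ij} = \tr(\widetilde G \widetilde G^\top)$; the third contributes $\sum_{i,j}\widetilde G_{ij}\widetilde G_{ji} = \tr(\widetilde G^2)$. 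Hence $\Var[Y^\top \widetilde G Y] = \tr(\widetilde G^2) + \tr(\widetilde G\widetilde G^\top) = \tr\big(\widetilde G(\widetilde G + \widetilde G^\top)\big)$.

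Finally I would translate back: since $\widetilde G = \Lambda^{1/2} Q^\top G Q \Lambda^{1/2}$ and $\widetilde G^\top = \Lambda^{1/2} Q^\top G^\top Q \Lambda^{1/2}$, cyclicity of trace gives
\[
\tr\big(\widetilde G(\widetilde G + \widetilde G^\top)\big) = \tr\big(\Lambda^{1/2}Q^\top G Q\Lambda Q^\top (G+G^\top) Q \Lambda^{1/2}\big) = \tr\big(G\Sigma(G+G^\top)\Sigma\big),
\]
using $Q\Lambda Q^\top = \Sigma$ twice. This matches the claimed formula. The only mildly delicate point — the ``main obstacle'' such as it is — is handling a possibly singular or merely PSD $\Sigma$ cleanly; I would dispatch this either by the $\delta I$ perturbation-and-limit argument (both sides are polynomials in the entries of $\Sigma$, hence continuous) or by noting that $X$ lives a.s.\ in $\operatorname{range}(\Sigma)$ so the whitening step goes through on that subspace. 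Everything else is the standard Wick calculation.
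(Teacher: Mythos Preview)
Your argument is correct and is the standard route via whitening plus Isserlis' formula. Note, however, that the paper does not actually prove this statement: it is recorded in the preliminaries as a known fact about Gaussian quadratic forms and used without justification, so there is no ``paper's own proof'' to compare against.
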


\subsection{Simplified Proof for the Optimization Algorithm with Inexact Gradients and Hessians}
\label{sec:inexact-randomized-short-proof-appendix}

\begin{algorithm}[H]
\caption{Nonconvex minimization with inexact gradients and Hessians~\cite{li2023randomized}}\label{alg:inexact_randomized}
  \For{$t = 1, 2, \dots$}{
    Obtain \(\inexact g_{t}\) from the inexact gradient oracle \\

  \uIf{\(\|\inexact g_{t}\| > \epsilon_{g}\)} {
    \(x_{t+1} = x_{t} - \frac1L \inexact g_{t}\)
  }
  \Else{
    Obtain \(\inexact H_{t}\) from the inexact Hessian oracle \\
    Compute smallest eigenvalue and its corresponding eigenvector \((\inexact\lambda_{t}, \inexact p_{t})\) \\
    \uIf{\(\inexact\lambda_{t} < -\epsilon_{H}\)}{
    \(\sigma_{t} = \pm 1\) with probability \(\frac12\) \\
    \(x_{t+1} = x_{t} + \frac{2 \epsilon_{H}}{L_{H}} \sigma_t \inexact p_{t}\)
  }
  \Else{
    return \(x_{t}\)
  }
  }
}
\end{algorithm}
We stated the following guarantee (Proposition~\ref{prop:optimization_inexact_derivatives_general}) in Section~\ref{sec:prelims}:
\begin{proposition}
\label{prop:optimization_inexact_derivatives_general-appendix}
    Suppose a function \(f\) is bounded below by $f^* > -\infty,$ has $L_g$-Lipschitz gradient and $L_H$-Lipschitz Hessian, and its
  inexact gradient and Hessian computation \(\inexact g_{t}\) and \(\inexact H_{t}\) satisfy
  \(\norm{\inexact g_{t} - \grad{f}(x_{t})} \le \frac{1}{3} \epsilon_{g}\) and \(\opnorm[\big]{\inexact H_{t} - \grad^{2} f(x_{t})} \leq \frac29 \epsilon_{H}\).
  Then there exists an algorithm (Algorithm~\ref{alg:inexact_randomized}) with the following guarantees:
  \begin{enumerate}[leftmargin=*]
    \item  (Correctness) If Algorithm~\ref{alg:inexact_randomized} terminates and outputs \(x_{n}\), then \(x_{n}\) is a
    \((\frac43\epsilon_g, \frac{4}{3}\epsilon_{H})\)-approximate second-order
    stationary point.
    \item (Runtime) Algorithm~\ref{alg:inexact_randomized} terminates with probability $1$. Let
    \(C_{\epsilon}:= \min\left(\frac{\epsilon_{g}^{2}}{6L_{g}},\frac{2\epsilon_{H}^{3}}{9L_{H}^{2}}\right)\). With probability at least \(1-\delta\),
    Algorithm~\ref{alg:inexact_randomized} terminates after $k$ iterations for
    \begin{equation*}
       k = \bigO[\Big]{ \frac{f(x_{0}) - \flb}{C_{\epsilon}} +  \frac{L_{H}^{2} L_{g}^{2} \epsilon_{g}^{2}}{\epsilon_{H}^{6}} \log\Bigl(\frac1\delta \Bigr)} \; .
    \end{equation*}
    \end{enumerate}
  \end{proposition}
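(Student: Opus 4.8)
\textbf{Overview.} The plan is to track the decrease of $f$ across the two types of iterations (gradient steps and negative-curvature steps), show that each step makes definite progress when the termination condition is not met, and then conclude termination by combining a deterministic potential-decrease argument with a concentration bound for the randomized negative-curvature steps. Correctness upon termination is the easy direction, so I would dispatch it first.

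\textbf{Step 1: Correctness.} Suppose the algorithm returns $x_t$. Then $\|\tilde g_t\| \le \epsilon_g$ and $\tilde\lambda_t \ge -\epsilon_H$, where $\tilde\lambda_t = \lambda_{\min}(\tilde H_t)$. Using the gradient inexactness $\|\tilde g_t - \grad f(x_t)\| \le \tfrac13\epsilon_g$ gives $\|\grad f(x_t)\| \le \tfrac43\epsilon_g$. Using the Hessian inexactness $\opnorm{\tilde H_t - \grad^2 f(x_t)} \le \tfrac29\epsilon_H$ together with Weyl's inequality gives $\lambda_{\min}(\grad^2 f(x_t)) \ge \tilde\lambda_t - \tfrac29\epsilon_H \ge -\epsilon_H - \tfrac29\epsilon_H = -\tfrac{11}{9}\epsilon_H \ge -\tfrac43\epsilon_H$. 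Hence $x_t$ is a $(\tfrac43\epsilon_g, \tfrac43\epsilon_H)$-approximate SOSP.

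\textbf{Step 2: Per-iteration progress.} For a gradient step (when $\|\tilde g_t\| > \epsilon_g$), I would apply the $L_g$-gradient-Lipschitz descent lemma $f(x_{t+1}) \le f(x_t) + \langle \grad f(x_t), x_{t+1}-x_t\rangle + \tfrac{L_g}{2}\|x_{t+1}-x_t\|^2$ with $x_{t+1} - x_t = -\tfrac1{L}\tilde g_t$ (taking $L = L_g$), and bound $\langle \grad f(x_t), \tilde g_t\rangle \ge \|\tilde g_t\|^2 - \|\tilde g_t\|\cdot\tfrac13\epsilon_g \ge \|\tilde g_t\|^2 - \tfrac13\|\tilde g_t\|^2 = \tfrac23\|\tilde g_t\|^2$ using $\|\tilde g_t\| > \epsilon_g$; this yields a decrease of at least $\tfrac{\epsilon_g^2}{6L_g} \ge C_\epsilon$. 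For a negative-curvature step (when $\|\tilde g_t\|\le\epsilon_g$ but $\tilde\lambda_t < -\epsilon_H$), I would use the $L_H$-Hessian-Lipschitz second-order expansion $f(x_{t+1}) \le f(x_t) + \langle\grad f(x_t), s_t\rangle + \tfrac12 s_t^\top \grad^2 f(x_t) s_t + \tfrac{L_H}{6}\|s_t\|^3$ with $s_t = \pm\tfrac{2\epsilon_H}{L_H}\tilde p_t$; the key point is that $\tilde p_t^\top \grad^2 f(x_t)\tilde p_t \le \tilde\lambda_t + \tfrac29\epsilon_H < -\tfrac79\epsilon_H$, so the quadratic term contributes roughly $-\tfrac{14}{9}\tfrac{\epsilon_H^3}{L_H^2}$; the cubic term is $+\tfrac43\tfrac{\epsilon_H^3}{L_H^2}$; and the sign $\sigma_t$ makes the linear term mean-zero. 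In expectation over $\sigma_t$, the decrease is at least $\tfrac{2\epsilon_H^3}{9L_H^2} \ge C_\epsilon$ (after carefully bookkeeping the constants), while the linear term $\langle\grad f(x_t), s_t\rangle$ is bounded in magnitude by $\tfrac{2\epsilon_H\epsilon_g}{L_H}$.

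\textbf{Step 3: Termination via a stochastic-process argument.} Deterministically, every gradient step decreases $f$ by at least $C_\epsilon$; since $f \ge f^*$, the number of gradient steps is at most $(f(x_0)-f^*)/C_\epsilon$. For the negative-curvature steps the issue is that the linear term can point uphill, so I would set up a supermartingale: let $Y_k = f(x_k) + c\cdot(\text{number of remaining... })$ — more precisely, index only the negative-curvature steps, and consider the process $Z_m = f(x_{t_m}) + \tfrac{m}{2}C_\epsilon$ minus a drift term, or simply apply Azuma--Hoeffding to the sum of the mean-zero linear contributions $\sum \sigma_{t_j}\langle\grad f(x_{t_j}), 2\epsilon_H\tilde p_{t_j}/L_H\rangle$, each bounded by $2\epsilon_H\epsilon_g/L_H$ in absolute value. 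Over $N$ negative-curvature steps the deterministic part of the decrease is $\ge \tfrac{N}{2}C_\epsilon$ while the fluctuation is $\bigO[\big]{\tfrac{\epsilon_H\epsilon_g}{L_H}\sqrt{N\log(1/\delta)}}$ with probability $1-\delta$; combined with $f\ge f^*$, solving the resulting inequality $\tfrac{N}{2}C_\epsilon \le (f(x_0)-f^*) + \bigO[\big]{\tfrac{\epsilon_H\epsilon_g}{L_H}\sqrt{N\log(1/\delta)}}$ for $N$ gives $N = \bigO[\big]{\tfrac{f(x_0)-f^*}{C_\epsilon} + \tfrac{\epsilon_H^2\epsilon_g^2}{L_H^2 C_\epsilon^2}\log(1/\delta)}$. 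Since $C_\epsilon \le \tfrac{2\epsilon_H^3}{9L_H^2}$, the second term is $\bigO[\big]{\tfrac{L_H^2 L_g^2 \epsilon_g^2}{\epsilon_H^6}\log(1/\delta)}$ after substituting the other branch $C_\epsilon \le \tfrac{\epsilon_g^2}{6L_g}$ where needed; adding the gradient-step count gives the stated bound on $k$. Termination with probability $1$ follows because on the event that the algorithm runs forever, the supermartingale would force $f \to -\infty$, contradicting $f \ge f^*$.

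\textbf{Main obstacle.} The delicate part is Step 3: the negative-curvature steps are the only source of randomness and their linear term is \emph{not} sign-definite, so a naive ``each step decreases $f$'' argument fails. The fix is to recognize that $\E[\langle\grad f(x_t), s_t\rangle \mid \mathcal F_t] = 0$ and apply a martingale concentration inequality to the accumulated linear terms, carefully choosing the constants $\tfrac13$ and $\tfrac29$ so that the quadratic gain strictly dominates the cubic loss and leaves room of size $\Theta(C_\epsilon)$ per step. Everything else is the standard Lipschitz-gradient / Lipschitz-Hessian descent calculus.
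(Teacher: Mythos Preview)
Your Steps~1 and~2 match the paper's argument: the paper carries out the same descent-lemma computations with the same constants, obtaining the deterministic gradient-step decrease $\tfrac{\epsilon_g^2}{6L_g}$ and the negative-curvature inequality $f(x_{t+1}) \le f(x_t) - \tfrac{2\epsilon_H^3}{9L_H^2} + \tfrac{2\epsilon_H}{L_H}\sigma_t\langle\grad f(x_t),\tilde p_t\rangle$.

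Step~3 is where you diverge. The paper does \emph{not} prove the $\log(1/\delta)$ bound stated in the proposition; it proves only the weaker $O\bigl(\tfrac{f(x_0)-f^*}{\delta\, C_\epsilon}\bigr)$ bound, by building the supermartingale $M_t = f(x_t) + tC_\epsilon$, applying the supermartingale convergence theorem to get $T<\infty$ a.s., then optional stopping to get $\E T \le (f(x_0)-f^*)/C_\epsilon$, and finally Markov's inequality. The full $\log(1/\delta)$ bound is only cited from~\cite{li2023randomized}. Your Azuma--Hoeffding route is the natural way to upgrade Markov to a high-probability bound and is the right idea; it buys you the stronger statement at the cost of a slightly more delicate stopping-time argument.

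That said, your Step~3 contains a gap. The claim ``the number of gradient steps is at most $(f(x_0)-f^*)/C_\epsilon$'' is not valid in isolation: negative-curvature steps with an unlucky sign of $\sigma_t$ can \emph{increase} $f$ (by up to $O(\epsilon_H\epsilon_g/L_H)$ each), potentially pushing $f$ above $f(x_0)$ and allowing arbitrarily many additional gradient steps. The fix is to avoid separating $G$ and $N$: if the algorithm has not terminated after $k$ total steps, summing all per-step bounds gives $f^* \le f(x_k) \le f(x_0) - kC_\epsilon + S_k$, where $S_k$ is the accumulated random linear contribution (zero on gradient steps), a martingale with increments bounded by $O(\epsilon_H\epsilon_g/L_H)$. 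Then apply Azuma together with a union bound over dyadic values of $k$ (since the stopping time is random) to bound $\max_{j\le k} S_j$, and solve the resulting inequality $kC_\epsilon \le f(x_0)-f^* + O\bigl(\tfrac{\epsilon_H\epsilon_g}{L_H}\sqrt{k\log(k/\delta)}\bigr)$ for $k$. This yields the claimed bound in one stroke and repairs the faulty separation.
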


  This section proves the correctness and a slightly weaker runtime guarantee in terms of the dependence on \(\delta\). We prove a \(O(1/\delta)\) dependence instead of \(O(\log(1/\delta))\), i.e.,
  with probability at least \(1-\delta\),
    Algorithm~\ref{alg:inexact_randomized} terminates after $k$ iterations for
    \begin{equation}
      \label{eq:high_probability_bound_general-appendix}
       k = \bigO[\Big]{ \frac{f(x_{0}) - \flb}{\delta C_{\epsilon}}}\;.
    \end{equation}

    \begin{proof}[Proof of correctness and a weaker runtime~\cite{li2023randomized}]
      We prove the correctness first. From the stopping criteria, we have
\(\norm{\inexact g_{k}} \le \epsilon_{g}\) and \(\inexact \lambda_{k} \ge -\epsilon_{H}\). Thus,
\[
  \norm{\grad f(x_{k})}
 \le \norm{g_{k}} + \frac13 \epsilon_{g} = \frac43 \epsilon_{g}.
\]
To bound \(\lambda_{\min}(\grad^{2} f(x_{k}))\), write \(U_{k} = \inexact H_{k} - \grad^{2} f(x_{k})\). By Hessian inexactness condition, \(\lambda_{\min}(-U_{k}) \ge -\opnorm{-U_{k}} \ge -\frac29 \epsilon_{H}\).

We use Weyl's theorem to conclude that
\[
  \lambda_{\min}(\grad^{2}f(x_{k}) ) \ge \lambda_{\min}(\inexact H_{k}) + \lambda_{\min}(-U_{k}) \ge -\epsilon_{H}-\frac29 \epsilon_{H} > -\frac{4}{3}\epsilon_{H},
\]
as required.

Now we analyze the runtime. We proceed by first establishing an expectation bound and then use Markov's inequality.

Write \(v_{t} :=  \inexact g_{t} - \grad f(x_{t})\). When the algorithm takes a gradient step, we have \(\| \inexact g_{t}\| > \epsilon_{g} \).
By gradient inexactness condition, we have \(\|v_{t}\| \le \frac13\| \inexact g_{t}\|\), so that  \(\|\grad f(x_{t}) \| \ge \frac23 \| \inexact g_{t}\|\).
From Taylor's Theorem, we have
\begin{align}
  f(x_{t+1}) & = f\left(x_{t} - \frac1{L_{g}}  \inexact g_t\right)  \nonumber \\
             & \leq f(x_{t}) - \frac1{L_{g}} \grad f(x_{t})^{\top} \inexact g_t + \frac{L_{g}}2 \cdot \frac1{L_{g}^{2}} \| \inexact g_t\|^{2} \nonumber \\
             & = f(x_{t}) - \frac1{2L_{g}}\left(\|\grad f(x_{t}) \|^{2} - \| v_{t}\|^{2} \right) \nonumber \\
             & \le f(x_{t}) - \frac1{2L_{g}} \left( \frac49\| \inexact g_{t}\|^{2} - \frac19\| \inexact g_{t}\|^{2} \right) \nonumber \\
             & \le f(x_{t}) - \frac1{6L_{g}}\| \inexact g_{t}\|^{2} \nonumber \\
             & \le f(x_{t}) - \frac1{6L_{g}}\epsilon_{g}^{2}, \label{eq:first-order-decrease}
\end{align}

For the negative curvature step, recall that \(\opnorm[\big]{\inexact H_{t} - \grad^{2} f(x_{t})} \leq \frac29 \epsilon_{H}\). It follows from Taylor's theorem that
\begin{align*}
  f(x_{t+1})
  &= f(x_{t} + \frac{2\epsilon_H}{L_{H}} \sigma_t \inexact{p}_{t})\\
  & \le f(x_{t}) + 2\frac{\epsilon_H}{L_{H}}\grad f(x_{t})^{\top}\sigma_t \inexact{p}_{t} + \frac12\frac{4\epsilon_H^{2}}{L_{H}^{2}} \inexact{p}_{t}^{\top} \grad^{2}f(x_{t})\inexact{p}_{t} + \frac{L_{H}}6  \frac{8\epsilon_H^{3}}{L_{H}^{3}} \\
  & = f(x_{t}) + \frac{2\epsilon_H^{2}}{L_{H}^{2}}\left(\inexact{p}_{t}^{\top} \grad^{2}f(x_{t})\inexact{p}_{t} + \frac23 \epsilon_H \right) + 2\frac{\epsilon_H}{L_{H}}\grad f(x_{t})^{\top}\sigma_t \inexact{p}_{t}
\end{align*}
When the algorithm takes a negative curvature step,  we have \(\inexact\lambda_{t} < -\epsilon_H < 0 \), so by Hessian inexactness condition,  we have
\(\| \inexact H_{t} - \grad^{2} f(x_{t})\| \le \frac29 \epsilon_{H}  \le \frac29 |\inexact\lambda_{t}|  \). It follows from the definition of operator norm and Cauchy-Schwarz that
\(|\inexact{p}_{t}^{\top} \inexact H_{t}\inexact{p}_{t} - \inexact{p}_{t}^{\top}\grad^{2} f(x_{t})\inexact{p}_{t}| \le \| \inexact H_{t}\inexact{p}_{t} - \grad^{2} f(x_{t})\inexact{p}_{t}\|\le \frac{2|\inexact\lambda_{t}|}9\), so
\[
\inexact{p}_{t}^{\top}\grad^{2} f(x_{t})\inexact{p}_{t} \le \frac29|\inexact\lambda_{t}| + {p}_{t}^{\top} \inexact H_{t}\inexact{p}_{t} = \frac29 |\inexact\lambda_{t}| + (-|\inexact\lambda_{t}|) = -\frac79 |\inexact\lambda_{t}|.
\]
We thus have
\begin{align}
  f(x_{t+1})
  & \le f(x_{t}) + \frac{2\epsilon_H^{2}}{L_H^{2}}\left(\inexact{p}_{t}^{\top} \grad^{2}f(x_{t})\inexact{p}_{t} + \frac23 \epsilon_H \right) + 2\frac{\epsilon_H}{L_H}\grad f(x_{t})^{\top}\sigma_t \inexact{p}_{t}  \nonumber \\
  & \le f(x_{t}) + \frac{2\epsilon_H^{2}}{L_H^{2}}\left(-\frac79 |\inexact\lambda_{t}| + \frac23 \alpha_{t}\right) + 2\frac{\epsilon_H}{L_H}\grad f(x_{t})^{\top}\sigma_t \inexact{p}_{t} \nonumber \\
  & \le  f(x_{t})  - \frac{2\epsilon_{H}^{3}}{9L_{H}^{2}} + 2\frac{\epsilon_H}{L_H}\grad f(x_{t})^{\top}\sigma_t \inexact{p}_{t}. \label{eq:NC-second-order-decrease}
\end{align}

We have the following result for expected stopping time of Algorithm~\ref{alg:inexact_randomized}.
Here the expectation is taken with respect to the random variables $\sigma_t$ used at the negative curvature iterations.
For purposes of this and later results, we define
\begin{equation} \label{eq:Ceps}
C_{\epsilon}:= \min\left(\frac{\epsilon_{g}^{2}}{6L_{g}},\frac{2\epsilon_{H}^{3}}{9L_{H}^{2}}\right).
\end{equation}

Assuming Lemma~\ref{lemma:expected_complexity}, the runtime guarantee in Proposition~\ref{prop:optimization_inexact_derivatives_general-appendix} follows from Markov inequality.
\end{proof}

\begin{lemma}
  \label{lemma:expected_complexity}
   Consider the same setting as in Proposition~\ref{prop:optimization_inexact_derivatives_general-appendix}.
    Let \(T\) denote the iteration at which  Algorithm~\ref{alg:inexact_randomized} terminates. Then \(T < \infty\) almost surely and
    \begin{equation}
      \label{eq:expectation_bound}
      \E T \le \frac{f(x_{0}) - \flb}{C_{\epsilon}},
    \end{equation}
    where $C_{\epsilon}$ is defined in \eqref{eq:Ceps}.
  \end{lemma}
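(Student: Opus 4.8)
The plan is to set up a supermartingale-type decrease argument on the objective values $f(x_t)$ and combine it with the lower bound $f \ge \flb$. First I would observe that the inequalities \eqref{eq:first-order-decrease} and \eqref{eq:NC-second-order-decrease} already established in the proof of Proposition~\ref{prop:optimization_inexact_derivatives_general-appendix} give, at every non-terminating iteration $t$,
\[
  f(x_{t+1}) \le f(x_t) - C_\epsilon + 2\tfrac{\epsilon_H}{L_H}\,\grad f(x_t)^\top \sigma_t \inexact p_t \, \Ind_{\text{NC step at }t},
\]
where the last term is present only on negative-curvature iterations and $C_\epsilon$ is as in \eqref{eq:Ceps}. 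The key point is that $\sigma_t = \pm 1$ with probability $\tfrac12$ each, independently of everything determined up to the start of iteration $t$ (in particular independent of $x_t$, $\grad f(x_t)$, and $\inexact p_t$). Hence, conditioning on the filtration $\ffilt_t$ generated by the first $t$ iterations, the random sign term has conditional expectation zero, so
\[
  \E\bigl[f(x_{t+1}) \,\big|\, \ffilt_t\bigr] \le f(x_t) - C_\epsilon
\]
on the event $\{T > t\}$ that the algorithm has not yet stopped.

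Next I would turn this one-step expected decrease into a bound on $\E T$ via optional stopping. Define $Y_t := f(x_{t \wedge T}) + C_\epsilon\,(t \wedge T)$. The computation above shows $\E[Y_{t+1} \mid \ffilt_t] \le Y_t$, i.e.\ $(Y_t)$ is a supermartingale; it is bounded below by $\flb$ since $f \ge \flb$ and $C_\epsilon (t\wedge T) \ge 0$. Therefore $\E Y_t \le \E Y_0 = f(x_0)$ for all $t$, which gives
\[
  C_\epsilon \,\E[t \wedge T] \le f(x_0) - \E\bigl[f(x_{t\wedge T})\bigr] \le f(x_0) - \flb.
\]
Letting $t \to \infty$ and applying the monotone convergence theorem to $t \wedge T \uparrow T$ yields $\E T \le (f(x_0) - \flb)/C_\epsilon$, which is \eqref{eq:expectation_bound}. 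Finiteness $T < \infty$ almost surely is then immediate since $\E T < \infty$ (alternatively it follows directly from the strict per-step decrease in expectation together with the lower bound).

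The main obstacle — really the only subtle point — is making the martingale/conditioning bookkeeping precise: one must be careful that the stopping time $T$ is a stopping time with respect to $(\ffilt_t)$ (it is, since the decision to stop at iteration $t$ depends only on $\inexact g_t$ and $\inexact H_t$, which are $\ffilt_t$-measurable), that the random sign $\sigma_t$ is genuinely independent of $\ffilt_t$, and that the extra term in \eqref{eq:NC-second-order-decrease} really does vanish in conditional expectation rather than merely in unconditional expectation — the latter is what licenses the telescoping even though the algorithm's trajectory is random. Everything else is the routine supermartingale optional-stopping argument; no new estimates beyond \eqref{eq:first-order-decrease} and \eqref{eq:NC-second-order-decrease} are needed.
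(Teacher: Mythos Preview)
Your proposal is correct and follows essentially the same supermartingale argument as the paper: both construct the stopped process $Y_t = f(x_{t\wedge T}) + C_\epsilon\,(t\wedge T)$, use the zero-mean random sign to get the one-step conditional decrease, and combine the lower bound $f\ge \flb$ with a limiting argument to obtain $\E T \le (f(x_0)-\flb)/C_\epsilon$. The only cosmetic difference is the order of the conclusions—the paper first establishes $T<\infty$ a.s.\ via the supermartingale convergence theorem and then invokes optional stopping, whereas you bound $\E[t\wedge T]$ directly and pass to the limit by monotone convergence, deducing a.s.\ finiteness from $\E T<\infty$.
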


  The proof of this result is given below.
  It constructs a supermartingale\footnote{A supermartingale with respect to filtration \(\{\G_{1}, \G_{2}, \dotsc\}\) is a sequence of random variables $\{Y_1,Y_2, \dotsc \}$ such that for all \(k \in \Z_{+}\), (i) \(\E \abs{Y_{t}} < \infty\), (ii) \(Y_{t}\) is \(\G_{t}\)-measurable, and (iii) $\E (Y_{t+1} \, | \G_{t})  \le Y_t$.} based on the function value and uses a supermartingale convergence theorem and optional stopping theorem to obtain the final result.
  A similar proof technique is used in \cite{bergou2021a-subsampling-l} but for a line-search algorithm.
  We collect several relevant facts about supermartingales before proving the result.

First, we need to ensure the relevant supermartingale is well defined even after the algorithm terminates, so that it is possible to let the index \(t\) of the supermartingale go to \(\infty\).
  \begin{fact}[{\cite[Theorem 4.2.9]{durrett2019probability}}]
    \label{fact:stopped_martingale}
If \(T\) is a stopping time and \(X_{t}\) is a supermartingale, then \(X_{\min(T, t)}\) is a supermartingale.
\end{fact}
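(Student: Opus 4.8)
The plan is to verify directly the three defining properties of a supermartingale (integrability, adaptedness, and the decreasing-in-conditional-expectation inequality) for the stopped process $Y_{t} := X_{\min(T,t)}$, with respect to the same filtration $\{\G_{t}\}$ for which $X_{t}$ is a supermartingale and $T$ a stopping time. Write $T \wedge t := \min(T,t)$. The single identity that drives the whole argument is that the increments of $Y$ are the increments of $X$ masked by the ``not yet stopped'' event: for every $k$,
\[
  Y_{k} - Y_{k-1} = X_{T \wedge k} - X_{T \wedge (k-1)} = (X_{k} - X_{k-1})\, \Ind_{\{T \ge k\}},
\]
since on $\{T \ge k\}$ both stopped indices advance while on $\{T \le k-1\}$ both are frozen at $T$. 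Summing this from the base index gives $Y_{t} = Y_{1} + \sum_{k=2}^{t} (X_{k} - X_{k-1})\Ind_{\{T \ge k\}}$.

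Given this representation I would argue as follows. \textbf{(i) Integrability.} Pointwise $Y_{t} = X_{T \wedge t}$ equals one of $X_{1},\dots,X_{t}$, so $|Y_{t}| \le \sum_{k=1}^{t} |X_{k}|$ and $\E|Y_{t}| \le \sum_{k=1}^{t} \E|X_{k}| < \infty$ by the integrability of each $X_{k}$. \textbf{(ii) Adaptedness.} In the increment identity, $X_{k} - X_{k-1}$ is $\G_{k}$-measurable, and $\Ind_{\{T \ge k\}} = 1 - \Ind_{\{T \le k-1\}}$ is $\G_{k-1}$-measurable precisely because $T$ is a stopping time; hence each summand is $\G_{k} \subseteq \G_{t}$-measurable, and therefore $Y_{t}$ is $\G_{t}$-measurable. \textbf{(iii) Supermartingale inequality.} Applying the increment identity with $k = t+1$,
\[
  \E\bigl( Y_{t+1} - Y_{t} \bigm| \G_{t} \bigr) = \E\bigl( (X_{t+1} - X_{t})\Ind_{\{T \ge t+1\}} \bigm| \G_{t} \bigr) = \Ind_{\{T \ge t+1\}}\, \E\bigl( X_{t+1} - X_{t} \bigm| \G_{t} \bigr) \le 0,
\]
where the indicator is pulled out of the conditional expectation because $\{T \ge t+1\} = \{T \le t\}^{c} \in \G_{t}$, and the last inequality uses $\Ind_{\{T \ge t+1\}} \ge 0$ together with $\E(X_{t+1} \mid \G_{t}) \le X_{t}$. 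Rearranging yields $\E(Y_{t+1} \mid \G_{t}) \le Y_{t}$, which is the claim.

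There is no genuine obstacle: this is a standard textbook fact (Durrett, Theorem~4.2.9), and in the final write-up it suffices either to cite it or to include the short verification above. The only conceptual point worth flagging is \emph{where} the stopping-time hypothesis enters --- in step (ii) and in the pull-out step of (iii): it is the measurability $\{T \le k-1\} \in \G_{k-1}$ that makes the masking indicators $\Ind_{\{T \ge k\}}$ sufficiently ``predictable,'' and for a general (non-adapted) random time this step, and hence the whole fact, fails.
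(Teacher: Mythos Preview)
Your proof is correct and is the standard textbook verification. The paper does not give its own proof of this fact: it simply states it with a citation to Durrett, Theorem~4.2.9, and invokes it as a black box. Your own remark that ``it suffices either to cite it or to include the short verification above'' accurately reflects how the paper handles it.
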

The following supermatingale convergence theorem will be used to ensure the function value converges, so that the algorithm terminates with probability 1.
\begin{fact}[Supermartingale Convergence Theorem, {\cite[Theorem 4.2.12]{durrett2019probability}}]
  \label{fact:martingale_convergence}
If \(X_{t} \ge 0\) is a supermartingale, then as \(t \ra \infty\), there exists a random variable \(X\) such that \(X_{t} \ra X \; a.s.\) and \(\E X \le \E X_{0}\).
\end{fact}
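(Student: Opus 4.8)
The plan is to follow the classical route to Doob's convergence theorem via the upcrossing inequality. First I would note that nonnegativity already gives $L^1$-boundedness: the supermartingale property $\E[X_{t+1}\mid\G_t]\le X_t$ yields $\E X_{t+1}\le\E X_t$, so $\sup_t\E\abs{X_t}=\sup_t\E X_t\le\E X_0<\infty$. The only thing left to rule out is oscillation of the paths, and for this I would control, for each fixed pair of rationals $a<b$, the number $U_n[a,b]$ of upcrossings of the interval $[a,b]$ completed by the finite sequence $X_0,\dots,X_n$.

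The heart of the argument is the upcrossing inequality $(b-a)\,\E U_n[a,b]\le\E[(X_n-a)^-]\le\abs{a}$, where the last bound uses $X_t\ge0$. I would prove this by the standard ``buy low, sell high'' betting argument rather than cite it: define the predictable $\{0,1\}$-valued process $H_k$ that turns on at the first time the process falls to or below $a$, turns off at the next time it rises to or above $b$, and then repeats; since $H$ is bounded, nonnegative and predictable, the discrete integral $(H\cdot X)_n:=\sum_{k=1}^n H_k(X_k-X_{k-1})$ is again a supermartingale with $(H\cdot X)_0=0$, so $\E(H\cdot X)_n\le0$, while pathwise $(H\cdot X)_n\ge(b-a)U_n[a,b]-(X_n-a)^-$ because each completed upcrossing contributes at least $b-a$ and an incomplete final excursion contributes at least $-(X_n-a)^-$. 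Rearranging and taking expectations gives the inequality; letting $n\to\infty$ and using monotone convergence, with $U_\infty[a,b]:=\lim_n U_n[a,b]$, we obtain $(b-a)\,\E U_\infty[a,b]\le\abs{a}<\infty$, hence $U_\infty[a,b]<\infty$ almost surely.

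To conclude, I would take a union bound over the countable set of rational pairs $a<b$: on an event of probability $1$, $U_\infty[a,b]<\infty$ simultaneously for all such pairs. On that event $\liminf_t X_t=\limsup_t X_t$, since otherwise one could pick rationals strictly between $\liminf_t X_t$ and $\limsup_t X_t$ and the path would upcross the corresponding interval infinitely often. Therefore $X:=\lim_t X_t$ exists almost surely with values in $[0,\infty]$. Finally, Fatou's lemma gives $\E X=\E[\liminf_t X_t]\le\liminf_t\E X_t\le\E X_0<\infty$, which simultaneously establishes the claimed bound $\E X\le\E X_0$ and, because $\E X<\infty$, forces $X<\infty$ almost surely, so $X$ is a genuine finite-valued random variable.

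The main obstacle is the upcrossing inequality: verifying that $(H\cdot X)_n$ is a supermartingale and checking the pathwise lower bound on it are the only places where care is needed, and everything else (the $L^1$ bound, the union bound over rationals, the application of Fatou) is routine. If one prefers to avoid the betting-strategy derivation, the same inequality can be obtained by applying optional stopping to $X$ stopped at the successive hitting times of $\{X\le a\}$ and $\{X\ge b\}$, using Fact~\ref{fact:stopped_martingale} to keep the stopped processes supermartingales; but the $H\cdot X$ argument is self-contained and short.
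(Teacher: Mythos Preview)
Your argument is the classical Doob proof via the upcrossing inequality and is correct; the $L^1$ bound from nonnegativity, the betting-strategy derivation of $(b-a)\,\E U_n[a,b]\le\E[(X_n-a)^-]\le\abs{a}$, the countable union over rational pairs, and the Fatou step are all sound. Note, however, that the paper does not give its own proof of this statement: it is recorded as a Fact cited from Durrett's textbook and used as a black box in the proof of Lemma~\ref{lemma:expected_complexity}, so there is nothing to compare against beyond observing that your proof is essentially the standard textbook proof (indeed the one in the cited reference).
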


Finally, we will use the optional stopping theorem to derive the expected iteration complexity. Note that we use a version of the optional stopping theorem specific to nonnegative supermartingales that does not require uniform integrability.
\begin{fact}[Optional Stopping Theorem, {\cite[Theorem 4.8.4]{durrett2019probability}}]
  \label{fact:optional_stopping}
  If \(X_{n}\) is a nonnegative supermartingale and \(N \le \infty\) is a stopping time, then \(\E X_{0} \ge \E X_{N}\). 
\end{fact}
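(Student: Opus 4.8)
The plan is to deduce this from three facts already in hand: that a stopped supermartingale is again a supermartingale (Fact~\ref{fact:stopped_martingale}), the Supermartingale Convergence Theorem (Fact~\ref{fact:martingale_convergence}), and Fatou's lemma. The first thing I would do is pin down the meaning of $X_N$ on the event $\{N = \infty\}$: since $X_n \ge 0$ is a supermartingale with respect to the filtration $\{\G_n\}$, Fact~\ref{fact:martingale_convergence} supplies a nonnegative random variable $X_\infty$ with $X_n \to X_\infty$ almost surely, and one sets $X_N := X_\infty$ there. This is the only reasonable reading of ``$N \le \infty$'' in the statement, and it is exactly the limit random variable that appears when the fact is invoked in Lemma~\ref{lemma:expected_complexity}.

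Next I would pass to the stopped process $Y_n := X_{\min(N,n)}$. By Fact~\ref{fact:stopped_martingale} this is again a supermartingale; it is visibly nonnegative and $\G_n$-measurable, and $Y_0 = X_0$, so $\E Y_n \le \E Y_0 = \E X_0$ for every finite $n$. The key observation is that $Y_n \to X_N$ almost surely: on $\{N < \infty\}$ one has $Y_n = X_N$ for all $n \ge N$, while on $\{N = \infty\}$ one has $Y_n = X_n \to X_\infty = X_N$ by the convergence theorem. Feeding the nonnegative sequence $(Y_n)$ into Fatou's lemma then gives
\[
  \E X_N \;=\; \E\Bigl(\liminf_{n \to \infty} Y_n\Bigr) \;\le\; \liminf_{n \to \infty} \E Y_n \;\le\; \E X_0 ,
\]
which is the desired inequality, and finiteness of $\E X_N$ (hence $X_N < \infty$ a.s.) comes along for free.

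The hard part --- really the only point requiring care --- is that $N$ is allowed to be unbounded, so one cannot simply ``apply optional stopping at time $N$'': the stopped sequence $(Y_n)$ need not be uniformly integrable, and the limit must therefore be taken through the one-sided Fatou inequality rather than through an $L^1$-convergence argument. This is precisely where the nonnegativity hypothesis earns its keep, on two counts: it is what lets the Supermartingale Convergence Theorem define $X_\infty$ in the first place, and it is what licenses Fatou's lemma in the last display. Everything else --- checking that $Y_n$ inherits integrability and $\G_n$-measurability and that $\E Y_0 = \E X_0$ --- is routine bookkeeping.
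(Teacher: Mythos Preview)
Your proof is correct and is essentially the standard argument (and indeed the one Durrett gives in the cited reference). The paper itself does not prove this statement---it is quoted as a Fact from \cite{durrett2019probability} and used as a black box in the proof of Lemma~\ref{lemma:expected_complexity}---so there is no in-paper proof to compare against; your write-up supplies exactly the missing details using only Facts~\ref{fact:stopped_martingale} and~\ref{fact:martingale_convergence} together with Fatou's lemma.
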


\begin{proof}[Proof of Lemma~\ref{lemma:expected_complexity}]
We first construct a supermartingale based on function values. Since \(\E \left[\sigma_t \right] = 0\), linearity of expectation implies that
\(\E \left[2\frac{\alpha_{t}}{M}\grad f(x_{t})^{\top}\sigma_t \inexact{p}_{t}\middle\vert x_{t}\right] = 0\).
We therefore have from \eqref{eq:NC-second-order-decrease} that
\begin{align*}
  \E\left[f(x_{t+1}) \middle\vert x_{t}\right]
  & \le f(x_{t})- \frac{2\epsilon_{H}^{3}}{9L_{H}^{2}}.
\end{align*}
By combining with the (deterministic) first-order decrease estimate \eqref{eq:first-order-decrease}, we have
\[\E\left[f(x_{t+1}) \middle\vert x_{t}\right] \le f(x_{t})- \min\left(\frac{\epsilon_{g}^{2}}{6L_{g}},\frac{2\epsilon_{H}^{3}}{9 L_{H}^{2}}\right) = f(x_t) - C_{\epsilon}.   \]
Consider the stochastic process \(M_{t}:= f(x_{t}) + t C_{\epsilon}\). We have
\begin{align*}
  \E\left[M_{t+1}\middle\vert x_{t} \right]
 &= \E\left[  f(x_{t+1}) + (t+1) C_{\epsilon} \middle\vert x_{t} \right] \\
  & \le \E\left[  f(x_{t})- C_{\epsilon} + (t+1) C_{\epsilon} \middle\vert x_{t} \right]\\
  & =  \E\left[  f(x_{t}) + t C_{\epsilon} \middle\vert x_{t} \right] = M_{t}.
\end{align*}

We need to select a filtration to define the supermartingale \(M_{t}\).
We view \(x_{t}\) as random variables defined with respect to \(\sigma_{i}, i \le k\).
Since \(M_{t}\) is expressed as a function of \(x_{t}\) only, we define the filtration \(\{\G_{t}\}\) to be the filtration generated by \(x_{t}\), and it naturally holds that \(M_{t}\) is \(\G_{t}\)-measurable for all \(k\) and \(\E\left[M_{t+1}\middle\vert \G_{t} \right] = \E\left[M_{t+1}\middle\vert x_{t} \right]\).
Hence, \(\{M_{t}\}\) is a supermartingale with respect to filtration \(\{\G_{t}\}\).

Let \(T\) denote the iteration at which our algorithm stops.
Since the decision to stop at iteration \(t\) depends only on \(x_{t}\), we have \(\{T = t \} \in \G_{t}\), which implies \(T\) is a stopping time.

We will use the supermartingale convergence theorem (Fact~\ref{fact:martingale_convergence}) to show that \(T < +\infty\) almost surely, since the function value cannot decrease indefinitely as it is bounded by \(\flb\) from below. To apply Fact~\ref{fact:martingale_convergence}, we need to let \(t \ra \infty\), so we need to transform \(\{M_{t}\}\) to obtain a supermartingale \(\{Y_{t}\}\)that is well defined even after the algorithm terminates.

It follows from Fact~\ref{fact:stopped_martingale} that \(Y_{t} :=  M_{\min(t,T)}\) is also a supermartingale. Since \(Y_{t} \ge \flb\), it follows from the supermartingale convergence theorem (Fact~\ref{fact:martingale_convergence}) applied  to \(Y_{t} - \flb\) that \(Y_{t} \rightarrow Y_{\infty}\) almost surely for some random variable \(Y_{\infty}\) with \(\E Y_{\infty} \le \E Y_{0} = \E M_{0} = f(x_{0}) < \infty\). Hence \(\Prob[Y_{\infty} = +\infty] = 0\). On the other hand, as \(t \rightarrow \infty\), we have \(t C_{\epsilon}  \rightarrow \infty\), so \( T = +\infty \implies Y_{t} = M_{t} \ge \flb + t C_{\epsilon} \ra \infty \implies Y_{\infty} = +\infty\). Therefore we have \(\Prob[T < +\infty]= 1\).

We can then apply the optional stopping theorem (Fact~\ref{fact:optional_stopping}) to \(Y_{t} - \flb\). It follows that \[ \flb +  \E T \cdot C_{\epsilon}  \le \E f(x_{T}) +  \E T \cdot C_{\epsilon}  =\E\left[ M_{T} \right] = \E\left[ Y_{T} \right] \le \E [Y_{0}] = \E\left[ M_{0} \right] = f(x_{0}), \] where the first equality uses \(T < +\infty\) almost surely and the last inequality is Fact~\ref{fact:optional_stopping}.
By reorganizing this bound, we obtain \(\E T \le  \frac{f(x_{0}) - \flb}{C_{\epsilon}} \), as desired.
\end{proof}

\subsection{Robust Mean Estimation---Omitted background}\label{sec:robust-mean-estimation-appendix}
A class of algorithms that robustly estimate quantities in the presence of outliers under strong contamination model (Definition~\ref{def:corruption}) are based on the stability of samples:
\begin{definition}[Definition of Stability~\cite{diakonikolas2017being-robust-in}]\label{def:stability}
  Fix \(0 < \epsilon < 1/2\) and \(\delta > \epsilon\). A finite set
  \(S \subset \R^{k}\) is {\((\epsilon, \delta)\)-stable} with respect to mean
  \(\mu \in \R^{k}\) and \(\sigma^{2}\) if for every \(S'\subset S\) with
  \(|S'| \ge (1 - \epsilon) |S|\), the following conditions hold: (i)
  \(\norm{\mu_{S'} - \mu} \le \sigma\delta\), and (ii)
  \(\opnorm{\bar\Sigma_{S'} - \sigma^{2} I} \le \sigma^{2}\delta^{2}\epsilon\),
  where \(\mu_{S'}\) and \(\bar\Sigma_{S'}\) denote the empirical mean and
  empirical covariance over the set \(S'\) respectively.
\end{definition}

\begin{algorithm}[ht]
  \caption{\(\mathsf{RobustMeanEstimation}\) with unknown covariance bound}
  \label{alg:robust_mean_estimation}
  \KwData{\(0 < \epsilon < 1/2\) and \(T\) is an \(\epsilon\)-corrupted set}
  \KwResult{\(\hat \mu\) with \(\norm{\hat\mu - \mu_{S}} = \bigO[\big]{\sqrt{\opnorm{\Sigma} \epsilon}}\)}
  Initialize a weight function \(w : T \rightarrow \R_{\ge 0}\) with \(w(x)= 1/ |T|\) for all \(x \in T\)\;
  \While{\(\norm{w}_{1} \ge 1 - 2\epsilon\)}{
    \(\mu(w) := \frac1{\norm{w}_{1}}\sum_{x\in T}w(x) x\)\;
    \(\Sigma(w) :=  \frac1{\norm{w}_{1}}\sum_{x\in T}w(x)(x - \mu(w))(x-\mu(w))^{\top}\)\;
    Compute the largest eigenvector \(v\) of \(\Sigma(w)\) \;
    \(g(x) := |v^{\top}(x - \mu(w))|^{2}\)\;
    Find the largest threshold \(t\) such that \(\Sigma_{x\in T: g(x) \ge t}w(x) \ge \epsilon\)\;
    \(f(x) := g(x)\Ind{\{g(x) \ge t\}}\) \;
    \(w(x) \leftarrow w(x)\left( 1 - \frac{f(x)}{\max_{y \in T: w(y) \neq 0} f(y)}\right)\) \;
  }
  \Return\(\mu(w)\)
\end{algorithm}

On input the corrupted version of a stable set, Algorithm~\ref{alg:robust_mean_estimation} has the following guarantee.
\begin{proposition}[{Robust Mean Estimation with Stability~\cite[Theorem A.3]{diakonikolas2020outlier}}]
  \label{prop:robust_mean_estimation_with_stability}
  Let $T \subset \mathbb{R}^k$ be an $\epsilon$-corrupted version of a set $S$, where $S$ is $(C \epsilon, \delta)$-stable with respect to $\mu_S$ and $\sigma^2$, and where $C>0$ is a sufficiently large constant. Algorithm~\ref{alg:robust_mean_estimation} on input $\epsilon$ and  $T$ (but not $\sigma$ or $\delta$) returns (deterministically) a vector $\hat{\mu}$ in polynomial time so that $\norm{\mu_S-\hat{\mu}}=O(\sigma \delta)$.
\end{proposition}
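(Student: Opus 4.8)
This is the standard \emph{filtering} (iterative downweighting) analysis, so the plan is to track the weight function $w$ maintained by Algorithm~\ref{alg:robust_mean_estimation} as a ``soft'' selection of sample points and show it never discards too much clean data. Write $S_g := S \cap T$ for the surviving inliers and $S_b := T \setminus S$ for the inserted outliers; since $T$ is an $\epsilon$-corrupted version of $S$ (Definition~\ref{def:corruption}), we have $|S_g| \ge (1-\epsilon)|S|$, and the initial uniform weights place total mass at most $\epsilon$ on $S_b$. For a current weight $w$, record the \emph{clean mass removed} $\Delta_g(w) := \sum_{x\in S_g}(1/|T| - w(x))$ and the \emph{outlier mass removed} $\Delta_b(w) := \sum_{x\in S_b}(1/|T|-w(x))$. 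The whole argument hinges on maintaining, inductively over iterations, the invariant $\Delta_g(w) \le \Delta_b(w)$; since the outlier mass removed is bounded by the initial outlier mass, this in particular forces $\Delta_g(w),\Delta_b(w) = O(\epsilon)$ throughout.

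The core step is a per-iteration lemma: whenever $\|w\|_1 \ge 1-2\epsilon$ and $\lambda_{\max}(\Sigma(w))$ exceeds a sufficiently large multiple of $\sigma^2\delta^2$, the update removes at least as much outlier mass as clean mass, i.e. $\sum_{x\in S_b} w(x)f(x) \ge \sum_{x\in S_g} w(x)f(x)$. I would prove this in three pieces. First, because $w$ is only ever multiplied pointwise by factors in $[0,1]$ and $\Delta_g(w) = O(\epsilon)$, the stability of $S$ (Definition~\ref{def:stability}, with $C$ chosen large enough) transfers to the $w$-reweighted inliers: their $w$-weighted mean is within $O(\sigma\delta)$ of $\mu_S$ and their $w$-weighted second moment in any unit direction is $\sigma^2(1\pm O(\delta^2\epsilon))$. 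Hence along the top eigenvector $v$ of $\Sigma(w)$ the inliers contribute only $O(\sigma^2 + \norm{\mu(w)-\mu_S}^2) = O(\sigma^2\delta^2 \vee \sigma^2)$ to $\lambda_{\max}(\Sigma(w))$, so once $\lambda_{\max}(\Sigma(w))$ is large the bulk of $\sum_x w(x)g(x)$ comes from $S_b$. Second, the same variance bound (Chebyshev-style) shows that only a small $w$-mass of inliers can have $g(x)$ above any given level, so restricting to the tail $\{x: g(x)\ge t\}$ does not help the inliers. Third, the threshold $t$ is chosen so that the selected tail carries $w$-mass $\ge \epsilon$, which forces it into the outlier-heavy regime; combining these gives the domination on the truncated scores $f$. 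I expect this tail-domination estimate — balancing the truncation level against the stability error terms and the constant $C$ — to be the main obstacle.

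Finally I would close out termination and the output bound. Each iteration sets the weight of the maximizer of $f$ (among points with nonzero weight) to $0$, and it never revives, so the loop runs at most $|T|$ times; with one top-eigenvector computation and an $O(|T|)$ pass per iteration this yields the claimed polynomial, deterministic running time. Every iteration at which $\lambda_{\max}(\Sigma(w))$ is above the threshold strictly decreases $\Delta_b(w)$ by a constant fraction of the mass it removes, and $\Delta_b(w) \le \epsilon$ always; a standard accounting then shows the process reaches and returns at a weight $w$ with $\|w\|_1 \ge 1-2\epsilon$, $\Delta_g(w)\le\Delta_b(w)=O(\epsilon)$, and $\lambda_{\max}(\Sigma(w)) = O(\sigma^2\delta^2)$. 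For such a $w$, decompose $\mu(w) - \mu_S = \big(\mu(w) - \mu_g(w)\big) + \big(\mu_g(w) - \mu_S\big)$, where $\mu_g(w)$ is the $w$-weighted mean of $S_g$ alone. The second term is $O(\sigma\delta)$ by the weighted form of stability condition~(i) (using $\Delta_g(w)=O(\epsilon)$). The first term is the bias induced by the $O(\epsilon)$ residual outlier mass; a short computation bounds it by $O\!\big(\sqrt{\epsilon\,\lambda_{\max}(\Sigma(w))}\big) + O(\epsilon\sigma\delta) = O(\sigma\delta)$, using $\lambda_{\max}(\Sigma(w)) = O(\sigma^2\delta^2)$ and $\delta>\epsilon$. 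Adding the two gives $\norm{\mu_S - \widehat\mu} = \norm{\mu_S - \mu(w)} = O(\sigma\delta)$, as claimed.
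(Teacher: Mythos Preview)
The paper does not give its own proof of this proposition --- it is quoted from \cite[Theorem~A.3]{diakonikolas2020outlier} and used as a black box --- so there is nothing in the paper to compare against. Your outline is the standard iterative-filtering analysis (good/bad mass invariant, per-iteration score domination, final mean decomposition), which is indeed the approach of the cited source.

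That said, the termination and output steps as written do not match Algorithm~\ref{alg:robust_mean_estimation}. The loop carries no test on $\lambda_{\max}(\Sigma(w))$: it runs while $\|w\|_1 \ge 1-2\epsilon$ and exits only when that fails, so you cannot conclude it ``returns at a weight $w$ with $\|w\|_1 \ge 1-2\epsilon$ and $\lambda_{\max}(\Sigma(w)) = O(\sigma^2\delta^2)$'' without additional argument about the iterations after $\lambda_{\max}$ first drops below your threshold. Moreover, $\lambda_{\max}(\Sigma(w)) = O(\sigma^2\delta^2)$ is never attainable --- the inliers alone force $\lambda_{\max} \ge \sigma^2(1 - O(\delta^2/\epsilon))$ by stability condition~(ii) in Definition~\ref{def:stability}. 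The quantity one actually controls is the \emph{excess} $\lambda_{\max}(\Sigma(w)) - \sigma^2(1-\delta^2/\epsilon) = O(\sigma^2\delta^2/\epsilon)$ (equivalently, the key-lemma threshold should be $\sigma^2(1+C\delta^2/\epsilon)$ rather than a multiple of $\sigma^2\delta^2$), and it is $\sqrt{\epsilon \cdot \text{excess}} = O(\sigma\delta)$ that bounds the outlier bias for all $\delta > \epsilon$, not just $\delta \ge \sqrt{\epsilon}$.
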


Proposition~\ref{prop:robust_mean_estimation_with_stability} requires the uncorrupted samples to be stable. With a large enough sample size, most independent and identically distributed samples from a bounded covariance distribution are stable. The remaining samples can be treated as corruptions.
\begin{proposition}[{Sample Complexity for Stability~\cite[Theorem 1.4]{diakonikolas2020outlier}}]
  \label{prop:bounded_variance_stability_iid}
  Fix any $0<\xi<1$. Let $S$ be a multiset of $n$ independent and identically distributed samples from a distribution on $\mathbb{R}^k$ with mean $\mu$ and covariance $\Sigma$. Then, with probability at least $1-\xi$, there exists a sample size \(n = \bigO{\frac{k \log k + \log (1 / \xi)}{\epsilon}}\) and a subset $S^{\prime} \subseteq S$ such that $\left|S^{\prime}\right| \geq\left(1-\epsilon\right) n$ and $S^{\prime}$ is $\left(2 \epsilon, \delta\right)$-stable with respect to $\mu$ and $\opnorm{\Sigma}$, where \(\delta = \bigO{\sqrt\epsilon}\).
\end{proposition}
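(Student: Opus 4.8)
The statement is quoted verbatim as \cite[Theorem~1.4]{diakonikolas2020outlier}, so the cleanest plan is simply to invoke that reference; for completeness I would reproduce the argument as follows. Normalize so that $\mu = 0$ and write $\sigma^2 := \opnorm{\Sigma}$. The first move is to replace the two conditions of Definition~\ref{def:stability} by the standard ``deletion'' reformulation of stability: a set $S'$ of size $m$ is $(2\epsilon,\delta)$-stable with respect to $0,\sigma^2$ provided that (a) for every $T\subseteq S'$ with $\abs{T}\le 2\epsilon m$ one has $\frac1m\norm{\sum_{x\in T}x} = \bigO{\sigma\sqrt{\epsilon}}$ (in particular the empirical mean of $S'$ is $\bigO{\sigma\sqrt\epsilon}$ in norm), and (b) for every unit vector $v$ and every such $T$ one has $\frac1m\sum_{x\in T}\<v,x\>^2 = \bigO{\sigma^2}$, together with $\opnorm{\frac1m\sum_{x\in S'}x x^\top - \Sigma} = \bigO{\sigma^2\sqrt\epsilon}$. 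A short deterministic computation (expanding $\bar\Sigma_{S''}$ for $S'' = S'\setminus T$, using $0\preceq\Sigma\preceq\sigma^2 I_k$) shows (a) and (b) imply conditions (i) and (ii) of Definition~\ref{def:stability} with $\delta = \Theta(\sqrt{\epsilon})$. So it suffices to exhibit $S'\subseteq S$ with $\abs{S'}\ge(1-\epsilon)n$ satisfying (a) and (b).

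I would build $S'$ by first discarding the at most $\epsilon n$ sample points of largest norm: since $\E\norm{X}^2 = \tr(\Sigma)\le k\sigma^2$, Markov gives $\Prob[\norm{X} > \sigma\sqrt{2k/\epsilon}]\le \epsilon/2$, and a Chernoff bound (valid once $n\gtrsim \log(1/\xi)/\epsilon$) shows that with probability $\ge 1-\xi/3$ at most $\epsilon n$ points exceed this radius; let $S'$ be the remaining set. For (a), fix a unit $v$: the worst $T$ is the set of the $2\epsilon m$ largest values of $\<v,X_i\>$, and its average over $S'$ concentrates (by Bernstein's inequality, where the truncation controls the range) around $\E[\<v,X\>\Ind\{\<v,X\>\ge t\}]$ for the threshold $t$ with $\Prob[\<v,X\>\ge t]\approx 2\epsilon$; by Cauchy--Schwarz this expectation is at most $\sqrt{\E\<v,X\>^2}\cdot\sqrt{2\epsilon}\le \sigma\sqrt{2\epsilon}$. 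A union bound over a fixed $\tfrac12$-net of the sphere (of size $e^{\bigO{k}}$) upgrades this to all $v$ simultaneously, and matching the Bernstein deviation against $\sigma\sqrt\epsilon$ is exactly what forces $n=\bigtO{(k+\log(1/\xi))/\epsilon}$. Property (b) is handled the same way for the top $2\epsilon m$ values of $\<v,X_i\>^2$ (using $\E[\<v,X\>^2\Ind\{\cdots\}]\le \E\<v,X\>^2\le\sigma^2$), plus a matrix-Bernstein bound for $\opnorm{\frac1n\sum_{x\in S'}xx^\top - \Sigma}$; a final union bound over the three failure events ($\le\xi$ total) completes the proof.

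The delicate point — and the reason the argument is less immediate than the sketch suggests — is obtaining the \emph{tight} linear-in-$k$ sample size under only a covariance bound (no subgaussian or bounded-support hypothesis). Individual samples can have norm as large as $\sigma\sqrt{k/\epsilon}$, and truncating at that scale leaks an extra $\sqrt k$ factor into the Bernstein range, giving $k^{3/2}$ rather than $k\log k$. The fix, following \cite{diakonikolas2020outlier}, is to split each empirical sum into a ``bulk'' part (points of moderate norm, handled by VC / matrix-Bernstein concentration, which supplies the $k\log k$) and a ``heavy-tail'' part (the few far points), bounding the latter \emph{directly} via the Cauchy--Schwarz estimate $\E[\norm{X}^2\Ind\{\text{far}\}]\le\sigma^2\sqrt\epsilon$ together with a counting bound on how many far points any single direction's top-$2\epsilon n$ list can contain — this is what avoids an exponentially large union bound over the subsets $T$. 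One must also track throughout that stability is asserted with respect to the \emph{population} $\mu$ and $\opnorm{\Sigma}$, so every appearance of the empirical mean or covariance has to be paired with the $\bigO{\sigma\sqrt\epsilon}$- and $\bigO{\sigma^2}$-level concentration bounds established above.
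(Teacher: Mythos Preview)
The paper does not prove this proposition at all: it is stated as a black-box citation of \cite[Theorem~1.4]{diakonikolas2020outlier} and is used as-is in the proofs of Proposition~\ref{prop:robust-mean-estimation-summary} and Theorem~\ref{thm:sample_complexity_stability}. Your proposal correctly identifies this and does exactly what the paper does (invoke the reference); the additional proof sketch you supply for completeness is a reasonable outline of the argument in \cite{diakonikolas2020outlier}, but it goes beyond anything the present paper contains.
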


\begin{proof}[Proof of Proposition~\ref{prop:robust-mean-estimation-summary}]
  Proposition~\ref{prop:bounded_variance_stability_iid} implies that for i.i.d.\ samples from a \(k\)-dimensional bounded covariance distribution to contain a stable set of more than $(1-\epsilon)$-fraction of samples with high probability, we need \(\bigtO{\frac{k}{\epsilon}}\) samples. We refer to the remaining $\eps$-fraction of samples as unstable samples. Since the adversary corrupts an $\epsilon$-fraction of clean samples \(S\), the input  set \(T\) can be considered as a $2\epsilon$-corrupted version of a stable set, if we view the unstable samples as corruptions. Therefore, Proposition~\ref{prop:robust_mean_estimation_with_stability} applies and gives the desired error guarantee.
\end{proof}

\section{Omitted Proofs in General Robust Nonconvex Optimization}\label{sec:general_nonconvex-appendix}
\paragraph{Sample Size for Successful Robust Mean Estimation}
For each iteration \(t\) in Algorithm~\ref{alg:inexact_randomized} and Algorithm~\ref{alg:local_linear_convergence}, we would like to robustly estimate the mean of a set of corrupted points \(\{\vect{(\grad f(x_{t}))}\}_{i = 1}^{n}\) and/or \(\{\vect{(\grad^{2} f(x_{t}))}\}_{i = 1}^{n}\)  where the inliers are drawn from a distribution with bounded covariance \(\Sigma \preceq \sigma^{2} I\). For fixed \(x_{t}\), inliers are drawn  independently and identically distributed (i.i.d.), but the dependence across all iterations \(t\) is allowed.

\begin{theorem}
  \label{thm:sample_complexity_stability}
  Let \(\X \subset \R^{l}\) be a closed and bounded set with radius at most \(\gamma\) (i.e., \(\norm{x} \le \gamma, \; \forall x \in \X\)). Let \(p^{*}\) be a distribution over functions \(h: \X \ra \R^{k}\). Let $\xi \in (0,1)$ be the failure probability. Suppose \(h\) is \(L\)-Lipschitz and uniformly bounded, i.e.,\ there exists \(B > 0\) such that \(\norm{h(x)} \le B \; \forall x \in \X \) almost surely.
  Assume further that for each \(x \in \X\) we have \(\opnorm{\Cov_{h \sim p^{*}}(h(x))} \le \sigma^{2}\).
  Let \(S\) be a multiset of \(n\) i.i.d.\ samples \(\{h_{i}\}_{i=1}^{n}\) from \(p^{*}\). Then there exists a sample size
  \[ n = O\left( \frac{k \log k + l \log\left(\frac{\gamma L B }{\sigma\epsilon\xi}\right) }{\epsilon}\right)\]
  such that with probability \(1-\xi\), it holds that for each \(x \in \X\), there exists a subset \(S' \subset S\) with
  \(|S'| \ge (1-2\epsilon)n\) (potentially different subsets \(S' \) for
  different \(x\)) such that \(\{h_{i}(x)\}_{i \in S'}\) is
  \((2\epsilon, \delta)\)-stable with respect to \(\mu\) and \(\sigma^{2}\), where  \(\delta = \bigO{\sqrt\epsilon}\).
\end{theorem}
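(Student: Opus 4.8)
The plan is to combine the single-point stability guarantee of Proposition~\ref{prop:bounded_variance_stability_iid} with a covering/union-bound argument over the bounded domain $\X$, using Lipschitzness to transfer stability from net points to all of $\X$. First I would fix an attention parameter $\rho > 0$ (to be chosen at the end, polynomially small in $\sigma\epsilon\xi/(\gamma L B)$) and take a $\rho$-net $\mathcal{N}$ of $\X$ in the $\ell_2$ norm; since $\X \subset \R^l$ has radius at most $\gamma$, we may take $\lvert\mathcal{N}\rvert \le (3\gamma/\rho)^l$. For each fixed net point $x \in \mathcal{N}$, the samples $\{h_i(x)\}_{i=1}^n$ are i.i.d.\ draws from a distribution on $\R^k$ with covariance bounded by $\sigma^2 I$, so Proposition~\ref{prop:bounded_variance_stability_iid} applies: with probability $1 - \xi'$ there is a subset $S'_x$ of size $\ge (1-\epsilon)n$ that is $(2\epsilon,\delta)$-stable w.r.t.\ $\mu(x) := \E[h(x)]$ and $\opnorm{\Cov(h(x))}$, with $\delta = O(\sqrt\epsilon)$, provided $n = O((k\log k + \log(1/\xi'))/\epsilon)$. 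Setting $\xi' = \xi/\lvert\mathcal{N}\rvert$ and union bounding, with probability $1-\xi$ this holds simultaneously for all $x \in \mathcal{N}$; the $\log(1/\xi')$ term becomes $\log(1/\xi) + l\log(3\gamma/\rho)$, which after substituting the choice of $\rho$ yields the stated sample size $n = O((k\log k + l\log(\gamma L B/(\sigma\epsilon\xi)))/\epsilon)$.

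Next I would upgrade stability at net points to stability at all $x \in \X$. Given an arbitrary $x \in \X$, pick the nearest net point $x_0 \in \mathcal{N}$, so $\norm{x - x_0} \le \rho$. Take $S' := S'_{x_0}$, the subset guaranteed stable for $x_0$; I claim it is (slightly weaker but still) $(2\epsilon, O(\sqrt\epsilon))$-stable for $x$. The point is that $L$-Lipschitzness of $h$ gives $\norm{h_i(x) - h_i(x_0)} \le L\rho$ for every $i$ (almost surely, on the good event), hence for any further subset $S'' \subseteq S'$ the empirical means satisfy $\norm{\mu_{S''}(x) - \mu_{S''}(x_0)} \le L\rho$ and, similarly, the population means satisfy $\norm{\mu(x) - \mu(x_0)} \le L\rho$; combining via triangle inequality, condition (i) of Definition~\ref{def:stability} is preserved up to an additive $2L\rho$, which is absorbed into $\sigma\delta$ once $\rho \le \sigma\sqrt\epsilon/(cL)$ for a suitable constant $c$. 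For condition (ii), the empirical covariance $\bar\Sigma_{S''}(x)$ differs from $\bar\Sigma_{S''}(x_0)$ by terms controlled by $L\rho$ and $B$ (since $\norm{h_i} \le B$ and the centered vectors shift by at most $2L\rho$ each), so $\opnorm{\bar\Sigma_{S''}(x) - \bar\Sigma_{S''}(x_0)} \le O(BL\rho)$; choosing $\rho$ small relative to $\sigma^2\epsilon/(BL)$ absorbs this into $\sigma^2\delta^2\epsilon$. Thus for a suitable polynomial choice $\rho = \Theta\!\big(\min(\sigma\sqrt\epsilon/L,\ \sigma^2\epsilon/(BL))\big)$, the set $S'$ is $(2\epsilon, O(\sqrt\epsilon))$-stable for $x$ with respect to $\mu(x)$ and $\sigma^2$, as required.

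The main obstacle I anticipate is the perturbation analysis for the covariance condition (ii): one must verify carefully that a uniform-in-$i$ shift of size $L\rho$ in the data points changes the empirical covariance of \emph{every} sub-sample $S'' \subseteq S'$ by only $O(BL\rho)$ in operator norm, uniformly, and that this is small compared to the $\sigma^2\delta^2\epsilon = O(\sigma^2\epsilon^2)$ budget — which forces $\rho$ to scale like $\sigma^2\epsilon^2/(BL)$ (up to the stated logarithmic slack), feeding back into the $\log(\gamma L B/(\sigma\epsilon\xi))$ term in $n$. A secondary subtlety is that the ``high probability'' bound on $\norm{h_i(x)}\le B$ and the Lipschitz constant hold for inliers with high probability (per Assumption~\ref{assump:general_clean_data}(ii)), so one should intersect that event with the stability event in the union bound; handling this cleanly — and making sure the same subset $S'$ works for \emph{all} $x$ in the cell around $x_0$, not a fresh subset per $x$ — is the bookkeeping-heavy part of the argument. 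Everything else is a routine net-plus-Chernoff calculation.
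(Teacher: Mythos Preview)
Your proposal is correct and follows essentially the same route as the paper: apply Proposition~\ref{prop:bounded_variance_stability_iid} at each point of a $\rho$-net of $\X$, union bound over the net (absorbing $l\log(\gamma/\rho)$ into the sample size), and then use $L$-Lipschitzness and the uniform bound $B$ to propagate the stability conditions from net points to all of $\X$. The one cosmetic difference is that the paper first invokes Claim~\ref{claim:stability_bounded_covariance} to reduce the stability check to the two conditions \eqref{eq:mean_stability}--\eqref{eq:covariance_stability} on the single set $S'$ itself (rather than on every sub-subset $S''\subseteq S'$), which makes the perturbation step marginally cleaner; your direct perturbation of Definition~\ref{def:stability} for all $S''$ works just as well, since the Lipschitz bound $\norm{h_i(x)-h_i(x_0)}\le L\rho$ is uniform in $i$ and hence uniform over all sub-subsets. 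The net radius the paper lands on is $\tau = \Omega(\sigma\sqrt\epsilon/(BL))$, matching your choice up to the expected polynomial slack.
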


To prove this theorem, we work with an easier version of stability condition specialized to samples from a bounded covariance distribution.

\begin{claim}[Claim 2.1 in~\cite{diakonikolas2020outlier}]
  \label{claim:stability_bounded_covariance}
  (Stability for bounded covariance) Let $R \subset \mathbb{R}^k$ be a finite multiset such that $\norm{\mu_R-\mu}\leq \sigma\delta$, and $\opnorm{\bar{\Sigma}_R-\sigma^{2}I} \leq \sigma^{2}\delta^2 / \epsilon$ for some $0 \leq \epsilon \leq \delta$. Then $R$ is $\left(\Theta(\epsilon), \delta^{\prime}\right)$ stable with respect to $\mu$ and $\sigma^2$, where $\delta^{\prime}=O(\delta+\sqrt{\epsilon})$.
\end{claim}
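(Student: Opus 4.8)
The statement is a purely deterministic implication, so the plan is to fix an arbitrary subset $R'\subseteq R$ with $|R'|\ge(1-c_0\epsilon)|R|$, where $c_0\le\tfrac12$ is a small absolute constant to be chosen (this is the ``$\Theta(\epsilon)$''), and to verify head‑on that $R'$ satisfies both requirements of Definition~\ref{def:stability} with the inflated radius $\delta'=O(\delta+\sqrt\epsilon)$. Write $n=|R|$, $B=R\setminus R'$, and $\beta=|B|/n\le c_0\epsilon$, so $1/(1-\beta)\le2$. Everything is driven by two elementary facts: (a) the convex‑combination identity $\mu_R=(1-\beta)\mu_{R'}+\beta\mu_B$, hence $\mu_{R'}-\mu_R=\tfrac{\beta}{1-\beta}(\mu_R-\mu_B)$; and (b) for any unit vector $v$ and any $T\subseteq R$, dropping points only removes nonnegative terms, i.e.\ $\sum_{x\in T}\langle v,x-\mu_R\rangle^2\le\sum_{x\in R}\langle v,x-\mu_R\rangle^2=n\langle v,\bar\Sigma_R v\rangle$, combined with the hypothesis $\langle v,\bar\Sigma_R v\rangle\le\sigma^2(1+\delta^2/\epsilon)$.

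\emph{Mean condition.} By (a) it suffices to bound $\|\mu_R-\mu_B\|$. For a unit vector $v$, Cauchy--Schwarz gives $\langle v,\mu_R-\mu_B\rangle^2\le\tfrac1{|B|}\sum_{x\in B}\langle v,x-\mu_R\rangle^2\le\tfrac n{|B|}\langle v,\bar\Sigma_R v\rangle=\tfrac1\beta\langle v,\bar\Sigma_R v\rangle\le\sigma^2(1+\delta^2/\epsilon)/\beta$ using (b). Taking the supremum over $v$ and applying (a), $\|\mu_{R'}-\mu_R\|\le\tfrac{\sqrt\beta}{1-\beta}\,\sigma\sqrt{1+\delta^2/\epsilon}=O\!\big(\sigma\sqrt{\epsilon+\delta^2}\big)=O(\sigma(\sqrt\epsilon+\delta))$, where we used $\beta=\Theta(\epsilon)$. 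The triangle inequality with the hypothesis $\|\mu_R-\mu\|\le\sigma\delta$ then yields $\|\mu_{R'}-\mu\|=O(\sigma(\delta+\sqrt\epsilon))$, i.e.\ $\le\sigma\delta'$ for a suitable $\delta'=O(\delta+\sqrt\epsilon)$.

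\emph{Covariance condition.} We must show $\opnorm{\bar\Sigma_{R'}-\sigma^2 I}=O(\sigma^2(\delta')^2/\epsilon)$. For the upper deviation, since the empirical mean $\mu_{R'}$ minimizes $c\mapsto\sum_{x\in R'}\langle v,x-c\rangle^2$, for every unit $v$ we have $\langle v,\bar\Sigma_{R'}v\rangle\le\tfrac1{|R'|}\sum_{x\in R'}\langle v,x-\mu_R\rangle^2\le\tfrac n{|R'|}\langle v,\bar\Sigma_R v\rangle=\tfrac1{1-\beta}\langle v,\bar\Sigma_R v\rangle\le2\sigma^2(1+\delta^2/\epsilon)$ by (b), so the positive part of $\bar\Sigma_{R'}-\sigma^2 I$ is $O(\sigma^2(1+\delta^2/\epsilon))$. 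For the lower deviation, $\bar\Sigma_{R'}\succeq0$, so the negative part is at most $\sigma^2$. Both $O(\sigma^2(1+\delta^2/\epsilon))$ and $\sigma^2$ are $O(\sigma^2(\delta')^2/\epsilon)$, because $(\delta')^2/\epsilon=\Theta((\delta^2+\epsilon)/\epsilon)=\Theta(1+\delta^2/\epsilon)$ and in particular $(\delta')^2/\epsilon=\Omega(1)$. Hence $\opnorm{\bar\Sigma_{R'}-\sigma^2 I}=O(\sigma^2(\delta')^2/\epsilon)$, as needed. Choosing $c_0$ small and the constant in $\delta'=O(\delta+\sqrt\epsilon)$ large enough relative to $c_0$ makes every $O(\cdot)$ fit inside the required stability bounds.

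\emph{What to watch.} The calculations are short; the only genuine work is bookkeeping the constants so that the subset‑size slack $c_0$, the constant in $\delta'$, and the constants in the two stability bounds are mutually consistent --- in particular, picking $\delta'$ large enough that $(\delta')^2/\epsilon\ge1$, which is precisely what makes the lower‑deviation side of condition (ii) free. The one conceptual trap to avoid is trying to prove a matching lower bound $\bar\Sigma_{R'}\succeq(1-o(1))\sigma^2 I$: this is false in general, since deleting a $\Theta(\epsilon)$‑fraction of $R$ can shrink the variance along some direction by $\Theta(\epsilon)\sigma^2$; but it is unnecessary here, because in the bounded‑covariance regime the target accuracy $(\delta')^2/\epsilon$ is already $\Omega(1)$, so the trivial bound $\bar\Sigma_{R'}\succeq0$ suffices on the lower side.
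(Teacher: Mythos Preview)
The paper does not supply its own proof of this claim: it is quoted as Claim~2.1 of \cite{diakonikolas2020outlier} and invoked as a black box in the proof of Theorem~\ref{thm:sample_complexity_stability}. So there is nothing in the paper to compare against.

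Your argument is correct and is essentially the standard one. The mean condition via the convex-combination identity $\mu_{R'}-\mu_R=\tfrac{\beta}{1-\beta}(\mu_R-\mu_B)$ together with Cauchy--Schwarz on the removed points is exactly how this is usually done. For the covariance condition, the crucial observation---which you state clearly---is that in the bounded-covariance regime $\delta'=\Omega(\sqrt\epsilon)$ forces $(\delta')^2/\epsilon=\Omega(1)$, so the target slack in condition~(ii) of Definition~\ref{def:stability} is already of order $\sigma^2$; hence the trivial bounds $0\preceq\bar\Sigma_{R'}$ and $\bar\Sigma_{R'}\preceq\tfrac{1}{1-\beta}\bar\Sigma_R$ (after recentering) suffice. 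Your closing remark that one should \emph{not} try to prove $\bar\Sigma_{R'}\succeq(1-o(1))\sigma^2 I$ is exactly the right diagnosis of why this claim is specific to the bounded-covariance setting.

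One cosmetic point: you do not actually need $c_0$ to be small. Taking any fixed $c_0\le\tfrac12$ works, since making $c_0$ smaller only shrinks the family of subsets you must control while simultaneously loosening the target $\sigma^2(\delta')^2/(c_0\epsilon)$ in condition~(ii). The ``choose $c_0$ small'' language is harmless but unnecessary.
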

\begin{proof}[Proof of Theorem~\ref{thm:sample_complexity_stability}]
  Given Claim~\ref{claim:stability_bounded_covariance}, it suffices to show that with probability \(1-\xi\), for each \(x \in \X\) there exists a subset \(S' \subset S\) with \( |S'| \ge (1 - 2\epsilon) n \) such that
  \begin{align}
    \label{eq:mean_stability}
    \norm{\E_{i \in S'}h_i(x) - \E_{h \sim p^{*}}h(x)} &\le \bigO{\sigma\delta} \\
    \label{eq:covariance_stability}
    \opnorm{\E_{i \in S'}(h_i(x) -  \E_{h \sim p^{*}}h(x))(h_i(x) - \E_{h \sim p^{*}}h(x))^{\top} - \sigma^{2} I} &\le \bigO{\sigma^{2}\delta^{2}/\epsilon}
  \end{align}
  By Proposition~\ref{prop:bounded_variance_stability_iid}, for each \(x \in \X\), with probability \(1 - \left(\frac{\xi\gamma B L}{\sigma \sqrt{\epsilon}}\right )^{l} \ge 1 - \xi/\left(\frac{\gamma B L}{\sigma \sqrt{\epsilon}}\right )^{l}\), there exists a subset \(S' \subset S\) with \( |S'| \ge (1 - 2\epsilon) n \) such that~\eqref{eq:mean_stability} and~\eqref{eq:covariance_stability} hold.

  We proceed with a net argument. Up to a multiplicative error that can be suppressed by \(\bigO{\cdot}\), if Equation~\eqref{eq:mean_stability} holds for some \(x \in \X\), it also holds for all other \(x'\) in a ball of radius \(\sigma\delta/L\) because \(h(\cdot)\) is \(L\)-Lipschitz. Similarly, \eqref{eq:covariance_stability} is equivalent to
  \[|\E_{i \in S'}((h_i(x) - \E_{h \sim p^{*}}h(x))^{\top}v)^{2} - \sigma^{2} v^{\top}v| \le \bigO{\sigma^{2}\delta^{2}/\epsilon}\qquad \forall v\in\R^{k}.\]
  Since \(h(\cdot)\) is \(L\)-Lipschitz and uniformly bounded by \(B\), if~\eqref{eq:covariance_stability} holds for some \(x \in \X\), it also holds for all \(x'\) in a ball of radius \(\frac{\sigma^{2}\delta^{2}}{\epsilon B L}\). Therefore, it suffices for Equation~\eqref{eq:mean_stability} and~\eqref{eq:covariance_stability} to hold for a \(\tau\)-net of \(\X\), where for \(\delta = \bigO{\sqrt\epsilon}\) we have \(\tau = \min\left(\frac{\sigma\delta}{L}, \frac{\sigma^{2}\delta^{2}}{\epsilon B L}\right) = \Omega\left( \frac{\sigma\sqrt\epsilon }{B L}\right )\). An \( \Omega\left( \frac{\sigma\sqrt\epsilon }{B L}\right )\)-net of \(\gamma\)-radius ball in \(\R^{l}\) has size \(\bigO[\Big]{\left(\frac{\gamma B L}{\sigma \sqrt{\epsilon}}\right )^{l}}\). Taking a union bound over this net completes the proof.
\end{proof}


\paragraph{Proof of Main Theorem}
We establish a slightly enhanced version of Theorem~\ref{thm:global_convergence-general} by including the last sentence as an additional component to the original theorem.
\begin{theorem}
  \label{thm:global_convergence-general-appendix}
  Suppose we are given \(\epsilon\)-corrupted set of functions  \(\{f_{i}\}_{i = 1}^{n}\) for sample size $n$, generated according to Definition~\ref{def:robust-stochastic-opt}.
  Suppose Assumption~\ref{assump:general_clean_data} holds in a bounded region \(\B \subset \R^D\) of radius \(\Gamma\) with gradient and Hessian covariance bound $\sigma_g$ and $\sigma_H$ respectively, and we have an arbitrary initialization \(x_{0} \in \B\).
  For some \(n = \bigtO[\big]{D^{2}/\epsilon}\), Algorithm~\ref{alg:inexact_randomized} initialized at \(x_{0}\) outputs an \((\epsilon_{g}, \epsilon_{H})\)-approximate SOSP in polynomial time with high probability if the following conditions hold:
  \begin{enumerate}[(I)]
    \item All iterates \(x_{t}\) in Algorithm~\ref{alg:inexact_randomized} stay inside the bounded region \(\B\).
    \item For an absolute constant \(c > 0\), it holds that \(\sigma_{g}\sqrt\epsilon \le c \epsilon_{g}\) and \(\sigma_{H}\sqrt\epsilon \le c\epsilon_{H}\).
  \end{enumerate}

  Moreover, there exists an absolute constant \(\Crme\) such that for each iteration \(t\), the gradient oracle \(\inexact g_{t}\) and Hessian oracle \(\inexact H_{t}\) satisfy
  \(\norm{\inexact g_{t} - \grad{f}(x_{t})} \le \Crme \sigma_{g}\sqrt\eps\) and \(\fnorm[\big]{\inexact H_{t} - \grad^{2} f(x_{t})} \leq\Crme \sigma_{H}\sqrt\epsilon\).
\end{theorem}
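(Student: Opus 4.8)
The plan is to prove Theorem~\ref{thm:global_convergence-general-appendix} by combining the robust mean estimation guarantee (Proposition~\ref{prop:robust-mean-estimation-summary}) with the uniform stability result (Theorem~\ref{thm:sample_complexity_stability}) and feeding the resulting oracle bounds into the optimization guarantee (Proposition~\ref{prop:optimization_inexact_derivatives_general}). First I would apply Theorem~\ref{thm:sample_complexity_stability} twice: once with the family of gradient maps $x \mapsto \grad f(x, A)$ taking values in $\R^{D}$ (so $k = D$, $l = D$, Lipschitz constant $L_{D_g}$, bound $B_{D_g}$, covariance bound $\sigma_g$), and once with the family of vectorized Hessian maps $x \mapsto \vect(\grad^2 f(x,A))$ taking values in $\R^{D^2}$ (so $k = D^2$, $l = D$, parameters $L_{D_H}, B_{D_H}, \sigma_H$). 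Since all these parameters are $\poly(D, 1/\eps)$ by the remark following Assumption~\ref{assump:general_clean_data}, the dominant term in the sample size is $k\log k / \eps = O(D^2 \log D / \eps)$ from the Hessian, and the $l \log(\cdot)/\eps$ term contributes only additional logarithmic factors; this yields $n = \bigtO{D^2/\eps}$. The output of Theorem~\ref{thm:sample_complexity_stability} is that, with probability $1-\xi$, for \emph{every} $x$ in the bounded region simultaneously, the clean gradient samples $\{\grad f_i(x)\}$ contain a $(1-2\eps)$-fraction subset that is $(2\eps, O(\sqrt\eps))$-stable with respect to the population mean $\grad \bar f(x)$ and $\sigma_g^2$, and similarly for the Hessians with $\sigma_H^2$.

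Next I would invoke condition (I): every iterate $x_t$ of Algorithm~\ref{alg:inexact_randomized} stays inside $\B$, so the uniform stability statement applies at each $x_t$ even though the $x_t$ depend on earlier robust-estimation calls — this is precisely the point of proving a \emph{uniform over $\B$} stability guarantee rather than a pointwise one, and it resolves the cross-iteration dependence issue flagged in Section~\ref{sec:prelims}. Given stability at $x_t$, the set $\{\grad f_i(x_t)\}_{i=1}^n$ handed to $\mathbf{RobustMeanEstimation}$ is (after absorbing the at-most-$2\eps$ unstable clean points into the corruption budget) a $4\eps$-corrupted version of a $(C\eps, O(\sqrt\eps))$-stable set, which is exactly the hypothesis of Proposition~\ref{prop:robust_mean_estimation_with_stability} (equivalently the i.i.d.\ packaging in Proposition~\ref{prop:robust-mean-estimation-summary}). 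Hence the algorithm returns $\inexact g_t$ with $\norm{\inexact g_t - \grad \bar f(x_t)} = O(\sigma_g \cdot O(\sqrt\eps)) = O(\sigma_g \sqrt\eps)$, and analogously $\fnorm{\inexact H_t - \grad^2 \bar f(x_t)} = O(\sigma_H \sqrt\eps)$ (the Frobenius norm appears because we run robust mean estimation on the vectorized Hessian, and $\opnorm{\cdot} \le \fnorm{\cdot}$). This establishes the ``Moreover'' clause with $\Crme$ the implied absolute constant.

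Finally I would close the loop: by condition (II), $\sigma_g \sqrt\eps \le c\,\eps_g$ and $\sigma_H\sqrt\eps \le c\,\eps_H$, so choosing the absolute constant $c$ small enough (relative to $\Crme$) guarantees $\norm{\inexact g_t - \grad \bar f(x_t)} \le \tfrac13 \eps_g$ and $\opnorm{\inexact H_t - \grad^2 \bar f(x_t)} \le \fnorm{\inexact H_t - \grad^2 \bar f(x_t)} \le \tfrac29 \eps_H$, which are exactly the inexactness hypotheses of Proposition~\ref{prop:optimization_inexact_derivatives_general}. Applying that proposition (with $f = \bar f$, using Assumption~\ref{assump:general_clean_data}(i) for the lower bound $\flb$ and (ii) for the Lipschitz constants of $\bar f$, which are inherited from those of the $f_i$ with high probability) shows Algorithm~\ref{alg:inexact_randomized} terminates in polynomially many iterations and outputs a $(\tfrac43\eps_g, \tfrac43\eps_H)$-approximate SOSP; rescaling constants gives an $(\eps_g, \eps_H)$-approximate SOSP as stated. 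One bookkeeping point worth care: the runtime bound in Proposition~\ref{prop:optimization_inexact_derivatives_general} depends on $C_\eps = \min(\eps_g^2/(6L_g), 2\eps_H^3/(9L_H^2))$ and on $\bar f(x_0) - \flb$, all of which are $\poly(D,1/\eps)$ under our assumptions, so the total number of robust-mean-estimation calls is polynomial and each call runs in polynomial time, giving the claimed overall polynomial running time. The main obstacle is the one the construction is designed to sidestep — making the stability guarantee hold \emph{uniformly} over all possible iterates so that the adaptively-chosen query points $x_t$ are covered without a per-iteration union bound blowing up the sample complexity — and this is handled by the net argument inside Theorem~\ref{thm:sample_complexity_stability} together with condition (I); everything after that is routine plugging-in.
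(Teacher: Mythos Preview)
Your proposal is correct and follows essentially the same approach as the paper's proof: apply Theorem~\ref{thm:sample_complexity_stability} to the gradient and Hessian families to obtain uniform stability over $\B$ (yielding the $\bigtO{D^2/\eps}$ sample size from the Hessian, with the gradient requirement dominated), use condition~(I) so that the uniform guarantee covers every adaptively chosen iterate, feed the stable sets into Proposition~\ref{prop:robust_mean_estimation_with_stability} to obtain the $O(\sigma_g\sqrt\eps)$ and $O(\sigma_H\sqrt\eps)$ oracle errors, and then invoke condition~(II) with $c$ small enough so that the inexactness hypotheses of Proposition~\ref{prop:optimization_inexact_derivatives_general} are met. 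The paper's proof is slightly terser but structurally identical; your additional remarks on absorbing the unstable clean points into the $4\eps$ corruption budget and on the polynomial runtime bookkeeping are accurate elaborations of points the paper leaves implicit.
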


And we recall that the we construct the gradient and Hessian oracles in Algorithm~\ref{alg:inexact_randomized} in the following way:
\begin{align*}
  \tilde g_{t} & \la \mathbf{RobustMeanEstimation}(\{\grad f_{i}(x_{t})\}_{i=1}^{n}, 4\epsilon) \\
  \tilde H_{t} & \la \mathbf{RobustMeanEstimation}(\{\grad^{2} f_{i}(x_{t})\}_{i=1}^{n}, 4\epsilon)
\end{align*}

If we ignore that the dependence between $\{\grad f_i(x_t)\}$ introduced via $x_t$, this theorem follows directly from Proposition~\ref{prop:robust-mean-estimation-summary}. Here we fix this technicality via a union bound over all $x_t$ with Theorem~\ref{thm:sample_complexity_stability}.

\begin{proof}

By Proposition~\ref{prop:optimization_inexact_derivatives_general}, it suffices to check that gradient and Hessian inexactness condition is satisfied. That is, for all iterations \(t\), it holds that
\begin{align}
  \label{eq:gradient_inexactness-general-appendix}
  \norm{\inexact g_{t} - \grad{f}(x_{t})} &\le \frac{1}{3} \epsilon_{g}, \\
  \label{eq:Hessian_inexactness-general-appendix}
  \norm[\big]{\vect(\inexact H_{t}) - \vect(\grad^{2} f(x_{t}))} &\leq \frac29 \epsilon_{H}.
\end{align}
Here we use the fact that \(\norm[\big]{\vect(\inexact H_{t}) - \vect(\grad^{2} f(x_{t}))}= \fnorm[\big]{\inexact H_{t} - \grad^{2} f(x_{t})} \ge \opnorm[\big]{\inexact H_{t} - \grad^{2} f(x_{t})}\).

We proceed to apply Theorem~\ref{thm:sample_complexity_stability} and consider the function \(h_{g}(x, A) = \vect(\grad_{x} f(x, A))\) and \(h_{H}(x) = \vect(\grad_{xx}^{2} f(x, A))\). We consider \(h_{H}: \R^{D} \ra \R^{D^{2}}\) first. By assumption (I), all iterates never leave the bounded region \(\mathcal{B}\) with high probability; we condition on this event in the remaining analysis and it follows that \(\opnorm{\Cov_{A \sim \G}(\vect(\grad^{2} f(x_{t},A)))} \le \sigma_{H}^{2}\) for all iterations \(t\).

Assumption~\ref{assump:general_clean_data} (ii) posits that with high probability \(h_{H}(\cdot, A)\) is \(L_{D_H}\)-Lipschitz and its \(\ell_{2}\)-norm is bounded above by \(B_{D_H}\). We further condition on this event. Let \(\xi \in (0,1)\).  Since \(\opnorm{\Cov_{A \in \G}(h_{H}(x, A))} \le \sigma_{H}\) by Assumption~\ref{assump:general_clean_data} (iii), there exists a sample size
\[ n = O\left( \frac{D^{2} \log D + D \log\left(\frac{\gamma L_{D_{H}} B_{D_{H}} }{\sigma_{H}\epsilon\xi}\right) }{\epsilon}\right),\]
such that with probability \(1-\xi\), it holds that for each \(x \in \X\), there
exists a \((2\epsilon, \bigO{\sqrt\epsilon})\)-stable subset of size at least
\((1-2\epsilon)n\) in the sense of Definition~\ref{def:stability} (potentially
different subsets \(S' \) for different \(x\)). Therefore, conditioning on the
event that for all \(x\) there exists a stable subset,
Proposition~\ref{prop:robust_mean_estimation_with_stability} implies that there exists an absolute constant \(\Crme\) such that \(\norm[\big]{\vect(\inexact H_{t}) - \vect(\grad^{2} f(x_{t}))} \le \Crme {\sigma_{H}\sqrt\epsilon}\) for all \(t\). By assumption (II) with a sufficiently small  constant \(c\), we have \(\Crme {\sigma_{H}\sqrt\epsilon} \le 2\epsilon_{H}/ 9\), and therefore \(\norm[\big]{\vect(\inexact H_{t}) - \vect(\grad^{2} f(x_{t}))} \leq \frac29 \epsilon_{H}.\)

Now we established that for Equation~\eqref{eq:Hessian_inexactness-general-appendix} to hold for all iterations \(t\) with high probability, the sample complexity is required to be at least \(n = \bigtO{D^{2}/\epsilon}\). A similar argument implies that \(\bigtO{D/\epsilon}\) samples are needed for Equation~\eqref{eq:gradient_inexactness-general-appendix} to hold for all iterations \(t\) with high probability, which is dominated by \( \bigtO{D^{2}/\epsilon}\).
\end{proof}

\section{Omitted Proofs in Low Rank Matrix Sensing}\label{sec:low-rank-appendix}
This section provides omitted computation and proofs for robust low rank matrix sensing (Section~\ref{sec:matrix-low-rank}), establishing Theorem~\ref{thm:low-rank-mtrx-sensing-intro}. Appendix~\ref{sec:computation-appendix} computes the gradient and Hessian of $f_i$ defined in Equation~\eqref{eq:obj_u_i} and their covariance upper bounds under Definition~\ref{def:iid_Gaussian_with_noise} with noiseless measurements ($\sigma = 0$). Appendix~\ref{sec:global-convergence-appendix} proves the global convergence of our general robust nonconvex optimization framework (Theorem~\ref{thm:global_convergence-general}) applied to the noiseless robust low rank matrix sensing problem, which leads to an approximate SOSP, and Appendix~\ref{sec:local-linear-convergence-appendix} proves the local linear convergence to a global minimum from this approximate SOSP. Finally, Appendix~\ref{sec:sensing_noise} discusses the case of noisy measurements ($\sigma \neq 0$) under Definition~\ref{def:iid_Gaussian_with_noise}.

\subsection{Omitted Computation in Low Rank Matrix Sensing}\label{sec:computation-appendix}

\subsubsection{Sample Gradient and Sample Hessian}\label{sec:sample_grad_and_hessians}

\paragraph{Summary of Results} Consider the noiseless setting as in Theorem~\ref{thm:low-rank-mtrx-sensing-intro} with $\sigma = 0$. We now compute the gradient and the Hessian of \(f_{i}: \R^{d \times r} \rightarrow \R \). We firstly summarize the results. The gradient of \(f_{i}\) at point \(U\), denoted by \(\grad f_{i}(U)\), is a matrix in \(\R^{d \times r}\) given by
\[\grad f_{i}(U) = \< U U^{\top} - M^{*}, A_{i} \>  (A_{i} + A_{i}^{\top}) U.\]

The Hessian of \(f_{i}\) at point \(U\), denoted by \((H_{i})_{U}\), is a linear operator \((H_{i})_{U}: \R^{d \times r} \rightarrow \R^{d \times r}\). The operator \((H_{i})_{U}\) acting on a matrix \(Y \in \R^{d \times r}\) gives:
\[(H_{i})_{U}(Y) = \< U U ^{\top} - M^{*}, A_{i} \> (A_{i} + A_{i}^{\top}) Y + \< Y, (A_{i} + A_{i}^{\top})U \> (A_{i} + A_{i}^{\top})U.\]

We want to identify this linear map with a matrix \(H_{i}(U)\) of dimension
\(\R^{dr \times dr}\). We vectorize both the domain and the codomain of
\((H_{i})_{U}\) so that it can be represented by a matrix.

Let \(I_{r}\) and \( I_{d}\) denote the identity matrices of dimension \(r \times r\) and \( d\times d\) respectively.
\[H_{i}(U) = \< U U ^{\top} - M^{*}, A_{i} \> I_{r} \otimes (A_{i} + A_{i}^{\top})  + \vect \left((A_{i} + A_{i}^{\top})U  \right) \vect\left((A_{i} + A_{i}^{\top})U\right)^{\top}. \]

In this paper, we sometimes abuse the notation and use \(\grad^{2}f_{i}(U)\) to refer to either \(H_{i}(U)\) or \((H_{i})_{U}\), but its precise meaning should be clear from its domain.
\paragraph{Computation of Gradients and Hessians} Let \((D^{k} f_{i})_{U}\) denote the \(k\)-th order  derivative of \(f_{i}\) at point \(U \in  \R^{d \times r}\) and \(\Lin(X, Y)\) denote the space of all linear mappings from \(X\) to \(Y\). We consider higher-order derivatives as linear maps:
\begin{align*}
  (D f_{i})_{U} &: \R^{d \times r} \rightarrow \R \\
  (D^{2} f_{i})_{U} &:  \R^{d \times r} \rightarrow  \Lin(\R^{d \times r}, \R)
\end{align*}

We identify \((D f_{i})_{U}\) with the matrix \(\grad f_{i}(U)\) such that \[(D f_{i})_{U}(Z) = \< \grad f_{i}(U), Z \>\] and identify \((D^{2} f_{i})_{U}\) with a linear operator \((H_{i})_{U}:  \R^{d \times r} \rightarrow  \R^{d \times r} \) such that \[ (D^2 f_{i})_{U}(Y)(Z) = \< (H_{i})_{U}(Y), Z \>. \]

Since \(f_{i}\) is differentiable, applying its derivative at \(U\) to a matrix \(Z\) gives the corresponding directional derivative at \(U\) for the direction \(Z\).

\begin{align*}
  (Df_{i})_{U}(Z) & = \dd{}{t} f_{i}(U+t Z)\Bigr|_{t=0} \\
              & = \dd{}{t} \frac{1}{2} (\< (U+t Z) (U+t Z)^{\top}, A_{i} \> - y_{i})^{2} \Bigr|_{t=0} \\
              & =\left(\< U U^{\top}, A_{i} \> - y_{i}\right) \left(\dd{}{t} \< U U^{\top} + t^{2} Z Z^{\top} + t(Z U^{\top} + U Z^{\top}), A_{i} \> \right)\Bigr|_{t=0} \\
              & = \left(\< U U^{\top}, A_{i} \> - y_{i}\right)  \< \dd{}{t}\left(t^{2} Z Z^{\top} + t(Z U^{\top} + U Z^{\top})  \right), A_{i}  \> \Bigr|_{t=0} \\
              & \stackrel{(a)}{=} \left(\< U U^{\top}, A_{i} \> - y_{i}\right)  \< Z U^{\top} + U Z^{\top}, A_{i}  \> \\
              & =\left(\< U U^{\top}, A_{i} \> - y_{i}\right)  \< A_{i}, Z U^{\top} \> + \< A_{i}, U Z^{\top}  \>\\
              & =\left(\< U U^{\top}, A_{i} \> - y_{i}\right)  \< A_{i}U, Z  \> + \< U^{\top} A_{i}, Z^{\top}  \> \\
              & \stackrel{(b)}{=} \left(\< U U^{\top}, A_{i} \> - y_{i}\right)  \< (A_{i} + A_{i}^{\top}) U, Z  \>.
\end{align*}

From (a) we conclude \((Df_{i})_{U}\) is the functional \(Z \mapsto\left(\< U U^{\top}, A_{i} \> - y_{i}\right)  \< Z U^{\top} + U Z^{\top}, A_{i}  \>\); it would be easier to calculate the second derivative from this form. From (b), using $y_i = \< A_i, M^{*} \>,$ we obtain the following closed-form expression for the gradient \(\grad f_{i}(U)\):
\[ \grad f_{i}(U) = \< U U^{\top} - M^{*}, A_{i} \>  (A_{i} + A_{i}^{\top}) U.\]

To calculate the second derivative of \(f_{i}\) at point \(U\), we study the variation of its first derivative \(U \mapsto (D f_{i})_{U}\) at direction \(Y\). Recall that the second derivative lives in the space of linear functionals \(\Lin(\R^{d \times r}, \R)\) and we use \(Z \in \R^{d \times r}\) to denote the input of this linear functional.
\begin{align*}
  (D^{2}f_{i})_{U}(Y)
  & = \dd{}{t} (D f_{i})_{U+t Y} \Bigr|_{t = 0} \\
  & = \dd{}{t} \left\{ Z \mapsto \left(\< (U + t Y) (U + t Y)^{\top}, A_{i} \> - y_{i}\right)  \< Z (U + t Y)^{\top} + (U + t Y) Z^{\top}, A_{i}  \> \right\} \Bigr|_{t=0} \\
  & =   \left(\< U U ^{\top}, A_{i} \> - y_{i}\right) \dd{}{t} \left\{ Z \mapsto   \< Z (U + t Y)^{\top} + (U + t Y) Z^{\top}, A_{i}  \> \right\} \Bigr|_{t=0} + \\
  & \qquad \qquad \dd{}{t} \left\{\< (U + t Y) (U + t Y)^{\top}, A_{i} \> - y_{i}\right\}  \Bigr|_{t=0} \left\{Z \mapsto \< Z U^{\top} + U  Z^{\top}, A_{i}  \> \right\} \\
  & =   \left(\< U U ^{\top}, A_{i} \> - y_{i}\right) \{Z \mapsto \< Z Y^{\top} + Y Z ^{\top},A_{i} \> \} + \\
  & \qquad \qquad \< U Y^{\top} + Y U^{\top}, A_{i} \>  \left\{Z \mapsto \< Z U^{\top} + U  Z^{\top}, A_{i}  \> \right\} \\
  & = Z \mapsto   \left(\< U U ^{\top}, A_{i} \> - y_{i}\right) \< Z Y^{\top} + Y Z ^{\top},A_{i} \> + \< U Y^{\top} + Y U^{\top}, A_{i} \> \< Z U^{\top} + U  Z^{\top}, A_{i}  \> \\
  & =  Z \mapsto (\< U U ^{\top}, A_{i} \> - y_{i}) \<(A_{i} + A_{i}^{\top}) Y, Z\> + \< Y, (A_{i} + A_{i}^{\top})U \>  \<(A_{i} + A_{i}^{\top})U, Z\>.
\end{align*}
Recall that we identify \((D^{2} f_{i})_{U}\) with a linear operator
\((H_{i})_{U}: \R^{d \times r} \rightarrow \R^{d \times r} \) such
that \[ (D^2 f_{i})_{U}(Y)(Z) = \< (H_{i})_{U}(Y), Z \>. \] Note that
\((H_{i})_{U}: \R^{d \times r} \rightarrow \R^{d \times r}\) viewed as a
fourth-order tensor is difficult to write down in a closed form, so we vectorize
both the domain and the codomain of \((H_{i})_{U}\) so that it can be
represented by a matrix.

The operator \((H_{i})_{U}\) acting on matrix \(Y \in \R^{d \times r}\) gives
\[(H_{i})_{U}(Y) = \< U U ^{\top} - M^{*}, A_{i} \> (A_{i} + A_{i}^{\top}) Y + \< Y, (A_{i} + A_{i}^{\top})U \>  (A_{i} + A_{i}^{\top})U.\]

We identify this linear map with a matrix \(H_{i}(U)\) of dimension \(\R^{dr \times dr}\), acting on the vectorized version of \(Y\). As a shorthand, write \(y = \vect(Y)\) and \(B_{i} = A_{i} + A_{i}^{\top}\).

Let \(I_{r}\) and \( I_{d}\) denote the identity matrix of dimension \(r \times r\) and \( d\times d\) respectively. Then
\begin{align*}
  H_{i}(U)\, y
  & =  \vect \left( \< U U ^{\top} - M^{*}, A_{i} \> (A_{i} + A_{i}^{\top}) Y + \< Y, (A_{i} + A_{i}^{\top})U \>  (A_{i} + A_{i}^{\top})U  \right) \\
  & = \< U U ^{\top} - M^{*}, A_{i} \> \vect \left(  B_{i} Y I_{r}\right) + \vect \left(B_{i}U  \right) \vect(B_{i}U)^{\top}\vect(Y) \\
  & = \< U U ^{\top} - M^{*}, A_{i} \> I_{r} \otimes B_{i} \vect \left( Y \right) + \vect \left(B_{i}U  \right) \vect(B_{i}U)^{\top}\vect(Y) \\
  & = \< U U ^{\top} - M^{*}, A_{i} \> (I_{r} \otimes B_{i}) y + \vect \left(B_{i}U  \right) \vect(B_{i}U)^{\top} y.
\end{align*}

Hence \[H_{i}(U) = \< U U ^{\top} - M^{*}, A_{i} \> I_{r} \otimes B_{i}  + \vect \left(B_{i}U  \right) \vect(B_{i}U)^{\top}. \]

\subsubsection{Gradient Covariance Bound}\label{sec:grad_cov_bound}

\begin{lemma}[Equation~\eqref{eq:gradient_covariance_bound}]
Consider the noiseless setting as in Theorem~\ref{thm:low-rank-mtrx-sensing-intro} with $\sigma = 0$. For all \(U \in \R^{d \times r}\) with \(\opnorm{U}^{2} \le \Gamma\), it holds that
  \begin{equation}
    \opnorm{\Cov(\vect( \grad f_{i}(U)))} \le 8 \fnorm{U U^{\top} - M^{*}}^{2} \opnorm{U}^{2} \le 32 r^{2}\Gamma^{3}.
  \end{equation}
\end{lemma}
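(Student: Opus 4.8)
The plan is to bound $\opnorm{\Cov(\vect(\grad f_i(U)))}$ through the scalar quadratic form $v^\top\Cov(\vect(\grad f_i(U)))\,v$ over unit vectors $v=\vect(V)$ (so $\fnorm V=1$), exploiting the fact that the sample gradient is a mean-zero scalar Gaussian times a Gaussian matrix, which turns the computation into a fourth moment of a bivariate Gaussian. Write $R:=UU^\top-M^*$, which is symmetric, and recall from Appendix~\ref{sec:sample_grad_and_hessians} that $\grad f_i(U)=\<R,A_i\>\,(A_i+A_i^\top)U$. Using cyclicity of the trace one checks $\<V,\grad f_i(U)\>=\<R,A_i\>\,\<(A_i+A_i^\top)U,V\>=\<R,A_i\>\,\<A_i,W\>$ with $W:=VU^\top+UV^\top$ symmetric, so that
\[
  v^\top\Cov(\vect(\grad f_i(U)))\,v=\E\!\left[\<R,A_i\>^2\<A_i,W\>^2\right]-\<V,\E[\grad f_i(U)]\>^2.
\]

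For the first term, $(\<R,A_i\>,\<A_i,W\>)$ is a mean-zero bivariate Gaussian (linear in the i.i.d.\ $\mathcal N(0,1)$ entries of $A_i$) with variances $\fnorm R^2$, $\fnorm W^2$ and covariance $\<R,W\>$, so Isserlis's formula (which here follows from Fact~\ref{fact:Gaussian_quadratic_form} applied to the quadratic form $a\mapsto(a^\top \vect(R)\vect(W)^\top a)^2$) gives $\E[\<R,A_i\>^2\<A_i,W\>^2]=\fnorm R^2\fnorm W^2+2\<R,W\>^2$. Using symmetry of $R$ one has $\<R,W\>=2\<RU,V\>$, while $\E[\<R,A_i\>(A_i+A_i^\top)]=2R$ yields $\E[\grad f_i(U)]=2RU$ and hence $\<V,\E[\grad f_i(U)]\>^2=4\<RU,V\>^2$. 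Substituting,
\[
  v^\top\Cov(\vect(\grad f_i(U)))\,v=\fnorm R^2\fnorm W^2+8\<RU,V\>^2-4\<RU,V\>^2=\fnorm R^2\fnorm W^2+4\<RU,V\>^2;
\]
note that it is exactly the mean subtraction that cancels half of the cross term and produces the clean constant --- merely bounding the covariance by the second-moment matrix would be too lossy here.

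It remains to bound the two pieces. By Fact~\ref{fact:fnorm}, $\fnorm W\le 2\fnorm{VU^\top}\le 2\opnorm U$ (since $\fnorm V=1$), so $\fnorm W^2\le 4\opnorm U^2$; and $\<RU,V\>^2\le\fnorm{RU}^2\le\fnorm R^2\opnorm U^2$. Adding gives $v^\top\Cov(\vect(\grad f_i(U)))\,v\le 8\fnorm R^2\opnorm U^2=8\fnorm{UU^\top-M^*}^2\opnorm U^2$, and since $v$ was arbitrary this is the first claimed inequality. For the second, both $UU^\top$ and $M^*$ have rank at most $r$, so $\fnorm{UU^\top}\le\sqrt r\,\opnorm U^2\le\sqrt r\,\Gamma$ and $\fnorm{M^*}\le\sqrt r\,\sigstarl\le\sqrt r\,\Gamma$; the triangle inequality gives $\fnorm{UU^\top-M^*}\le 2\sqrt r\,\Gamma$, and combined with $\opnorm U^2\le\Gamma$ this yields $8\fnorm{UU^\top-M^*}^2\opnorm U^2\le 32r^2\Gamma^3$. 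The only part requiring genuine care is the bivariate fourth-moment bookkeeping together with the observation that subtracting the mean is what delivers the stated constant $8$.
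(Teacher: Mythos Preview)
Your proof is correct and follows essentially the same route as the paper. The paper packages the variance computation into Lemma~\ref{lemma:Gaussian_quadratic_form_rank1} (derived from Fact~\ref{fact:Gaussian_quadratic_form}) to obtain $\Var\<\grad f_i(U),Z\>=\<R,W\>^2+\fnorm{R}^2\fnorm{W}^2$ and then bounds $\<R,W\>^2\le\fnorm{R}^2\fnorm{W}^2$ by Cauchy--Schwarz, whereas you reach the identical expression via Isserlis plus explicit mean subtraction and bound the cross term as $4\<RU,V\>^2\le 4\fnorm{R}^2\opnorm{U}^2$; since $\<R,W\>^2=4\<RU,V\>^2$, the two bounds coincide term by term. (One tiny wording quibble: your parenthetical should say Fact~\ref{fact:Gaussian_quadratic_form} is applied to the quadratic form $a\mapsto a^\top\vect(R)\vect(W)^\top a$, not to its square.)
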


Recall that \(\Gamma \ge 36 \sigstarl\), so we have the following trivial bound that is used frequently
\[ \opnorm{M^{*}} = \sigstarl \le \Gamma / 36 < \Gamma. \]

\begin{proof}[Proof of Equation~\eqref{eq:gradient_covariance_bound}]
Recall that  \[\grad f_{i}(U) = \< U U^{\top} - M^{*}, A_{i} \> (A_{i} + A_{i}^{\top}) U. \]

We frequently encounter the following calculations.
\begin{lemma}
  \label{lemma:Gaussian_quadratic_form_rank1}
  Let \(P,Q \in \R^{d \times d}\) be given and let \(A_{i}\) have i.i.d.\ standard Gaussian entries. Then
  \begin{align}
    \E[\<P, A_{i}\> \<Q, A_{i}\>] &= \<P, Q\>, \\
    \Var[\<P, A_{i}\> \<Q, A_{i}\>] &\le  2 \fnorm{P}^{2}\fnorm{Q}^{2}.
  \end{align}
\end{lemma}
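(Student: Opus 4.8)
The plan is to reduce both identities to the already-stated Fact~\ref{fact:Gaussian_quadratic_form} on quadratic forms of zero-mean Gaussians. First I would pass to the vectorized picture: since $A_{i}$ has i.i.d.\ standard Gaussian entries, $a := \vect(A_{i}) \sim \normal(0, I_{d^{2}})$, and under this identification $\langle P, A_{i}\rangle = \langle \vect(P), a\rangle$ and $\langle Q, A_{i}\rangle = \langle \vect(Q), a\rangle$. Consequently the product of interest is a genuine quadratic form,
$\langle P, A_{i}\rangle\langle Q, A_{i}\rangle = a^{\top} G a$ with $G := \vect(P)\vect(Q)^{\top} \in \R^{d^{2}\times d^{2}}$, so that Fact~\ref{fact:Gaussian_quadratic_form} applies directly with $\Sigma = I_{d^{2}}$.

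For the mean, Fact~\ref{fact:Gaussian_quadratic_form} gives $\E[a^{\top} G a] = \tr(G) = \tr(\vect(P)\vect(Q)^{\top}) = \langle \vect(Q), \vect(P)\rangle = \langle P, Q\rangle$, which is the first claim. For the variance, the same fact yields $\Var[a^{\top} G a] = \tr\bigl(G(G+G^{\top})\bigr) = \tr(G^{2}) + \tr(GG^{\top})$. Plugging in $G = \vect(P)\vect(Q)^{\top}$, a one-line computation gives $G^{2} = \langle P,Q\rangle\,\vect(P)\vect(Q)^{\top}$, hence $\tr(G^{2}) = \langle P,Q\rangle^{2}$, and $GG^{\top} = \fnorm{Q}^{2}\,\vect(P)\vect(P)^{\top}$, hence $\tr(GG^{\top}) = \fnorm{P}^{2}\fnorm{Q}^{2}$. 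Therefore $\Var[\langle P,A_{i}\rangle\langle Q,A_{i}\rangle] = \langle P,Q\rangle^{2} + \fnorm{P}^{2}\fnorm{Q}^{2}$, and Cauchy--Schwarz ($\langle P,Q\rangle^{2} \le \fnorm{P}^{2}\fnorm{Q}^{2}$) gives the stated bound $\le 2\fnorm{P}^{2}\fnorm{Q}^{2}$.

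There is no substantive obstacle here; the computation is routine, and the only points requiring care are (i) applying the variance formula of Fact~\ref{fact:Gaussian_quadratic_form} to a \emph{non-symmetric} $G$, which is precisely why the $G+G^{\top}$ appears, and (ii) keeping the $\vect(\cdot)$ bookkeeping consistent so that the entrywise matrix inner product $\langle P,Q\rangle$ is correctly identified with $\vect(P)^{\top}\vect(Q)$. An equivalent route that avoids vectorization: $(\langle P,A_{i}\rangle,\langle Q,A_{i}\rangle)$ is a mean-zero bivariate Gaussian with variances $\fnorm{P}^{2},\fnorm{Q}^{2}$ and covariance $\langle P,Q\rangle$, so $\E[\langle P,A_{i}\rangle\langle Q,A_{i}\rangle]=\langle P,Q\rangle$ and, by Isserlis' theorem, $\E[\langle P,A_{i}\rangle^{2}\langle Q,A_{i}\rangle^{2}] = \fnorm{P}^{2}\fnorm{Q}^{2} + 2\langle P,Q\rangle^{2}$, from which subtracting $\langle P,Q\rangle^{2}$ and applying Cauchy--Schwarz recovers the same bound.
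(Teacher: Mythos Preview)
Your proposal is correct and follows essentially the same approach as the paper: vectorize, write the product as the quadratic form $a^{\top}Ga$ with $G=\vect(P)\vect(Q)^{\top}$, apply Fact~\ref{fact:Gaussian_quadratic_form} with $\Sigma=I$, and finish with Cauchy--Schwarz. The paper's computation is line-for-line the same (modulo naming $\vect(P),\vect(Q)$ as separate letters); your Isserlis alternative is a nice extra but not in the paper.
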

\begin{proof}
  By Definition~\ref{def:iid_Gaussian_with_noise}, \(X:=\vect(A_{i})\) is a standard Gaussian vector with identity covariance \(\Sigma = I_{d^2}\).  Let \(a = \vect(P), b = \vect(Q)\), and \(G = a b^{\top}\). Fact~\ref{fact:Gaussian_quadratic_form} implies that
  \begin{align*}
    \E \<P, A_{i}\> \<Q, A_{i}\>
    & = \< ab^{\top}, I_{dr} \>  = \<a, b\> = \<P, Q\>\\
    \Var[\<P, A_{i}\> \<Q, A_{i}\>]
    & =  \tr(G\Sigma (G + G^{\top})\Sigma) = \tr(G(G+G^{\top})) \\
    & = \tr(ab^{\top}(ab^{\top} + ba^{\top})) \\
    & = b^{\top} a \tr(a b^{\top}) +  b^{\top} b \tr(a a^{\top}) \\
    & = \<a, b\>^{2} + \norm{a}^{2}\norm{b}^{2} \\
    & \le 2 \norm{a}^{2}\norm{b}^{2} \text{\qquad (Cauchy-Schwarz)} \\
    & =  2 \fnorm{P}^{2}\fnorm{Q}^{2}. \qedhere
  \end{align*}
\end{proof}

To compute the mean and the variance of the gradient \(\grad f_{i}(U)\), it is more convenient to work with \(\< \grad f_{i}(U), Z \>\) for some \(Z \in \R^{d \times r}\) with \(\fnorm{Z} = 1\).

\begin{align*}
  \< \grad f_{i}(U), Z \>
  & = \< U U^{\top} - M^{*}, A_{i}\> \< (A_{i} + A_{i}^{\top})U, Z\> \\
  & = \< U U^{\top} - M^{*}, A_{i}\> \< A_{i} + A_{i}^{\top}, Z U^{\top}\> \\
  & = \< U U^{\top} - M^{*}, A_{i}\> \< Z U^{\top} + U Z^{\top}, A_{i}\>.
\end{align*}

Lemma~\ref{lemma:Gaussian_quadratic_form_rank1} implies that the expectation
\begin{align*}
  \E \< \grad f_{i}(U), Z \>
  & = \< U U^{\top} - M^{*}, Z U^{\top} + U Z^{\top}\> \\
  & = \< \left((U U^{\top} - M^{*}) + (U U^{\top} - M^{*})^{\top}\right)U, Z\> \\
  & = \< 2\left(U U^{\top} - M^{*}\right)U, Z\>.
\end{align*}

By linearity of expectation, we conclude
\begin{equation}
  \label{eq:expected_gradient}
   \E\grad f_{i}(U)= 2(U U^{\top} - M^{*})U.
\end{equation}

The variance bound can also be obtained via Lemma~\ref{lemma:Gaussian_quadratic_form_rank1}
\begin{align*}
  \Var \< \grad f_{i}(U), Z \>
  & \le 2  \fnorm{U U^{\top} - M^{*}}^{2} \fnorm{Z U^{\top} + U Z^{\top}}^{2} \le 8 \|U U^{\top} - M^{*}\|_{F}^{2} \opnorm{U}^{2} \|Z\|_{F}^{2},
\end{align*}
where the last inequality comes from Fact~\ref{fact:fnorm}.

Since \(\Var \< \grad f_{i}(U), Z \> = \Var[\vect( \grad f_{i}(U))^{\top} \vect(Z)] = \vect(Z)^{\top} \Cov(\vect( \grad f_{i}(U))) \vect(Z)\), we have
\begin{align*}
  \opnorm{\Cov(\vect( \grad f_{i}(U)))}
  & \le 8 \fnorm{U U^{\top} - M^{*}}^{2} \opnorm{U}^{2} \\
  & \le 8 \Gamma \left(\fnorm{U U^{\top}} + \fnorm{M^{*}}\right)^{2} \\
  & \le 8 \Gamma (r \Gamma + r \sigstarl)^{2} = 32 r^{2} \Gamma^{3}. \qedhere
\end{align*}
\end{proof}

\subsubsection{Hessian Covariance Bound}
\label{sec:Hessian_cov_bound}




\begin{lemma}[Equation~\eqref{eq:hessian_covariance_bound}]
  Consider the noiseless setting as in Theorem~\ref{thm:low-rank-mtrx-sensing-intro} with $\sigma = 0$. For all \(U \in \R^{d \times r}\) with \(\opnorm{U}^{2} \le \Gamma\), it holds that
  \begin{equation}
    \opnorm{\Cov(\vect(H_{i}))} \le  16r \fnorm{ U U ^{\top} - M^{*}}^{2}  + 128 \opnorm{U}^{4} \le 192 r^{3} \Gamma^{2}.
  \end{equation}
\end{lemma}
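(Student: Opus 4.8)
\textbf{Proof proposal for Equation~\eqref{eq:hessian_covariance_bound}.}
The plan is to mimic the gradient-covariance argument of Appendix~\ref{sec:grad_cov_bound}. Since $\Cov(\vect(H_i(U)))$ is a PSD matrix of size $(dr)^2\times(dr)^2$, its operator norm equals $\sup\{\Var[\<H_i(U),W\>]:W\in\R^{dr\times dr},\ \fnorm{W}=1\}$, where $\<\cdot,\cdot\>$ is the trace inner product and we identify $W$ with $\vect(W)$ (this is exactly the reduction used for $\grad f_i$). Recalling the closed form $H_i(U)=\<UU^{\top}-M^{*},A_i\>\,(I_r\otimes B_i)+\vect(B_iU)\vect(B_iU)^{\top}$ with $B_i=A_i+A_i^{\top}$, I would split $\<H_i(U),W\>=T_1+T_2$, where $T_1:=\<UU^{\top}-M^{*},A_i\>\,\<I_r\otimes B_i,W\>$ and $T_2:=\vect(B_iU)^{\top}W\,\vect(B_iU)$, and use $\Var[T_1+T_2]\le 2\Var[T_1]+2\Var[T_2]$. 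It then suffices to show $\sup_{\fnorm{W}=1}\Var[T_1]\le 8r\fnorm{UU^{\top}-M^{*}}^{2}$ and $\sup_{\fnorm{W}=1}\Var[T_2]\le 32\opnorm{U}^{4}$.

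For $T_1$: writing $W$ in $r\times r$ blocks of $d\times d$ matrices $W_{jk}$, block-diagonality of $I_r\otimes B_i$ gives $\<I_r\otimes B_i,W\>=\<B_i,S_W\>$ with $S_W:=\sum_{j=1}^{r}W_{jj}$, and since $\<B_i,S_W\>=\<S_W+S_W^{\top},A_i\>$ we get $T_1=\<UU^{\top}-M^{*},A_i\>\,\<S_W+S_W^{\top},A_i\>$. Lemma~\ref{lemma:Gaussian_quadratic_form_rank1} then bounds $\Var[T_1]\le 2\fnorm{UU^{\top}-M^{*}}^{2}\fnorm{S_W+S_W^{\top}}^{2}\le 8\fnorm{UU^{\top}-M^{*}}^{2}\fnorm{S_W}^{2}$, and Cauchy–Schwarz gives $\fnorm{S_W}^{2}\le r\sum_{j}\fnorm{W_{jj}}^{2}\le r\fnorm{W}^{2}=r$, yielding the claimed bound on $\Var[T_1]$.

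For $T_2$: by Fact~\ref{fact:tensor}, $v_i:=\vect(B_iU)=\vect(I_d(A_i+A_i^{\top})U)=(U^{\top}\otimes I_d)\vect(A_i+A_i^{\top})$ is a centered Gaussian vector. Writing $\vect(A_i^{\top})=K\vect(A_i)$ for the commutation matrix $K$ (a symmetric permutation with $K^{2}=I$), we have $\Cov(\vect(A_i+A_i^{\top}))=(I+K)(I+K)^{\top}=2(I+K)$, whose operator norm is $4$; hence, by Fact~\ref{fact:kron_operator_norm}, $\Sigma_v:=\Cov(v_i)$ satisfies $\opnorm{\Sigma_v}\le\opnorm{U^{\top}\otimes I_d}^{2}\cdot 4=4\opnorm{U}^{2}$. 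Since $T_2=v_i^{\top}Wv_i$ with $v_i$ centered Gaussian, Fact~\ref{fact:Gaussian_quadratic_form} gives $\Var[T_2]=\tr(W\Sigma_v(W+W^{\top})\Sigma_v)$, and Fact~\ref{fact:fnorm} bounds this by $\fnorm{W}\,\opnorm{\Sigma_v}\,\fnorm{W+W^{\top}}\,\opnorm{\Sigma_v}\le 2\opnorm{\Sigma_v}^{2}\le 32\opnorm{U}^{4}$. Combining the two pieces gives $\opnorm{\Cov(\vect(H_i))}\le 16r\fnorm{UU^{\top}-M^{*}}^{2}+64\opnorm{U}^{4}\le 16r\fnorm{UU^{\top}-M^{*}}^{2}+128\opnorm{U}^{4}$, and the final $192r^{3}\Gamma^{2}$ bound follows from the crude estimates $\fnorm{UU^{\top}-M^{*}}^{2}\le(r\Gamma+r\sigstarl)^{2}\le 4r^{2}\Gamma^{2}$ (using $\sigstarl\le\Gamma$), $\opnorm{U}^{4}\le\Gamma^{2}$, and $r\ge 1$.

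\textbf{Main obstacle.} The rank-one term $\vect(B_iU)\vect(B_iU)^{\top}$ is the delicate part: unlike in the gradient computation, $B_iU$ is a Gaussian matrix whose vectorization has a non-identity covariance, so I must track the commutation matrix $K$ carefully to pin down $\Sigma_v$, and then correctly apply the Gaussian quartic-moment identity (Fact~\ref{fact:Gaussian_quadratic_form}) to the quadratic form $v_i^{\top}Wv_i$ while keeping $W$ a general (not necessarily symmetric) unit-Frobenius test matrix.
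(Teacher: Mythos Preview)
Your proposal is correct and follows essentially the same argument as the paper: the same $T_1/T_2$ split via Young's inequality, the same block-diagonal reduction of $\<I_r\otimes B_i,W\>$ to $\<S_W+S_W^\top,A_i\>$ with the Cauchy--Schwarz factor of $r$, and the same bound $\opnorm{\Cov(\vect(B_iU))}\le 4\opnorm{U}^2$ combined with Fact~\ref{fact:Gaussian_quadratic_form} for the rank-one term. The only cosmetic difference is that you compute $\Cov(\vect(B_i))=2(I+K)$ exactly via the commutation matrix, whereas the paper bounds it by $2I+2P$ through Young's inequality; both give the same operator-norm bound of $4$, and your resulting constant $64\opnorm{U}^4$ is in fact slightly tighter than the paper's $128\opnorm{U}^4$ before you relax it to match the stated inequality.
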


\begin{proof}

As a shorthand, we write \(B_{i} = A_{i} + A_{i}^{\top}\). Recall that
\[H_{i}(U) = \< U U ^{\top} - M^{*}, A_{i} \> I_{r} \otimes B_{i} + \vect \left( B_{i} U  \right) \vect\left(B_{i}U\right)^{\top}.\]

Let \(W\) be a \(dr \times dr\) matrix that has Frobenius norm 1. We work with \(\<H_{i}(U), W\>\) as we did in the gradient covariance bound calculation.

By Young's inequality for \(L^{2}\) random variables (i.e., \(\norm{\mathcal A + \mathcal B}^{2}_{L^{2}} \le 2 \norm{\mathcal A}^{2}_{L^{2}} + 2 \norm{\mathcal B}^{2}_{L^{2}}\)),
\begin{align*}
  \Var \< H_{i}(U), W\>
  & \le 2\Var\underbrace{[\< U U ^{\top} - M^{*}, A_{i} \> \<I_{r} \otimes B_{i}, W\>]}_{\mathcal{A}} + 2\Var\underbrace{\left[ \<\vect \left(B_{i}U  \right) \vect(B_{i}U)^{\top}, W\> \right]}_{\mathcal{B}}.
\end{align*}

We first bound the term \(\mathcal{A}\). Note \(I_{r} \otimes B_{i}\) consists of \(r\) copies of \(B_{i}\) in the diagnal \[
I_{r} \otimes B_{i} =
\left[
\begin{array}{c|c|c}
  B_{i} & \cdots & O \\
  \hline
  \vdots & \ddots & \vdots \\
  \hline
  O & \cdots &B_{i}
\end{array}
\right].
\]

We partition \(W\) into \(r^{2}\) submatrices of dimension \(d \times d\).
\[
W=
\left[
\begin{array}{c|c|c}
  W_{11} & \cdots & W_{1r} \\
  \hline
  \vdots & \ddots & \vdots \\
  \hline
  W_{r1} & \cdots & W_{rr}
\end{array}
\right].
\]

Then
\begin{align*}
  & \Var[\< U U ^{\top} - M^{*}, A_{i} \> \<I_{r} \otimes B_{i}, W\>] \\
  & = \Var[\< U U ^{\top} - M^{*}, A_{i} \> \< B_{i}, \sum_{i=1}^{r}W_{ii}\>]\\
  &  = \Var[\< U U ^{\top} - M^{*}, A_{i} \> \< A_{i}, \sum_{i=1}^{r} (W_{ii} + W_{ii}^{\top})\>] \\
  & \le 2 \fnorm{ U U ^{\top} - M^{*}}^{2} \fnorm{\sum_{i=1}^{r} (W_{ii} + W_{ii}^{\top})}^{2},
\end{align*}
where the last inequality uses Lemma~\ref{lemma:Gaussian_quadratic_form_rank1}.

Write \(a_{i} = \vect(W_{ii}) \in \R^{d^{2}}, b_{i} = \vect(W_{ii}^{\top}), c_{i} = a_{i} + b_{i}\). Then \(\norm{a_{i}} = \norm{b_{i}} = \fnorm{W_{ii}}\) and
\begin{align*}
  \fnorm{\sum_{i=1}^{r} (W_{ii} + W_{ii}^{\top})}^{2}
  & = \norm{\sum_{i=1}^{r}c_{i}}^{2} \\
  & = \sum_{j=1}^{d^{2}} \left(\sum_{i=1}^{r}c_{ij}\right)^{2} \le \sum_{j=1}^{d^{2}} r \sum_{i=1}^{r} c_{ij}^{2} \qquad \text{(Cauchy-Schwarz)}\\
  & = r \sum_{i=1}^{r} \norm{c_{i}}^{2} = r \sum_{i=1}^{r} \norm{a_{i} + b_{i}}^{2} \\
  & \le 2r \left(\sum_{i=1}^{r}(\norm{a_{i}}^{2} + \norm{b_{i}}^{2}) \right) \\
  & = 4 r \sum_{i=1}^{r}\fnorm{W_{ii}}^{2} \le 4 r \fnorm{W}^{2} = 4 r.
\end{align*}

Therefore,
\begin{align*}
  \Var[\< U U ^{\top} - M^{*}, A_{i} \> \<I_{r} \otimes B_{i}, W\>] & \le 2 \fnorm{ U U ^{\top} - M^{*}}^{2} \fnorm{\sum_{i=1}^{r} (W_{ii} + W_{ii}^{\top})}^{2} \\
  & \le 8r \fnorm{ U U ^{\top} - M^{*}}^{2} \fnorm{W}^{2} = 8r \fnorm{ U U ^{\top} - M^{*}}^{2}.
\end{align*}

To bound the term \(\mathcal{B}\), we apply Fact~\ref{fact:Gaussian_quadratic_form} with \(\Sigma = \Cov(\vect(B_{i}U)) \in \R^{dr \times dr}\):
\begin{align*}
  & \Var\left[ \<\vect \left(B_{i}U  \right) \vect(B_{i}U)^{\top}, W\> \right] \\
  & = \Var\left[  \vect(B_{i}U)^{\top} W \vect (B_{i}U) \right] \\
  & =  \tr(W \Sigma (W + W^{\top}) \Sigma) \\
  & =  \< W \Sigma, \Sigma W\> +  \< W \Sigma, \Sigma W^{\top}\> \\
  & \le  \fnorm{W \Sigma} \fnorm{\Sigma W} +   \fnorm{W \Sigma} \fnorm{\Sigma W^{\top}} \\
  & \le  \fnorm{W}\opnorm{\Sigma} \opnorm{\Sigma}\fnorm{W} +  \fnorm{W} \opnorm{\Sigma} \opnorm{\Sigma} \fnorm{W} \\
  & = 2 \fnorm{W}^{2} \opnorm{\Sigma}^{2} = 2 \opnorm{\Sigma}^{2},
\end{align*}
where the last inequality comes from Fact~\ref{fact:fnorm}.

To bound \(\opnorm\Sigma ^{2}\), we use Fact~\ref{fact:tensor} again. Since \(\vect(B_{i}U) = \vect(I_{d}B_{i}U) = (U^{\top} \otimes I_{d}) \vect(B_{i})\), we have
\begin{equation}
  \label{eq:covariance_BU}
  \Cov(\vect{(B_{i}U)}) =  (U^{\top} \otimes I_{d}) \Cov(\vect(B_{i})) (U \otimes I_{d}).
\end{equation}

Let \(P \in \R^{d^{2} \times d^{2}}\) be the permutation that maps \(\vect(A_{i})\) to \(\vect(A_{i}^{\top})\). Then
\begin{align*}
  \Cov(\vect(B_{i}))
  & = \Cov(\vect(A_{i}) + \vect(A_{i}^{\top}))   \\
  & \le 2 I_{d^{2}} + 2\Cov(\vect(A_{i}) , \vect(A_{i}^{\top})) & \text{(Young's inequality)}\\
  & = 2 I_{d^{2}} + 2\Cov(\vect(A_{i}) , P \vect(A_{i})) \\
  & = 2 I_{d^{2}} + 2 P\Cov(\vect(A_{i}) , \vect(A_{i})) \\
  & = 2 I_{d^{2}} + 2P.
\end{align*}

Since \(P\) is a permutation, we have \(\opnorm{\Cov(\vect(B_{i}))} \le 4\).

It follows from Equation~\eqref{eq:covariance_BU} and Fact~\ref{fact:kron_operator_norm} that \(\opnorm{\Cov(\vect(B_{i}U))} \le 4\opnorm{U}^{2}\) and therefore
\[ \Var\left[ \<\vect \left(B_{i}U  \right) \vect(B_{i}U)^{\top}, W\> \right] \le 64 \opnorm{U}^{4} \fnorm{W}^{2} = 64 \opnorm{U}^{4}. \]

We conclude that for all \(W \in \R^{dr \times dr}\), \[\Var \< H_{i}, W\> \le 16r \fnorm{ U U ^{\top} - M^{*}}^{2}  + 128 \opnorm{U}^{4},\] hence
\begin{equation*}
  \opnorm{\Cov(\vect(H_{i}))} \le  16r \fnorm{ U U ^{\top} - M^{*}}^{2}  + 128 \opnorm{U}^{4} \le 192 r^{3} \Gamma^{2},
\end{equation*}
where the last inequality uses \(r \ge 1\) and \(\opnorm{U}^{2} \le \Gamma\).
\end{proof}

\subsection{Global Convergence in Low Rank Matrix Sensing---Omitted Proofs}
\label{sec:global-convergence-appendix}
We firstly verify Assumption~\ref{assump:general_clean_data} (ii). We recall the entire assumption below:
\begin{assumption}[Assumption~\ref{assump:general_clean_data}]
   There exists a bounded region \(\mathcal{B}\) such that the function \(f\)  satisfies: 
  \begin{enumerate}[leftmargin=*]
    \item[(i)] There exists a lower bound $\flb > -\infty$ such that for all $x \in \mathcal{B}$, $f(x, A) \ge \flb$ with probability $1$.
    \item[(ii)] There exist parameters \(L_{D_g}, L_{D_H}, B_{D_g}, B_{D_H}\) such that with high probability over the randomness in $A$, \(f(\cdot, A)\) is \(L_{D_g}\)-gradient Lipschitz and \(L_{D_H}\)-Hessian Lipschitz, and the \(\ell_{2}\)-norm of gradient and the Frobenius norm of Hessian of \(f(\cdot, A)\) are upper bounded by $B_{D_g}$ and $B_{D_H}$ respectively.
    \item[(iii)]  There exist parameters \(\sigma_{g}, \sigma_{H} > 0\) such that

    \(\opnorm{\Cov_{A \sim \G}(\grad f(x,A))}  \le \sigma_{g}\) and   
    \(\opnorm{\Cov_{A \sim \G}(\vect(\grad^{2} f(x,A)))} 
    \le \sigma_{H}\).
  \end{enumerate}
\end{assumption}

\begin{proof}[Verifying Assumption~\ref{assump:general_clean_data}(ii)]
We take the bounded region $\mathcal{B}$ to be the region $\{U: \opnorm{U}^2 \le \Gamma\}$. Let $f_{i}(U) = \frac12 \left(\< U U^{\top}, A_{i} \> - y_{i}\right)^{2}$ be the cost function corresponding to clean samples. We need to verify that $\grad f_i(U)$ and $\grad^2 f_i(U)$ are \(\poly(dr\Gamma/\epsilon)\)-Lipschitz and bounded within $\mathcal{B}$ with high probability so that the violated samples constitute at most an $\eps$-fraction.

We discuss gradient-Lipschitzness as an example, focusing on the constant $L_{D_g}$. The rest of conditions can be checked similarly.  We drop the subscript $L := L_{D_g}$ in the following analysis for conciseness of the notation. 

Since \(\grad f_{i}(X) - \grad f_{i}(Y) = \< X X^{\top} - M^{*}, A_{i} \> (A_{i} + A_{i}^{\top}) (X - Y) + \<X X^{\top} - Y Y^{\top}, A_{i}\> (A_{i}+A_{i}^{\top})Y\), it suffices that \(\fnorm{A_{i}}^{2} \le 2d^2 + 3/\eps\) for \(\grad f_{i}(\cdot)\) to be \(\poly(dr\Gamma/\epsilon)\)-Lipschitz. Since \(\fnorm{A_{i}}^{2}\) follows chi-square distribution with degree of freedom \(d^{2}\), by Laurent-Massart bound, we have 
  \[\Prob\{\fnorm{A_{i}}^{2} - d^2 \ge d^2 + 3/\epsilon \} \le \exp(-1/\epsilon).\]

This completes the verification of Assumption~\ref{assump:general_clean_data}(ii) for $L_{D_g}$. We proceed to discuss why this high probability result implies that at most an $\eps$-fraction of clean samples violates gradient Lipschitzness. By Chernoff's inequality, the probability that more than \(\epsilon\)-fraction of uncorrupted \(A_{i}\)'s fail to satisfy \(\fnorm{A_{i}}^{2} \le 2 d^2 + 3/ \eps\) is less than
    \[\exp\left(-n\left( (1-\epsilon) \log\frac{1-\epsilon}{1-\exp(-1/\epsilon)} + \epsilon \log\frac{\epsilon}{\exp(-1/\epsilon)}\right)\right) = \bigO{\frac1\eps \exp(-n)},\] which is a small if \(n = \tilde O\left(\frac1{\epsilon} \right)\) is sufficiently large.
\end{proof}

\begin{corollary}
  \label{cor:robust_mean_estimation_guarantee}
  Consider the noiseless setting as in Theorem~\ref{thm:low-rank-mtrx-sensing-intro} with $\sigma = 0$. There exists a sample size \(n = \tilde O\left( \frac{d^{2}r^{2}}{\epsilon}\right)\) such that with $n$ samples, all subroutine calls to \(\mathsf{RobustMeanEstimation}\)  (Algorithm~\ref{alg:robust_mean_estimation}) to estimate population gradients \(\grad \bar f(U_{t})\) and population Hessians \(\grad^{2} \bar f(U_{t})\) succeed with high probability. In light of Equations~\eqref{eq:gradient_covariance_bound} and~\eqref{eq:hessian_covariance_bound}, this means that there exists a large enough constant \(\Crme\) such that with high probability, for all iterations \(t\), we have
  \begin{align}
    \label{eq:inexact_gradient_accuracy}
    \fnorm{\tilde g_{t} - \grad \bar f(U_{t})}  \le 2 \Crme \fnorm{U_{t} U_{t}^{\top} - M^{*}} \opnorm{U_{t}}  \\
    \fnorm{\tilde H_{t} - \grad^{2}\bar f(U_{t})} \le  4r^{1/2} \fnorm{ U_{t} U_{t} ^{\top} - M^{*}}  + 16 \opnorm{U_{t}}^{2}.
  \end{align}
\end{corollary}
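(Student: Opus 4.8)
The plan is to run the matrix-sensing gradient and Hessian maps through the uniform-over-iterates stability machinery already in place, and then read off each $\mathsf{RobustMeanEstimation}$ error from Proposition~\ref{prop:robust_mean_estimation_with_stability} in a \emph{point-dependent} form. Concretely: (i) certify the hypotheses of Theorem~\ref{thm:sample_complexity_stability} for $h_g(U,A) = \vect(\grad f(U,A)) \in \R^{dr}$ and $h_H(U,A) = \vect(\grad^2 f(U,A)) \in \R^{d^2r^2}$ over the region $\mathcal{B} = \{U : \opnorm{U}^2 \le \Gamma\}$, identified with a subset of $\R^{dr}$ of radius $\gamma = \sqrt{r\Gamma}$ via vectorization; (ii) invoke that theorem to get, with the stated sample size and probability $\ge 1-\xi$, a large stable subset at every point of $\mathcal{B}$ simultaneously, so that the dependence of the iterates $U_t$ on the samples is harmless; (iii) at each iterate substitute the covariance bounds of Lemma~\ref{lemma:gradient_hessian_covariance_bound} into the robust-mean error.

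For (i): Assumption~\ref{assump:general_clean_data}(ii), verified just above, supplies the Lipschitz constants $L_{D_g},L_{D_H}$ and uniform bounds $B_{D_g},B_{D_H}$, all $\poly(dr\Gamma/\epsilon)$, with at most an $\epsilon$-fraction of clean samples violating them absorbed into the corruption; the covariance hypotheses come from Lemma~\ref{lemma:gradient_hessian_covariance_bound}, whose uniform forms are $\sigma_g^2 \le 32r^2\Gamma^3$, $\sigma_H^2 \le 192r^3\Gamma^2$. For (ii): Theorem~\ref{thm:sample_complexity_stability} applied to $h_H$ (which dominates, with $k = d^2r^2$) yields $n = \bigtO{d^2r^2/\epsilon}$, every other parameter entering only through logarithms; $h_g$ needs only $\bigtO{dr/\epsilon}$ and is subsumed. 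Condition on the good event: for every $U \in \mathcal{B}$, the clean gradient set $\{\grad f_i(U)\}$ (resp.\ Hessian set) contains a subset of size $\ge (1-2\epsilon)n$ that is $(2\epsilon, \bigO{\sqrt\epsilon})$-stable with respect to $\grad\bar f(U)$ (resp.\ $\grad^2\bar f(U)$) and the covariance scale there.

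For (iii): fix $t$; by Lemma~\ref{lemma:radius}, $U_t \in \mathcal{B}$, so the stable subset at $U_t$ applies. Viewing the $\le 2\epsilon$ unstable clean points together with the adversary's $\epsilon$ points as a single corruption of at most a $4\epsilon$-fraction of the stable set, Proposition~\ref{prop:robust_mean_estimation_with_stability} returns $\tilde g_t$ with $\fnorm{\tilde g_t - \mu_S} = \bigO{\sqrt{\opnorm{\Cov(\vect(\grad f_i(U_t)))}\,\epsilon}}$; stability also forces the clean mean $\mu_S$ within the same order of $\grad\bar f(U_t)$, so $\fnorm{\tilde g_t - \grad\bar f(U_t)}$ obeys the same bound. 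Substituting $\opnorm{\Cov(\vect(\grad f_i(U_t)))} \le 8\fnorm{U_tU_t^\top - M^*}^2\opnorm{U_t}^2$ from Equation~\eqref{eq:gradient_covariance_bound} and using $\sqrt\epsilon < 1$ to absorb $\sqrt\epsilon$ and all absolute constants into $\Crme$ gives $\fnorm{\tilde g_t - \grad\bar f(U_t)} \le 2\Crme\fnorm{U_tU_t^\top - M^*}\opnorm{U_t}$. The Hessian case is identical using Equation~\eqref{eq:hessian_covariance_bound}, with $\sqrt{a+b}\le\sqrt a+\sqrt b$ to split $\sqrt{16r\fnorm{U_tU_t^\top - M^*}^2 + 128\opnorm{U_t}^4}$ into the two claimed terms (after $\sqrt\epsilon < 1$).

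The main obstacle is that step (iii) needs the error to scale with the \emph{actual} covariance $\opnorm{\Cov(\vect(\grad f_i(U_t)))}$ at the current iterate rather than with the uniform bound $\sigma_g^2 = \bigO{r^2\Gamma^3}$ that drives the sample complexity — this is exactly what makes the corollary usable for the self-improving, linearly-convergent analysis of Section~\ref{sec:local_linear_convergence}, where this covariance tends to $0$. Proposition~\ref{prop:bounded_variance_stability_iid} (hence Proposition~\ref{prop:robust_mean_estimation_with_stability}) does deliver a guarantee in terms of the true covariance at each net point, but the Lipschitz-perturbation step transferring stability from a net point to a nearby $U_t$ in the proof of Theorem~\ref{thm:sample_complexity_stability} is written with the uniform scale; preserving the point-dependent scale requires refining the net — e.g.\ using a sequence of nets whose resolution shrinks toward the optimum in tandem with the distance-contraction argument, which leaves the total net size, hence the sample complexity, unchanged up to logarithms. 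The remaining constant bookkeeping linking the $4\epsilon$ fed to $\mathsf{RobustMeanEstimation}$, the $2\epsilon$ unstable fraction, and the $C\epsilon$-stability hypothesis of Proposition~\ref{prop:robust_mean_estimation_with_stability} is routine.
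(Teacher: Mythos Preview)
Your approach is essentially the paper's: both invoke the uniform-stability machinery behind Theorem~\ref{thm:global_convergence-general-appendix} and then substitute the matrix-sensing covariance bounds from Lemma~\ref{lemma:gradient_hessian_covariance_bound}. The paper's entire proof is the single sentence ``This follows directly from the last sentence of Theorem~\ref{thm:global_convergence-general-appendix}, with $D = d^2r^2$ and $\sigma_g,\sigma_H$ given by Equations~\eqref{eq:gradient_covariance_bound} and~\eqref{eq:hessian_covariance_bound}.''

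You have in fact been more careful than the paper. The obstacle you flag is real: Theorem~\ref{thm:global_convergence-general-appendix} is stated and proved with a \emph{uniform} covariance parameter $\sigma_g$ (resp.\ $\sigma_H$) over the region $\mathcal{B}$, and the net argument in Theorem~\ref{thm:sample_complexity_stability} delivers stability with respect to that uniform scale. Simply plugging the point-dependent first inequality of Equation~\eqref{eq:gradient_covariance_bound} into the theorem's conclusion, as the paper does, is not literally justified by the theorem as written --- stability with respect to $\sigma$ does not automatically yield stability with respect to a smaller $\sigma(U_t)$, since both conditions in Definition~\ref{def:stability} tighten when $\sigma$ shrinks. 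Your proposed fix (a dyadic family of nets, or equivalently applying Theorem~\ref{thm:sample_complexity_stability} once for each scale $\sigma_k = 2^{-k}\sigma_{\max}$ over the sublevel set $\{U:\sigma(U)\le\sigma_k\}$ and union-bounding over $O(\log)$ many scales) is the right way to close this gap, and it does preserve the $\widetilde O(d^2r^2/\epsilon)$ sample complexity since each scale uses the same samples and the number of scales is logarithmic. The paper implicitly relies on exactly this refinement (the point-dependent bound is essential for the local-linear-convergence argument in Section~\ref{sec:local_linear_convergence}) but does not spell it out.

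One minor remark: absorbing $\sqrt\epsilon$ into the ``absolute constant'' $\Crme$ is awkward, but this is an artifact of the corollary's own statement --- every downstream use (e.g.\ in the proof of Lemma~\ref{lemma:radius} and Equation~\eqref{eq:gradient_robust_mean_error}) re-inserts the $\sqrt\epsilon$, so the intended bound is evidently $\Crme\sqrt{\opnorm{\Cov}\cdot\epsilon}$ and the corollary appears to have dropped the factor.
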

\begin{proof}
  This follows directly from the last sentence of Theorem~\ref{thm:global_convergence-general-appendix}, with \(D = d^{2}r^{2}\) and \(\sigma_{g}, \sigma_{H}\) given by Equations~\eqref{eq:gradient_covariance_bound} and~\eqref{eq:hessian_covariance_bound}.
\end{proof}

We next prove Lemma~\ref{lemma:radius}, establishing that all iterates stay inside a bounded region in which the covariance bounds are valid. The analysis utilizes a ``dissipativity'' property~\cite{hale2010asymptotic}), which says that the iterate aligns with the direction of the gradient when the iterate's norm is large. For the gradient step, the gradient will reduce the norm of the iterate. For the negative curvature step, dissipativity property provides a lower bound on the gradient when the iterate's norm is large, but we show that Algorithm~\ref{alg:inexact_randomized} only takes a negative curvature step when this lower bound is violated, therefore the iterate's norm must be small and negative curvature steps with a fixed and small stepsize cannot increase the iterate's norm by too much.

\begin{proof}[Proof of Lemma~\ref{lemma:radius}]
  Recall Algorithm~\ref{alg:inexact_randomized} consists of inexact gradient descent steps of size \(1/L_{g} = \frac{1}{16\Gamma}\) and randomized inexact negative curvature steps of size
  \begin{equation}
    \label{eq:negative_curvature_stepsize}
    \frac{2\epsilon_{H}}{L_{H}} = \frac{\sigstarr}{36\Gamma} \le \frac{\Gamma^{1/2}}{36}.
  \end{equation}

  We proceed to use induction to prove the following for Algorithm~\ref{alg:inexact_randomized}:

  \begin{enumerate}
    \item Suppose at step \(\tau\) we run the negative curvature step to update the iterate from \(U_{\tau}\) to \(U_{\tau + 1}\), then \(\opnorm{U_{\tau}} \le \frac12 \Gamma^{1/2}\)
    \item \(\opnorm{U_{t}} \le \Gamma^{1/2}\) for all \(t \ge 0\).
  \end{enumerate}

  First we consider the inexact gradient steps. We denote the gradient inexactness by \(e_{t} = \grad\bar f(U_{t}) - \tilde g_{t}\). Recall that \[\fnorm{e_{t}} \le  4 r \Crme \sigstarl \opnorm{U_{t}}\sqrt{\epsilon}\] according to Equation~\eqref{eq:inexact_gradient_accuracy} in Lemma~\ref{cor:robust_mean_estimation_guarantee} whenever \(\opnorm{U_{t}} \le \Gamma^{1/2}\), which is true according to the induction hypothesis. Therefore, we have
  \begin{align*}
    \opnorm{U_{t+1}}
    & = \opnorm{U_{t} - \frac1{16\Gamma} \tilde g_{t}} = \opnorm{U_{t} - \frac{1}{16\Gamma} (2 (U_{t} U_{t} - M^{*}) U_{t} - e_{t})} \\
    & \le \opnorm{U_{t} - \frac1{8\Gamma} U_{t} U_{t}^{\top} U_{t}} + \frac1{8\Gamma}\opnorm{M^{*}U_{t} + \frac12 e_{t}}\\
    & \le \max_{i}\{ \sigma_{i}(U_{t}) - \frac1{8\Gamma} \sigma_{i}^{3}(U_{t})\} + \frac1{8\Gamma}(\sigstarl + 2 r \Crme \sigstarl \sqrt{\epsilon}) \opnorm{U_{t}}
  \end{align*}
  Since the function \(t \mapsto t - \frac1{8\Gamma} t^{3}\) is increasing in \([0, \sqrt{\frac{8\Gamma}{3}}]\) and \(\opnorm{U_{t}} \le \Gamma^{1/2} \le \sqrt{\frac{8\Gamma}{3}}\) by induction hypothesis, the maximum is taken when \(i = 1\), hence
  \begin{align}
    \label{eq:iterate_inside_main}
    \opnorm{U_{t+1}}
    & \le \opnorm{U_{t}} - \frac{1}{8\Gamma} \opnorm{U_{t}}^{3} + \frac1{8\Gamma}(\sigstarl + 2 r \Crme \sigstarl \sqrt{\epsilon}) \opnorm{U_{t}} \nonumber \\
    & \le \opnorm{U_{t}} - \frac{1}{8\Gamma} (\opnorm{U_{t}}^{2} - \sigstarl - 2 r \Crme \sigstarl \sqrt{\epsilon})\opnorm{U_{t}} \nonumber \\
    & \le  \opnorm{U_{t}} - \frac{1}{8\Gamma} (\opnorm{U_{t}}^{2} - 2\sigstarl)\opnorm{U_{t}},
  \end{align}
where the last inequality uses \(\epsilon = \bigO{\frac1{\kappa^{3}r^{3}}} = \bigO{\frac1{r^{2}}}\). We split into two cases now:

  \textbf{Case} \(\opnorm{U_t} > \frac12 \Gamma^{1/2}\). Recall that \(\Gamma \ge 36 \sigstarl\), hence
  \begin{align*}
    \opnorm{U_{t+1}}
    & \le  \opnorm{U_{t}} - \frac{1}{8\Gamma} (\opnorm{U_{t}}^{2} - 2\sigstarl)\opnorm{U_{t}} \\
    & \le  \opnorm{U_{t}} - \frac{1}{8\Gamma} (\frac{\Gamma}{4} - 2\sigstarl)\frac{\Gamma^{1/2}}{2} \\
    & \le \opnorm{U_{t}} - \frac{1}{8\Gamma} (\frac{\Gamma}{4} - \frac{\Gamma}{18})\frac{\Gamma^{1/2}}{2} \\
    & \le \opnorm{U_{t}} - \frac{1}{96}\Gamma^{1/2},
  \end{align*}
  where the second inequality is because \(t \mapsto (t^{2} - 2\sigstarl)t\) is increasing on \(\left[ \sqrt{\frac{2\sigstarl}{3}}, +\infty \right]\) and \(\opnorm{U} \ge \frac12 \Gamma^{1/2} \ge 3 \sigstarl\). This case captures the dissipativity condition satisfied by the matrix sensing problem, which says that the iterate aligns with the direction of the gradient when the iterate’s norm is large, so descending along the gradient decreases the norm of the iterate.

  \textbf{Case} \(\opnorm{U_t} \le \frac12 \Gamma^{1/2}\). From Equation~\eqref{eq:iterate_inside_main} we know that if \(\opnorm{U_{t}}^{2} \ge 2 \opnorm{M^{*}}\), it always holds that \(\opnorm{U_{t+1}} \le \opnorm{U_{t}} \le \frac12 \Gamma^{1/2}\). For \(\opnorm{U_{t}}^{2} \le 2 \opnorm{M^{*}}\), we have \(\opnorm{U_{t}} \le \sqrt 2 \sigstarl^{1/2}\), and therefore
  \begin{align*}
    \opnorm{U_{t+1}}
    & \le  \opnorm{U_{t}} - \frac{1}{8\Gamma} (\opnorm{U_{t}}^{2} - 2\sigstarl)\opnorm{U_{t}} \\
    & \le  \opnorm{U_{t}} + \frac{1}{8\Gamma} (\sqrt 2 \sigstarl^{1/2} + \opnorm{U_{t}} )((\sqrt 2\sigstarl^{1/2} -  \opnorm{U_{t}} ))\opnorm{U_{t}} \\
    & \le  \opnorm{U_{t}} + \frac{2\sqrt 2 \sigstarl^{1/2} }{8\Gamma} (\sqrt 2\sigstarl^{1/2} -  \opnorm{U_{t}} )\sqrt 2 \sigstarl^{1/2} \\
    & = \opnorm{U_{t}} + \frac{ \sigstarl}{2\Gamma} (\sqrt 2\sigstarl -  \opnorm{U_{t}} ) \\
    & \le \sqrt 2 \sigstarl^{1/2}  \text{\qquad (because the above expression is increasing in \(\opnorm{U_{t}}\))}\\
    & \le \frac{\sqrt 2}{6} \Gamma^{1/2} < \Gamma^{1/2},
  \end{align*}
  where in the second last inequality we use \(\sigstarl \le \Gamma / 36\).

  In summary, for all \(t \in \N\),
  \begin{align}
    \label{eq:iterate_inside}
    \opnorm{U_{t}} > \frac12 \Gamma^{1/2} & \implies \opnorm{U_{t} - \frac1{16\Gamma}\inexact g_{t}} \le \opnorm{U_{t}} - \frac1{96}\Gamma^{1/2} \\
    \opnorm{U_{t}} \le \frac12 \Gamma^{1/2} & \implies \opnorm{U_{t} - \frac1{16\Gamma} \inexact g_{t}} \le \frac12\Gamma^{1/2}
  \end{align}
  In particular, for  gradient step regime \(U_{t+1} := U_{t} - \frac1{16\Gamma} \tilde g_{t}\), we know the iterate \(U_{t+1}\) stays inside the region \(\{U: \opnorm{U}^{2} \le \Gamma\}\) provided that \(U_{t}\) is inside the region.

  We proceed to analyze  negative curvature steps, which only happen if the inexact gradient is small \(\fnorm{\tilde g_{t}} \le \epsilon_{g} = \frac1{32}\sigstarr^{3/2}\). Note that it follows from~Equation~\eqref{eq:iterate_inside} that
  \begin{align*}
    \fnorm{\tilde g_{t}} \le  \frac1{32}\sigstarr^{3/2}
    & \implies \opnorm{\tilde g_{t}} \le  \frac1{32}\sigstarr^{3/2} \le \frac1{32}\Gamma^{3/2} \\
    & \implies \opnorm{U_{t} - \frac1{16\Gamma} \tilde g_{t}} \ge \opnorm{U_{t}} - \frac1{512} \Gamma^{1/2} > \opnorm{U_{t}} - \frac1{96}\Gamma^{1/2}\\
    & \implies \opnorm{U_{t}} \le \frac12 \Gamma^{1/2}.
  \end{align*}
  Therefore, if at step \(\tau\) we run the inexact negative curvature step to update the iterate from \(U_{\tau}\) to \(U_{\tau + 1}\), then \(\opnorm{U_{\tau}} \le \frac12 \Gamma^{1/2}\). Recall in Equation~\eqref{eq:negative_curvature_stepsize} the negative curvature stepsize \(\frac{2 \epsilon_{H}}{L_{H}}  \le \frac{\Gamma^{1/2}}{36} < \frac{\Gamma^{1/2}}{2}\), so for the negative curvature update \[U_{\tau+1} = U_{\tau} + \frac{2 \epsilon_{H}}{L_{H}} \sigma_{\tau} \inexact p_{\tau}\] where \(\sigma_{\tau} = \pm 1\) with probability 1/2 and \(p_{\tau}\) is a vector with \(\fnorm{p_{\tau}} = 1\), we have
  \begin{align*}
    \opnorm{U_{\tau+1}} &\le \opnorm{U_{\tau}} + \opnorm{\frac{2 \epsilon_{H}}{L_H} \sigma_{\tau} \inexact p_{\tau}} \\
    & \le  \opnorm{U_{\tau}} + \fnorm{\frac{2 \epsilon_{H}}{L_H} \sigma_{\tau} \inexact p_{\tau}} < \frac12 \Gamma^{1/2} + \frac12 \Gamma^{1/2} = \Gamma^{1/2}.
  \end{align*}
  Finally, the initialization satisfies \(\opnorm{U_{0}}^{2} \le \Gamma\) by assumption, so the induction is complete.
\end{proof}

\subsection{Local Linear Convergence in Low Rank Matrix Sensing---Omitted Proofs}\label{sec:local-linear-convergence-appendix}
Recall that Theorem~\ref{thm:global_convergence} gives us a \((\frac{1}{24}\sigstarr^{3/2}, \frac{1}{3}\sigstarr)\)-approximate SOSP as the initialization for Algorithm~\ref{alg:local_linear_convergence}. We now explain why approximate SOSP is useful for the local search in the low rank matrix sensing problems.
\begin{definition}[Strict Saddle Property]
\label{def:strict_saddle}
  Function $f(\cdot)$ is a $(\epsilon_{g}, \epsilon_{H}, \zeta)$-{strict saddle} function if for any $U \in \R^{r \times d}$, at least one of following properties holds:
\begin{enumerate}[label=(\alph*)]
\item $\norm{\grad f(U)} > \epsilon_{g}$.
\item $\lambda_{\min}(\grad^{2} f(U)) < -\epsilon_{H}$.
\item $U$ is $\zeta$-close to $\cXstar$ --- the set of local minima; namely, $\dist(U, \cXstar) \leq \zeta$.
\end{enumerate}
\end{definition}
Approximate SOSPs are close to the set of local minima for strict saddle functions:
\begin{proposition}
\label{prop:strict_saddle}
    Let $f$ be a $(\epsilon_{g}, \epsilon_{H}, \zeta)$-{strict saddle} function and  $\cXstar$ be the set of its local minima. If $U_{SOSP}$ is a \((\epsilon_{g}, \epsilon_{H})\)-approximate SOSP of $f$, then $\dist(U_{SOSP}, \cXstar) \leq \zeta$.
\end{proposition}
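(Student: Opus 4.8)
The plan is to argue directly from the two definitions involved, since the statement is essentially a tautological consequence of them once the inequalities are lined up correctly. First I would recall that, by Definition~\ref{def:SOSP}, the hypothesis that $U_{SOSP}$ is an $(\epsilon_{g}, \epsilon_{H})$-approximate SOSP of $f$ means precisely that $\norm{\grad f(U_{SOSP})} \le \epsilon_{g}$ and $\lambda_{\min}(\grad^{2} f(U_{SOSP})) \ge -\epsilon_{H}$.

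Next I would observe that these two inequalities are exactly the negations of conditions (a) and (b) in Definition~\ref{def:strict_saddle}: condition (a) asserts $\norm{\grad f(U)} > \epsilon_{g}$, which is incompatible with $\norm{\grad f(U_{SOSP})} \le \epsilon_{g}$, and condition (b) asserts $\lambda_{\min}(\grad^{2} f(U)) < -\epsilon_{H}$, which is incompatible with $\lambda_{\min}(\grad^{2} f(U_{SOSP})) \ge -\epsilon_{H}$. Hence neither (a) nor (b) can hold at $U = U_{SOSP}$. Since $f$ is a $(\epsilon_{g}, \epsilon_{H}, \zeta)$-strict saddle function, the definition guarantees that at every point, and in particular at $U_{SOSP}$, at least one of (a), (b), (c) holds; having ruled out (a) and (b), condition (c) must hold, i.e.\ $\dist(U_{SOSP}, \cXstar) \le \zeta$, which is the claim.

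The only point requiring any care — and the closest thing to an obstacle here — is to make sure the strict/non-strict inequalities in the two definitions are genuinely complementary (the strict $>$ and $<$ appearing in the strict-saddle conditions versus the non-strict $\le$ and $\ge$ in the SOSP definition), so that negating (a) and (b) is exactly the SOSP condition and no boundary case slips through; as the definitions are stated, they match, so the argument goes through with no further work.
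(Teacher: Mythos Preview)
Your proof is correct and follows exactly the same approach as the paper's own proof: use the approximate SOSP conditions to rule out alternatives (a) and (b) of the strict saddle definition, forcing (c) to hold. The paper's version is slightly terser but the argument is identical.
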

\begin{proof}
    Since $U_{SOSP}$ is a \((\epsilon_{g}, \epsilon_{H})\)-approximate SOSP of $f$, we have $\norm{\grad f(U)} \le \epsilon_{g}$ and $\lambda_{\min}(\grad^{2} f(U)) \ge -\epsilon_{H}$, i.e.,\ (a) and (b) in Definition~\ref{def:strict_saddle} fail to hold. Therefore, (c) holds, i.e.,\ $\dist(U, \cXstar) \leq \zeta$.
\end{proof}

Some regularity conditions only hold in a local neighborhood of $\cXstar$. We focus on the following definition.
\begin{definition}[Local Regularity Condition]
  \label{def:regularity}
In a $\zeta$-neighborhood of the set of local minima $\cXstar$ (that is, $\dist(U, \cXstar) \leq \zeta$),
the function $f(\cdot)$ satisfies an $(\alpha, \beta)$-{regularity condition} if for any $U$ in this neighborhood:
\begin{equation}\label{eq:regularity}
\< \grad f(U), U - \projX(U) \> \ge \frac{\alpha}{2}\norm{U- \projX(U)}^2 + \frac{1}{2\beta} \norm{\grad f(U)}^2.
\end{equation}
\end{definition}
Local regularity condition is a weaker condition than strong convexity, but both
conditions would allow a first-order algorithm to obtain local linear
convergence. 

Recall that \(\bar f(U) = \frac12\fnorm{ UU^{\top} - M^{*} }^{2}\). The global
minima of \(\bar f\) solve the so-called symmetric low rank matrix factorization
problem and the corresponding function value is zero. Let \(TDT^{\top}\) be the
singular value decomposition of the real symmetric matrix \(M^{*}\). We observe
\(U^{*} = TD^{1/2}\) is a global optimum of \(\bar f\).

The function
\(\bar f\) satisfies the local regularity condition and the strict saddle
property, and all of its local minima are global minima~\cite{jin2017howto}.

\begin{fact}[\cite{jin2017howto}, Lemma 7]
\label{fact:matrix_factorization}
For \(\bar f\) defined in \eqref{eq:barf}, all local minima are global minima. The set of global minima is characterized by $\cXstar = \{U^* R |  R R^{\top}= R^{\top} R = I \}$. Furthermore, $\bar f(U)$ satisfies:
\begin{enumerate}
\item $(\frac{1}{24}(\sigstarr)^{3/2}, \frac{1}{3}\sigstarr, \frac{1}{3}(\sigstarr)^{1/2})$-strict saddle property, and
\item $(\frac{2}{3}\sigstarr, 10\sigstarl)$-regularity condition in the $\frac{1}{3}(\sigstarr)^{1/2}$-neighborhood of $\cXstar$ in Frobenius norm.
\end{enumerate}
\end{fact}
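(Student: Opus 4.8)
The plan is to reproduce the landscape analysis of the symmetric matrix-factorization objective. Throughout I would write $M^{*} = T D T^{\top}$ for the eigendecomposition ($T \in \R^{d \times r}$ with orthonormal columns, $D = \operatorname{diag}(\sigstarl, \dots, \sigstarr)$) and $U^{*} = T D^{1/2}$, and I would work from the two identities $\grad \bar f(U) = 2 (U U^{\top} - M^{*}) U$ and $[\hess \bar f(U)](Z, Z) = \fnorm{Z U^{\top} + U Z^{\top}}^{2} + 2 \< U U^{\top} - M^{*},\, Z Z^{\top} \>$, both immediate from $\bar f(U) = \tfrac12 \fnorm{U U^{\top} - M^{*}}^{2}$. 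I would also record that $\bar f(U R) = \bar f(U)$ for $R \in O(r)$, so $\cXstar$ is $O(r)$-invariant and, when analyzing a fixed $U$, I may rotate so that its closest global optimum is $U^{*}$ itself; then $\Delta := U - U^{*}$ satisfies $\fnorm{\Delta} = \dist(U, \cXstar)$.

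\textbf{Critical points and the form of $\cXstar$.} First I would solve $\grad \bar f(U) = 0$, i.e. $U U^{\top} U = M^{*} U$: plugging in a thin SVD $U = \Phi \Sigma \Psi^{\top}$ and comparing columns with $\Sigma_{jj} > 0$ shows every such left singular vector is an eigenvector of $M^{*}$ with eigenvalue $\Sigma_{jj}^{2}$, so $U U^{\top}$ is a sum of at most $r$ orthogonal eigenprojectors of $M^{*}$ with matching eigenvalues. This immediately gives $\bar f(U) = 0 \iff U U^{\top} = M^{*}$, and since both sides are rank $r$, $U = U^{*} R$ with $R R^{\top} = R^{\top} R = I_{r}$, identifying $\cXstar$ and showing the global minima are exactly $\cXstar$. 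For a non-global critical point, $\rank(U) < r$, so $\ker U \neq \{0\}$; I would pick a unit $w \in \ker U$ and a unit eigenvector $v$ of $M^{*}$ orthogonal to $\operatorname{col}(U)$ with eigenvalue $\ge \sigstarr$ (such $v$ exists because $\operatorname{col}(U)$ is spanned by eigenvectors of $M^{*}$), and take $Z = v w^{\top}$: then $Z U^{\top} = U Z^{\top} = 0$, hence $[\hess \bar f(U)](Z, Z) = 2 \< U U^{\top} - M^{*},\, v v^{\top} \> = -2\, v^{\top} M^{*} v \le -2 \sigstarr < 0$. So every critical point is global or a strict saddle; the rest of the work is to make this robust.

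\textbf{Robust strict-saddle property.} For arbitrary $U$ I would split on $\opnorm{U}$. If $\opnorm{U}^{2} \ge 2 \sigstarl$, applying $\grad \bar f(U)$ to the top right singular vector of $U$ gives (with $\rho = \opnorm{U}$) a vector of norm $\ge 2 \rho(\rho^{2} - \sigstarl) \ge \rho^{3} \ge (2\sigstarr)^{3/2} > \tfrac{1}{24}\sigstarr^{3/2}$, so $\fnorm{\grad \bar f(U)} > \tfrac{1}{24}\sigstarr^{3/2}$ and condition (a) of the strict-saddle definition holds --- this is the dissipativity regime. Otherwise $\opnorm{U}^{2} < 2\sigstarl$; I would assume (a) and (c) both fail, rotate so $\projX(U) = U^{*}$, and use $Z = \Delta = U - U^{*}$ (with $\fnorm{\Delta} > \tfrac13 \sigstarr^{1/2}$). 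Expanding $U U^{\top} - M^{*} = \Delta U^{*\top} + U^{*} \Delta^{\top} + \Delta \Delta^{\top}$ inside the Hessian identity and using the assumed bound $\fnorm{\grad \bar f(U)} \le \tfrac{1}{24}\sigstarr^{3/2}$ to control the term coupling $\Delta$ to $\grad \bar f(U)$, I expect $[\hess \bar f(U)](\Delta, \Delta) \le -\tfrac13 \sigstarr \fnorm{\Delta}^{2}$ once the cubic/quartic remainders are absorbed via $\opnorm{U}^{2} < 2\sigstarl$ and $\fnorm{\Delta} > \tfrac13 \sigstarr^{1/2}$; this gives condition (b). The numbers $\tfrac{1}{24}$ and $\tfrac13$ fall out of balancing these two regimes.

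\textbf{Local regularity condition, and the main obstacle.} On $\{ \dist(U, \cXstar) \le \tfrac13 \sigstarr^{1/2} \}$ I would again rotate so $\projX(U) = U^{*}$, $\Delta = U - U^{*}$, $\fnorm{\Delta} \le \tfrac13\sigstarr^{1/2}$, and use that optimality of $R = I_{r}$ in $\min_{R \in O(r)} \fnorm{U - U^{*} R}$ forces $U^{\top} U^{*}$ to be symmetric PSD (this is what keeps the cross terms under control). Expanding $\< \grad \bar f(U), \Delta \> = 2 \< (U U^{\top} - M^{*}) U,\, \Delta \>$ with the same splitting of $U U^{\top} - M^{*}$, the quadratic-in-$\Delta$ part is $\ge 2 \sigstarr \fnorm{\Delta}^{2}$ ($\sigstarr$ being $\lambda_{\min}$ of $M^{*}$ on $\operatorname{col}(U^{*})$), the cubic/quartic remainders are $\le O(\sigstarr^{1/2})\fnorm{\Delta}^{3} + \fnorm{\Delta}^{4} \le \tfrac13 \sigstarr \fnorm{\Delta}^{2}$ by the radius bound, giving the $\tfrac{\alpha}{2}\fnorm{\Delta}^{2}$ term with $\alpha \ge \tfrac23 \sigstarr$; and since $\opnorm{U}^{2} \le (\sqrt{\sigstarl} + \tfrac13\sqrt{\sigstarr})^{2} \le 4\sigstarl$ here, $\fnorm{\grad \bar f(U)}^{2} = 4 \fnorm{(U U^{\top} - M^{*})U}^{2} \le O(\sigstarl)\< \grad \bar f(U), \Delta \>$ supplies the $\tfrac{1}{2\beta}\fnorm{\grad \bar f(U)}^{2}$ term with $\beta \le 10 \sigstarl$; splitting $\< \grad \bar f(U), \Delta \>$ between the two bounds yields \eqref{eq:regularity} with $(\alpha, \beta) = (\tfrac23 \sigstarr, 10 \sigstarl)$. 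I expect the qualitative part (critical points, global $=$ local) to be routine; the main obstacle is the \emph{quantitative, robust} bookkeeping in the last two parts --- exhibiting the negative-curvature direction and tracking exactly how the assumed bounds on $\dist(U, \cXstar)$ and $\fnorm{\grad \bar f(U)}$, together with the $O(r)$-orbit non-uniqueness of $\cXstar$ (handled through the polar-factor optimality), propagate into the precise constants $\tfrac{1}{24}\sigstarr^{3/2}$, $\tfrac13\sigstarr$, $\tfrac13 \sigstarr^{1/2}$. This is exactly the content of \cite[Lemma~7]{jin2017howto}, whose statement I would reuse rather than re-derive from scratch.
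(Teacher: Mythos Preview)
The paper provides no proof of this statement: it is recorded as a \emph{Fact} and attributed wholesale to \cite[Lemma~7]{jin2017howto}. Your proposal ultimately lands in the same place (``whose statement I would reuse rather than re-derive from scratch''), so there is nothing to compare at the level of what the paper actually does.

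That said, the sketch you give along the way is a reasonable reconstruction of the standard landscape analysis behind the cited result: the gradient/Hessian formulas, the SVD classification of critical points, the explicit negative-curvature direction $Z = v w^{\top}$ at rank-deficient saddles, the Procrustes alignment (symmetry/PSD of $U^{\top} U^{*}$) to handle the $O(r)$-orbit, and the dissipativity case split on $\opnorm{U}$. The qualitative picture is right; as you yourself flag, pinning down the specific constants $\tfrac{1}{24}, \tfrac{1}{3}, \tfrac{2}{3}, 10$ requires the careful bookkeeping carried out in \cite{jin2017howto}, which neither you nor the present paper reproduces.
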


Similar to Corollary~\ref{cor:robust_mean_estimation_guarantee}, we need a theorem to say that all calls to \(\mathsf{RobustMeanEstimation}\) (Algorithm~\ref{alg:robust_mean_estimation}) are successful. For the local search (Algorithm~\ref{alg:local_linear_convergence}), only
inexact gradient oracles are required. Since gradients have dimension \(d r\),
the sample complexity needed for all robust estimates of gradient to be accurate with high probability is \(\tilde O\left( \frac{dr}{\epsilon}\right)\).

\begin{corollary}\label{cor:robust_mean_estimation_guarantee_gradient_only}
  Consider the noiseless setting as in Theorem~\ref{thm:low-rank-mtrx-sensing-intro} with $\sigma = 0$. There exists a sample size \(n = \tilde O\left( \frac{dr}{\epsilon}\right)\) such that with $n$ samples, all
  subroutine calls to \(\mathsf{RobustMeanEstimation}\)
  (Algorithm~\ref{alg:robust_mean_estimation}) to estimate gradients
  \(\grad \bar f(U_{t})\) succeed with high probability. In light of Equation~\eqref{eq:gradient_covariance_bound}, this means that there exists a large
  enough constant \(\Crme\) such that with high probability which is suppresed by \(\tilde O(\cdot)\), for all iterations \(t\), we have
  \begin{align}
    \fnorm{\tilde g_{t} - \grad \bar f(U_{t})}  \le 2 \Crme \fnorm{U_{t} U_{t}^{\top} - M^{*}} \opnorm{U_{t}}.
  \end{align}
\end{corollary}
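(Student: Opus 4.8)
The plan is to repeat the argument behind Corollary~\ref{cor:robust_mean_estimation_guarantee} (equivalently, the last sentence of Theorem~\ref{thm:global_convergence-general-appendix}), keeping only the part that concerns the gradient oracle. First I would instantiate Theorem~\ref{thm:sample_complexity_stability} with the map $h_g(U,A) = \vect(\grad_U f(U,A))$, taking the domain $\X$ to be the (vectorized) region $\{U:\opnorm{U}^2\le\Gamma\}$ --- which has Frobenius radius $\gamma = O(\sqrt{r\Gamma})$, hence $l = dr$ --- and codomain dimension $k = dr$, since $\grad f_i(U)\in\R^{d\times r}$. The hypotheses of Theorem~\ref{thm:sample_complexity_stability} are exactly the ones already checked for matrix sensing: $h_g(\cdot,A)$ is $L_{D_g}$-Lipschitz and $\|h_g(\cdot,A)\|\le B_{D_g}$ with high probability (both $\poly(dr\Gamma/\epsilon)$, as in the verification of Assumption~\ref{assump:general_clean_data}(ii), so the violating $A_i$ form at most an $\epsilon$-fraction of the clean samples), and $\opnorm{\Cov_{A\sim\G}(\vect(\grad f(U,A)))}\le\sigma_g^2\le 32r^2\Gamma^3$ by Equation~\eqref{eq:gradient_covariance_bound}. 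Plugging $k=l=dr$ into the sample bound of Theorem~\ref{thm:sample_complexity_stability} gives $n = \tilde O(dr/\epsilon)$ --- the $d^2r^2$ term, which came from the $k\log k$ factor for the Hessian in Corollary~\ref{cor:robust_mean_estimation_guarantee}, is absent here.

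With this $n$, Theorem~\ref{thm:sample_complexity_stability} guarantees that with probability at least $1-\xi$, for \emph{every} $U$ in the region there is a subset of $\{\grad f_i(U)\}_{i=1}^n$ of size at least $(1-2\epsilon)n$ that is $(2\epsilon,O(\sqrt\epsilon))$-stable with respect to the population mean $\grad\bar f(U)$ (recall $\E_{A\sim\G}\grad f(U,A)=\grad\bar f(U)$ by Equation~\eqref{eq:expected_gradient}) and $\opnorm{\Cov(\vect(\grad f_i(U)))}$. By Lemma~\ref{lemma:radius} every iterate $U_t$ of Algorithm~\ref{alg:local_linear_convergence} lies in this region (the proof of Lemma~\ref{lemma:radius} uses only inexact gradient steps, so it transfers without change), so the stability property holds at each $U_t$ simultaneously. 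Feeding this into Proposition~\ref{prop:robust_mean_estimation_with_stability} --- whose input is a $4\epsilon$-corrupted version of a stable set, combining the adversary's $\epsilon$-corruption with the $\epsilon$-fraction of unstable points --- yields $\fnorm{\tilde g_t-\grad\bar f(U_t)} = O\!\bigl(\sqrt{\opnorm{\Cov(\vect(\grad f_i(U_t)))}\,\epsilon}\bigr)$, and substituting the distance-dependent bound $\opnorm{\Cov(\vect(\grad f_i(U_t)))}\le 8\fnorm{U_tU_t^\top-M^*}^2\opnorm{U_t}^2$ from Equation~\eqref{eq:gradient_covariance_bound} gives the asserted inequality for a large enough absolute constant $\Crme$ (the $\sqrt\epsilon$ being the $\delta=O(\sqrt\epsilon)$ from Proposition~\ref{prop:robust_mean_estimation_with_stability}).

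The only step that is not bookkeeping is, as in Corollary~\ref{cor:robust_mean_estimation_guarantee}, the dependence across iterations: once $U_t$ depends on the earlier robust estimates, $\{\grad f_i(U_t)\}_i$ is no longer an i.i.d.\ sample. This is precisely why Theorem~\ref{thm:sample_complexity_stability} proves stability \emph{uniformly} over the bounded region via a net argument, at the cost of only an extra $l\log(\cdot)=dr\log(\cdot)$ term in the sample size; the uniform statement then applies to the $U_t$ regardless of how the algorithm generated them. The remaining details --- matching the constant $C$ of Proposition~\ref{prop:robust_mean_estimation_with_stability} against the $4\epsilon$ passed to the subroutine, and the Chernoff bound showing the ``bad'' $A_i$ are an $\epsilon$-fraction --- are routine and identical to the general case.
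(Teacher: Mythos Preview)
Your proposal is correct and follows essentially the same approach as the paper: the paper's own proof is a one-sentence pointer to Corollary~\ref{cor:robust_mean_estimation_guarantee} and Theorem~\ref{thm:global_convergence-general-appendix}, noting that since only the gradient (dimension $dr$) is estimated, the $k\log k$ term in Theorem~\ref{thm:sample_complexity_stability} becomes $dr\log(dr)$ rather than $d^2r^2\log(d^2r^2)$, yielding $n=\tilde O(dr/\epsilon)$. You have simply unpacked this pointer in full.

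One small remark: your invocation of Lemma~\ref{lemma:radius} to guarantee $U_t\in\{U:\opnorm{U}^2\le\Gamma\}$ for Algorithm~\ref{alg:local_linear_convergence} is slightly off-target. Lemma~\ref{lemma:radius} is stated for Algorithm~\ref{alg:inexact_randomized} with step size $1/(16\Gamma)$, whereas Algorithm~\ref{alg:local_linear_convergence} uses $\eta=1/\Gamma$, so the argument does not transfer verbatim. In the paper, boundedness of the iterates for Algorithm~\ref{alg:local_linear_convergence} is instead obtained inside the proof of Theorem~\ref{thm:local_linear_convergence} by the separate induction showing $U_t$ remains $\frac13\sigstarr^{1/2}$-close to $\cXstar$ (which in particular lies in $\{U:\opnorm{U}^2\le\Gamma\}$). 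This does not affect the validity of the corollary itself, since the uniform stability from Theorem~\ref{thm:sample_complexity_stability} holds over the whole region and the bounded-iterate property is supplied externally wherever the corollary is invoked.
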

\begin{proof}
  The proof follows exactly the same line of argument as Corollary~\ref{cor:robust_mean_estimation_guarantee} and Theorem~\ref{thm:global_convergence-general-appendix}, which discusses the sample complexity required for robust mean estimation of both gradients and Hessians to be successful. Since the Hessian has dimension $d^2 r^2$, the sample complexity in Corollary~\ref{cor:robust_mean_estimation_guarantee} is dominated by $\bigtOmega{d^2r^2/\epsilon}$. Here the gradient has dimension $d r$, so we have the required sample complexity $\bigtOmega{dr/\epsilon}$.
\end{proof}



\begin{proof}[Proof of Theorem~\ref{thm:local_linear_convergence}]
  Let \(e_{t}  = \tilde g_{t} - \grad \bar f(U_{t})\) and we rewrite the inexact gradient descent update as \[U_{t+1} = U_{t} - \eta (\grad \bar f(U_{t}) + e_{t} ).\]

  It follows from
  Corollary~\ref{cor:robust_mean_estimation_guarantee_gradient_only} that, with
  high probability and for some large constant \(\Crme\), all calls to
  \(\mathbf{RobustMeanEstimation}\) are successful, i.e.,\ for all iterations
  $t$,
  \begin{align}
    \label{eq:gradient_robust_mean_error}
    \fnorm{e_{t}}  \le 2 \Crme \Gamma^{1/2} \fnorm{U_{t} U_{t}^{\top} - M^{*}} \sqrt{\epsilon}.
  \end{align}
  We condition on this event for the rest of the analysis. 
  
  It is straightforward to check that \(\frac13 \sigstarr^{1/2}\)-neighborhood of \(\cXstar\) in Frobenius norm is inside the region \(\{U: \opnorm{U}^{2} \le \Gamma\}\). By strict saddle property in Fact~\ref{fact:matrix_factorization}, we know from Proposition~\ref{prop:strict_saddle} that \(U_{0} = U_{SOSP}\) is \(\frac13 \sigstarr^{1/2}\)-close to \(\cXstar\) in Frobenius norm. We assume for the sake of induction that  \(U_{t}\) is \(\frac13 \sigstarr^{1/2}\)-close to \(\cXstar\).

  Let \(\projX(U)\) be the Frobenius projection of \(U \in \R^{d \times r}\) onto \(\cXstar\). By Fact~\ref{fact:matrix_factorization}, for a given \(U\in \R^{d \times r}\), there exists a rotation \(R_{U}\) so that \(\projX(U) = U^{*} R_{U}\). Therefore, \(M^{*} = U^{*} {U^{*}}^{\top} = U^{*}R_{U} R_{U}^{\top}{U^{*}}^{\top} = \projX(U) \projX(U)^{\top}\). We have
  \begin{align*}
    U U^{\top} -  M^{*}
    & = U U^{\top} - U  \projX(U)^{\top} +  U \projX(U)^{\top} -  \projX(U)  \projX(U)^{\top} \\
    & = U (U - \projX(U) )^{\top} + (U - \projX(U) ) \projX(U)^{\top}.
  \end{align*}
  
  Since \(\opnorm{U_{t}} \le \Gamma^{1/2}\) and \(\opnorm{\projX(U)} \le \Gamma^{1/2}\), it follows from~\eqref{eq:gradient_robust_mean_error} that
  \[ \fnorm{e_{t}} \le 2 \Crme \Gamma^{1/2} \sqrt{\epsilon} \fnorm{U_{t} U_{t}^{\top} -  M^{*} } \le 4 \Crme \Gamma \sqrt{\epsilon} \fnorm{U_t - \projX(U_t)}.\]

  We proceed to derive the exponential decrease of the distance to global minima.
  \begin{align*}
    & \quad \fnorm{U_{t+1} - \projX(U_{t+1})}^{2}  \\
    &  \le \fnorm{U_{t+1} - \projX(U_{t})}^{2} \\
    & = \fnorm{U_{t} - \eta (\grad \bar f(U_{t}) + e_{t} ) - \projX(U_{t}) }^{2} \\
    & = \fnorm{U_{t} - \projX(U_{t})}^{2} + \eta^{2} \fnorm{\grad \bar f(U_{t})+e_{t}}^{2} - 2\eta \< \grad \bar f(U_{t}) + e_{t} , U_{t} - \projX(U_{t}) \> \\
    & \le (1 - \eta \alpha) \fnorm{U_{t} - \projX(U_{t})}^{2} + \eta^{2} \fnorm{\grad \bar f (U_{t}) + e_{t}}^{2} - \frac{\eta}{\beta} \fnorm{\grad \bar f(U_{t})}^{2} - 2\eta \< e_{t}, U_{t} - \projX(U_{t}) \>,
  \end{align*}
  where the last inequality comes from the regularity condition in Fact~\ref{fact:matrix_factorization} with \(\alpha = \frac23\sigstarr, \beta = 10\sigstarl\).
  
  The cross term can be bounded by
  \begin{align*}
    -2 \eta \< e_{t}, U_{t} - \projX(U_{t}) \> \le 2 \eta \fnorm{e_{t}} \fnorm{U_{t} - \projX(U_{t})} \le 8\Crme\eta\Gamma\sqrt\epsilon \fnorm{U_{t} - \projX(U_{t})}^{2},
  \end{align*}
  where the last inequality comes from Equation~\ref{eq:gradient_robust_mean_error}.
  
  By Young's inequality,
  \begin{align*}
     \eta^{2} \fnorm{\grad \bar f (U_{t}) + e_{t}}^{2} &\le 2 \eta^{2} \fnorm{\grad \bar f(U_{t})}^{2} + 2 \eta^{2} \fnorm{e_{t}}^{2} \\
     &\le 2 \eta^{2} \fnorm{\grad \bar f(U_{t})}^{2} +  32 \eta^{2} \Crme^{2} \Gamma^{2} \epsilon \fnorm{U_{t} - \projX(U_{t})}^{2}.
  \end{align*}
  
  Recall that \(\Gamma \ge 36 \sigstarl\). Choosing \(\eta =  \frac{1}{\Gamma}  \le \frac1{36\sigstarl} < \frac1{20\sigstarl} = \frac1{2\beta} \), it follows that
  \begin{align}
    \label{eq:local_linear_convergence}
    &\fnorm{U_{t+1} - \projX(U_{t+1})}^{2} \nonumber \\
    & \le (1 - \eta \alpha) \fnorm{U_{t} - \projX(U_{t})}^{2} + \eta^{2} \fnorm{\grad \bar f (U_{t}) + e_{t}}^{2} - \frac{\eta}{\beta} \fnorm{\grad \bar f(U_{t})}^{2} - 2\eta \< e_{t}, U_{t} - \projX(U_{t}) \> \nonumber \\
    & \le (1 - \eta \alpha + 8\Crme\eta\Gamma\sqrt\epsilon +  32 \eta^{2} \Crme^{2} \Gamma^{2} \epsilon ) \fnorm{U_{t} - \projX(U_{t})}^{2}+ (2 \eta^{2}  - \frac{\eta}{\beta}) \fnorm{\grad \bar f(U_{t})}^{2} \nonumber \\
    & \le \left(1 - \frac {\sigstarr} {30\Gamma} + \frac25 \Crme\sqrt{\epsilon} +  32(\Crme/20)^{2}\epsilon\right) \fnorm{U_{t} - \projX(U_{t})}^{2} \nonumber \\
    &  \le \left(1 - \left(O\left(\frac{1}{\kappa} \right) - O(\sqrt \epsilon)\right)\right)\fnorm{U_{t} - \projX(U_{t})}^{2}.
  \end{align}
  
  One consequence of this calculation is \(\fnorm{U_{t+1} - \projX(U_{t+1})} \le \fnorm{U_{t} - \projX(U_{t})}\), so \(U_{t+1}\) is also \(\frac13 \sigstarr^{1/2}\)-close to \(\cXstar\) in Frobenius norm, completing the induction. The other consequence is local linear convergence of \(U_{t}\) to \(\cXstar\) with rate \(\left(1 - O\left(\frac1\kappa\right)\right)\), assuming \(\epsilon \kappa^{2}\) is sufficiently small. Since the initial distance is bounded by \(O(\sigstarr^{1/2})\), converging to a point that is \(\iota\)-close to \(\cXstar\) in Frobenius norm requires the following number of iterations:
  \[ O\left(\frac{\log(\iota/\sigstarr^{1/2})}{\log(1- O(1/\kappa))}\right) = O\left(\kappa\log\left( \frac\sigstarr{\iota}\right)\right).  \qedhere\]
  \end{proof}

\subsection{Low Rank Matrix Sensing with Noise}
\label{sec:sensing_noise}

In Section~\ref{sec:matrix-low-rank}, we focused on the case that \(\sigma = 0\) as in Theorem~\ref{thm:together}. Now we consider the case that \(\sigma \neq 0\) and prove Theorem~\ref{thm:noisy_combined}.

Recall that $y_i \sim \normal(\<A_{i}, M^{*}\>, \sigma^2)$ in Definition~\ref{def:iid_Gaussian_with_noise}. Since \(\<A_{i}, M^{*}\>\) follows Gaussian distribution with mean 0 and variance \(\fnorm{M^{*}}^{2}\), we optionally assume \(\sigma = \bigO{\fnorm{M^{*}}} = \bigO{r \Gamma}\) to keep the signal-to-noise ratio in constant order. For the ease of presentation, we make the following assumption:
\begin{assumption}
\label{assump:noise_level}
  Assume \(\sigma \le r \Gamma\).
\end{assumption}

We present different algorithms depending on whether Assumption~\ref{assump:noise_level} holds.

As in the noiseless case, we define \(f_{i}(U) = \frac12\left( \< U U^{\top}, A_{i}\> - y_{i}\right )^{2}\) and \[\bar f(U) = \E_{(A_{i}, y_{i}) \sim \G_\sigma}f_{i}(U) = \frac12\fnorm{U U^{\top} - M^{*}}^{2} + \frac12 \sigma^{2}.\]
Note that the \(\frac12 \sigma^{2}\) term in \(\bar f(U)\) has no effect on its minimum, gradients, or Hessians, and Fact~\ref{fact:matrix_factorization} and~\ref{fact:matrix_factorization_constants} still apply in verbatim.

We prove the following result when Assumption~\ref{assump:noise_level} holds:
\begin{theorem}
  Consider the same setting as in Theorem~\ref{thm:low-rank-mtrx-sensing-intro} with $0 \neq \sigma \le r \Gamma$ (Assumption~\ref{assump:noise_level} holds). There exists a sample size \(n = \bigtO[\big]{{d^{2}r^{2}}/{\epsilon}}\) such that with high probability, there exists an algorithm that outputs a solution \(\hat M\) in $ \tilde O( r^{2}\kappa^{3} )$ calls to robust mean estimation routine~\ref{alg:robust_mean_estimation}, with error \(\fnorm[\big]{\hat M - M^{*}} = \bigO{\kappa \sigma\sqrt\epsilon}\).
\end{theorem}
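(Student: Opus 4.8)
The plan is to reproduce the two‑phase argument used for $\sigma=0$ (Theorem~\ref{thm:global_convergence} followed by Theorem~\ref{thm:local_linear_convergence}), the only structural change being that the measurement noise inflates the gradient and Hessian covariances by an additive term governed by $\sigma^{2}$. Writing $\langle UU^{\top},A_{i}\rangle - y_{i} = \langle UU^{\top}-M^{*},A_{i}\rangle - \zeta_{i}$, the sample gradient and Hessian are exactly the noiseless expressions of Appendix~\ref{sec:sample_grad_and_hessians} with $\langle UU^{\top}-M^{*},A_{i}\rangle$ replaced by $\langle UU^{\top}-M^{*},A_{i}\rangle-\zeta_{i}$. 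First I would redo the covariance bounds of Appendices~\ref{sec:grad_cov_bound} and~\ref{sec:Hessian_cov_bound}: since $\zeta_{i}$ is mean‑zero and independent of $A_{i}$, Young's inequality for $L^{2}$ random variables splits off a clean extra term, and the quadratic‑form identities of Fact~\ref{fact:Gaussian_quadratic_form} give, for all $U$ with $\opnorm{U}^{2}\le\Gamma$,
\begin{align*}
  \opnorm{\Cov(\vect(\grad f_{i}(U)))} &\le 16\fnorm{UU^{\top}-M^{*}}^{2}\opnorm{U}^{2} + 8\sigma^{2}\opnorm{U}^{2}, \\
  \opnorm{\Cov(\vect(H_{i}(U)))} &\le 32r\fnorm{UU^{\top}-M^{*}}^{2} + 256\opnorm{U}^{4} + 8r\sigma^{2}.
\end{align*}
Under Assumption~\ref{assump:noise_level} ($\sigma\le r\Gamma$) the noise terms are dominated by the noiseless ones, so the worst‑case bounds are still $\sigma_{g}=\bigO{r\Gamma^{3/2}}$ and $\sigma_{H}=\bigO{r^{3/2}\Gamma}$, and Facts~\ref{fact:matrix_factorization_constants} and~\ref{fact:matrix_factorization} apply verbatim since the additive $\tfrac12\sigma^{2}$ in $\bar f(U)=\tfrac12\fnorm{UU^{\top}-M^{*}}^{2}+\tfrac12\sigma^{2}$ affects neither minima, gradients, Hessians, nor Lipschitz constants.

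Given these bounds I would rerun the global phase. Assumption~\ref{assump:general_clean_data} is checked as in Appendix~\ref{sec:global-convergence-appendix}: (i) holds with $\flb=0$; for (ii) the extra $\zeta_{i}$ is subgaussian, so $f_{i}$ is $\poly(dr\Gamma/\epsilon)$‑gradient/Hessian‑Lipschitz and bounded with probability $1-\poly(\epsilon)$ by the same Chernoff argument; (iii) is the display above. To verify condition (I) of Theorem~\ref{thm:global_convergence-general}, I would re‑examine the dissipativity argument of Lemma~\ref{lemma:radius}: the gradient inexactness now acquires only a constant‑size extra term $\bigO{\sigma\sqrt{\Gamma\epsilon}}$, which is $\bigO{\Gamma^{1/2}}$ for $\epsilon=\bigO{1/r^{2}}$, so both the gradient‑step norm‑contraction and the small‑norm negative‑curvature bookkeeping survive. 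Condition (II) with $\epsilon_{g}=\tfrac1{32}\sigstarr^{3/2}$, $\epsilon_{H}=\tfrac14\sigstarr$ reduces, exactly as before, to $\epsilon=\bigO{1/(\kappa^{3}r^{3})}$. This produces a $(\tfrac1{24}\sigstarr^{3/2},\tfrac13\sigstarr)$‑approximate SOSP $U_{SOSP}$ of $\bar f$ in $\bigO{r^{2}\kappa^{3}\log(1/\xi)}$ calls to robust mean estimation, using $n=\bigtO{d^{2}r^{2}/\epsilon}$ samples.

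For the local phase I would run Algorithm~\ref{alg:local_linear_convergence} from $U_{SOSP}$, which is $\tfrac13\sigstarr^{1/2}$‑close to $\cXstar$ by Proposition~\ref{prop:strict_saddle} and Fact~\ref{fact:matrix_factorization}. Writing $\delta_{t}=\dist(U_{t},\cXstar)$ and using $\fnorm{U_{t}U_{t}^{\top}-M^{*}}\le 2\Gamma^{1/2}\delta_{t}$, the robust estimation error is $\fnorm{e_{t}}=\bigO{\Gamma\delta_{t}\sqrt\epsilon + \sigma\sqrt{\Gamma\epsilon}}$. Feeding this into the regularity recursion from the proof of Theorem~\ref{thm:local_linear_convergence}, with $\eta=1/\Gamma$ and $(\alpha,\beta)=(\tfrac23\sigstarr,10\sigstarl)$, the $\delta_{t}$‑proportional part of $e_{t}$ only perturbs the contraction factor by $\bigO{\sqrt\epsilon}$, while the constant part — after a Young's‑inequality split of the cross term $-2\eta\langle e_{t},U_{t}-\projX(U_{t})\rangle$ — yields
\[
  \delta_{t+1}^{2} \;\le\; \Bigl(1-\tfrac1{3\kappa}+\bigO{\sqrt\epsilon}\Bigr)\delta_{t}^{2} + \bigO{\kappa\sigma^{2}\epsilon/\Gamma}.
\]
For $\epsilon\kappa^{2}$ small this contracts at rate $1-\Omega(1/\kappa)$ down to a fixed point $\delta_{\infty}=\bigO{\kappa\sigma\sqrt{\epsilon/\Gamma}}$, which is $\le\tfrac13\sigstarr^{1/2}$ under Assumption~\ref{assump:noise_level} and $\epsilon=\bigO{1/(\kappa^{3}r^{3})}$, so all iterates stay in the neighborhood where the regularity condition holds (closing the induction). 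After $\bigO{\kappa\log(\sigstarr/\iota)}=\bigtO{\kappa}$ iterations the output $\hat U$ satisfies $\dist(\hat U,\cXstar)=\bigO{\kappa\sigma\sqrt{\epsilon/\Gamma}}$, hence $\fnorm{\hat M-M^{*}}=\fnorm{\hat U\hat U^{\top}-M^{*}}\le 2\Gamma^{1/2}\dist(\hat U,\cXstar)=\bigO{\kappa\sigma\sqrt\epsilon}$. The total is $\bigO{r^{2}\kappa^{3}\log(1/\xi)}+\bigtO{\kappa}=\bigtO{r^{2}\kappa^{3}}$ calls, with sample complexity $\bigtO{d^{2}r^{2}/\epsilon}$ from the global Hessian estimates (the $\bigtO{dr/\epsilon}$ needed for the local gradient estimates is dominated).

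The main obstacle is the local phase: one must resolve the circular dependence among $\delta_{t}$, the inexactness $e_{t}$, and the algorithm's progress into the clean contraction above, and then carefully verify that the non‑vanishing noise floor $\delta_{\infty}=\bigO{\kappa\sigma\sqrt{\epsilon/\Gamma}}$ simultaneously (a) keeps every iterate inside the $\tfrac13\sigstarr^{1/2}$‑neighborhood so the regularity condition of Fact~\ref{fact:matrix_factorization} remains valid throughout, and (b) translates, via $\fnorm{UU^{\top}-M^{*}}\le 2\Gamma^{1/2}\delta$, into the advertised $\bigO{\kappa\sigma\sqrt\epsilon}$ Frobenius error. A secondary and more routine step is the bookkeeping of the additive $\sigma^{2}$ terms in the covariance estimates, which relies on the independence of $\zeta_{i}$ from $A_{i}$ together with the quadratic‑form variance formula and Assumption~\ref{assump:noise_level} to keep $\sigma_{g}$, $\sigma_{H}$ at their noiseless orders.
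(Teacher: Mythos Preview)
Your proposal is correct and follows the paper's two-phase strategy closely for the global phase (same covariance bounds up to constants, same invocation of Theorem~\ref{thm:global_convergence-general}). The one genuine methodological difference is in the local phase. The paper does \emph{not} derive a single geometric-plus-additive recursion; instead it splits into two regimes according to whether $\fnorm{U_tU_t^{\top}-M^{*}}$ exceeds $\sigma$. In Case~1 ($>\sigma$) the noise term in $\fnorm{e_t}$ is dominated by the distance-dependent term, so the noiseless contraction argument of Theorem~\ref{thm:local_linear_convergence} applies verbatim and yields geometric decrease until $\fnorm{U_tU_t^{\top}-M^{*}}\le\sigma$. In Case~2 ($\le\sigma$) the paper bounds $\fnorm{e_t}\le 4\Crme\sigma\Gamma^{1/2}\sqrt\epsilon$ by a constant, rescales $\mathcal{B}_t=\delta_t/(4\sqrt{2}\eta\Crme\sigma\Gamma^{1/2}\sqrt\epsilon)$, and applies the Banach fixed-point theorem to the map $b\mapsto\sqrt{(1-\eta\alpha)b^{2}+b+1}$ to locate the limit $\mathcal{B}^{*}=\bigO{1/(\eta\alpha)}=\bigO{\kappa}$, i.e., $\delta_{\infty}=\bigO{\kappa\Gamma^{-1/2}\sigma\sqrt\epsilon}$. (The paper also switches to $\eta=1/(20\Gamma)$ here, though with $\Gamma\ge 36\sigstarl$ your $\eta=1/\Gamma$ works just as well.)

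Your route---absorb the constant part of $e_t$ via Young's inequality on the cross term and read off the fixed point of the resulting affine recursion---is more elementary and unified, and lands at exactly the same $\delta_{\infty}$ and final error $\fnorm{\hat M-M^{*}}=\bigO{\kappa\sigma\sqrt\epsilon}$. The paper's case split has the minor expository advantage of making the transition from linear convergence to the noise floor explicit and of quantifying how many iterations each regime takes separately, but mathematically the two arguments are equivalent.
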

\begin{proof}
We compute the gradient and Hessian of \(f_{i}\):
\begin{align*}
   \grad f_{i}(U) &= \left(\< U U^{\top} - M^{*}, A_{i} \> - \zeta_{i}\right)  (A_{i} + A_{i}^{\top}) U \\
  H_{i}(U) &= \left(\< U U ^{\top} - M^{*}, A_{i} \> - \zeta_{i}\right) I_{r} \otimes B_{i}  + \vect \left(B_{i}U  \right) \vect(B_{i}U)^{\top}.
\end{align*}

We also need to bound the covariance of sample gradients and sample Hessians in the bounded region \(\{U \in \R^{d \times r}: \opnorm{U}^{2} \le \Gamma\}\), similar to Equations~\eqref{eq:gradient_covariance_bound} and~\eqref{eq:hessian_covariance_bound}.
\begin{align}
  \label{eq:gradient_covariance_bound_noise}
  \opnorm{\Cov(\vect( \grad f_{i}(U)))} &\le 4 \left(\fnorm{U U^{\top} - M^{*}}^{2} + \sigma^{2}\right) \opnorm{U}^{2} = \bigO{r^{2}\Gamma^{3}}. \\
  \label{eq:hessian_covariance_bound_noise}
  \opnorm{\Cov(\vect(H_{i}))} &\le  16r \left(\fnorm{ U U ^{\top} - M^{*} }^{2} + \sigma^{2}\right)  + 128 \opnorm{U}^{4} = \bigO{r^{3}\Gamma^{2}}.
\end{align}

Since global convergence analysis in Section~\ref{sec:global_convergence} only requires \(\opnorm{\Cov(\vect( \grad f_{i}(U)))} = \bigO{r^{2}\Gamma^{3}}\) and \(\opnorm{\Cov(\vect(H_{i}))} = \bigO{r^{3}\Gamma^{2}}\), Theorem~\ref{thm:global_convergence} also holds in the noisy setting (up to a constant factor). This means that we could obtain a \((\frac1{24}\sigstarr^{3/2}, \frac13 \sigstarr)\)-approximate SOSP of \(\bar f\), which is in \(\frac13 \sigstarr^{1/2}\)-neighborhood of \(\cXstar\) in Frobenius norm.

We now focus on the local convergence analysis. We use the same algorithm (Algorithm~\ref{alg:local_linear_convergence}), which uses \(\mathsf{RobustMeanEstimation}\) subroutine in each iteration \(t\) to obtain an inexact gradient \(\tilde g_{t}\) for \(\bar f(U)\). We rewrite the inexact gradient descent update as \[U_{t+1} = U_{t} - \eta (\grad \bar f(U_{t}) + e_{t} ).\]
We condition the remaining analysis on the event that all calls to \(\mathbf{RobustMeanEstimation}\) are successful as in Corollary~\ref{cor:robust_mean_estimation_guarantee_gradient_only}, which happens with high probability and implies
\begin{align*}
  \fnorm{e_{t}}
  & \le 2 \Crme \left(\fnorm{U_{t} U_{t}^{\top} - M^{*}} + \sigma\right) \opnorm{U_{t}} \sqrt\epsilon \\
  &\le 2 \Crme \left(\fnorm{U_{t} U_{t}^{\top} - M^{*}} + \sigma\right) \Gamma^{1/2} \sqrt\epsilon.
\end{align*}

It is straightforward to check that \(\frac13 \sigstarr^{1/2}\)-neighborhood of \(\cXstar\) in Frobenius norm is inside the region \(\{U: \opnorm{U}^{2} \le \Gamma\}\). By strict saddle property in Fact~\ref{fact:matrix_factorization}, we know \(U_{0} = U_{SOSP}\) is \(\frac13 \sigstarr^{1/2}\)-close to \(\cXstar\) in Frobenius norm. We assume for the sake of induction that  \(U_{t}\) is \(\frac13 \sigstarr^{1/2}\)-close to \(\cXstar\). We will show that \(\fnorm{U_{t+1} - \projX(U_{t+1})} \) either shrinks or is at the order of \(\bigO{\frac{\sigma\epsilon}{\Gamma^{1/2}}}\), which is also \(\frac13 \sigstarr^{1/2}\)-close to \(\cXstar\) in Frobenius norm for sufficiently small \(\epsilon = \bigO{\frac1{r\kappa}}\).

As before, we define \(\projX(\cdot)\) to be the projection onto \(\cXstar\) and bound the Frobenius distance to \(\cXstar\) as follows:
\begin{align*}
  & \quad \fnorm{U_{t+1} - \projX(U_{t+1})}^{2}  \\
  &  \le \fnorm{U_{t+1} - \projX(U_{t})}^{2} \\
  & = \fnorm{U_{t} - \eta (\grad \bar f(U_{t}) + e_{t} ) - \projX(U_{t}) }^{2} \\
  & = \fnorm{U_{t} - \projX(U_{t})}^{2} + \eta^{2} \fnorm{\grad \bar f(U_{t})+e_{t}}^{2} - 2\eta \< \grad \bar f(U_{t}) + e_{t} , U_{t} - \projX(U_{t}) \> \\
  & \le (1 - \eta \alpha) \fnorm{U_{t} - \projX(U_{t})}^{2} + \eta^{2} \fnorm{\grad \bar f (U_{t}) + e_{t}}^{2} - \frac{\eta}{\beta} \fnorm{\grad \bar f(U_{t})}^{2} - 2\eta \< e_{t}, U_{t} - \projX(U_{t}) \>,
\end{align*}
where the last inequality comes from the regularity condition in Fact~\ref{fact:matrix_factorization} with \(\alpha = \frac23\sigstarr, \beta = 10\sigstarl\).

Using Cauchy-Schwarz inequality to bound \(\fnorm{\grad \bar f (U_{t}) + e_{t}}^{2}\) and \(\< e_{t}, U_{t} - \projX(U_{t}) \>\) and choosing  \(\eta =  \frac{1}{20\Gamma}  \le \frac1{20\sigstarl} = \frac1{2\beta}\) to kill the \(\fnorm{\grad \bar f (U_{t})}^{2}\) term, we obtain
\begin{align*}
  &\fnorm{U_{t+1} - \projX(U_{t+1})}^{2} \nonumber \\
  & \le (1 - \eta \alpha) \fnorm{U_{t} - \projX(U_{t})}^{2} + 2\eta^{2} \fnorm{\grad \bar f (U_{t})}^{2} + 2\fnorm{e_{t}}^{2} - \frac{\eta}{\beta} \fnorm{\grad \bar f(U_{t})}^{2} - 2\eta \< e_{t}, U_{t} - \projX(U_{t}) \> \nonumber \\
  & \le (1 - \eta \alpha) \fnorm{U_{t} - \projX(U_{t})}^{2}  + 2\eta^{2}\fnorm{e_{t}}^{2} - 2\eta \< e_{t}, U_{t} - \projX(U_{t}) \> + (2 \eta^{2}  - \frac{\eta}{\beta}) \fnorm{\grad \bar f(U_{t})}^{2}\\
  & \le (1 - \eta \alpha) \fnorm{U_{t} - \projX(U_{t})}^{2}  + 2\eta^{2}\fnorm{e_{t}}^{2} + 2\eta \fnorm{e_{t}} \fnorm{U_{t} - \projX(U_{t})}.
\end{align*}

Now we consider the following two possible cases:

\textbf{Case 1:}  \(\fnorm{U_{t} U_{t}^{\top} - M^{*}} > \sigma\). Then
\begin{align*}
  \fnorm{e_{t}}
  & \le  2 \Crme \left(\fnorm{U_{t} U_{t}^{\top} - M^{*}} + \sigma\right) \Gamma^{1/2} \sqrt\epsilon \\
  & < 4 \Crme \fnorm{U_{t} U_{t}^{\top} - M^{*}} \Gamma^{1/2} \sqrt\epsilon.
\end{align*}
And it follows similar to~\eqref{eq:local_linear_convergence} that
\begin{align*}
  &\fnorm{U_{t+1} - \projX(U_{t+1})}^{2}  \\
    & \le (1 - \eta \alpha + 8\Crme\eta\Gamma\sqrt\epsilon +  32 \eta^{2} \Crme^{2} \Gamma^{2} \epsilon ) \fnorm{U_{t} - \projX(U_{t})}^{2} \\
  & \le \left(1 - \frac {\sigstarr} {30\Gamma} + \frac25 \Crme\sqrt{\epsilon} +  32(\Crme/20)^{2}\epsilon\right) \fnorm{U_{t} - \projX(U_{t})}^{2} \nonumber \\
  &  \le \left(1 - \left(O\left(\frac{1}{\kappa} \right) - O(\sqrt \epsilon)\right)\right)\fnorm{U_{t} - \projX(U_{t})}^{2}.
\end{align*}
We conclude that the distance to \(\cXstar\) decreases geometrically in this regime until \(\fnorm{U_{t} U_{t}^{\top} - M^{*}} \le \sigma\), which takes a total of \(\bigO{\kappa \log\left( \frac{\sigstarr}{\sigma}\right )}\) iterations, which is very few if \(\sigma = \Theta\left(r \Gamma \right)\).

\textbf{Case 2:} \(\fnorm{U_{t} U_{t}^{\top} - M^{*}}  \le \sigma\). Then \(\fnorm{e_{t}} \le 4 \Crme \sigma \Gamma^{1/2} \sqrt\epsilon\). It follows that
\begin{align*}
  &\fnorm{U_{t+1} - \projX(U_{t+1})}^{2}  \\
  & \le (1 - \eta \alpha) \fnorm{U_{t} - \projX(U_{t})}^{2}  + 2\eta^{2}\fnorm{e_{t}}^{2} + 2\eta \fnorm{e_{t}} \fnorm{U_{t} - \projX(U_{t})} \\
  & \le  (1 - \eta \alpha) \fnorm{U_{t} - \projX(U_{t})}^{2}  + 32 \eta^{2}  \Crme^{2} \sigma^{2} \Gamma \epsilon  + 8 \eta  \Crme \sigma \Gamma^{1/2} \sqrt\epsilon \fnorm{U_{t} - \projX(U_{t})}.
\end{align*}

Write \(\C = 4 \sqrt{2}\eta  \Crme \sigma \Gamma^{1/2} \sqrt\epsilon\), \(\B_{t} = \fnorm{U_{t+1} - \projX(U_{t+1})} / \C\), we have
\[\B_{t+1} \le \sqrt{(1 - \eta\alpha) \B_{t}^{2} + \B_{t} + 1}. \]

It is straightforward to compute that the function \( b \mapsto \sqrt{(1 - \eta\alpha) b^{2} + b + 1}\) has Lipschitz constant \[ \rho =
\begin{cases}
  \frac12 & \mbox{if } 0 < 1-\eta\alpha \le \frac14\\
  \sqrt{1-\eta\alpha } & \mbox{if } \frac14 \le 1-\eta\alpha < 1.
\end{cases}
\]

Since \(\rho < 1\), Banach fixed-point theorem implies that \(\B_{t}\) converges to some \(\B^{*}\) with rate
\[\abs{\B_{t+1} - \B^{*}} \le {\rho}\abs{\B_{t} - \B^{*}}.\]  We can compute an upper bound on the fixed point \(\B^{*}\) by solving \[ \B^* =\sqrt{(1 - \eta\alpha) {\B^*}^{2} + \B^* + 1},\] which gives \(\B^{*} = \bigO{\frac1{\eta\alpha}} = \bigO{\kappa}\).

Denote the first iteration of Case 2 regime (\(\fnorm{U_{t} U_{t}^{\top} - M^{*}} \le \sigma \)) by \(t_{0}\). By~\cite[Lemma 6]{ge2017no-spurious-loc}, we have \(\fnorm{U_{t_{0}} - \projX{U_{t_{0}}}} \le \sigma\sqrt{\frac2{\sigstarr}}\) and therefore \(\B_{t_{0}} = \sqrt{\kappa/\epsilon}\). Therefore, Case 2 takes a total of \(\bigO{\kappa\log{1/\epsilon}}\) iterations to reach the error
\[ \fnorm{U_{t} - \projX(U_{t})} \le \bigO{ \kappa  \Gamma^{-1/2}\sigma \sqrt\epsilon}.\]
This translates to the error in the matrix space
\[ \fnorm{U_{t}U_{t}^{\top} - M^*} \le \Gamma^{1/2}\fnorm{U_{t} - \projX(U_{t})} = \bigO{ \kappa \sigma \sqrt\epsilon}.\]
\end{proof}

Assumption~\ref{assump:noise_level} might fail to hold; a notable example is our construction in Section~\ref{sec:sq-lower-bound}, where \(\fnorm{M^{*}} = \bigO{\epsilon^{1/2}}\) but \(\sigma = O(1)\). In this case, we consider a simple spectral method, relying on the observation that with clean samples \(\E_{(A_{i}, y_{i}) \in \G_{\sigma}}[y_{i}A_{i}] = M^{*}\).  So we run \(\mathsf{RobustMeanEstimation}\) on \(\{y_{i}A_{i}\}_{i = 1}^{n}\) to get \(\inexact M\) with its singular decomposition \(\inexact M  = \sum_{i=1}^{n}s_{i}u_{i}v_{i}^{\top}\), where singular values \(s_{1} \ge s_{2} \ge \dots \ge s_{n}\) are descending. We then form a new matrix \(\hat M\) from the \(r\) leading singular vectors, i.e., \(\hat M = \sum_{i=1}^{r}s_{i}u_{i}v_{i}^{\top}\).

This simple algorithm has the following guarantee:
\begin{theorem}
  \label{thm:together_high_noise}
  Consider the same setting as in Theorem~\ref{thm:low-rank-mtrx-sensing-intro} with $\sigma > r \Gamma$, i.e., Assumption~\ref{assump:noise_level} fails. There exists a sample size \(n = \bigtO[\big]{{d^{2}}/{\epsilon}}\) such that with high probability, there exists an algorithm that outputs a solution \(\hat M\) in one call to robust mean estimation routine~\ref{alg:robust_mean_estimation}, with error \(\fnorm[\big]{\hat M - M^{*}} = \bigO{\sigma\sqrt\epsilon}\).
\end{theorem}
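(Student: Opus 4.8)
The plan is to show that the elementary spectral estimator described above works because $y_i A_i$ is an \emph{unbiased} estimator of $M^*$ whose vectorized covariance has operator norm $\bigO{\sigma^2}$ in the high-noise regime, so a single call to $\mathsf{RobustMeanEstimation}$ already recovers $M^*$ up to Frobenius error $\bigO{\sigma\sqrt\epsilon}$, after which the rank-$r$ truncation only costs a constant factor. Concretely, writing $X = \vect(A_i)$ (a standard Gaussian vector in $\R^{d^2}$), $m = \vect(M^*)$, and $y_i = m^\top X + \zeta_i$ with $\zeta_i \sim \normal(0,\sigma^2)$ independent of $X$, I would first record the identity $\E_{(A_i,y_i)\sim\G_\sigma}[\,\vect(y_i A_i)\,] = \E[(m^\top X + \zeta_i)X] = m$, i.e.\ $\E[y_i A_i] = M^*$. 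Then, using the fourth Gaussian moment (Isserlis/Wick) identity $\E[X_aX_bX_jX_k] = \delta_{ab}\delta_{jk}+\delta_{aj}\delta_{bk}+\delta_{ak}\delta_{bj}$ together with the independence of $\zeta_i$, a short computation gives
\[
  \Cov\bigl(\vect(y_i A_i)\bigr) = \bigl(\sigma^2 + \fnorm{M^*}^2\bigr)\, I_{d^2} + \vect(M^*)\,\vect(M^*)^\top ,
\]
so $\opnorm{\Cov(\vect(y_i A_i))} \le \sigma^2 + 2\fnorm{M^*}^2$. Since Assumption~\ref{assump:noise_level} fails, $\sigma > r\Gamma$, and $\fnorm{M^*}\le\sqrt r\,\sigstarl \le \sqrt r\,\Gamma/36 \le \sigma/36$, whence $2\fnorm{M^*}^2 \le \sigma^2$ and $\opnorm{\Cov(\vect(y_i A_i))} = \bigO{\sigma^2}$, with no dimension factor.

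Next I would invoke the robust mean estimation machinery. The clean samples $\{y_i A_i\}$ are i.i.d.\ from a distribution on $\R^{d^2}$ with mean $M^*$ and covariance of operator norm $\bigO{\sigma^2}$; applying Proposition~\ref{prop:bounded_variance_stability_iid} with $k = d^2$, for $n = \bigtO{d^2/\epsilon}$ at least a $(1-\epsilon)$-fraction of them forms an $\bigl(O(\epsilon),\bigO{\sqrt\epsilon}\bigr)$-stable set with respect to $M^*$ and $\bigO{\sigma^2}$. Treating the unstable inliers as additional corruptions, the input $\{y_i A_i\}_{i=1}^n$ is an $O(\epsilon)$-corrupted version of a stable set, so Proposition~\ref{prop:robust_mean_estimation_with_stability} (equivalently Proposition~\ref{prop:robust-mean-estimation-summary}) applied once to $\{y_i A_i\}$ returns $\inexact M$ with $\fnorm{\inexact M - M^*} = \bigO{\sigma\sqrt\epsilon}$. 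Finally, letting $\hat M$ be the best rank-$r$ Frobenius approximation of $\inexact M$ (its top $r$ singular components), and using that $M^*$ is itself a rank-$r$ competitor so $\fnorm{\hat M - \inexact M}\le \fnorm{M^* - \inexact M}$, the triangle inequality gives $\fnorm{\hat M - M^*}\le 2\fnorm{\inexact M - M^*} = \bigO{\sigma\sqrt\epsilon}$; the sample size $n = \bigtO{d^2/\epsilon}$ and the single call to robust mean estimation match the statement.

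The only substantive step is the first one: establishing $\E[y_i A_i] = M^*$ and, more delicately, that $\opnorm{\Cov(\vect(y_i A_i))} = \bigO{\sigma^2}$ \emph{without} any factor of $d$. This hinges on the precise structure of the fourth Gaussian moment (which produces a rank-one correction $\vect(M^*)\vect(M^*)^\top$ plus an isotropic term, rather than anything dimension-dependent) and on absorbing $\fnorm{M^*}^2$ into $\sigma^2$ via the high-noise hypothesis $\sigma > r\Gamma$. Steps 2 and 3 are then immediate consequences of the robust mean estimation guarantees already stated in the excerpt and of the standard best-rank-$r$ truncation inequality.
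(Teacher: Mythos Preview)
Your proposal is correct and follows essentially the same approach as the paper's proof: compute $\E[y_iA_i]=M^*$, bound $\opnorm{\Cov(\vect(y_iA_i))}=\bigO{\sigma^2}$ using the high-noise hypothesis to absorb $\fnorm{M^*}^2$ into $\sigma^2$, invoke robust mean estimation once, and finish with the best rank-$r$ truncation plus the triangle inequality. The only cosmetic difference is that you compute the covariance matrix exactly via Isserlis/Wick, while the paper bounds $\Var\langle y_iA_i,Z\rangle$ for unit $Z$ via Young's inequality and Lemma~\ref{lemma:Gaussian_quadratic_form_rank1}, arriving at the slightly looser (but equivalent) bound $8\sigma^2$.
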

\begin{proof}
  We compute the mean and variance of \(\<y_{i}A_{i}, Z\>\) for some unit Frobenius norm \(Z\).
  \begin{align*}
    \E \<y_{i}A_{i}, Z\>
    & = \E \<A_{i}, M^{*}\> \<A_{i}, Z\> + \E \zeta \<A_{i}, Z\> \\
    & = \E \<A_{i}, M^{*}\> \<A_{i}, Z\> = \<M^{*}, Z\>.
  \end{align*}
  Hence \(\E[y_{i}A_{i}] = M^{*}\).

  \begin{align*}
    & \Var\left( \<A_{i}, M^{*}\> \<A_{i}, Z\> +  \zeta \<A_{i}, Z\>\right) \\
    & \le 2 \Var \<A_{i}, M^{*}\> \<A_{i}, Z\> + 2 \Var( \zeta \<A_{i}, Z\>) \\
    & \le 4 \fnorm{M^{*}}^{2} + 4 \sigma^{2} \le 8 \sigma^{2},
  \end{align*}
  where the last inequality is because \(\fnorm{M^{*}} \le r \Gamma\). Hence running robust mean estimation algorithm on \(\{y_{i}A_{i}\}\) outputs \(\inexact M\) with \(\fnorm{\inexact M - M^{*}} = \bigO{\sigma\sqrt\epsilon}\).

  Since \(\hat M\) is formed by the \(r\) leading singular vectors of \(\inexact M\), it is the best rank \(r\) approximation to \(\inexact M\) by Eckart–Young–Mirsky's theorem, i.e.,\ \(\fnorm{\hat M - \inexact M} \le \fnorm{M^{*} - \inexact M} = \bigO{\sigma\sqrt\epsilon}\). We conclude that \(\fnorm{\hat M - M^{*}} \le \fnorm{\hat M - \inexact M} + \fnorm{\inexact M - M^{*}} =  \bigO{\sigma\sqrt\epsilon}\) by triangle inequality.
\end{proof}

\section{SQ Lower bound---Omitted Proofs}
\label{sec:sq-lower-bound-appendix}
We consider a weaker corruption model, known as Huber's \(\epsilon\)-contamination model~\cite{huber1964}, than Definition~\ref{def:corruption} in the sense that any corruptions in Huber's contamination model can be emulated by the adversary in Definition~\ref{def:corruption}. We show a lower bound of the sample complexity in the presence of Huber's \(\epsilon\)-contamination, and the sample complexity under Definition~\ref{def:corruption} can therefore be no better.

\begin{assumption}[Huber's Contamination Model]
  Let \(Q\) be the joint distribution of \((A, y)\) with \(\vect(A) \sim \normal(0, I_{d^{2}})\) and \(y | A \sim \normal(\<M^{*}, u u^{\top}\>, \sigma^{2})\). The input samples are contaminated according to Huber's \(\epsilon\)-contamination model in the following way: we observe samples from a mixture \(Q'\)  of the clean joint distribution \(Q\) and an arbitrary noise distribution \(N\), i.e., \(Q' = (1 - \epsilon) Q + \epsilon N\).
\end{assumption}
The adversary in the $\epsilon$-strong contamination model can emulate the Huber's adversary: on input clean samples, the $\epsilon$-strong contamination adversary removes a randomly chosen $\epsilon$-fraction of samples and replaces them with samples drawn i.i.d.\ from $N$.

Recall that SQ algorithms are a class of algorithms
that, instead of access to samples, 
are allowed to query expectations of bounded functions of the distribution. 

\begin{definition}[SQ Algorithms and \(\mathsf{STAT}\) Oracle~\cite{Kearns:98}]
Let \(\G\) be a distribution on \(\R^{d^{2} + 1}\).  A Statistical Query (SQ) is a bounded function \(q: \R^{d^{2} + 1} \ra [-1, 1]\). For \(\tau > 0\), the \(\mathsf{STAT}(\tau)\) oracle responds to the query \(q\) with a value \(v\) such that \(\abs{v - \E_{X \sim D}[q(X)]} \le \tau\). An SQ algorithm is an algorithm whose objective is to learn some information about an unknown distribution \(\G\) by making adaptive calls to the corresponding \(\mathsf{STAT}(\tau)\) oracle.
\end{definition}

\new{A statistical query with accuracy \(\tau\) can be implemented with error probability $\delta$ by taking $O(\log(1/\delta)/\tau^2)$ samples and evaluating the empirical mean of the query function $q(\cdot)$ evaluated at those samples~\cite[Chapter 8.2.1]{diakonikolas2022algorithmic}. This bound is tight in general for a single query.}

\begin{lemma}[Clean marginal and conditional distribution]
  Let \(A \in \R^{d \times d}\) with \(\vect(A) \sim \normal(0, I_{d^{2}})\) and \(y | A \sim \normal(\<u u^{\top}, M^{*}\>, \sigma^{2})\). Then
\begin{align*}
  y & \sim \mathcal{N}(0, \sigma^{2} + \norm{u}^4) \\
  \vect(A) | y & \sim \mathcal{N}\left(\frac{\vect(u u^{\top}) y}{\sigma^{2} + \norm{u}^4}, I_{d^{2}} - \frac{\vect(u u^{\top})\vect(u u^{\top})^{\top}}{\sigma^{2} + \norm{u}^4}\right).
\end{align*}
\end{lemma}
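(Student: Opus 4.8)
The plan is to compute the joint law of $(\vect(A), y)$, observe that it is a $(d^2+1)$-dimensional Gaussian, and then read off the marginal of $y$ and the conditional of $\vect(A)$ given $y$ using the standard formulas for conditioning jointly Gaussian vectors. First I would set $a := \vect(A) \in \R^{d^2}$ and $m := \vect(u u^\top) \in \R^{d^2}$, so that $\langle u u^\top, M^*\rangle = \langle m, a\rangle = m^\top a$ (using $M^* = u u^\top$, so actually the mean of $y\mid A$ is $\langle uu^\top, uu^\top\rangle = \|u\|^4$ times\dots wait — no: the conditional mean is $\langle A, M^*\rangle$ which in vectorized form is $m^\top a$, a \emph{random} quantity). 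Since $a \sim \normal(0, I_{d^2})$ and $y = m^\top a + \zeta$ with $\zeta \sim \normal(0,\sigma^2)$ independent of $a$, the pair $(a, y)$ is a linear transformation of the Gaussian vector $(a,\zeta)$, hence jointly Gaussian with mean zero. Its covariance blocks are $\Cov(a) = I_{d^2}$, $\Cov(a, y) = \E[a(m^\top a + \zeta)] = \E[a a^\top] m = m$, and $\Var(y) = m^\top m + \sigma^2 = \|m\|^2 + \sigma^2$. Finally note $\|m\|^2 = \|\vect(uu^\top)\|^2 = \fnorm{uu^\top}^2 = \|u\|^4$, so $\Var(y) = \sigma^2 + \|u\|^4$, giving the stated marginal $y \sim \normal(0, \sigma^2 + \|u\|^4)$.

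Next I would apply the Gaussian conditioning identity: if $\binom{a}{y}$ is mean-zero Gaussian with covariance $\begin{pmatrix} \Sigma_{aa} & \Sigma_{ay} \\ \Sigma_{ya} & \Sigma_{yy}\end{pmatrix}$, then $a \mid y \sim \normal\big(\Sigma_{ay}\Sigma_{yy}^{-1} y,\ \Sigma_{aa} - \Sigma_{ay}\Sigma_{yy}^{-1}\Sigma_{ya}\big)$. Substituting $\Sigma_{aa} = I_{d^2}$, $\Sigma_{ay} = m$, $\Sigma_{yy} = \sigma^2 + \|u\|^4$ yields conditional mean $\frac{m\, y}{\sigma^2 + \|u\|^4} = \frac{\vect(uu^\top)\, y}{\sigma^2 + \|u\|^4}$ and conditional covariance $I_{d^2} - \frac{m m^\top}{\sigma^2 + \|u\|^4} = I_{d^2} - \frac{\vect(uu^\top)\vect(uu^\top)^\top}{\sigma^2 + \|u\|^4}$, which is exactly the claimed formula. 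This is entirely routine once the joint Gaussianity is established.

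There is essentially no serious obstacle here; the only mild subtlety is being careful that the conditional mean of $y \mid A$ is the random linear functional $\langle A, M^*\rangle = m^\top a$ rather than a constant — this is what makes $y$ and $a$ correlated and is the whole content of the lemma. I should also state explicitly that $(a,y)$ is jointly Gaussian (it is an affine image of the independent Gaussian pair $(a,\zeta)$), since the conditioning formula requires joint Gaussianity, not merely Gaussian marginals. One could alternatively verify the formula directly by writing down the joint density and completing the square, but invoking the standard conditioning lemma is cleaner. I would present the proof in roughly four short steps: (1) identify $(a,y)$ as jointly Gaussian and compute its covariance; (2) read off the marginal of $y$ using $\|\vect(uu^\top)\|^2 = \|u\|^4$; (3) apply the Gaussian conditioning formula; (4) simplify to obtain the stated conditional law.
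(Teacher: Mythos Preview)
Your proposal is correct and follows essentially the same approach as the paper: write down the joint covariance of $(\vect(A), y)$ and apply the Gaussian conditioning formula. The paper's proof is a terse two-liner that simply displays the covariance matrix and invokes ``the conditional Gaussian formula,'' whereas you spell out the computation of the covariance blocks and explicitly justify joint Gaussianity via the linear map $(a,\zeta)\mapsto (a, m^\top a+\zeta)$; this added detail is fine and arguably an improvement.
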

\begin{proof}
  Let \(Q\) be the joint distribution of \((\vect(A), y)\). We can write down the covariance of \(Q\):\[
  \begin{bmatrix}
    I_{d^{2}} & \vect(u u^{\top}) \\
    \vect(u u^{\top})^{\top} & \sigma^{2} + \norm{u}^{4}
  \end{bmatrix}.\]
The mean and variance of \(\vect(A) | y\) follow from the conditional Gaussian formula.
\end{proof}

We will construct a family of contaminated joint distributions of $(A, y) $ that are hard to
learn with SQ access. We first construct a family of clean distributions and then construct the contaminations.

\begin{lemma}[Defining Clean Distributions]
  \label{lemma:clean_distribution}
  Let \(v\) be a unit vector and set \(u = c_{1}^{1/2}\epsilon^{1/4} v\) for some small constant \(c_{1}\). Let \(\vect(A) \sim \normal(0, I_{d^{2}})\), and choose \(\sigma =  1 - c_{1}^{2}\epsilon\). Then we have the marginal distribution \(y \sim \normal(0,1)\) and
  \[ \vect(A) | y \sim \normal\left( c_{1}\sqrt\epsilon \vect(v v^{\top}) y, I_{d^{2}} - c_{1}^{2}\epsilon \vect(v v^{\top})\vect(v v^{\top})^{\top}  \right). \]
\end{lemma}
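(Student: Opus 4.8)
The plan is to specialize the preceding lemma (clean marginal and conditional distribution) to the rank-one choice $M^{*} = u u^{\top}$ with $u = c_1^{1/2}\epsilon^{1/4} v$ and the stated noise level. First I would record the elementary identities that this substitution produces. Since vectorization is linear, $\vect(u u^{\top}) = \vect\!\bigl(c_1\sqrt{\epsilon}\, v v^{\top}\bigr) = c_1\sqrt{\epsilon}\,\vect(v v^{\top})$; and $\norm{u}^{4} = \bigl(c_1^{1/2}\epsilon^{1/4}\bigr)^{4} = c_1^{2}\epsilon$. Choosing the noise variance so that $\sigma^{2} + \norm{u}^{4} = 1$, i.e.\ $\sigma^{2} = 1 - c_1^{2}\epsilon$, is legitimate because $c_1$ is a sufficiently small absolute constant and $\epsilon \in (0,1/2)$, so $c_1^{2}\epsilon < 1$; this also guarantees that the conditional covariance displayed below is positive semidefinite (its only nontrivial eigenvalue is $1 - c_1^{2}\epsilon > 0$).

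Next I would plug these quantities into the two conclusions of the preceding lemma. The marginal becomes $y \sim \normal\!\bigl(0,\ \sigma^{2} + \norm{u}^{4}\bigr) = \normal(0,1)$. For the conditional law of $\vect(A)$ given $y$, the mean vector is
\[
\frac{\vect(u u^{\top})\, y}{\sigma^{2} + \norm{u}^{4}} = c_1\sqrt{\epsilon}\,\vect(v v^{\top})\, y ,
\]
and the covariance matrix is
\[
I_{d^{2}} - \frac{\vect(u u^{\top})\vect(u u^{\top})^{\top}}{\sigma^{2} + \norm{u}^{4}} = I_{d^{2}} - c_1^{2}\epsilon\,\vect(v v^{\top})\vect(v v^{\top})^{\top},
\]
which is exactly the asserted conditional distribution.

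There is essentially no obstacle here: the lemma is a direct corollary of the preceding one together with the algebraic bookkeeping above, and the only point requiring care is keeping track of the difference between a second and a fourth power of $\epsilon^{1/4}$ — namely $\norm{u}^{4} = c_1^{2}\epsilon$ versus the factor $c_1\sqrt{\epsilon}$ appearing in $\vect(u u^{\top})$. The real content of the statement is the choice of parameters rather than its verification: scaling $u$ like $\epsilon^{1/4}$ makes $\norm{u}^{4}$ of order $\epsilon$, so that the rank-one ``signal'' term $c_1^{2}\epsilon\,\vect(v v^{\top})\vect(v v^{\top})^{\top}$ in the conditional covariance is of the same order as an $\epsilon$-fraction contamination can affect, while the choice $\sigma^{2} = 1 - c_1^{2}\epsilon$ keeps the marginal of $y$ fixed at $\normal(0,1)$ independently of the hidden direction $v$. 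This is precisely what the subsequent moment-matching contamination construction needs.
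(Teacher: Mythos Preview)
Your proposal is correct and is exactly what the paper intends: the lemma is stated without an explicit proof in the paper, as a direct specialization of the preceding ``Clean marginal and conditional distribution'' lemma, and your substitution and bookkeeping carry this out precisely. One small remark: you correctly work with $\sigma^{2} = 1 - c_{1}^{2}\epsilon$, which is what is needed for $\sigma^{2} + \norm{u}^{4} = 1$; the statement's ``$\sigma = 1 - c_{1}^{2}\epsilon$'' is evidently a typo for the variance, as confirmed by the use of variance $1 - c_{1}^{2}\epsilon$ in the subsequent one-dimensional moment-matching lemma.
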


Let \(Q_{v}\) denote the joint distribution from the construction in
Lemma~\ref{lemma:clean_distribution}. The family of contaminated joint distributions
that are hard to learn will be denoted by \(Q_{v}'\), and we proceed to construct them. The technique for the construction follows a standard non-Gaussian component analysis argument, which starts from mixing a one-dimensional distribution to match moments with the standard Gaussian. We note that the conditional distribution \(\vect(A) | y\)  in all directions perpendicular to \(\vect(v v^{\top})\) is already standard Gaussian, so we proceed to consider the one-dimensional distribution along \(\vect(v v^{\top})\) direction, which is
\begin{align*}
   \normal \left(c_{1} \sqrt\epsilon y, 1 -c_{1}^2 \epsilon\right).
\end{align*}

Let \(\mu(y) = c_{1} \sqrt\epsilon y\) be a shortand for the mean of this one dimensional distribution. We proceed to mix it with some noise distribution \(B_{\mu}\). It is important to note here that the fraction of the noise depends on \(\mu\) and thus on \(y\)---the larger \(\mu\) is, the higher the noise level is needed.

\begin{definition}
  Let \(P, Q\) be two distributions with probability density functions \(P(x), Q(x)\); we obscure the line between a distribution and its density function. The \(\chi^{2}\)-divergence of \(P\) and \(Q\) is
  \[\chi^{2}(P, Q) = \int \frac{(P(x) - Q(x))^{2}}{Q(x)} dx = \int \frac{P^{2}(x)}{Q(x)} dx - 1.\]

  Let \(N\) be a distribution whose support contains the supports of \(P\) and \(Q\). We define
  \[\chi_{N}(P, Q) = \int \frac{P(x) Q(x)}{N(x)} dx. \]
  Then \(\abs{\chi_{N}(P, Q) - 1} = \int \frac{(P(x) - N(x))(Q(x) - N(x))}{N(x)} dx \) defines an inner product of \(P\) and \(Q\).
\end{definition}

\begin{lemma}[One-Dimensional Moment Matching]
  \label{lemma:mixing_one_dimensional}
  For any \(\epsilon > 0, \mu \in \R\), there is a distribution \(D_{\mu}\) such that \(D_{\mu}\) agrees with the mean of \(\mathcal N(0,1)\) and \[D_{\mu} = (1 - \epsilon_{\mu}) \mathcal N(\mu,  1 -c_{1}^2 \epsilon) + \epsilon_{\mu} B_{\mu},\]
  for some distribution \(B_{\mu}\) and \(\epsilon_{\mu}\) satisfying
  \begin{enumerate}
    \item If \(\abs{\mu} < c_{1}^{2}\sqrt{ \epsilon}\), then \(\epsilon_{\mu} = \epsilon\) and \(\chi^{2}(D_{\mu}, \mathcal N(0,1)) = e^{O(1/\epsilon)}\).

    \item If \(\abs{\mu} \ge c_{1}^{2}\sqrt{ \epsilon}\), then we take \(\epsilon_{\mu}\) such that \(\epsilon_{\mu} / ( 1 - \epsilon_{\mu} ) = \mu^{2}\), which simplies to
  \begin{equation}
    \label{eq:noise_y}
    \epsilon_{\mu} = \frac{\mu^{2}}{1 + \mu^{2}},
  \end{equation}
  and \(\chi^{2}(D_{\mu}, \mathcal N(0,1)) = e^{O(\max(1/\mu^{2}, \mu^{2}))}\).
  \end{enumerate}
\end{lemma}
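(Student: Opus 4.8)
The plan is to construct $D_\mu$ explicitly as a two-component Gaussian mixture so that the centering constraint and the claimed decomposition hold by fiat, and then reduce the lemma to an elementary $\chi^2$ computation between Gaussians. Write $s^2 := 1 - c_1^2\epsilon$, which lies in $(0,1)$ once $c_1$ is a sufficiently small constant (so that $\mathcal{N}(\mu, s^2)$ is a genuine Gaussian), and let $\varphi$ denote the standard normal density. First I would fix $\epsilon_\mu$ exactly as in the statement; in both regimes $\epsilon_\mu \in (0,1)$. The degenerate case $\mu = 0$ falls in the first regime and is handled by taking $D_\mu = (1-\epsilon)\mathcal{N}(0, s^2) + \epsilon\,\mathcal{N}(0,1)$, which is centered. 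For $\mu \neq 0$, take the noise component to be the unit-variance Gaussian $B_\mu := \mathcal{N}(m_\mu, 1)$ whose mean $m_\mu$ is chosen to center the mixture: solving $(1-\epsilon_\mu)\mu + \epsilon_\mu m_\mu = 0$ yields $m_\mu = -(1-\epsilon_\mu)\mu / \epsilon_\mu$, and one sets $D_\mu := (1-\epsilon_\mu)\mathcal{N}(\mu, s^2) + \epsilon_\mu B_\mu$. Being a convex combination of probability measures, $D_\mu$ is a probability measure; its mean is $0$ by construction; and the mixture decomposition in the lemma holds by definition. So everything except the $\chi^2$ estimate is immediate.

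For the divergence, I would write $1 + \chi^2(D_\mu, \mathcal{N}(0,1)) = \int D_\mu(x)^2/\varphi(x)\,dx$ and expand the square into three pieces: $(1-\epsilon_\mu)^2 \int \mathcal{N}(\mu,s^2)^2/\varphi$, the cross term $2(1-\epsilon_\mu)\epsilon_\mu \int \mathcal{N}(\mu,s^2) B_\mu/\varphi$, and $\epsilon_\mu^2 \int B_\mu^2/\varphi$. The cross term is controlled by Cauchy--Schwarz in terms of the other two, so the only nontrivial input is the one-line Gaussian identity $\int (\mathcal{N}(a,t^2)(x))^2/\varphi(x)\,dx = (t\sqrt{2-t^2})^{-1}\exp(a^2/(2-t^2))$, valid for $0 < t^2 < 2$ (completion of the square). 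Applying it with $(a,t^2) = (\mu, s^2)$ gives $\int \mathcal{N}(\mu,s^2)^2/\varphi = (1 - c_1^4\epsilon^2)^{-1/2}\exp(\mu^2/(1+c_1^2\epsilon))$ with an $O(1)$ prefactor, and with $(a,t^2) = (m_\mu, 1)$ gives $\int B_\mu^2/\varphi = e^{m_\mu^2}$.

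It then remains to plug in the two choices of $\epsilon_\mu$. In the regime $|\mu| < c_1^2\sqrt\epsilon$ one has $\epsilon_\mu = \epsilon$, so $\exp(\mu^2/(1+c_1^2\epsilon)) = O(1)$ and $m_\mu^2 = (1-\epsilon)^2\mu^2/\epsilon^2 < c_1^4/\epsilon$; hence the first piece is $O(1)$, the last is at most $\epsilon^2 e^{c_1^4/\epsilon}$, and the cross term at most $O(1)\,\epsilon\, e^{c_1^4/(2\epsilon)}$, giving $\chi^2(D_\mu, \mathcal{N}(0,1)) = e^{O(1/\epsilon)}$. In the regime $|\mu| \ge c_1^2\sqrt\epsilon$ one has $\epsilon_\mu = \mu^2/(1+\mu^2)$, so $1-\epsilon_\mu = 1/(1+\mu^2)$ and $m_\mu = -1/\mu$, i.e.\ $m_\mu^2 = 1/\mu^2$: the first piece is $(1+\mu^2)^{-2}\exp(\mu^2/(1+c_1^2\epsilon))\cdot O(1) \le O(1)\,e^{\mu^2}$, the last is $\epsilon_\mu^2 e^{1/\mu^2} \le e^{1/\mu^2}$, and the cross term is at most $O(1)\,e^{\mu^2/2 + 1/(2\mu^2)} \le O(1)\,e^{\max(\mu^2, 1/\mu^2)}$; since $\max(\mu^2, 1/\mu^2) \ge 1$ these combine to $\chi^2(D_\mu, \mathcal{N}(0,1)) = e^{O(\max(1/\mu^2, \mu^2))}$, as claimed.

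The computation is mostly bookkeeping; the only place where the specific formula for $\epsilon_\mu$ does real work is the second regime, where $\epsilon_\mu = \mu^2/(1+\mu^2)$ serves two purposes at once: it keeps the noise mean $m_\mu = -1/\mu$ bounded for $|\mu|$ bounded away from $0$ (controlling $\epsilon_\mu^2 \int B_\mu^2/\varphi$), and it makes the mixing weight $1-\epsilon_\mu$ decay like $\mu^{-2}$, which is exactly what damps the $e^{\mu^2}$-growth of the leading Gaussian term into the stated bound. A minor point worth flagging: this construction matches only the first moment of $\mathcal{N}(0,1)$, which is all the lemma asks for, so no higher-moment cancellation is needed — the hardness in the subsequent argument comes from the corruption fraction, not from moment matching beyond the mean.
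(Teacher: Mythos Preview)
Your proposal is correct and follows essentially the same route as the paper: both take $B_\mu = \mathcal{N}(m_\mu, 1)$ with $m_\mu = -(1-\epsilon_\mu)\mu/\epsilon_\mu$ so that the mixture is centered, and then bound $\chi^2(D_\mu,\mathcal{N}(0,1))$ via the closed-form $\chi$-quantity between one-dimensional Gaussians (the paper packages this as Facts~\ref{fact:chi_square_mixture} and~\ref{fact:chi_square_gaussians}, while you derive the identity by completing the square and handle the cross term with Cauchy--Schwarz). The case split and the resulting exponents $e^{O(1/\epsilon)}$ and $e^{O(\max(1/\mu^2,\mu^2))}$ match; your closing remark about the $(1-\epsilon_\mu)\sim\mu^{-2}$ weight ``damping'' the $e^{\mu^2}$ term is a slight overstatement (the bound is $e^{O(\mu^2)}$ with or without that prefactor), but it does not affect correctness.
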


\begin{proof}
  It is straightforward to verify that
  \(D_{\mu} = (1 - \epsilon_{\mu}) \mathcal N(\mu,  1 -c_{1}^2 \epsilon) + \epsilon_{\mu} \normal(a, b)\) has mean 0 and second moment 1, where \(a\) and \(b\) are defined by
  \[a = - \frac{\mu(1 - \epsilon_{\mu})}{\epsilon_{\mu}}, \;
    b = 1.\]

  Facts~\ref{fact:chi_square_mixture} and~\ref{fact:chi_square_gaussians} imply that if \(\sigma_{1} = O(1), \sigma_{2} = O(1)\), then
  \[\chi^{2}(\epsilon_{\mu} \normal(\mu_{1}, \sigma_{1}^{2}) + (1-\epsilon_{\mu})\normal(\mu_{2}, \sigma_{2}^{2}),\, \normal(0,1)) = e^{\bigO{\mu_{1}^{2} + \mu_{2}^{2}}}.\]
  \begin{itemize}
    \item \textbf{Case 1}: \(\abs{\mu} <c_{1}^{2} \sqrt{ \epsilon}\). Then taking \(\epsilon_{\mu} = \epsilon\) implies that \(|a| \le \mu / \epsilon < c_{1} /\sqrt{\epsilon}\), and \(\chi^{2}(D_{\mu}, \normal(0,1)) = e^{O(1/\epsilon)}.\)
    \item \textbf{Case 2}: \(\abs{\mu} \ge c_{1}^{2}\sqrt{ \epsilon}\). Then taking \(\epsilon_{\mu} / ( 1 - \epsilon_{\mu} ) = \mu^{2}\) implies that \(\abs{a} = \frac{1}{|\mu|}\),  and \(\chi^{2}(D_{\mu}, \mathcal N(0,1)) = e^{O(\max(1/\mu^{2}, \mu^{2}))}\).\qedhere
  \end{itemize}
\end{proof}
We follow the non-Gaussian component analysis construction and define the conditional distribution of the corrupted \(\vect(A) | y\) to have a hidden direction \(v\).
\begin{definition}[{High-Dimensional Hidden Direction Distribution~\cite[Definition 8.9]{diakonikolas2022algorithmic}}]
  For a one-dimensional distribution \(D\) that admits density \(D(x)\) and a unit vector \(v \in \R^{d^{2}}\), the distribution \(\mathbf{P}_{v}^{A}\) is defined to be the product distribution of \(D(x)\) in the \(v\)-direction and standard normal distribution in the subspace perpendicular to \(v\), i.e.,
  \[ \mathbf{P}_{v}^{D}(a) = \frac{1}{(2\pi)^{\frac{d^{2}- 1}2}} D(v^{\top} a) \exp\left(-\frac12 \norm{a - (v^{\top}a) v}^{2}\right ). \]
\end{definition}

\begin{lemma}[Construction of Corrupted Conditional Distribution]\label{lemma:corrupted-conditional}
  Define \(P_{\mu(y), v} = \mathbf{P}_{\vect(v v^{\top})}^{D_{\mu(y)}}\). Then for all unit vectors \(v, v' \in \R^{d}\), it holds that
  \begin{equation}
    \label{eq:correlation_lemma}
    \abs{\chi_{\normal(0, I)}(P_{\mu(y), v}, P_{\mu(y), v'}) - 1} \le {(v^{\top} v')}^{4}\, \chi^{2}(D_{\mu(y)}, \mathcal N(0,1)).
  \end{equation}
\end{lemma}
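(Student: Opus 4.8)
The statement is the standard non-Gaussian component analysis correlation estimate, specialized to the rank-one lift $v \mapsto \vect(vv^{\top})$, so my plan is to fix the value of $y$ (so $\mu = \mu(y)$ is a fixed scalar and $D_{\mu}$ a fixed one-dimensional distribution), expand $D_{\mu}$ in the Hermite basis relative to the standard Gaussian, and then invoke the elementary fact that the $\chi_{\normal(0,I)}$ inner product of two hidden-direction distributions built from the \emph{same} univariate distribution $D_{\mu}$ but with different hidden directions $w,w'$ couples only matching Hermite degrees, weighting the degree-$k$ term by $(w^{\top}w')^k$. The Hermite degrees below the order up to which $D_{\mu}$ matches the Gaussian drop out, and the remaining geometric factor in $(v^{\top}v')^2$ produces the claimed bound.

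First I would record the two algebraic facts that make the lift work: writing $w = \vect(vv^{\top})$ and $w' = \vect(v'v'^{\top})$, we have $\norm{w} = \fnorm{vv^{\top}} = \norm{v}^2 = 1$ (and likewise $\norm{w'} = 1$), so $w,w'$ are genuinely unit vectors in $\R^{d^2}$ as the definition of $\mathbf{P}^{D}_{(\cdot)}$ requires, while $w^{\top}w' = \langle vv^{\top}, v'v'^{\top}\rangle = \tr(vv^{\top}v'v'^{\top}) = (v^{\top}v')^2$. Next I would set up the Hermite expansion $D_{\mu}(x) = \phi(x)\sum_{k\ge 0}\alpha_k \bar{h}_k(x)$, where $\phi$ is the standard-Gaussian density, $\{\bar{h}_k\}$ the $L^2(\phi)$-orthonormal Hermite polynomials, and $\alpha_k = \E_{X\sim D_{\mu}}[\bar{h}_k(X)]$; this is legitimate because $\chi^2(D_{\mu},\phi)<\infty$ by Lemma~\ref{lemma:mixing_one_dimensional}, and in this notation $\chi^2(D_{\mu},\normal(0,1)) = \sum_{k\ge 1}\alpha_k^2$. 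Since $D_{\mu}$ is a probability measure, $\alpha_0 = 1$; since $D_{\mu}$ agrees with the mean of $\normal(0,1)$ by Lemma~\ref{lemma:mixing_one_dimensional}, $\alpha_1 = 0$. The hidden-direction correlation identity
\[
\chi_{\normal(0,I)}\bigl(\mathbf{P}_{w}^{D_{\mu}}, \mathbf{P}_{w'}^{D_{\mu}}\bigr) = \sum_{k\ge 0}\alpha_k^2\,(w^{\top}w')^{k}
\]
then follows from the product structure of the hidden-direction density together with Hermite orthonormality (this is the computation underlying \cite[Chapter~8]{diakonikolas2022algorithmic}). Subtracting the $k=0$ term, which equals $\alpha_0^2 = 1$, and dropping the vanishing $k=1$ term yields $\bigl|\chi_{\normal(0,I)}(P_{\mu,v},P_{\mu,v'})-1\bigr| = \sum_{k\ge 2}\alpha_k^2\,(v^{\top}v')^{2k}$. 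Finally, since $|v^{\top}v'|\le 1$ we have $(v^{\top}v')^{2k}\le (v^{\top}v')^4$ for every $k\ge 2$, so the sum is at most $(v^{\top}v')^4\sum_{k\ge 2}\alpha_k^2 = (v^{\top}v')^4\,\chi^2(D_{\mu},\normal(0,1))$, which is exactly \eqref{eq:correlation_lemma}.

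The only step that is not purely mechanical is establishing the correlation identity $\chi_{\normal(0,I)}(\mathbf{P}_{w}^{D}, \mathbf{P}_{w'}^{D}) = \sum_k \alpha_k^2 (w^{\top}w')^k$; if one prefers not to cite it, it is proved by rotating coordinates so that $w$ and $w'$ lie in a common two-dimensional plane, writing out $\mathbf{P}_{w}^{D}(a)\,\mathbf{P}_{w'}^{D}(a)/\phi_{d^2}(a)$, integrating out the $d^2-2$ directions orthogonal to both $w$ and $w'$, and expanding the residual two-dimensional Gaussian integral in Hermite polynomials, where the change of basis introduces exactly the powers of $w^{\top}w'$ and the cross terms collapse via $\langle \bar{h}_j,\bar{h}_k\rangle_{\phi} = \delta_{jk}$. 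Everything else --- the norm and inner-product identities for $\vect(vv^{\top})$, the values $\alpha_0 = 1$ and $\alpha_1 = 0$, and the trivial bound $(v^{\top}v')^{2k}\le (v^{\top}v')^4$ for $k\ge 2$ --- is routine. One small point to keep in mind is that only first-moment matching of $D_{\mu}$ with $\normal(0,1)$ is needed here (to kill the $k=1$ term), which is precisely what Lemma~\ref{lemma:mixing_one_dimensional} guarantees.
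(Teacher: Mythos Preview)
Your proof is correct and follows the same approach as the paper: both reduce to the NGCA correlation bound for hidden-direction distributions together with the algebraic identity $\vect(vv^\top)^\top\vect(v'v'^\top)=(v^\top v')^2$. The only difference is that the paper cites this correlation bound as \cite[Lemma~8.12]{diakonikolas2022algorithmic} directly, whereas you unpack its proof via the Hermite expansion.
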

\begin{proof}
  By Lemma 8.12 in the textbook~\cite{diakonikolas2022algorithmic}, we have
\begin{align*}
  \abs{\chi_{\normal(0, I)}(P_{\mu(y), v}, P_{\mu(y), v'}) - 1}
  & = \abs{\chi_{\normal(0, I)}(\mathbf{P}_{\vect(v v^{\top})}^{D_{\mu(y)}}, \mathbf{P}_{\vect(v' {v'}^{\top})}^{D_{\mu(y)}}) - 1} \nonumber \\
  & \le (\vect(v v^{\top})^{\top} \vect(v' {v'}^{\top}))^{2}\, \chi^{2}(D_{\mu(y)}, \mathcal N(0,1)) \nonumber \\
  &  = {(v^{\top} v')}^{4}\, \chi^{2}(D_{\mu(y)}, \mathcal N(0,1)),
\end{align*}
where the inequality is a direct application of~\cite[Lemma 8.12]{diakonikolas2022algorithmic}, and the last equality is because
\begin{align*}
  \vect(v v^{\top})^{\top} \vect(v' {v'}^{\top})
   = \< v v^{\top}, v' {v'}^{\top}\>
   = \tr(v v^{\top}  v' {v'}^{\top}) = v^{\top}  v' \tr(v {v'}^{\top}) = (v^{\top}  v')^{2}.
\end{align*}
\end{proof}

Now we define \(Q_{v}'(A, y)\) to be a contaminated version of the joint distribution of \(A\) and \(y\) such that \(\vect(A) | y \sim P_{\mu(y), v}\) in the following lemma. Recall that, according to Lemma~\ref{lemma:mixing_one_dimensional}, different level of noise \(\epsilon_{\mu(y)}\) is needed for different \(y\), and the implication of the following lemma is that the total noise is less than \(\epsilon\). This is done via integration with respect to \(y\).

\begin{lemma}[Construction of Corrupted Joint Distribution: Controlling Large \(\epsilon_{\mu(y)}\)]
  Recall \(\mu(y) = c_{1} \sqrt\epsilon y\) and \(Q_{v}\) is the joint distribution of \(A\) and \(y\) for clean samples. Define \(Q_{v}'(A, y) = P_{\mu(y), v}(A) R(y)\), where
  \begin{align*}
    R(y) &= \frac{G(y)}{(1 - \epsilon_{\mu(y)}) \int G(y')/(1-\epsilon_{\mu(y')}) d y'}, \\
    G(y) & \mbox{ is the marginal distribution of \(y\) under \(Q_{v}\), which is standard Gaussian}.
  \end{align*}
  Then \(Q_{v}'(A, y)\) is the contaminated  joint distribution of \(A, y\) i.e.,\ \(Q_{v}' = (1 - \epsilon) Q_{v} + \epsilon N_{v}\) for some noise distribution \(N_{v}\). Under \(Q_{v}'\), it holds that \(A | y \sim P_{\mu(y), v}\). 
\end{lemma}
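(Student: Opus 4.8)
The plan is to verify the two assertions in turn, the decomposition $Q_v' = (1-\epsilon)Q_v + \epsilon N_v$ being the one with content. For that it is enough to show the pointwise domination $Q_v'(A,y) \ge (1-\epsilon)\,Q_v(A,y)$ for all $(A,y)$: granting this, $N_v := \frac1\epsilon\bigl(Q_v' - (1-\epsilon)Q_v\bigr)$ is nonnegative, and it is a probability density because $\int N_v = \frac1\epsilon\bigl(1 - (1-\epsilon)\bigr) = 1$, using that both $Q_v$ and $Q_v'$ integrate to $1$ --- the latter since $\int P_{\mu(y),v}(A)\,dA = 1$ for every $y$ (as $D_{\mu(y)}$ is a probability distribution and $1-\epsilon_{\mu(y)} > 0$, so $R$ is well defined), whence $\int Q_v' = \int R(y)\,dy = 1$ by the normalization built into $R$. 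The conditional claim then drops out: integrating $A$ out of $Q_v'(A,y) = P_{\mu(y),v}(A)R(y)$ shows the $y$-marginal of $Q_v'$ is $R(y)$, so $Q_v'(A\mid y) = P_{\mu(y),v}(A)$.

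First I would expand $P_{\mu(y),v}$ through the mixture structure of $D_{\mu(y)}$. Since the hidden-direction map $D\mapsto \mathbf{P}_{\vect(vv^\top)}^{D}$ is linear in the one-dimensional density $D$, Lemma~\ref{lemma:mixing_one_dimensional} gives
\[
P_{\mu(y),v}(A) = (1-\epsilon_{\mu(y)})\,\mathbf{P}_{\vect(vv^\top)}^{\normal(\mu(y),\,1-c_1^2\epsilon)}(A) + \epsilon_{\mu(y)}\,\mathbf{P}_{\vect(vv^\top)}^{B_{\mu(y)}}(A).
\]
By Lemma~\ref{lemma:clean_distribution}, together with $\mu(y) = c_1\sqrt\epsilon\,y$ and $\fnorm{vv^\top}=\norm{v}^2=1$, the clean conditional density $Q_v(A\mid y)$ equals exactly $\mathbf{P}_{\vect(vv^\top)}^{\normal(\mu(y),\,1-c_1^2\epsilon)}(A)$. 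Setting $Z := \int G(y')/(1-\epsilon_{\mu(y')})\,dy'$, we have $R(y)(1-\epsilon_{\mu(y)}) = G(y)/Z$, so dropping the nonnegative $B_{\mu(y)}$-term,
\[
Q_v'(A,y) = R(y)\,P_{\mu(y),v}(A) \ \ge\ R(y)(1-\epsilon_{\mu(y)})\,Q_v(A\mid y) = \frac{G(y)}{Z}\,Q_v(A\mid y) = \frac1Z\,Q_v(A,y),
\]
using that $G$ is the $y$-marginal of $Q_v$. Hence the whole statement reduces to the inequality $Z \le \frac1{1-\epsilon}$, i.e.\ $\frac1Z \ge 1-\epsilon$.

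The remaining step --- and, as the lemma's title indicates, the real obstacle --- is bounding $Z$ despite $\epsilon_{\mu(y)}$ growing with $|y|$ up towards $1$. Plugging in the explicit forms from Lemma~\ref{lemma:mixing_one_dimensional} with $\mu(y)^2 = c_1^2\epsilon y^2$: for $|y| < c_1$, $\epsilon_{\mu(y)} = \epsilon$ and $1/(1-\epsilon_{\mu(y)}) = 1/(1-\epsilon)$; for $|y|\ge c_1$, $1/(1-\epsilon_{\mu(y)}) = 1+\mu(y)^2 = 1 + c_1^2\epsilon y^2$. Splitting $Z$ at $|y| = c_1$ and writing $p := \Prob_{y\sim\normal(0,1)}[|y|<c_1]$,
\[
Z \le \frac{p}{1-\epsilon} + (1-p) + c_1^2\epsilon\int_{|y|\ge c_1} y^2\,G(y)\,dy \le \frac{p}{1-\epsilon} + (1-p) + c_1^2\epsilon,
\]
using $\E_{y\sim\normal(0,1)}[y^2] = 1$. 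An elementary rearrangement shows $Z \le \frac1{1-\epsilon}$ is equivalent to $c_1^2 \le (1-p)/(1-\epsilon)$, which holds once $c_1$ is a small enough absolute constant, since $1-p = \Prob[|y|\ge c_1]\to 1$ as $c_1\downarrow 0$ while the left side $\to 0$ (e.g.\ $c_1 \le 1/2$ works, as $\Prob[|y|\ge 1/2] > 1/4 \ge c_1^2$ and $\epsilon < 1/2$). This yields $Q_v' \ge (1-\epsilon)Q_v$ pointwise, and by the first paragraph the lemma follows.

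In short, the only nontrivial point is this normalization bound: the corruption level $\epsilon_{\mu(y)}$ is location-dependent --- equal to $\epsilon$ only in the central band $|y|<c_1$, but larger in the tails --- and one must check that, averaged against the Gaussian $y$-marginal, the total corruption stays below $\epsilon$. It works because the tail event $|y|\ge c_1$ carries little Gaussian mass while the overshoot $1/(1-\epsilon_{\mu(y)}) - 1 = c_1^2\epsilon y^2$ is only quadratic in $y$, so a second-moment estimate closes the gap provided $c_1$ is chosen sufficiently small.
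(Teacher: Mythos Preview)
Your proof is correct and follows essentially the same approach as the paper: both reduce to the pointwise domination $Q_v' \ge (1-\epsilon)Q_v$ via the mixture structure of $D_{\mu(y)}$, then establish the key normalization bound $\int G(y')/(1-\epsilon_{\mu(y')})\,dy' \le 1/(1-\epsilon)$ by splitting at $|y|=c_1$ and using the Gaussian second moment. One small quibble with your closing narrative: the tail event $|y|\ge c_1$ actually carries \emph{most} of the Gaussian mass when $c_1$ is small (which is precisely why $1-p$ is large enough to absorb the $c_1^2$ term in your inequality $c_1^2 \le (1-p)/(1-\epsilon)$), but your computation itself is right.
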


\begin{proof}
  We proceed to bound \(\int G(y')/(1-\epsilon_{\mu(y')}) d y' \le 1/(1-\epsilon)\). This shows that \(R(y)\) is well-defined, and we will use that to show \(Q_{v}' \ge (1 - \epsilon) Q_{v}\) (recall that we obscure the line between the distribution and its probability density function).
  \begin{align*}
    \int G(y')/(1-\epsilon_{\mu(y')}) d y'
    & =  \int G(y') \left( 1+ \frac{\epsilon_{\mu(y')}}{1 - \epsilon_{\mu(y')}}\right) d y'  \\
    & \le 1 +  \int_{\abs{y} \ge c_{1}} G(y') \left( \frac{\epsilon_{\mu(y')}}{1 - \epsilon_{\mu(y')}}\right) d y' + \int_{\abs{y} < c_{1}} G(y') \frac{\epsilon}{1 - \epsilon} d y' \\
    & \le 1 + 2 c_{1} \epsilon +  \int G(y') {\mu(y')^{2}} d y'  \\
    & \le 1 + 2 c_{1} \epsilon +  \int G(y') {c_{1}^{2}\epsilon{y'}^{2}} d y'  \\
    & \le 1 + 2 c_{1} \epsilon +  {c_{1}^{2}\epsilon}  \\
    & \le 1 / (1 - \epsilon),
  \end{align*}
  where the last step holds by choosing \(c_{1}\) to be sufficiently small.

  Since for any \(y\) and \(A\), \(D_{\mu(y)} \ge (1 - \epsilon_{\mu(y)})\normal(\mu(y), 1-c_{2}\epsilon^{2})\) by the construction of \(D_{\mu}\), so \(P_{y, v} \ge (1 - \epsilon_{\mu(y)})\normal(\mu(y) \vect(v v^{\top}), I -c_{1}^2\epsilon^{2}\vect(v v^{\top}) \vect(v v^{\top})^{\top}) \), since \(P_{y, v}\) agrees with Gaussian in all directions except \(\vect(v v^{\top})\). We proved that \(R(y)\ge (1 - \epsilon) G(y)/(1 - \epsilon_{\mu(y)})\), so \(Q_{v}' = P_{\mu(y), v}R(y) \ge (1 - \epsilon) G(y) \normal(\mu(y) \vect(v v^{\top}), I -c_{1}^2\epsilon^{2}\vect(v v^{\top}) \vect(v v^{\top})^{\top})  = (1 - \epsilon) Q_{v} \).
\end{proof}

Next we show that \(Q_{v}'\)  are near orthogonal to each other for distinct \(v\), if we view \(\chi_{S}(\cdot, \cdot) - 1\) as an inner product on the space of \(d^{2}+1\)-dimensional distributions.
\begin{lemma}[Near Orthogonality of Corrupted Joint Distributions]
  \label{lemma:near_orthogonal_distribution}
  Let \(S\) be the joint distribution of \(A\) and \(y\) when they are independent and entries of \(A\) are i.i.d.\ standard Gaussian and \(y \sim R\). Then we have \[\chi_{S}(Q_{v}', Q_{v'}') - 1= e^{O(1/\epsilon^{3})}(v^{\top} v')^{4}.\]
\end{lemma}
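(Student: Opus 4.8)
The plan is to compute $\chi_S(Q_v', Q_{v'}')$ directly from the factorized forms $Q_v'(A,y) = P_{\mu(y),v}(A)\,R(y)$ and $S(A,y) = \mathcal{N}(0,I_{d^2})(A)\,R(y)$, exploiting that the $y$-marginal is the same ($R$) for both corrupted distributions and for $S$. First I would write
\[
\chi_S(Q_v', Q_{v'}') = \int\!\!\int \frac{P_{\mu(y),v}(A)\,R(y)\,P_{\mu(y),v'}(A)\,R(y)}{\mathcal{N}(0,I_{d^2})(A)\,R(y)}\,dA\,dy = \int R(y)\left(\int \frac{P_{\mu(y),v}(A)\,P_{\mu(y),v'}(A)}{\mathcal{N}(0,I_{d^2})(A)}\,dA\right)dy,
\]
so that the inner integral is exactly $\chi_{\mathcal{N}(0,I)}(P_{\mu(y),v}, P_{\mu(y),v'})$. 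By Lemma~\ref{lemma:corrupted-conditional}, this inner quantity satisfies $|\chi_{\mathcal{N}(0,I)}(P_{\mu(y),v}, P_{\mu(y),v'}) - 1| \le (v^\top v')^4\,\chi^2(D_{\mu(y)}, \mathcal{N}(0,1))$. Therefore
\[
\bigl|\chi_S(Q_v', Q_{v'}') - 1\bigr| = \left|\int R(y)\bigl(\chi_{\mathcal{N}(0,I)}(P_{\mu(y),v}, P_{\mu(y),v'}) - 1\bigr)\,dy\right| \le (v^\top v')^4 \int R(y)\,\chi^2(D_{\mu(y)}, \mathcal{N}(0,1))\,dy,
\]
using that $\int R(y)\,dy = 1$.

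The remaining work is to bound $\int R(y)\,\chi^2(D_{\mu(y)}, \mathcal{N}(0,1))\,dy = e^{O(1/\epsilon^3)}$. Here I would split the integral at $|y| = c_1$ (equivalently $|\mu(y)| = c_1^2\sqrt{\epsilon}$, since $\mu(y) = c_1\sqrt{\epsilon}\,y$), matching the two cases of Lemma~\ref{lemma:mixing_one_dimensional}. On the region $|y| < c_1$ we have $\chi^2(D_{\mu(y)}, \mathcal{N}(0,1)) = e^{O(1/\epsilon)}$, and since $R$ is a probability density this part contributes at most $e^{O(1/\epsilon)}$. On the region $|y| \ge c_1$ we have $\chi^2(D_{\mu(y)}, \mathcal{N}(0,1)) = e^{O(\max(1/\mu(y)^2, \mu(y)^2))}$; for $|y| \ge c_1$ the term $1/\mu(y)^2 = 1/(c_1^2\epsilon y^2) \le 1/(c_1^4\epsilon)$ is bounded, while $\mu(y)^2 = c_1^2\epsilon y^2$ grows quadratically in $y$. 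I also need the tail bound $R(y) \le \frac{1}{1-\epsilon}\cdot\frac{G(y)}{1-\epsilon_{\mu(y)}} \le \frac{G(y)}{(1-\epsilon)}(1+\mu(y)^2) = O(G(y)(1 + c_1^2\epsilon y^2))$ from the construction, with $G$ the standard Gaussian density. Then on $|y|\ge c_1$,
\[
\int_{|y|\ge c_1} R(y)\,\chi^2(D_{\mu(y)}, \mathcal{N}(0,1))\,dy \le e^{O(1/(c_1^4\epsilon))}\int G(y)\,(1 + c_1^2\epsilon y^2)\,e^{O(c_1^2\epsilon y^2)}\,dy,
\]
and this Gaussian integral converges (the $e^{O(c_1^2\epsilon y^2)}$ factor is absorbed by the $e^{-y^2/2}$ in $G$ once $c_1$ is chosen small enough, the polynomial factor integrates to a constant) and is bounded by $e^{O(1/\epsilon)}$. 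Combining the two regions and recalling $1/\epsilon^3$ dominates $1/\epsilon$, we get $\int R(y)\,\chi^2(D_{\mu(y)}, \mathcal{N}(0,1))\,dy \le e^{O(1/\epsilon^3)}$, which completes the proof; the stated bound actually only needs $e^{O(1/\epsilon)}$ here, but keeping $e^{O(1/\epsilon^3)}$ is harmless and matches the lemma statement.

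I expect the main obstacle to be the careful bookkeeping of the Gaussian tail integral on $|y|\ge c_1$: one must verify that the exponential growth $e^{O(c_1^2\epsilon y^2)}$ coming from $\chi^2(D_{\mu(y)}, \mathcal{N}(0,1))$ together with the extra $(1 + \mu(y)^2)$ factor in the $R(y)$ upper bound does not overwhelm the $e^{-y^2/2}$ decay of $G(y)$, and to track that the constant $c_1$ can be chosen (small, absolute) so that everything stays finite and the exponent is $O(1/\epsilon)$ (hence $O(1/\epsilon^3)$) independent of $y$. A secondary technical point is justifying the interchange of integration order in the opening display (Tonelli, since all integrands are nonnegative) and confirming that $R(y)$ integrates to $1$ by its very definition. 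Everything else — plugging in Lemma~\ref{lemma:corrupted-conditional}, the $|y|<c_1$ region, and the final exponent arithmetic — is routine.
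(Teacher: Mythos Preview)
Your proposal is correct and follows essentially the same approach as the paper: factor $\chi_S(Q_v',Q_{v'}')$ through the common $y$-marginal $R$, apply Lemma~\ref{lemma:corrupted-conditional} to the inner $A$-integral, then bound $\int R(y)\,\chi^2(D_{\mu(y)},\mathcal N(0,1))\,dy$ by splitting at $|y|=c_1$ and controlling the Gaussian tail via $R(y)\lesssim G(y)(1+\mu(y)^2)$. You also correctly observe that the argument actually yields $e^{O(1/\epsilon)}$; the $e^{O(1/\epsilon^3)}$ in the lemma statement is looser than what the paper's own proof delivers.
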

\begin{proof}
  By the definition of \(\chi_{S}(Q_{v}', Q_{v'}')\), we have
  \begin{align*}
    \chi_{S}(Q_{v}', Q_{v'}')
    & = \int Q_{v}'(a, y) Q_{v'}'(a,y) / S(x, y) dx dy \\
    & = \int P_{\mu(y), v}(a) P_{\mu(y), v'}(a) R(y)^{2} / (G(a)R(y)) dx dy \\
    & = \int \chi_{\normal{(0, I_{d^{2}})}}(P_{\mu(y), v}, P_{\mu(y), v'})R(y) dy \\
    & \le 1 + \bigO{(v^{\top} v')^{4}} \int \chi^{2}(D_{\mu(y), \epsilon}, \normal(0,1)) R(y) d y,
  \end{align*}
where the last inequality follows from Lemma~\ref{lemma:corrupted-conditional}.

  Recall \(\mu(y) = c_{1} \sqrt\epsilon y\). The main technical part is to bound the following quantity:
  \begin{align*}
    \int \chi^{2}(D_{\mu(y), \epsilon}, \normal(0,1)) R(y) d y
    & \le e^{O(1/\epsilon)} + \int_{\abs{y} \ge c_{1}} e^{\bigO{\max\{1/\mu(y)^{2}, \mu(y)^{2}\}}} R(y) d y \\
    & \le e^{O(1/\epsilon)} + \int_{\abs{y} \ge c_{1}} e^{\bigO{\max\{1/(c_{1}^{4}\epsilon), \mu(y)^{2}\}}} R(y) d y \\
    & \le e^{O(1/\epsilon)} +  e^{1/(c_{1}^{4}\epsilon)} \int_{\abs{y} \ge c_{1}} e^{c_{1}^{2} \epsilon y^{2}} R(y) d y \\
    & \le e^{O(1/\epsilon)} + e^{1/(c_{1}^{4}\epsilon)} \int_{\abs{y} \ge c_{1}} e^{c_{1}^{2} \epsilon y^{2}} (1 - \epsilon) G(y) / (1 - \epsilon_{\mu(y)})d y \\
    &  \le e^{O(1/\epsilon)} + e^{1/(c_{1}^{4}\epsilon)} \int_{\abs{y} \ge c_{1}} e^{c_{1}^{2} \epsilon y^{2}} c_{1}^{2} y^{2} G(y) d y \\
    & = e^{\bigO{1/\epsilon}},
  \end{align*}
  where the first inequality splits into two cases as in Lemma~\ref{lemma:mixing_one_dimensional}, and the last inequality follows by choosing $c_1$ to be a sufficiently small constant so that $c_1^2 \eps \le 1/4$ and $\int e^{c_{1}^{2} \epsilon y^{2}} y^{2} G(y) d y $ is bounded above by a constant.
\end{proof}

\begin{proof}[Proof of Theorem~\ref{thm:sq_lower_bound}]
  The proof follows a standard non-Gaussian component analysis argument. For any \( 0 < c < 1/2\), there is a set \(S\) of at least \(2^{d^{c}}\) unit vectors in \(\R^{d}\) such that for each pair of distinct \(v, v' \in S\), \(\abs{v^{\top} v'} = O(d^{c - 1/2})\). By Lemma~\ref{lemma:near_orthogonal_distribution}, we have
  \[
    \chi_{S}(Q_{v}', Q_{v'}') - 1 =
    \begin{cases}
       e^{O(1/\epsilon)} O(d^{4c-2}) =: \gamma & \mbox{ if } v \neq v' \\
      e^{O(1/\epsilon)} =: \beta & \mbox{ if } v = v'
    \end{cases}.
  \]
  Therefore, by Lemma 8.8 in~\cite{diakonikolas2022algorithmic}, any SQ algoritm requires at least \(2^{d^{c}} \gamma / \beta = 2^{d^{c}}/ d^{2-4c}\) calls to \(\mathsf{STAT}(e^{1/\sqrt\epsilon} /d^{1-2c})\) to find \(v\) and therefore \(u\) within better than \(\bigO{\epsilon^{1/4}}\).
\end{proof}

We state the auxiliary facts that we used in the above analysis:
\begin{fact}[{\cite[Lemma G.3]{diakonikolas2021statistical}}]
  \label{fact:chi_square_mixture}
	        Let $k \in \Z_+$, distributions $P_i$ and $\lambda_i\geq 0$, for $i \in [k]$ such that  $\sum_{i=1}^k \lambda_i=1$. We have that $\chi^2\left(\sum_{i=1}^k \lambda_i P_i,D  \right)= \sum_{i=1}^k \sum_{j=1}^k \lambda_i \lambda_j \chi_D(P_i,P_j)$.
\end{fact}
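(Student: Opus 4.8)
The plan is to prove this by a direct expansion, since the identity just records that $\chi^2(\,\cdot\,, D)$ is the quadratic form attached to the bilinear pairing $\chi_D(\,\cdot\,,\,\cdot\,)$; all the content is linearity of the integral together with the normalization $\sum_i \lambda_i = 1$. First I would set $P := \sum_{i=1}^k \lambda_i P_i$ and work from the centered form $\chi^2(P,D) = \int (P-D)^2/D\,dx$. Because $\sum_i \lambda_i = 1$, we can write $P - D = \sum_{i=1}^k \lambda_i (P_i - D)$, so expanding the square and interchanging the finite double sum with the integral gives
\begin{equation*}
  \chi^2(P,D) = \int \frac{\bigl(\sum_{i=1}^k \lambda_i (P_i - D)\bigr)^2}{D}\,dx = \sum_{i=1}^k \sum_{j=1}^k \lambda_i \lambda_j \int \frac{(P_i - D)(P_j - D)}{D}\,dx ,
\end{equation*}
and one then recognizes each inner integral as $\chi_D(P_i, P_j)$ read as the centered correlation $\int (P-D)(Q-D)/D$ --- precisely the inner product highlighted just before the fact in the paper --- which completes the proof.

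Under the alternative, uncentered convention $\chi_D(P_i,P_j) = \int P_iP_j/D$, the same computation (now expanding $\int (\sum_i \lambda_i P_i)^2 / D$ directly) gives $\chi^2(P,D) = \sum_{i,j}\lambda_i\lambda_j\,\chi_D(P_i,P_j) - 1$, and the stated identity then follows from $\sum_{i,j}\lambda_i\lambda_j = (\sum_i\lambda_i)^2 = 1$, which lets the $-1$ be absorbed; either way the only bookkeeping point is keeping the normalizations of $\chi^2$ and $\chi_D$ consistent with each other.

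I do not expect a genuine obstacle: this is a purely algebraic bilinearity lemma. The one point requiring a little care is integrability, so that the term-by-term expansion and the sum/integral swap are valid: by Cauchy--Schwarz, $\int P_iP_j/D \le (\int P_i^2/D)^{1/2}(\int P_j^2/D)^{1/2}$, which is finite as soon as each $\chi^2(P_i,D)$ is finite, and all summands are nonnegative so Tonelli applies; in the degenerate case where some $\chi^2(P_{i},D) = +\infty$ with $\lambda_{i} > 0$, the left-hand side is also $+\infty$ (since $P \ge \lambda_{i} P_{i}$ pointwise), so the identity continues to hold as $+\infty = +\infty$.
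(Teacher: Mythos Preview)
The paper does not supply a proof of this fact; it is cited from \cite{diakonikolas2021statistical} and only invoked to bound $\chi^2$ of a two-component Gaussian mixture up to $e^{O(\cdot)}$ factors. Your direct bilinear expansion is the standard argument and is correct.

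One clarification on the bookkeeping you flag: with the paper's uncentered definition $\chi_D(P,Q)=\int PQ/D$, the identity that actually holds is
\[
\chi^2\Bigl(\sum_{i} \lambda_i P_i,\, D\Bigr) \;=\; \sum_{i,j}\lambda_i\lambda_j\,\chi_D(P_i,P_j) \;-\; 1,
\]
and the observation $\sum_{i,j}\lambda_i\lambda_j=1$ only lets you rewrite the right-hand side as $\sum_{i,j}\lambda_i\lambda_j\bigl(\chi_D(P_i,P_j)-1\bigr)$; it does not make the $-1$ disappear. So the fact as printed is off by $1$ relative to the paper's own conventions (immaterial for how it is applied), and your phrase ``the stated identity then follows'' slightly overstates what the absorption accomplishes. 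Your first-paragraph proof under the centered pairing $\int (P_i-D)(P_j-D)/D$ is the clean version and matches exactly.
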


\begin{fact}[{\cite[Lemma F.8]{diakonikolas2019efficient-algor}}]
  \label{fact:chi_square_gaussians}
\begin{align*}
	    \chi_{\normal(0,1)}\left(\normal(\mu_1,\sigma_1^2), \normal(\mu_2,\sigma_2^2)\right) = \frac{\exp\left(-\frac{\mu_1^2(\sigma_2^2-1) +2\mu_1 \mu_2 + \mu_2^2(\sigma_1^2-1)}{2\sigma_1^2(\sigma_2^2-1) - 2\sigma_2^2} \right)}{\sqrt{\sigma_1^2 + \sigma_2^2 - \sigma_1^2\sigma_2^2}}.
\end{align*}
\end{fact}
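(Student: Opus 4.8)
The quantity to be computed is, by the definition of $\chi_N(\cdot,\cdot)$ given earlier in the excerpt, the Gaussian integral
\[
  \chi_{\normal(0,1)}\bigl(\normal(\mu_1,\sigma_1^2),\normal(\mu_2,\sigma_2^2)\bigr)
  = \int_{\R} \frac{P(x)\,Q(x)}{\phi(x)}\,dx,
\]
where $P,Q$ are the two Gaussian densities and $\phi$ is the standard normal density. The plan is a completely elementary computation: substitute the three densities, collect the exponent into a single quadratic in $x$, complete the square, and integrate. Writing the integrand out, the polynomial prefactors combine to $\tfrac{1}{\sqrt{2\pi}\,\sigma_1\sigma_2}$, and the exponent becomes $-\tfrac12\bigl[\tfrac{(x-\mu_1)^2}{\sigma_1^2}+\tfrac{(x-\mu_2)^2}{\sigma_2^2}-x^2\bigr]$.

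First I would expand the bracket as $a x^2 - 2bx + c_0$ with
\[
  a = \frac{1}{\sigma_1^2}+\frac{1}{\sigma_2^2}-1,\qquad
  b = \frac{\mu_1}{\sigma_1^2}+\frac{\mu_2}{\sigma_2^2},\qquad
  c_0 = \frac{\mu_1^2}{\sigma_1^2}+\frac{\mu_2^2}{\sigma_2^2}.
\]
Completing the square gives $a x^2-2bx+c_0 = a\bigl(x-\tfrac{b}{a}\bigr)^2 + \bigl(c_0-\tfrac{b^2}{a}\bigr)$, and then the standard identity $\int_{\R} e^{-\frac{a}{2}(x-m)^2}\,dx = \sqrt{2\pi/a}$ (valid precisely when $a>0$) yields
\[
  \chi_{\normal(0,1)}\bigl(\normal(\mu_1,\sigma_1^2),\normal(\mu_2,\sigma_2^2)\bigr)
  = \frac{1}{\sqrt{2\pi}\,\sigma_1\sigma_2}\cdot\sqrt{\frac{2\pi}{a}}\cdot\exp\!\Bigl(\frac{b^2-ac_0}{2a}\Bigr)
  = \frac{1}{\sigma_1\sigma_2\sqrt{a}}\,\exp\!\Bigl(\frac{b^2-ac_0}{2a}\Bigr).
\]
Here I would note that $a\sigma_1^2\sigma_2^2 = \sigma_1^2+\sigma_2^2-\sigma_1^2\sigma_2^2$, so the hypothesis implicitly built into the stated formula (that $\sigma_1^2+\sigma_2^2-\sigma_1^2\sigma_2^2>0$, which holds in the regime $\sigma_1,\sigma_2$ near $1$ in which the fact is applied) is exactly the condition $a>0$ needed for convergence.

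It then remains to match the two factors to the claimed expression. For the prefactor, $\sigma_1\sigma_2\sqrt{a} = \sqrt{a\sigma_1^2\sigma_2^2} = \sqrt{\sigma_1^2+\sigma_2^2-\sigma_1^2\sigma_2^2}$, giving the stated denominator. For the exponent, multiplying numerator and denominator of $\tfrac{b^2-ac_0}{2a}$ by $\sigma_1^2\sigma_2^2$ and expanding, one finds $(b^2-ac_0)\sigma_1^2\sigma_2^2 = \mu_1^2(\sigma_2^2-1)+2\mu_1\mu_2+\mu_2^2(\sigma_1^2-1)$ and $2a\sigma_1^2\sigma_2^2 = -\bigl(2\sigma_1^2(\sigma_2^2-1)-2\sigma_2^2\bigr)$, so $\tfrac{b^2-ac_0}{2a}$ is exactly the exponent in the statement. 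There is no genuine obstacle here: the only points requiring care are the sign bookkeeping when rewriting $2a\sigma_1^2\sigma_2^2$ in the form $2\sigma_1^2(\sigma_2^2-1)-2\sigma_2^2$, and flagging the integrability condition $a>0$; the remainder is routine, which is presumably why the source cites it as a fact.
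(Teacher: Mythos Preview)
Your computation is correct. The paper does not give its own proof of this statement; it is stated as a fact cited from \cite{diakonikolas2019efficient-algor}, so there is nothing to compare against beyond noting that your direct completion-of-the-square argument is the standard and essentially the only way to verify such a Gaussian integral identity.
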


\section{\new{A Counterexample: Why Simple Algorithms Do Not Work}}
\label{sec:count-why-simple}
While Section~\ref{sec:sq-lower-bound-appendix} formally showed that a broad class of algorithms with sample complexity proportional to the dimension \(d\) cannot achieve \(O(\epsilon^{1/4})\) error without an exponential amount of computation, we informally discuss a concrete construction of adversarial corruptions where some straightforward algorithms fail. This discussion is intended only to provide intuition; the SQ lower bound provides more rigorous evidence that improving sample complexity beyond quadratic is impossible. The \(O(\cdot)\) notation in this section only displays dependence on the dimension \(d\).

Let \((A_{i}, y_{i}) \sim \G\) be sampled according to the noiseless Gaussian design in Definition~\ref{def:iid_Gaussian_with_noise}, i.e.\ entries of \(A_{i}\) are i.i.d.\ standard Gaussians and \(y_{i} = \<M^{*}, A_{i} \> \).

\begin{itemize}
  \item  The adversary first inspects all \(n\) samples and computes \(M^{*}\) up to some small error with out-of-shelf low rank matrix sensing algorithms. In the following discussion, we simply assume the adversary finds \(M^{*}\) exactly.

  \item The adversary discards a random \(\eps\)-fraction of samples and let \(\mathcal{S}\) denote the set of remaining samples. The adversary computes \(P = -\frac1n\sum_{i \in \mathcal{S}}y_{i}A_{i}\).
  \item For \(j = 1, \dots, \epsilon n\), the adversary independently samples  \(z_{j}\) uniformly between the spheres of radius \(\fnorm{M^{*}}/2\) and \(2 \fnorm{M^{*}}\)  and computes \(E_{j} = \frac{P}{\epsilon z_{j}} + A_{j}'\), where entries of \(A_{j}'\) are i.i.d.\ sampled from standard Gaussian.
  \item The adversary adds \((z_{j}, E_{j})\) as corruptions. Let \(\mathcal{B}\) denote the points added by the adversary.
\end{itemize}

Recall that with the objective function \(f_{i}\) defined in Equation~\eqref{eq:obj_u_i} and \(B_{i} = A_{i} + A_{i}^{T}\), we have the gradient and Hessian
\begin{align*}
  \grad f_{i}(U) &= (\< U U^{\top}, A_{i} \> - y_{i})  (A_{i} + A_{i}^{\top}) U \\
  \grad^{2} f_{i}(U) &= (\< U U ^{\top}, A_{i} \> - y_{i}) I_{r} \otimes B_{i}  + \vect \left(B_{i}U  \right) \vect(B_{i}U)^{\top}.
\end{align*}

Prior works on first-order robust stochastic
optimization~\cite{prasad2020robust-estimati, diakonikolas2019sever} required
\(O(d)\) samples, because they rely on the robust estimation of the gradients.
They would accept \(0\) as the solution, because when \(U = 0\),
\(\grad f_{i}(U) = 0\) for all \(i\), i.e., all sample gradients are zero at
\(U = 0\) whether or not they are corruptions, so gradient-based filtering does
not work. On the other hand,
\(\grad^{2}\bar f(0) = - \E [\<M^{*}, A_{i}\> I_{r} \otimes B_{i} ]= - I_{r} \otimes \E[ \<M^{*}, A_{i}\> (A_{i} + A_{i}^{T})] = -2 I_{r}\otimes M^{*}\),
where the last equality follows from
Lemma~\ref{lemma:Gaussian_quadratic_form_rank1}. Hence the error measured by the
negative curvature of the solution is
\(\lambda_{\min}(\grad^{2}\bar f (0)) = 2 \sigstarl\), which does not even scale
with \(\epsilon\).

This adversary can also help illustrate why spectral initialization similar to the algorithm
discussed in Theorem~\ref{thm:together_high_noise} cannot succeed with \(O(d)\)
samples. Spectral methods rely on the observation that with clean samples,
\(\E_{(A_{i}, y_{i}) \sim \G}[y_{i} A_{i}] = M^{*}\). The hope is that the matrix formed by the leading \(r\) singular vectors of some robust mean estimation of \(\{y_{i}A_{i}\}\) is close enough to \(M^{*}\). If so, it can be a good initialization to Algorithm~\ref{alg:local_linear_convergence}, which only requires robust estimation of sample gradients and \(O(d)\) samples suffice.

However, with \(O(d)\) samples, it is unclear how to estimate the mean of \(\{y_{i}A_{i}\}\). Robust mean estimation algorithms (e.g. Algorithm~\ref{alg:robust_mean_estimation}) discussed in Proposition~\ref{prop:robust-mean-estimation-summary} do not work because they require \(O(d^{2})\) samples. Here we show that the naive filter based on the norm of \(\{y_{i}A_{i}\}\) does not work, either.

Observe that \(P \approx -M^{*}\) by construction. For each \(j\), \[\fnorm{E_{j}} \le \fnorm{A_{j}'} +  \fnorm{P} / (z_{i}\epsilon) \approx \fnorm{A_{j}'} +  \fnorm{M^{*}} / (z_{i}\epsilon) \le \fnorm{A_{j}'} + 2/\eps.\] Since \(\fnorm{A_{j}'}\) scales with the dimension \(d\) but \(\eps\) is a fixed constant, as we move to a high-dimensional regime, \(\fnorm{A_{j}'}\) dominates and the effects of the outliers diminish if we only look at the norm of data.

Without an effective filter, the mean of \(\{y_{i}A_{i}\}\) across both clean samples and outliers becomes
\[\frac1n\sum_{i \in \mathcal{S}}y_{i}A_{i} + \frac1n\sum_{j \in \mathcal{B}}z_{j}E_{j} = - P + \frac1n\sum_{j\in\mathcal{B}} P / \epsilon + A_{j}' = \frac1n\sum_{j \in \mathcal{B}}A_{j}',\]
which contains no signals --- only noise --- and is close to 0.

In summary, we constructed an adversary under which robust first-order methods and spectral methods with naive filter fail. Note that this construction relies on the ability of the adversary to corrupt the input matrices \(A_{i}\); if the adversary could only corrupt \(y_{i}\), then the results in \cite{li2020non,li2020nonconvex} would apply and \(\tilde O(d)\) samples would suffice.
\end{document}